\documentclass[a4paper,11pt,reqno]{amsart}

\usepackage[utf8]{inputenc}
\usepackage[T1]{fontenc}
\usepackage{lmodern}
\usepackage[english]{babel}
\usepackage{microtype}

\usepackage{amsmath,amssymb,amsfonts,amsthm,comment}
\usepackage{mathtools,accents}
\usepackage{mathrsfs}
\usepackage{aliascnt}
\usepackage{braket}
\usepackage{bm}
\usepackage{esint}
\usepackage{amsfonts}
\usepackage{csquotes}

\usepackage[a4paper,margin=3cm]{geometry}
\usepackage[citecolor=blue,colorlinks]{hyperref}
\usepackage[english,capitalize]{cleveref}

\usepackage{enumerate}
\usepackage{xcolor}
\usepackage{comment}

\usepackage{imakeidx} 
\usepackage{xparse} 
\usepackage{mathtools}
\usepackage{float}
\usepackage{comment}
\usepackage{nicefrac}
\usepackage{graphicx}
\usepackage{enumerate}

\usepackage[maxbibnames=99,backend=biber, sorting=nyt, doi=false, url=false]{biblatex}
\addbibresource{Bibliography.bib}


\makeatletter
\g@addto@macro\@floatboxreset\centering
\makeatother


\makeatletter
\def\newaliasedtheorem#1[#2]#3{
  \newaliascnt{#1@alt}{#2}
  \newtheorem{#1}[#1@alt]{#3}
  \expandafter\newcommand\csname #1@altname\endcsname{#3}
}
\makeatother

\numberwithin{equation}{section}

\newtheoremstyle{slanted}{\topsep}{\topsep}{\slshape}{}{\bfseries}{.}{.5em}{}

\theoremstyle{plain}
\newtheorem{theorem}{Theorem}[section]
\newaliasedtheorem{proposition}[theorem]{Proposition}
\newaliasedtheorem{question}[theorem]{Question}
\newaliasedtheorem{lemma}[theorem]{Lemma}
\newaliasedtheorem{corollary}[theorem]{Corollary}
\newaliasedtheorem{counterexample}[theorem]{Counterexample}

\theoremstyle{definition}
\newaliasedtheorem{definition}[theorem]{Definition}
\newaliasedtheorem{example}[theorem]{Example}
\newaliasedtheorem{conjecture}[theorem]{Conjecture}

\theoremstyle{remark}
\newaliasedtheorem{remark}[theorem]{Remark}


\newcommand{\setN}{\mathbb{N}}
\newcommand{\N}{\mathbb{N}}

\newcommand{\setR}{\mathbb{R}}
\newcommand{\R}{\mathbb{R}}

\newcommand{\X}{{\rm X}}
\newcommand{\lip}{{\rm lip \,}}
\newcommand{\nchi}{{\raise.3ex\hbox{\(\chi\)}}}

\newcommand{\eps}{\varepsilon}

\let\phi\varphi

\newcommand{\abs}[1]{\left\lvert#1\right\rvert}



\DeclareMathOperator{\sn}{sn}

\newcommand{\di}{\mathop{}\!\mathrm{d}}

\newcommand{\Ch}{{\sf Ch}}

\newcommand{\st}{\ensuremath{\ :\ }} 
\newcommand{\eqdef}{\ensuremath{\vcentcolon=}}

\DeclareMathOperator{\Lip}{LIP}

\newcommand{\haus}{\mathscr{H}}
\newcommand{\leb}{\mathscr{L}}


\newcommand{\dist}{\mathsf{d}}

\newcommand{\meas}{\mathfrak{m}}


\newcommand{\diam}{\mathrm{diam}}
\DeclareMathOperator{\CD}{CD}
\DeclareMathOperator{\RCD}{RCD}

\DeclareMathOperator{\Ric}{Ric}
\DeclareMathOperator{\Per}{Per}

\newfont{\tmpf}{cmsy10 scaled 2500}

\newcommand{\de}{\ensuremath{\,\mathrm d}} 

\subjclass{Primary: 49Q20, 49J45, 53A35. Secondary: 53C23, 49J40.}
\keywords{Isoperimetric problem, Isoperimetric profile, Lower Ricci bounds, RCD space, Isoperimetric inequality.\\
The first author is partially supported by the European Research Council (ERC Starting Grant 713998 GeoMeG `\emph{Geometry of Metric Groups}').
The second author is supported by the Balzan project led by Luigi Ambrosio. The first and the third authors are partially supported by the INdAM - GNAMPA Project CUP\_E55F22000270001 ``Isoperimetric problems: variational and geometric aspects''.
The last author is supported by the European Research Council (ERC), under the European Union Horizon 2020 research and innovation programme, via the ERC Starting Grant  “CURVATURE”, grant agreement No. 802689. \\
Data sharing not applicable to this article as no datasets were generated or analysed during the current study.
}

\begin{document}

\title[Isoperimetry under lower Ricci bounds]{Asymptotic isoperimetry on non collapsed spaces with lower Ricci bounds}

\author[G. Antonelli]{Gioacchino Antonelli}\thanks{G. Antonelli: Scuola Normale Superiore, Piazza dei Cavalieri, 7, 56126 Pisa, Italy, ga2434@nyu.edu}

\author[E. Pasqualetto]{Enrico Pasqualetto}\thanks{E. Pasqualetto: Scuola Normale Superiore, Piazza dei Cavalieri, 7, 56126 Pisa, Italy, enrico.e.pasqualetto@jyu.fi}

\author[M. Pozzetta]{Marco Pozzetta}\thanks{M. Pozzetta: Dipartimento di Matematica e Applicazioni, Universit\`a di Napoli Federico II, Via Cintia, Monte S. Angelo, 80126 Napoli, Italy, marco.pozzetta@unina.it}

\author[D. Semola]{Daniele Semola}\thanks{D. Semola: Mathematical Institute, University of Oxford, Radcliffe Observatory, Andrew Wiles Building, Woodstock Rd, Oxford OX2 6GG, daniele.semola.math@gmail.com}

\maketitle

\begin{abstract}
  This paper studies sharp and rigid isoperimetric comparison theorems and asymptotic isoperimetric properties for small and large volumes on $N$-dimensional ${\rm RCD}(K,N)$ spaces $(X,\mathsf{d},\mathscr{H}^N)$. Moreover, we obtain almost regularity theorems formulated in terms of the isoperimetric profile and enhanced consequences at the level of several functional inequalities.\\  
  Most of our statements {seem to be} new even in the classical setting of smooth, non compact manifolds with lower Ricci curvature bounds. The synthetic theory plays a key role via compactness and stability arguments.
\end{abstract}


\section{Introduction}


The connection between lower Ricci curvature bounds on Riemannian manifolds and the isoperimetric problem is classical and in recent years it has attracted a lot of interest also in the realm of metric measure spaces with lower Ricci curvature bounds in synthetic sense. The goal of this paper is to explore a series of sharp isoperimetric comparison, stability and rigidity theorems in the setting of $N$-dimensional $\RCD(K,N)$ metric measure spaces $(X,\dist,\haus^N)$, for finite $N\in [1,\infty)$ and $K\in \setR$. 
Here $K\in \setR$ plays the role of (synthetic) lower bound on the Ricci curvature, $N\in [1,\infty)$ plays the role of (synthetic) upper bound on the dimension and $\haus^N$ indicates the $N$-dimensional Hausdorff measure.
\smallskip

Given an $\RCD(K,N)$ metric measure space $(X,\dist,\haus^N)$ we introduce the isoperimetric profile $I_X:[0,\haus^N(X))\to [0,\infty)$ by
\begin{equation}\label{eq:isoIntro}
    I_X(v):=\inf\left\{\Per(E)\, :\, E\subset X\, ,\, \, \haus^N(E)=v\right\}\, ,
\end{equation}
where we drop the subscript $X$ when there is no risk of confusion.
When $E\subset X$ attains the infimum in \eqref{eq:isoIntro} for $v=\haus^N(E)$, we call it an isoperimetric region.
Above, $\Per$ denotes the perimeter measure of a Borel set $E\subset X$ with finite perimeter. For the sake of this introduction we remark that the theory of sets of finite perimeter on $\RCD$ spaces is fully consistent with the Riemannian one.\\ 
In this setting we obtain:
\begin{itemize}
    \item sharp and rigid isoperimetric inequalities when $K=0$ and $(X,\dist,\haus^N)$ has Euclidean volume growth; 
    \item the precise asymptotic behaviour of the isoperimetric profile and of isoperimetric regions for small volumes in great generality, and for large volumes under the assumption that $K=0$ and $(X,\dist,\haus^N)$ has Euclidean volume growth;
    \item new global $\eps$-regularity theorems under lower Ricci curvature bounds formulated in terms of the isoperimetric profile;
    \item sharp and rigid comparison theorems for some spectral gap inequalities.
\end{itemize}

Many of the above results {seem to be} new even for smooth, non compact Riemannian manifolds with lower Ricci curvature bounds and for Alexandrov spaces with lower sectional curvature bounds. They answer to several open questions raised in \cite{Bayle03,NardulliOsorio20, BaloghKristaly}. The main technical contributions of the paper are a series of Geometric Measure Theory arguments in low regularity, leading to enhanced consequences about classical problems in Geometric Analysis under lower Ricci curvature bounds.

\subsection*{Main results}
The first main result of the paper is the characterization of rigidity for the sharp isoperimetric inequality on $\RCD(0,N)$ spaces endowed with the volume measure $\haus^N$ and with Euclidean volume growth.\\
We recall that the asymptotic volume ratio of an $\RCD(0,N)$ space $(X,\dist,\haus^N)$ is defined by
\begin{equation}
    \mathrm{AVR}(X,\dist,\haus^N):=\lim_{r\to \infty}\frac{\haus^N(B_r(p))}{\omega_Nr^N}\,,
\end{equation}
and it is independent of the base point $p\in X$. Here $\omega_N$ is the volume of the unit ball in $\mathbb R^N$. The space is said to have Euclidean volume growth if $\mathrm{AVR}(X,\dist,\haus^N)>0$.

\begin{theorem}\label{thm:IsoperimetricSharpRigidintro}
Let $(X,\dist,\haus^N)$ be an $\RCD(0,N)$ metric measure space with $\mathrm{AVR}(X,\dist,\haus^N)>0$ for some $N\in\setN$, $N\ge 2$, and let $E\subset X$ be a set of finite perimeter. Then 
\begin{equation}\label{eqn:SharpIsopintro} 
\Per(E)\ge N\omega_N^{\frac{1}{N}}\mathrm{AVR}(X,\dist,\haus^N)^{\frac{1}{N}}\left(\haus^N(E)\right)^{\frac{N-1}{N}}.
\end{equation}
Moreover equality holds for some $E$ with $\haus^N(E)\in(0,\infty)$ if and only if $X$ is isometric to a Euclidean metric measure cone (of dimension $N$) over an $\RCD(N-2,N-1)$ space and $E$ is isometric to a ball centered at one of the tips of $X$.
\end{theorem}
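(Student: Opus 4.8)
The plan is to separate the statement into the inequality \eqref{eqn:SharpIsopintro}, the implication ``$X$ is a cone with $E$ a tip-centered ball $\Rightarrow$ equality'', and the converse implication ``equality $\Rightarrow$ $X$ is a cone''.

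\emph{The inequality.} The bound \eqref{eqn:SharpIsopintro} is the synthetic counterpart of the Euclidean-volume-growth isoperimetric inequality, and it is by now available for $\RCD(0,N)$ spaces; it can be proved by the $L^1$-localization (needle) technique. For a bounded $E$ with $0<\haus^N(E)<\infty$ one disintegrates $\haus^N$ along the transport rays of an $L^1$-optimal transport pushing a normalization of $\haus^N\measrestr E$ ``towards infinity''; each conditional measure lives on a ray $X_q\cong[0,L_q)$ with a $\CD(0,N)$ density $h_q$ (so $h_q^{1/(N-1)}$ is concave, and on a maximal ray $h_q(0)=0$), and the conditional isoperimetric problem is the one-dimensional one on $([0,L_q),h_q\,\mathrm dt)$, whose minimizers are initial segments. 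Running the one-dimensional estimate, using concavity of $h_q^{1/(N-1)}$ to compare $h_q$ with the conical profile $t\mapsto c_q t^{N-1}$, and encoding $\mathrm{AVR}(X,\dist,\haus^N)=\lim_{R\to\infty}\haus^N(B_R)/(\omega_N R^N)$ to control the ``total angle'' $\int c_q\,\mathrm d\mathfrak q$, one recovers \eqref{eqn:SharpIsopintro} after a Jensen-type inequality in the $q$-variable. (Alternatively, \eqref{eqn:SharpIsopintro} may be quoted from the work of Balogh--Kristály or Cavalletti--Manini.)

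\emph{Cone $\Rightarrow$ equality.} If $X=C(Z)$ is the Euclidean metric measure cone of dimension $N$ over an $\RCD(N-2,N-1)$ space $Z$, then $\haus^{N-1}(Z)=N\omega_N\,\mathrm{AVR}(X,\dist,\haus^N)$ and, for any tip $o$, $\haus^N(B_r(o))=\frac{\haus^{N-1}(Z)}{N}\,r^N=\mathrm{AVR}\,\omega_N r^N$, while $\Per(B_r(o))=\frac{\mathrm d}{\mathrm dr}\haus^N(B_r(o))=N\,\mathrm{AVR}\,\omega_N r^{N-1}$. Substituting $v=\mathrm{AVR}\,\omega_N r^N$ in the right-hand side of \eqref{eqn:SharpIsopintro} yields exactly $N\,\mathrm{AVR}\,\omega_N r^{N-1}$, so equality holds for $E=B_r(o)$.

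\emph{Equality $\Rightarrow$ cone.} Suppose equality holds for some $E$ with $\haus^N(E)=v\in(0,\infty)$; combining with the inequality, $E$ is an isoperimetric region realizing $I_X(v)=N\omega_N^{1/N}\mathrm{AVR}^{1/N}v^{(N-1)/N}$. I would then analyze the equality case inside the localization proof above: saturation forces, for $\mathfrak q$-a.e.\ $q$, the density $h_q$ to coincide with the conical profile $c_q t^{N-1}$ on a full ray $[0,\infty)$, the set $E\cap X_q$ to be an initial segment $[0,r_q)$, and the Jensen step to be an equality, which pins $r_q\equiv r$ to a common value and forces the Kantorovich potential to be a shift of $\dist(p,\cdot)$ for a single point $p$. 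Hence $E=B_r(p)$ and the disintegration is precisely the polar-coordinate decomposition of a metric measure cone with tip $p$; in particular $\haus^N(B_\rho(p))=\big(\frac1N\int_Q c_q\,\mathrm d\mathfrak q\big)\rho^N=\mathrm{AVR}\,\omega_N\rho^N$ for all $\rho>0$, i.e.\ the Bishop--Gromov volume ratio at $p$ is constant. By the ``volume cone implies metric cone'' theorem of De~Philippis--Gigli, $(X,\dist,\haus^N)$ is then a metric measure cone with tip $p$; by Ketterer's characterization of $\RCD$ cones, being $\RCD(0,N)$ it must be the Euclidean cone over an $\RCD(N-2,N-1)$ space (here $N\ge 2$ and $N\in\setN$ enter), and $E=B_r(p)$ is the asserted tip-centered ball. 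The main obstacle is precisely this equality-case analysis: one must propagate the saturation of each one-dimensional estimate through the disintegration (equality in Jensen, constancy of $r_q$, concurrency of the transport rays) within the low-regularity framework where the needle decomposition, the structure of $\CD(0,N)$ densities on rays, and the fine properties of sets of finite perimeter hold only up to $\haus^N$- and $\Per$-negligible sets; an alternative would be to run a Brendle-type ABP argument and read rigidity off the Hessian of the solved potential being a multiple of the metric, but the localization route seems the more natural one in this setting.
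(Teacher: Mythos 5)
Your treatment of the inequality and of the implication ``cone $\Rightarrow$ equality'' is fine (the inequality can indeed be quoted from Balogh--Kristály, and your computation of $\haus^N(B_r(o))$ and $\Per(B_r(o))$ on a cone is correct). The genuine gap is in the converse, which is the whole novelty of \autoref{thm:IsoperimetricSharpRigidintro}: your localization route reduces the rigidity to the claims that saturation forces $h_q=c_qt^{N-1}$ on full rays, that $E\cap X_q$ is an initial segment $[0,r_q)$, that equality in the Jensen step pins $r_q\equiv r$, and that the Kantorovich potential is a shift of $\dist(p,\cdot)$ so that all needles emanate from one point $p$ — but none of these steps is actually established, and you yourself flag them as ``the main obstacle''. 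They are not routine: one must first design a needle decomposition that sees $\mathrm{AVR}$, a quantity defined at infinity (this already requires a limiting family of $L^1$-transport problems), and then recombine almost-everywhere one-dimensional equalities, in a framework where needles, densities and slices of $E$ are only defined up to null sets, into the exact global statements $E=B_r(p)$ and $\haus^N(B_\rho(p))=\mathrm{AVR}\,\omega_N\rho^N$ for every $\rho>0$ needed to invoke De Philippis--Gigli. This program is workable — it is essentially what Cavalletti and Manini carried out in \cite{CavallettiManiniRigid}, for essentially non-branching $\CD(0,N)$ spaces, a few months after this paper (see the Addendum) — but as written your proposal asserts precisely the part where the difficulty of the theorem lies, so it does not yet constitute a proof. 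Note also that the localization rigidity uses boundedness of $E$, which you would have to extract from the regularity theory for isoperimetric sets (\autoref{thm:RegularityIsoperimetricSets}), and that your inequality argument is stated only for bounded $E$, so an approximation step is missing for general competitors.

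For comparison, the paper never touches needles. It first proves the statement for isoperimetric regions (\autoref{lem:RigidityIsop}) via the mean curvature barrier $c$ of \autoref{thm:Isoperimetriciintro}: Heintze--Karcher-type estimates give $c\le\tfrac{N-1}{N}\Per(E)/\haus^N(E)$ (\autoref{cor:MeanCurvatureIso}) and, using Euclidean volume growth, $c\ge(N-1)\big(N\omega_N\mathrm{AVR}/\Per(E)\big)^{1/(N-1)}$; combining the two yields the inequality, and equality forces $E$ to contain, hence by Bishop--Gromov to coincide with, a ball of radius $(N-1)/c$ whose volume ratio equals $\mathrm{AVR}$, so the Bishop--Gromov ratio at its center is constant and the volume-cone-to-metric-cone theorem applies. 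The case of a general set of finite perimeter, with no existence of minimizers assumed, is then handled by the asymptotic mass decomposition \autoref{thm:MassDecompositionINTRO}, applying the lemma to each limit piece (whose $\mathrm{AVR}$ is no smaller than that of $X$) and superadditivity of $v\mapsto v^{\frac{N-1}{N}}$; equality for a given $E$ makes $E$ itself isoperimetric, so the lemma delivers the rigidity. If you complete the equality analysis along Cavalletti--Manini lines, your route is a legitimate and more general alternative (arbitrary reference measures, essentially non-branching $\CD$), at the price of substantially heavier one-dimensional rigidity bookkeeping; the paper's route stays within the $\RCD(K,N)$, $\meas=\haus^N$ world but is shorter and closer to Gromov's original Lévy--Gromov argument.
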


The novelty in \autoref{thm:IsoperimetricSharpRigidintro} is the complete characterization of the rigidity in \eqref{eqn:SharpIsopintro}. Moreover, even though the inequality \eqref{eqn:SharpIsopintro} was already known in the setting of \autoref{thm:IsoperimetricSharpRigidintro} {(see \cite{BaloghKristaly} after \cite{AgostinianiFogagnoloMazzieri,BrendleFigo,FogagnoloMazzieri})}, we will give a new proof tailored for the characterization of rigidity in this paper.
The rigidity statement of \autoref{thm:IsoperimetricSharpRigidintro} in the generality of $\RCD(0,N)$ spaces is fundamental in order to understand the asymptotic behaviour of the isoperimetric profile for small and large volumes even on a smooth non compact Riemannian manifold, see the results below. 
Furthermore, the characterization of the equality case in \autoref{thm:IsoperimetricSharpRigidintro} without further regularity assumptions besides the natural ones for the isoperimetric problem is new for Riemannian manifolds with dimension higher than $7$
and completely new in the $\RCD$ setting.

The proof of \autoref{thm:IsoperimetricSharpRigidintro} combines the generalized existence of isoperimetric regions {obtained in \cite{AntonelliNardulliPozzetta} (see \autoref{thm:MassDecompositionINTRO})}, the sharp Laplacian comparison {for the distance function from isoperimetric sets obtained in \cite{AntonelliPasqualettoPozzettaSemolaFIRSThalf} (see \autoref{thm:Isoperimetriciintro})} and a limiting argument. {No assumption is made on the existence of isoperimetric regions for any volume. Nevertheless, the recently developed tools of Geometric Measure Theory on $\RCD$ spaces (see \cite{MoS21,AntonelliPasqualettoPozzettaSemolaFIRSThalf}) allow for a proof very much in the spirit of Gromov's original proof of the Lèvy-Gromov inequality \cite{Gromovmetric}.}
\medskip

In our companion paper \cite{AntonelliPasqualettoPozzettaSemolaFIRSThalf} we obtained sharp second order differential inequalities for the isoperimetric profile of $\RCD(K,N)$ spaces $(X,\dist,\haus^N)$ with volume of unit balls uniformly bounded from below.
Indeed, we proved that if $(X,\dist,\haus^N)$ is an $\RCD(K,N)$ space and there exists $v_0>0$ such that $\haus^N(B_1(x))\geq v_0$ for every $x\in X$, then
 the inequality
 \begin{equation}\label{eq:sharpdiffisop}
    -I''I\geq K+\frac{(I')^2}{N-1}
 \end{equation}
 holds in the viscosity sense on $(0,\haus^N(X))$, see \autoref{thm:BavardPansuIntro}. {The statement generalizes several results previously obtained for smooth Riemannian manifolds with Ricci curvature lower bounds, either in the compact case, or under uniform bounds on the geometry at infinity in the non compact case. See for instance \cite{BavardPansu86,MorganJohnson00,Bayle03,NiWangiso,MondinoNardulli16} and the introduction of \cite{AntonelliPasqualettoPozzettaSemolaFIRSThalf} for a more extended bibliography and a detailed comparison with the existing literature.}

{An elementary consequence of \eqref{eq:sharpdiffisop} that we explore in this paper is} that the {scale invariant isoperimetric profile}
\begin{equation}\label{monotone}
    v\mapsto \frac{I(v)}{v^{\frac{N-1}{N}}}
\end{equation}
is monotone decreasing on $\RCD(0,N)$ spaces $(X,\dist,\haus^N)$, see \autoref{cor:IsoperimetricProfileRCD0N}. 

The monotonicity formula for the {scale invariant isoperimetric profile} on spaces with non negative Ricci curvature has a series of consequences for other functional inequalities that can be obtained with the classical monotone rearrangement technique and are investigated in \autoref{subsec:functineq}. {Several other geometric and functional inequalities could be studied with analogous techniques.}
\medskip

The limit for large volumes in \eqref{monotone} is explicitly determined by the sharp isoperimetric inequality \autoref{thm:IsoperimetricSharpRigidintro} and Bishop--Gromov's inequality. Indeed, it is easily seen that the limit vanishes when the space has not Euclidean volume growth.

A combination of the ideas introduced in \cite{AntBruFogPoz} with the tools developed in {our previous paper \cite{AntonelliPasqualettoPozzettaSemolaFIRSThalf} and several compactness and stability arguments that fully exploit the $\RCD$ theory} leads to the asymptotic description of 
{ the isoperimetric behaviour for large volumes on a subclass of $\RCD(0,N)$ spaces with Euclidean volume growth that includes Alexandrov spaces with non negative sectional curvature and Euclidean volume growth.

Let $(X,\dist,\meas,x)$ be a pointed $\RCD(0,N)$ space with $\mathrm{AVR}(X,\dist,\meas)>0$. For every sequence $\{r_i\}_{i\in\mathbb N}$ with $r_i\to+\infty$ the sequence of pointed metric measure spaces $\{(X,r_i^{-1}\dist,r_i^{-N}\meas,x)\}_{i\in\mathbb N}$ is precompact in the pointed measured Gromov--Hausdorff (pmGH) topology, see \autoref{def:GHconvergence}. Every pmGH limit of such a sequence is a metric cone, by a slight modification of the proof of \cite[Proposition 2.8]{DePhilippisGigli18} (see also \cite[Theorem 7.6]{ChCo0}). Any such limit is called an {\em asymptotic cone of $X$}.

By \emph{Euclidean metric measure cone of dimension $N$} over a metric measure space, we mean a $(0,N)$-cone over that metric measure space, according to \cite[Definition 5.1]{Ketterer15}, where the reference measure is $\haus^N$. Given a Euclidean metric measure cone $(X,\dist,\haus^N)$ that is an $\RCD(0,N)$ space, we call a \emph{tip} any point $x\in X$ such that 
\[
\vartheta[X,\dist,\haus^N,x]:=\lim_{r\to 0}\frac{\haus^N(B_r(x))}{\omega_Nr^N}=\mathrm{AVR}(X,\dist,\haus^N)\, .
\]
\begin{theorem}\label{thm:AlexandrovRigidIntro}
Let $k\geq 0$, and let $(X,\dist,\haus^N)=(\mathbb R^k\times \tilde X,\dist_{\mathbb R^k}\times \dist_{\tilde X},\haus^N)$ be an $\RCD(0,N)$ metric measure space with $\mathrm{AVR}(X,\dist,\haus^N)>0$, and such that $\tilde X$ splits no lines. Assume that no asymptotic cone of $\tilde X$ splits a line. Then
\begin{itemize}
    \item there exists $V_0\geq 0$ such that for every $V\geq V_0$ there exists an isoperimetric region of volume $V$;
    \item  let $\{E_i\}_{i\geq 1}$ be a sequence of isoperimetric sets such that $\haus^N(E_i)=V_i$ with $V_i\to +\infty$, and fix $o \in X$. Then up to subsequence and translations along the Euclidean factor $\mathbb R^k$, the sequence $\overline{E}_i \subset (X, V_i^{-\frac1N}\dist,V_i^{-1}\haus^N,o)$ converges in the Hausdorff distance to a closed ball of volume $1$ centered at one tip of an asymptotic cone of $(X,\dist,\haus^N)$;
    \item if there exists $V>0$ such that 
\begin{equation}\label{eq:RigidProfileAlexandrov}
I(V)=N\omega_N^{\frac1N}({\rm AVR}(X,\dist,\haus^N))^{\frac1N}V^{\frac{N-1}{N}}\, ,
\end{equation}
then $N\ge2$, and $(X,\dist,\haus^N)$ is a Euclidean metric measure cone over a compact $\RCD(N-2,N-1)$ metric measure space.
\end{itemize}

\end{theorem}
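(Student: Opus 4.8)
The plan is to derive all three items from four inputs: the generalized existence theorem \autoref{thm:MassDecompositionINTRO}, the sharp isoperimetric inequality with its rigidity \autoref{thm:IsoperimetricSharpRigidintro}, the monotonicity of the scale invariant profile \autoref{cor:IsoperimetricProfileRCD0N}, and soft compactness and stability of the pmGH topology; the structural hypotheses on $\tilde X$ will enter only to control the maximal Euclidean factor of the metric measure spaces arising as limits of blow-downs and of translations of $X$. Three elementary facts are used repeatedly: $\mathrm{AVR}(\mathbb R^k\times\tilde X)=\mathrm{AVR}(\tilde X)$ (a direct slicing computation for balls), $\mathrm{AVR}$ is invariant under blow-down, and $\mathrm{AVR}$ does not decrease along pmGH limits of $X$ taken at diverging base points (by Bishop--Gromov). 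Set $c_X:=N\omega_N^{1/N}\mathrm{AVR}(X,\dist,\haus^N)^{1/N}$ and assume throughout $N\ge 2$. As a preliminary I record that $\lim_{V\to\infty}I_X(V)V^{-(N-1)/N}=c_X$: indeed $v\mapsto I_X(v)v^{-(N-1)/N}$ is non-increasing by \autoref{cor:IsoperimetricProfileRCD0N} and $\ge c_X$ by \autoref{thm:IsoperimetricSharpRigidintro}, while testing \eqref{eq:isoIntro} with balls $B_r(o)$ of almost every radius, together with Bishop--Gromov and the convergence of $(X,r^{-1}\dist,r^{-N}\haus^N,o)$ to an asymptotic cone (whose unit ball has volume $\omega_N\mathrm{AVR}$ and perimeter $N\omega_N\mathrm{AVR}$), produces the matching upper bound in the limit.

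\emph{Asymptotic shape.} Let $E_i$ be isoperimetric of volume $V_i\to\infty$, put $r_i:=V_i^{1/N}$, and consider $X_i:=(X,r_i^{-1}\dist,r_i^{-N}\haus^N,o)$; up to a subsequence $X_i\to C$ in pmGH, with $C$ an asymptotic cone of $X$ and $\mathrm{AVR}(C)=\mathrm{AVR}(X)>0$, and, writing $\bar E_i$ for $E_i$ in $X_i$, one has $\haus^N(\bar E_i)=1$ and $\Per(\bar E_i)=I_X(V_i)V_i^{-(N-1)/N}\to c_X$. The key a priori estimate, available in the $\RCD$ setting from \cite{MoS21,AntonelliPasqualettoPozzettaSemolaFIRSThalf} via the lower bound on $\mathrm{AVR}$, is that isoperimetric sets of volume $1$ in $\RCD(0,N)$ spaces with $\mathrm{AVR}\ge\mathrm{AVR}(X)$ have diameter $\le D=D(N,\mathrm{AVR}(X))$; translating $\bar E_i$ along the Euclidean factor $\mathbb R^k$ (which affects neither volume nor perimeter) we may assume $\bar E_i\subset B_{D'}(o)$ uniformly, so no volume is lost at infinity and $\bar E_i\to E_\infty$ in $L^1$ with $\haus^N(E_\infty)=1$. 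Lower semicontinuity of perimeter gives $\Per_C(E_\infty)\le c_X$, while \autoref{thm:IsoperimetricSharpRigidintro} applied in $C$ gives $\Per_C(E_\infty)\ge c_X$; hence equality, so $C$ is a Euclidean metric measure cone and $E_\infty$ is a ball of volume $1$ centred at a tip of $C$. Upgrading $L^1$ to Hausdorff convergence via the uniform density estimates and the uniform diameter bound for isoperimetric sets then yields the second item. The same scheme applies verbatim to each piece of a \emph{generalized} isoperimetric region, since each such piece is isoperimetric in a pmGH limit of $X$: after the appropriate rescaling it converges to a ball centred at a tip of an asymptotic cone of that limit.

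\emph{Existence.} Suppose towards a contradiction that $V_i\to\infty$ admits no isoperimetric region of volume $V_i$. By \autoref{thm:MassDecompositionINTRO}, $I_X(V_i)$ is realized by a bounded isoperimetric region $\Omega_i\subset X$ of volume $w_i$ together with at most countably many isoperimetric regions $\Omega_i^j$ of volumes $v_i^j$ in pmGH limits $X_i^j$ of $X$ at diverging base points, with $w_i+\sum_j v_i^j=V_i$ and $\Per(\Omega_i)+\sum_j\Per_{X_i^j}(\Omega_i^j)=I_X(V_i)$, and the decomposition is non-trivial. Bounding each summand from below by the sharp inequality in $X$, resp.\ in $X_i^j$ (using $\mathrm{AVR}(X_i^j)\ge\mathrm{AVR}(X)$), dividing by $c_X V_i^{(N-1)/N}$ and invoking the preliminary step gives
\begin{equation*}
1=\lim_i\frac{I_X(V_i)}{c_X V_i^{(N-1)/N}}\ \ge\ \limsup_i\Big((w_i/V_i)^{\frac{N-1}{N}}+\sum_j(v_i^j/V_i)^{\frac{N-1}{N}}\Big)\ \ge\ 1,
\end{equation*}
so by strict subadditivity of $t\mapsto t^{(N-1)/N}$, up to a subsequence, a single volume fraction tends to $1$ and all the others to $0$. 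If the dominant fraction is $w_i/V_i$, a refinement of the argument — using that $\Omega_i$ is itself an honest isoperimetric region of large volume (so $\Per(\Omega_i)\sim c_X w_i^{(N-1)/N}$ by the shape result) together with the product structure — shows the remaining pieces at infinity must vanish, contradicting non-triviality. If the dominant fraction belongs to a piece at infinity, then, after translating along $\mathbb R^k$ so that the escape is purely along $\tilde X$, $I_X(V_i)$ is asymptotically realized by an isoperimetric region of volume $\sim V_i$ in $\mathbb R^k\times Y_i$, with $Y_i$ a pmGH limit of $\tilde X$ at diverging points. This is the heart of the matter and the step I expect to be the main obstacle: rescaling this piece by $V_i^{-1/N}$ and passing to the limit produces, by the previous paragraph, a ball centred at a tip of an asymptotic cone of $X=\mathbb R^k\times\tilde X$, hence a space of the form $\mathbb R^k\times C$ with $C$ an asymptotic cone of $\tilde X$ that splits no line; but a genuine escape at scale $V_i^{1/N}$ would manifest in this blow-down as an extra line in a limit of $\tilde X$, so that $\tilde X$ or one of its asymptotic cones would split a line, contradicting the hypotheses. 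Making this quantitative requires an iteration on the number of Euclidean factors, bounded by $N$ so that the process terminates, together with the full compactness and stability machinery of the $\RCD$ theory, in the spirit of \cite{AntBruFogPoz}. Hence isoperimetric regions of volume $V$ exist for all $V\ge V_0$.

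\emph{Rigidity.} Assume $I_X(V)=c_X V^{(N-1)/N}$ for some $V>0$. By \autoref{cor:IsoperimetricProfileRCD0N} the map $v\mapsto I_X(v)v^{-(N-1)/N}$ is non-increasing, is $\ge c_X$ by \autoref{thm:IsoperimetricSharpRigidintro}, equals $c_X$ at $V$, and tends to $c_X$ at infinity; hence it is identically $c_X$ on $[V,\infty)$. Choosing $v\ge\max\{V,V_0\}$, the existence part provides an isoperimetric region $E$ of volume $v$ with $\Per(E)=c_X v^{(N-1)/N}$, i.e.\ equality in \eqref{eqn:SharpIsopintro}; by the rigidity in \autoref{thm:IsoperimetricSharpRigidintro} it follows that $N\ge 2$ and that $(X,\dist,\haus^N)$ is a Euclidean metric measure cone over an $\RCD(N-2,N-1)$ space. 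Finally, this link has diameter $\le\pi$ by Ketterer's characterization of $\RCD(0,N)$ cones, and $\RCD(N-2,N-1)$ spaces are proper; being bounded and proper, it is compact, which completes the proof.
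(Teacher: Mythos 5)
Your overall scaffolding (mass decomposition, sharp/rigid isoperimetric inequality, monotonicity of the scale invariant profile, compactness and lower semicontinuity, Hausdorff upgrade via \autoref{thm:ConvergenceBarriersStability}) is the same as the paper's, and your rigidity step is a legitimately different and nice route: propagate equality from $V$ to all larger volumes by monotonicity, then invoke existence at a large volume and the rigidity of \autoref{thm:IsoperimetricSharpRigidintro}. But there is a genuine gap at the point you yourself flag as the heart of the matter, and it infects both the shape item and the existence item. In the asymptotic-shape step you assert that, after translating along $\mathbb R^k$, the rescaled sets $\overline E_i$ lie in a fixed ball $B_{D'}(o)$. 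The diameter bound \eqref{eq:BoundDiamBig} controls only the size of $E_i$, not its location: the sets may sit at distance much larger than $V_i^{1/N}$ from $o$ along the $\tilde X$-factor, and there is no translation in $\tilde X$ to normalize this; in that scenario $\overline E_i$ disappears at infinity relative to the base point $o$ and your ``no volume is lost'' claim fails. Excluding this escape is exactly where the hypothesis that no asymptotic cone of $\tilde X$ splits a line must enter, and it is the core of the paper's proof: by \cite[Lemma 4.2]{AntBruFogPoz}, in an asymptotic cone splitting no line the density at any point at unit distance from the tip is at least $\mathrm{AVR}(X)+\varepsilon$; transferring this to $X$ via volume convergence and Bishop--Gromov shows that the blow-down of $X$ at scale $V_i^{1/N}$ centered at escaping points $q_i\in E_i$ has $\mathrm{AVR}\ge\mathrm{AVR}(X)+\varepsilon/2$, so any $L^1$-limit of $\overline E_i$ based at $q_i$ would have perimeter at least $N\big(\omega_N(\mathrm{AVR}(X)+\varepsilon/2)\big)^{1/N}$, contradicting $\Per(\overline E_i)\to N(\omega_N\mathrm{AVR}(X))^{1/N}$. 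Your proposal never supplies this density-gap argument; the mechanism you sketch in the existence step (``escape would manifest as an extra line in a limit of $\tilde X$'') is not the actual one — escape produces no line, it produces a base point at positive distance from the tip of an asymptotic cone, and what kills it is the density gap at such points — and you explicitly leave it unquantified.

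The same omission leaves the existence item incomplete. The paper does not need your ``dominant volume fraction'' bookkeeping: since $K=0$, the profile is strictly subadditive on all of $(0,\infty)$ (item (3) of \autoref{cor:FinePropertiesProfile}), so item (1) of \autoref{lem:IsoperimetricAtFiniteOrInfinite} reduces the outcome of \autoref{thm:MassDecompositionINTRO} to a clean dichotomy: either an isoperimetric region of volume $V$ exists in $X$, or a single piece carries all the volume in one pmGH limit at infinity; the latter is excluded for large $V$ (after normalizing the $\mathbb R^k$-component of the diverging base points) by the same AVR gap. Both sub-cases you leave open — the ``refinement of the argument'' when the dominant fraction stays in $X$ with nontrivial escaping pieces, and the escaping dominant piece — are precisely the missing content. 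Finally, note that your rigidity argument is conditional on item 1, whereas the paper's proof of the third item is independent of existence: it runs the mass decomposition directly at the given volume $V$, uses the dichotomy plus the AVR gap to force either an isoperimetric region of volume $V$ in $X$ or in an isometric copy of $X$, and then applies \autoref{lem:RigidityIsop}, which also delivers the compactness of the cross-section without the extra diameter discussion. Until the escape-to-infinity case along $\tilde X$ is handled by the density-gap mechanism, none of the three items is fully proved.
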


We stress that \emph{every} Alexandrov space of dimension $N$ with nonnegative curvature and Euclidean volume growth falls in the hypotheses of \autoref{thm:AlexandrovRigidIntro}, since \cite[Theorem 4.6]{AntBruFogPoz} holds with the same proof in the setting of Alexandrov spaces.\\
We remark that
an additional smoothness assumption on the cross sections of the asymptotic cone at infinity of an open manifold with non negative sectional curvature is not sufficient to guarantee smooth convergence to the blow-down, after scaling. Hence the results in \cite{ChodoshEichmairVolkmann17}, dealing with manifolds that are $C^{2,\alpha}$ asymptotically conical, do not apply in the generality considered above. 
}
\medskip

The isoperimetric behaviour for large volumes on spaces with non negative Ricci curvature and Euclidean volume growth is determined by the large scale geometry of the space and of its pointed limits at infinity. Dual to this statement, we prove that the isoperimetric behaviour for small volumes of any $\RCD(K,N)$ space with volume of unit balls uniformly bounded from below is tightly linked to its structure at small scale and to that of its pointed limits at infinity.
\smallskip

{
Given any $\RCD(K,N)$ metric measure space $(X,\dist,\haus^N)$ such that $\haus^N(B_1(x))>v_0>0$ for every $x\in X$, we shall say that a pointed $\RCD(K,N)$ space $(Y,\dist_Y,\haus^N,y)$ is a pmGH limit at infinity of $(X,\dist,\haus^N)$ if there exists a sequence $x_i\in X$ with $\dist(x,x_i)\to\infty$ for some $x\in X$ such that $(X,\dist,\haus^N,x_i)$ converge to $(Y,\dist_Y,\haus^N,y)$ as $i\to\infty$ in the pmGH topology, see \autoref{def:GHconvergence}.

}

\begin{theorem}\label{cor:AsymptoticProfileZero}
Let $(X,\dist,\haus^N)$ be an $\RCD(K,N)$ space with isoperimetric profile function $I_X$. Let us assume $\inf_{x\in X}\haus^N(B_1(x))\geq v_0>0$. Then:
\begin{enumerate}
\item It holds
\begin{equation}\label{eqn:AsymptoticIsoperimetricProfileAtZero}
    \lim_{v\to 0}\frac{I_X(v)}{v^{\frac{N-1}{N}}} = N\left(\omega_N\vartheta_{\infty,\mathrm{min}}\right)^{\frac 1N}\, ,
\end{equation}
    where
\begin{equation}\label{eqn:EQUA}    
    \vartheta_{\infty,\mathrm{min}}=\lim_{r\to 0}\inf_{x\in X}\frac{\haus^N(B_r(x))}{v(N,K/(N-1),r)}\, >\, 0\, 
\end{equation}    
  is the minimum of all the possible densities at any point in $X$ or in any pmGH limit at infinity of $X$, and $v(N,K/(N-1),r)$ denotes the volume of the ball of radius $r$ in the simply connected model space with constant sectional curvature $K/(N-1)$ and dimension $N$;
   \item 
   Let $E_i \subset X_i$ be a sequence of sets with $\haus^N(E_i)=:V_i\to0$ and $\Per(E_i)=I_X(V_i)$, where $(X_i,\dist_i,\haus^N_{\dist_i})$ is either $X$ or a pmGH limit at infinity of $X$.\footnote{Such sets always exist by \autoref{lem:IsoperimetricAtFiniteOrInfinite}.}
    Let $(X_\infty,\dist_\infty,\haus^N)$ be a pmGH limit point of the sequence $\{(X_i,\dist_i,\haus^N_{\dist_i})\}_{i\in\mathbb N}$. Then, up to subsequences, the sets $E_i$ converge in the Hausdorff sense to a point where $\vartheta_{\infty,\mathrm{min}}$ is realized.
    
    \item Let $E_i$ be as in the previous item and let $q_i\in E_i$. Then, up to subsequences, the sets 
    \[
    \overline{E}_i\subset (X_i,V_i^{-1/N}\dist_i,V_i^{-1}\haus^N_{\dist_i},q_i)\, ,
    \]
    converge in the Hausdorff sense to a ball of volume 1 centered in one tip of a Euclidean metric measure cone with opening $\vartheta_{\infty,\mathrm{min}}$ over an $\RCD(N-2,N-1)$ space.
\end{enumerate}
\end{theorem}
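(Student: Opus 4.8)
The plan is to run a blow-up analysis for isoperimetric regions of small volume, built on the sharp and rigid comparison \autoref{thm:IsoperimetricSharpRigidintro} on cones, the generalized existence \autoref{thm:MassDecompositionINTRO} through \autoref{lem:IsoperimetricAtFiniteOrInfinite}, and compactness and stability in the $\RCD$ class. We may assume $N\ge 2$, the case $N\in[1,2)$ being elementary. Two preliminary facts come first. The number $\vartheta_{\infty,\mathrm{min}}$ in \eqref{eqn:EQUA} is positive: by Bishop--Gromov and the hypothesis $\haus^N(B_1(x))\ge v_0$ the ratio $\haus^N(B_r(x))/v(N,K/(N-1),r)$ is bounded below by $c_0:=v_0/v(N,K/(N-1),1)>0$ for every $x\in X$ and $r\le 1$, a bound passing to every pmGH limit at infinity, so letting $r\to0$ gives $\vartheta_{\infty,\mathrm{min}}\ge c_0$. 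Moreover the infimum in \eqref{eqn:EQUA} is attained: choosing points $z_i$ in spaces $Z_i$, each $X$ or a pmGH limit at infinity of $X$, with density tending to $\vartheta_{\infty,\mathrm{min}}$, a diagonal argument together with upper semicontinuity of the density under non-collapsed pmGH convergence produces a point $z$ in a space $Z$ of one of these two types with $\vartheta[Z,z]=\vartheta_{\infty,\mathrm{min}}$; replacing $Z$ by a tangent cone at $z$ --- which by \cite{DePhilippisGigli18,Ketterer15} is a Euclidean metric measure cone of dimension $N$ over an $\RCD(N-2,N-1)$ space whose tip density equals $\vartheta[Z,z]$ --- I assume $Z$ is such a cone and $z$ one of its tips.

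\smallskip
\noindent\emph{Upper bound.} In the cone $Z$, a ball $B_\rho(z)$ at the tip satisfies $\haus^N(B_\rho(z))=\omega_N\vartheta_{\infty,\mathrm{min}}\rho^N$ and $\Per(B_\rho(z))=N\omega_N\vartheta_{\infty,\mathrm{min}}\rho^{N-1}$, hence has isoperimetric ratio $N(\omega_N\vartheta_{\infty,\mathrm{min}})^{1/N}$. Realizing $Z$ as a pmGH limit of rescalings $(X,r^{-1}\dist,r^{-N}\haus^N,y_r)$, $r\to 0$, of $X$ (with $y_r$ possibly escaping to infinity, via a diagonal argument through $Z$), I use as competitors in $X$ the balls $B_{sr}(y_r)$ with $s$ close to $\rho$, choosing $s$ by the coarea inequality for the distance function so that the rescaled perimeter is $\le N\omega_N\vartheta_{\infty,\mathrm{min}}\rho^{N-1}+o(1)$ while the rescaled volume tends to $\omega_N\vartheta_{\infty,\mathrm{min}}\rho^N$. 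Letting $\rho$ and $r$ vary, these realize every sufficiently small volume, giving $\limsup_{v\to 0}I_X(v)\,v^{-(N-1)/N}\le N(\omega_N\vartheta_{\infty,\mathrm{min}})^{1/N}$.

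\smallskip
\noindent\emph{Blow-up and conclusion.} Fix a sequence $V_i\to 0$. By \autoref{lem:IsoperimetricAtFiniteOrInfinite} there are isoperimetric regions $E_i\subset X_i$, each $X_i$ being $X$ or a pmGH limit at infinity, with $\haus^N(E_i)=V_i$ and $\Per(E_i)=I_X(V_i)=I_{X_i}(V_i)$; density estimates for isoperimetric regions give $\diam(E_i)\le C V_i^{1/N}$ with $C$ uniform, as the rescaled curvatures $KV_i^{2/N}$ are bounded and the volumes of unit balls uniformly bounded below. The rescaled pointed spaces $\overline X_i:=(X_i,V_i^{-1/N}\dist_i,V_i^{-1}\haus^N,q_i)$, $q_i\in E_i$, are $\RCD(KV_i^{2/N},N)$, non-collapsed, with volumes of unit balls $\ge c_0\omega_N(1-o(1))$; by Gromov precompactness, up to a subsequence $\overline X_i\to(C_\infty,\dist_{C_\infty},\haus^N,q_\infty)$ in pmGH, a non-collapsed $\RCD(0,N)$ space. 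A diagonal argument identifies $C_\infty$ with a tangent cone, at some point $p$, of $X$ or of a pmGH limit at infinity; hence (again by \cite{DePhilippisGigli18,Ketterer15}) $C_\infty$ is a Euclidean metric measure cone of dimension $N$ over an $\RCD(N-2,N-1)$ space, and $\mathrm{AVR}(C_\infty)$ equals its tip density, which equals $\vartheta[\cdot,p]\ge\vartheta_{\infty,\mathrm{min}}$. Since $\overline E_i\subset B_C(q_i)$ stays in a region converging to a fixed ball, the $\overline E_i$ converge in $L^1$ --- and, by the density estimates, in the Hausdorff sense --- to a set $E_\infty$ with $\haus^N(E_\infty)=1$, which a recovery-sequence argument shows to be an isoperimetric region of $C_\infty$; by \autoref{thm:IsoperimetricSharpRigidintro}, $\Per(E_\infty)=I_{C_\infty}(1)=N(\omega_N\mathrm{AVR}(C_\infty))^{1/N}$ and $E_\infty$ is a ball of volume $1$ centered at a tip of $C_\infty$. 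Lower semicontinuity of perimeter under non-collapsed pmGH convergence then yields $\liminf_i I_X(V_i)\,V_i^{-(N-1)/N}=\liminf_i\Per(\overline E_i)\ge\Per(E_\infty)=N(\omega_N\mathrm{AVR}(C_\infty))^{1/N}$; comparing with the upper bound and with $\mathrm{AVR}(C_\infty)\ge\vartheta_{\infty,\mathrm{min}}$ forces $\mathrm{AVR}(C_\infty)=\vartheta_{\infty,\mathrm{min}}$ and $\lim_i I_X(V_i)\,V_i^{-(N-1)/N}=N(\omega_N\vartheta_{\infty,\mathrm{min}})^{1/N}$. As $V_i\to 0$ was arbitrary this proves \eqref{eqn:AsymptoticIsoperimetricProfileAtZero}, hence (1). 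Item (3) is now the above description of $E_\infty$, the cone $C_\infty$ having opening $\mathrm{AVR}(C_\infty)=\vartheta_{\infty,\mathrm{min}}$. For item (2): re-basing the un-rescaled $X_i$ at $q_i\in E_i$ one has $(X_i,\dist_i,\haus^N,q_i)\to(X_\infty,\dist_\infty,\haus^N,x_\infty)$, and since $\diam(E_i)\le C V_i^{1/N}\to 0$ the sets $E_i$ Hausdorff-converge to $x_\infty$; by a further diagonal argument the tangent cone of $X_\infty$ at $x_\infty$ coincides with the blow-up limit $C_\infty$, so $\vartheta[X_\infty,x_\infty]$, being its tip density, equals $\vartheta_{\infty,\mathrm{min}}$, that is, the minimal density is realized at $x_\infty$.

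\smallskip
\noindent\emph{Main obstacle.} The two delicate points are: (i) proving that the blow-up limit $C_\infty$ --- a pointed limit of rescalings of $X$ (or of its limits at infinity) with base points and scales both varying --- is genuinely a metric cone, which requires the careful diagonalization reducing it to a tangent cone and then the conical structure of tangent cones of non-collapsed $\RCD(K,N)$ spaces; and (ii) the stability of the isoperimetric problem along this joint rescaling--limit, namely the uniform diameter and density estimates for small isoperimetric regions --- so that no volume escapes and the $L^1$ and Hausdorff topologies agree in the limit --- together with the recovery-sequence construction ensuring $E_\infty$ is genuinely isoperimetric in $C_\infty$. Upgrading the competitor bound of the upper-bound step from a sequence of volumes to all small volumes is routine but slightly technical.
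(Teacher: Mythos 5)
Your overall skeleton --- existence of minimizers in $X$ or in a limit at infinity via \autoref{lem:IsoperimetricAtFiniteOrInfinite}, uniform diameter bounds, blow-up at scale $V_i^{1/N}$, lower semicontinuity of the perimeter combined with the sharp isoperimetric inequality, and rigidity for item (3) --- is the same as the paper's. But there is a genuine gap at the point you yourself flag as delicate: you claim that a diagonal argument identifies the blow-up limit $C_\infty$ of $(X_i,V_i^{-1/N}\dist_i,V_i^{-1}\haus^N_{\dist_i},q_i)$ with a tangent cone at some point (and, for item (2), that the tangent cone of $X_\infty$ at $x_\infty$ coincides with $C_\infty$), and you then read off conicity, the value of ${\rm AVR}(C_\infty)$, and the minimality of $\vartheta[X_\infty,\dist_\infty,\haus^N,x_\infty]$ from that identification. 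This does not work. Diagonal arguments let you realize a \emph{given} tangent cone as a limit of rescalings of $X$ with suitably chosen, slowly varying points and scales --- the direction you correctly use in the upper bound --- but they do not let you conclude that a \emph{prescribed} sequence of rescalings, with base points $q_i$ moving freely and scales $V_i^{-1/N}$ blowing up at an uncontrolled rate relative to the convergence $(X_i,\dist_i,\haus^N,q_i)\to(X_\infty,\dist_\infty,\haus^N,q_\infty)$, converges to a tangent cone of $X_\infty$ at $q_\infty$; such deep blow-ups at varying points need not be metric cones at all. The paper never claims conicity of the blow-up a priori, and indeed remarks after the proof that establishing convergence inside a tangent cone at a point realizing the minimal density is left to future investigation.

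The gap is repairable, and the repair is the paper's route. All that is needed about $C_\infty$ is the inequality ${\rm AVR}(C_\infty)\ge\vartheta[X_\infty,\dist_\infty,\haus^N,q_\infty]\ge\vartheta_{\infty,\mathrm{min}}$, which follows directly from Bishop--Gromov monotonicity and volume convergence (this is item (2) of \autoref{lem:InequalitiesIsopProfile}); the sharp isoperimetric inequality of \autoref{thm:IsoperimetricSharpRigidintro}, valid for every $\RCD(0,N)$ space with positive ${\rm AVR}$ and requiring no cone structure, then closes the chain of inequalities against the upper bound. Equality in the chain forces $\vartheta[X_\infty,\dist_\infty,\haus^N,q_\infty]=\vartheta_{\infty,\mathrm{min}}$, which is how item (2) should be concluded instead of via the tangent-cone identification, and shows that $E_\infty$ saturates the sharp inequality in $C_\infty$; the rigidity part of \autoref{thm:IsoperimetricSharpRigidintro} then yields \emph{a posteriori} that $C_\infty$ is a Euclidean metric measure cone over an $\RCD(N-2,N-1)$ space with opening $\vartheta_{\infty,\mathrm{min}}$ and that $E_\infty$ is a tip ball. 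This also replaces your unproved ``recovery-sequence'' step, since equality together with the validity of the inequality for all competitors already makes $E_\infty$ isoperimetric, and the Hausdorff convergence in item (3) is provided by \autoref{thm:ConvergenceBarriersStability}. Finally, your preliminary step proves positivity and attainment of the infimum of densities but omits the identity \eqref{eqn:EQUA}, which is part of item (1) and requires the two-sided comparison with $\lim_{r\to 0}\inf_{x\in X}\haus^N(B_r(x))/v(N,K/(N-1),r)$ via Bishop--Gromov and volume convergence.
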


The above {seems to be new also} for smooth non compact manifolds with Ricci curvature and volumes of unit balls uniformly bounded from below. {The proof combines the outcomes of \cite{AntonelliPasqualettoPozzettaSemolaFIRSThalf} with \autoref{thm:IsoperimetricSharpRigidintro} and a series of compactness and stability arguments. It is not difficult to build examples of smooth Riemannian manifolds with Ricci curvature uniformly bounded from below and volumes of unit balls uniformly bounded from below such that the value in \eqref{eqn:EQUA} is strictly less than $1$, see \autoref{rm:exmin1}.}
\smallskip

Starting from \autoref{cor:AsymptoticProfileZero} we prove a converse of the almost Euclidean isoperimetric inequality in \cite{CavallettiMondinoalmost}, to the effect that an almost Euclidean scale invariant isoperimetric { profile} for some volume $v\in (0,v_{K,N}]$ implies Reifenberg flatness of the space below a uniform radius $r(v,K,N)>0$, see \autoref{thm:isoperimetricepsregularity} and \autoref{rm:Reif} for the precise statements.\\
When the lower Ricci curvature bound is strengthened to a two sided Ricci curvature bound, the gap phenomenon of the classical \cite{Anderson90,ChCo1} reflects into a gap phenomenon for the almost Euclidean isoperimetric inequality. Namely, there exists $\eps(n)>0$ such that if $(M^n,g)$ is a smooth Riemannian manifold with two sided Ricci curvature bounds, then
\begin{equation}
       \lim_{v\to 0}\frac{I(v)}{v^{\frac{n-1}{n}}} \ge  n\omega_n^{\frac 1n}-\eps(n)\, ,
\end{equation}
if and only if 
\begin{equation}\label{eq:liintro}
     \lim_{v\to 0}\frac{I(v)}{v^{\frac{n-1}{n}}} =  n\omega_n^{\frac 1n}\, .
\end{equation}
Moreover, \eqref{eq:liintro} holds if and only if the harmonic radius is uniformly bounded away from zero on $(M^n,g)$, see \autoref{cor:RicciflatEinstein}.\\
{In the setting of noncollapsed $\RCD(K,N)$ spaces, these} results {connect} isoperimetry, volume, and regularity. Indeed, globally and under almost non negative Ricci curvature assumptions, almost regularity, almost Euclidean lower volume bounds and almost Euclidean isoperimetric { profile} at one scale/volume are all equivalent and propagate down to the bottom scale/volume. 
\medskip

We believe that the techniques introduced here and in the companion paper \cite{AntonelliPasqualettoPozzettaSemolaFIRSThalf} will be useful to deal in the future with the general case of $\RCD(K,N)$ metric measure spaces $(X,\dist,\meas)$ and, in particular, with smooth weighted Riemannian manifolds verifying Curvature-Dimension bounds. 
We refer to the introduction of \cite{AntonelliPasqualettoPozzettaSemolaFIRSThalf} for a discussion of the main additional difficulties that are encountered when the assumption $\meas=\haus^N$ is dropped.

\subsection*{Comparison with the previous literature}

We conclude this introduction with a brief comparison between our results and the previous literature about the isoperimetric problem under lower curvature bounds, without the aim of being comprehensive.
\begin{itemize}
\item In \cite{BaloghKristaly} a sharp isoperimetric inequality is obtained for $\CD(0,N)$ metric measure spaces with Euclidean volume growth. In particular, this setting covers the case of $\RCD(0,N)$ spaces $(X,\dist,\haus^N)$ with Euclidean volume growth considered in \autoref{thm:IsoperimetricSharpRigidintro}. However, our strategy is different from \cite{BaloghKristaly} and also from the previous proofs in the Riemannian setting in \cite{AgostinianiFogagnoloMazzieri,FogagnoloMazzieri} (working up to dimension $7$) and \cite{BrendleFigo} (based on the Alexandrov--Bakelmann--Pucci method). For the generalization of the sharp isoperimetric inequality in the case of $\mathrm{MCP}(0,N)$ spaces with Euclidean volume growth, we point out the recent \cite{CavallettiManini}. 
       \item The setting of $\RCD(0,N)$ spaces $(X,\dist,\haus^N)$ recovers in particular many of the results for Euclidean convex cones treated in \cite{RitRosales04} and for cones with non negative Ricci curvature considered in \cite{MorganRitore02}.
    \item The results of the present paper recover, in a more general setting, many of the results proved in \cite{LeonardiRitore} for unbounded Euclidean convex bodies of uniform geometry, { and in \cite{MondinoNardulli16} for smooth Riemannian manifolds with uniform $C^0$ controls on the geometry at infinity.\\}
    The setting of $\RCD(K,N)$ spaces $(X,\dist,\haus^N)$ includes more in general convex subsets of smooth Riemannian manifolds with Ricci curvature bounded from below, regardless of any compactness assumption and regularity assumption on the boundary. Compact convex subsets of Riemannian manifolds with Ricci curvature lower bounds have been considered in \cite{BayleRosales}. 
\end{itemize}

\subsection*{Addendum}
This is the second of two companion papers, together with \cite{AntonelliPasqualettoPozzettaSemolaFIRSThalf}. The joint version of the two papers originally appeared on arXiv in \cite{ConcavitySharpAPPS}. In the first one, whose main results can be found in \autoref{subsec:FirstHalf}, we proved, in the setting of $N$-dimensional $\RCD(K,N)$ spaces with volumes of unit balls uniformly bounded from below, 
\begin{itemize}
     \item sharp second-order differential inequalities for the isoperimetric profile, corresponding to equalities on the model spaces with constant sectional curvature, see \autoref{thm:BavardPansuIntro};
    \item a sharp Laplacian comparison theorem for the distance function from isoperimetric boundaries, see \autoref{thm:Isoperimetriciintro}.
\end{itemize}
Several consequences of the above results play a fundamental role for the developments of the present paper.
\medskip

We mention that, a few months after the appearance of \cite{ConcavitySharpAPPS} on arXiv, Cavalletti and Manini generalized \autoref{thm:IsoperimetricSharpRigidintro}, see in particular \cite[Theorem 1.4 and Theorem 1.5] {CavallettiManiniRigid}, with a different technique. 
Their main result is that a {\em bounded} set $E$ that saturates the sharp isoperimetric inequality \eqref{eqn:SharpIsopintro} in an essentially non branching $\CD(0,N)$ metric measure space $(X,\dist,\meas)$ with Euclidean volume growth must be a ball centered at some point $o$. Under the same assumptions, they prove that the space is a cone with respect to $o$ in a measure theoretic sense. If $(X,\dist,\meas)$ is an $\RCD(0,N)$ space, then the volume cone implies metric cone theorem (\cite{DePhilippisGigli16} after \cite{ChCo0}) shows that it is isomorphic to a metric measure cone over an $\RCD(N-2,N-1)$ space, therefore generalizing \autoref{thm:IsoperimetricSharpRigidintro} to the case of arbitrary reference measures.\\
The techniques in \cite{CavallettiManiniRigid} seem not sufficiently developed yet for the analysis of the isoperimetric profile pursued here and in our previous paper \cite{AntonelliPasqualettoPozzettaSemolaFIRSThalf}, though covering a more general setting.

\section{Preliminaries}

In this paper, by a \emph{metric measure space} (briefly, m.m.s.) we mean a triple \((X,\dist,\meas)\), where \((X,\dist)\) is a complete and separable
metric space, while \(\meas\geq 0\) is a boundedly-finite Borel measure on \(X\).
For any \(k\in[0,\infty)\), we denote by \(\haus^k\) the \emph{\(k\)-dimensional Hausdorff measure} on \((X,\dist)\).
We indicate with \(C(X)\) the space of all continuous functions \(f\colon X\to\R\) and \(C_b(X)\coloneqq\{f\in C(X)\,:\,f\text{ is bounded}\}\).
We denote by \({\rm LIP}(X)\subseteq C(X)\) the space of all Lipschitz functions, while \({\rm LIP}_{bs}(X)\) (resp.\ \({\rm LIP}_c(X)\)) stands for
the set of all those \(f\in{\rm LIP}(X)\) whose support \({\rm spt}f\) is bounded (resp.\ compact). More generally, we denote by
\({\rm LIP}_{loc}(X)\) the space of locally Lipschitz functions \(f\colon X\to\setR\). Given $f\in{\rm LIP}_{loc}(X)$,
\[
\lip f (x) \eqdef \limsup_{y\to x} \frac{|f(y)-f(x)|}{\dist(x,y)}
\]
is the \emph{slope} of $f$ at $x$, for any accumulation point $x\in X$, and $\lip f(x)\coloneqq 0$ if $x\in X$ is isolated.

We shall also work with the local versions of the above spaces: given \(\Omega\subseteq X\) open, we will consider the spaces
\({\rm LIP}_c(\Omega)\subseteq{\rm LIP}_{bs}(\Omega)\subseteq{\rm LIP}(\Omega)\subseteq{\rm LIP}_{loc}(\Omega)\). We underline
that by \({\rm LIP}_{bs}(\Omega)\) we mean the space of all \(f\in{\rm LIP}(\Omega)\) having bounded support \({\rm spt}f\subseteq\Omega\)
that verifies \(\dist({\rm spt}f,\partial\Omega)>0\).

\subsection{Convergence and stability results}

We introduce the pointed measured Gromov--Hausdorff convergence already in a proper realization even if this is not the general definition. Nevertheless, the simplified definition of Gromov--Hausdorff convergence via a realization is equivalent to the standard definition of pmGH convergence in our setting, because in the applications we will always deal with locally uniformly doubling measures, see \cite[Theorem 3.15 and Section 3.5]{GigliMondinoSavare15}. The following definition is taken from the introductory exposition of \cite{AmbrosioBrueSemola19}.

\begin{definition}[pGH and pmGH convergence]\label{def:GHconvergence}
A sequence $\{ (X_i, \dist_i, x_i) \}_{i\in \N}$ of pointed metric spaces is said to converge in the \emph{pointed Gromov--Hausdorff topology, in the $\mathrm{pGH}$ sense for short,} to a pointed metric space $ (Y, \dist_Y, y)$ if there exist a complete separable metric space $(Z, \dist_Z)$ and isometric embeddings
\[
\begin{split}
&\Psi_i:(X_i, \dist_i) \to (Z,\dist_Z), \qquad \forall\, i\in \N\, ,\\
&\Psi:(Y, \dist_Y) \to (Z,\dist_Z)\, ,
\end{split}
\]
such that for any $\eps,R>0$ there is $i_0(\varepsilon,R)\in\mathbb N$ such that
\[
\Psi_i(B_R^{X_i}(x_i)) \subset \left[ \Psi(B_R^Y(y))\right]_\eps,
\qquad
\Psi(B_R^{Y}(y)) \subset \left[ \Psi_i(B_R^{X_i}(x_i))\right]_\eps\, ,
\]
for any $i\ge i_0$, where $[A]_\eps\coloneqq \{ z\in Z \st \dist_Z(z,A)\leq \eps\}$ for any $A \subset Z$.

Let $\meas_i$ and $\mu$ be given in such a way $(X_i,\dist_i,\meas_i,x_i)$ and $(Y,\dist_Y,\mu,y)$ are m.m.s.\! If in addition to the previous requirements we also have $(\Psi_i)_\sharp\mathfrak{m}_i \rightharpoonup \Psi_\sharp \mu$ with respect to duality with continuous bounded functions on $Z$ with bounded support, then the convergence is said to hold in the \emph{pointed measured Gromov--Hausdorff topology, or in the $\mathrm{pmGH}$ sense for short}.
\end{definition}

We need to recall a generalized $L^1$-notion of convergence for sets defined on a sequence of metric measure spaces converging in the pmGH sense. Such a definition is given in \cite[Definition 3.1]{AmbrosioBrueSemola19}, and it is investigated in \cite{AmbrosioBrueSemola19} capitalizing on the results in \cite{AmbrosioHonda17}.

\begin{definition}[$L^1$-strong and $L^1_{\mathrm{loc}}$ convergence]\label{def:L1strong}
Let $\{ (X_i, \dist_i, \mathfrak{m}_i, x_i) \}_{i\in \N}$  be a sequence of pointed metric measure spaces converging in the pmGH sense to a pointed metric measure space $ (Y, \dist_Y, \mu, y)$ and let $(Z,\dist_Z)$ be a realization as in \autoref{def:GHconvergence}.

We say that a sequence of Borel sets $E_i\subset X_i$ such that $\mathfrak{m}_i(E_i) < +\infty$ for any $i \in \N$ converges \emph{in the $L^1$-strong sense} to a Borel set $F\subset Y$ with $\mu(F) < +\infty$ if $\mathfrak{m}_i(E_i) \to \mu(F)$ and $\chi_{E_i}\mathfrak{m}_i \rightharpoonup \chi_F\mu$ with respect to the duality with continuous bounded functions with bounded support on $Z$.

We say that a sequence of Borel sets $E_i\subset X_i$ converges \emph{in the $L^1_{\mathrm{loc}}$-sense} to a Borel set $F\subset Y$ if $E_i\cap B_R(x_i)$ converges to $F\cap B_R(y)$ in $L^1$-strong for every $R>0$.
\end{definition}

\begin{definition}[Hausdorff convergence]
Let $\{ (X_i, \dist_i, \mathfrak{m}_i, x_i) \}_{i\in \N}$  be a sequence of pointed metric measure spaces converging in the pmGH sense to a pointed metric measure space $ (Y, \dist_Y, \mu, y)$.

We say that a sequence of closed sets $E_i\subset X_i$ converges \emph{in Hausdorff distance} (or \emph{in Hausdorff sense}) to a closed set $F\subset Y$ if there holds convergence in Hausdorff distance in a realization $(Z,\dist_Z)$ of the pmGH convergence as in \autoref{def:GHconvergence}.
\end{definition}

{
In order to avoid confusion, we remark that the notions of convergence for sets that we recalled above do depend on the specific realization of the pmGH convergence of the ambient spaces into a metric space $(Z,\dist_Z)$. See for instance the discussion after Definition 3.23 in \cite{GigliMondinoSavare15}, where the notion of convergence of points is presented. However, this dependence does not affect any of the forthcoming arguments in the paper.}

\medskip

It is also possible to define notions of uniform convergence and $H^{1,2}$-strong and weak convergences for sequences of functions on a sequence of spaces $X_i$ converging in pointed measured Gromov--Hausdorff sense. We refer the reader to \cite{AmbrosioBrueSemola19, AmbrosioHonda17} for such definitions.

\subsection{\texorpdfstring{$\rm BV$}{BV} functions and sets of finite perimeter in metric measure spaces}
We begin with the definitions of \emph{function of bounded variation} and \emph{set of finite perimeter} in the setting of m.m.s.
\begin{definition}[$\rm BV$ functions and perimeter on m.m.s.]\label{def:BVperimetro}
Let $(X,\dist,\meas)$ be a metric measure space.  Given $f\in L^1_{loc}(X,\meas)$ we define
\[
|Df|(A) \eqdef \inf\left\{\liminf_i \int_A \lip f_i \de\meas \st \text{$f_i \in {\rm LIP}_{loc}(A),\,f_i \to f $ in $L^1_{loc}(A,\meas)$} \right\}\, ,
\]
for any open set $A\subseteq X$.
We declare that a function \(f\in L^1_{loc}(X,\meas)\) is of \emph{local bounded variation}, briefly \(f\in{\rm BV}_{loc}(X)\),
if \(|Df|(A)<+\infty\) for every \(A\subseteq X\) open bounded.
A function $f \in L^1(X,\meas)$ is said to belong to the space of \emph{bounded variation functions} ${\rm BV}(X)={\rm BV}(X,\dist,\meas)$ if $|Df|(X)<+\infty$. 

If $E\subseteq\X$ is a Borel set and $A\subseteq X$ is open, we  define the \emph{perimeter $\Per(E,A)$  of $E$ in $A$} by
\[
\Per(E,A) \eqdef \inf\left\{\liminf_i \int_A \lip u_i \de\meas \st \text{$u_i \in {\rm LIP}_{loc}(A),\,u_i \to \nchi_E $ in $L^1_{loc}(A,\meas)$} \right\}\, ,
\]
in other words \(\Per(E,A)\coloneqq|D\nchi_E|(A)\).
We say that $E$ has \emph{locally finite perimeter} if $\Per(E,A)<+\infty$ for every open bounded set $A$. We say that $E$ has \emph{finite perimeter} if $\Per(E,X)<+\infty$, and we denote $\Per(E)\eqdef \Per(E,X)$.
\end{definition}

Let us remark that when $f\in{\rm BV}_{loc}(X,\dist,\meas)$ or $E$ is a set with locally finite perimeter, the set functions $|Df|, \Per(E,\cdot)$ above are restrictions to open sets of Borel measures that we still denote by $|Df|, \Per(E,\cdot)$, see \cite{AmbrosioDiMarino14}, and \cite{Miranda03}.
\medskip

In the sequel, we shall frequently make use of the following \emph{coarea formula}, proved in \cite{Miranda03}.
\begin{theorem}[Coarea formula]\label{thm:coarea}
Let \((X,\dist,\meas)\) be a metric measure space. Let \(f\in L^1_{loc}(X)\) be given. Then for any open set
\(\Omega\subseteq X\) it holds that \(\R\ni t\mapsto \Per(\{f>t\},\Omega)\in[0,+\infty]\) is Borel measurable and satisfies
\[
|Df|(\Omega)=\int_\R \Per(\{f>t\},\Omega)\,\de t\, .
\]
In particular, if \(f\in{\rm BV}(X)\), then \(\{f>t\}\) has finite perimeter for a.e.\ \(t\in\setR\).
\end{theorem}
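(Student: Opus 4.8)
The plan is to prove the two inequalities
\[
\int_\R \Per(\{f>t\},\Omega)\,\de t \,\le\, |Df|(\Omega)\qquad\text{and}\qquad |Df|(\Omega)\,\le\,\int_\R \Per(\{f>t\},\Omega)\,\de t\, ,
\]
following the classical scheme (cf.\ \cite{Miranda03}); the second one is the harder. Two soft facts will be used throughout: first, both \(\Per(\cdot,\Omega)\) and \(|D\cdot|(\Omega)\) are lower semicontinuous with respect to \(L^1_{loc}(\Omega)\)-convergence, and \(|D\cdot|(\Omega)\) is subadditive, \(1\)-homogeneous and unchanged under addition of constants --- all immediate from the relaxation definition in \autoref{def:BVperimetro}. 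Second, \(t\mapsto\Per(\{f>t\},\Omega)\) is Lebesgue measurable: the \(L^1_{loc}(\Omega)\)-valued map \(t\mapsto\chi_{\{f>t\}}\) is right-continuous (for bounded \(A\Subset\Omega\) one has \(\meas\bigl(A\cap\{t<f\le t'\}\bigr)\to0\) as \(t'\downarrow t\)), hence Borel, and it is then composed with the lower semicontinuous --- thus Borel --- functional \(\Per(\cdot,\Omega)\colon L^1_{loc}(\Omega)\to[0,+\infty]\).

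For ``\(\ge\)'' I would first establish the coarea \emph{inequality} for Lipschitz functions: if \(u\in{\rm LIP}_{loc}(\Omega)\) then \(\int_\Omega\lip u\,\de\meas\ge\int_\R\Per(\{u>t\},\Omega)\,\de t\). Given \(t\in\R\) and \(\eps>0\), the function \(v^t_\eps\eqdef\min\{1,\eps^{-1}(u-t)^+\}\in{\rm LIP}_{loc}(\Omega)\) satisfies \(v^t_\eps\to\chi_{\{u>t\}}\) in \(L^1_{loc}(\Omega)\) as \(\eps\to0\) and \(\lip v^t_\eps\le\eps^{-1}\,\lip u\cdot\chi_{\{t\le u\le t+\eps\}}\) \(\meas\)-a.e., whence by lower semicontinuity \(\Per(\{u>t\},\Omega)\le\liminf_{\eps\to0}\eps^{-1}\int_{\Omega\cap\{t\le u\le t+\eps\}}\lip u\,\de\meas\); integrating in \(t\), applying Fatou and Tonelli and using \(\int_\R\chi_{\{t\le u(x)\le t+\eps\}}\,\de t=\eps\) gives the inequality. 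Now assume \(|Df|(\Omega)<+\infty\) (otherwise there is nothing to prove) and pick, by definition of \(|Df|(\Omega)\), functions \(f_i\in{\rm LIP}_{loc}(\Omega)\) with \(f_i\to f\) in \(L^1_{loc}(\Omega)\) and \(\int_\Omega\lip f_i\,\de\meas\to|Df|(\Omega)\). From the identity \(\int_A|f_i-f|\,\de\meas=\int_\R\meas\bigl(A\cap(\{f_i>t\}\triangle\{f>t\})\bigr)\,\de t\) on bounded \(A\Subset\Omega\) and a diagonal extraction over an exhaustion of \(\Omega\), we may pass to a subsequence so that \(\chi_{\{f_i>t\}}\to\chi_{\{f>t\}}\) in \(L^1_{loc}(\Omega)\) for a.e.\ \(t\). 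Then lower semicontinuity of the perimeter and Fatou yield \(\int_\R\Per(\{f>t\},\Omega)\,\de t\le\liminf_i\int_\R\Per(\{f_i>t\},\Omega)\,\de t\le\liminf_i\int_\Omega\lip f_i\,\de\meas=|Df|(\Omega)\).

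For ``\(\le\)'' we may assume \(\int_\R\Per(\{f>t\},\Omega)\,\de t<+\infty\). Since \(f^{a,b}\eqdef\min\{b,\max\{a,f\}\}\to f\) in \(L^1_{loc}(\Omega)\) as \(a\to-\infty\), \(b\to+\infty\), and \(\{f^{a,b}>t\}\) equals \(\{f>t\}\) for \(t\in(a,b)\) and has zero perimeter in \(\Omega\) otherwise, lower semicontinuity of \(|D\cdot|(\Omega)\) reduces the matter to \(a\le f\le b\). In that case \(f=a+\int_a^b\chi_{\{f>t\}}\,\de t\), which I discretize with a shift: for \(n\in\N\) and \(s\in[0,\tfrac{b-a}{n})\) set \(f_{n,s}\eqdef a+\tfrac{b-a}{n}\sum_{j=0}^{n-1}\chi_{\{f>a+s+j(b-a)/n\}}\), so that \(\|f_{n,s}-f\|_{L^\infty(\Omega)}\le\tfrac{b-a}{n}\) and, by subadditivity and \(1\)-homogeneity, \(|Df_{n,s}|(\Omega)\le g_n(s)\eqdef\tfrac{b-a}{n}\sum_{j=0}^{n-1}\Per(\{f>a+s+j(b-a)/n\},\Omega)\). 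The crucial point is that averaging over the shift repacks the \(n\) pieces onto \([a,b]\): \(\tfrac{n}{b-a}\int_0^{(b-a)/n}g_n(s)\,\de s=\int_a^b\Per(\{f>t\},\Omega)\,\de t\). Hence one can pick \(s_n\) with \(|Df_{n,s_n}|(\Omega)\le g_n(s_n)\le\int_a^b\Per(\{f>t\},\Omega)\,\de t\); since \(f_{n,s_n}\to f\) in \(L^1_{loc}(\Omega)\), lower semicontinuity gives \(|Df|(\Omega)\le\int_a^b\Per(\{f>t\},\Omega)\,\de t\le\int_\R\Per(\{f>t\},\Omega)\,\de t\). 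Combining the two inequalities proves the identity, and taking \(\Omega=X\) --- whence the integral is finite when \(f\in{\rm BV}(X)\), so that \(\{f>t\}\) has finite perimeter for a.e.\ \(t\) --- gives the final assertion.

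The step I expect to be the main obstacle is the inequality ``\(\le\)'': unlike in the Euclidean proof one has no continuity of \(t\mapsto\Per(\{f>t\},\Omega)\) at hand, only measurability, so the Riemann-sum approximation of \(\int_\R\Per(\{f>t\},\Omega)\,\de t\) must be organized so as not to require any regularity of the integrand --- this is exactly what the averaging-over-shifts device achieves --- while simultaneously keeping the discretized functions uniformly close to \(f\) so that lower semicontinuity of the total variation can be applied. The remaining ingredients (measurability, the Lipschitz coarea inequality, and the truncation reduction) are routine consequences of the relaxation definitions of \(|Df|\) and \(\Per\).
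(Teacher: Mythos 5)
Your proof is correct; note that the paper does not prove \autoref{thm:coarea} itself but imports it from \cite{Miranda03}, and your argument — the truncation functions and lower semicontinuity for the inequality ``$\ge$'', and the layer-cake discretization with averaged shifts plus lower semicontinuity of the total variation for ``$\le$'' — is essentially the standard relaxation proof given in that reference. The only cosmetic remark is that your right-continuity argument in fact yields Borel (not merely Lebesgue) measurability of $t\mapsto\Per(\{f>t\},\Omega)$, which is exactly what the statement requires.
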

\begin{remark}[Semicontinuity of the total variation under $L^1_{loc}$-convergence]\label{rem:SemicontPerimeter}
Let $(X,\dist,\meas)$ be a metric measure space. We recall (cf.\ \cite[Proposition 3.6]{Miranda03}) that whenever $g_i,g\in L^1_{loc}(X,\meas)$
are such that $g_i\to g$ in $L^1_{loc}(X,\meas)$, for every open set $\Omega$ we have 
$$
|Dg|(\Omega)\leq \liminf_{i\to +\infty}|Dg_i|(\Omega)\, .
$$
\end{remark}

\subsection{Sobolev functions, Laplacians and vector fields in metric measure spaces}
The \emph{Cheeger energy} on a metric measure space \((X,\dist,\meas)\) is defined as the \(L^2\)-relaxation of the functional
\(f\mapsto\frac{1}{2}\int\lip^2 f_n\di\meas\) (see \cite{AmbrosioGigliSavare11} after \cite{Cheeger99}). Namely, for any function \(f\in L^2(X)\) we define
\[
\Ch(f)\coloneqq\inf\bigg\{\liminf_{n\to\infty}\int\lip^2 f_n\di\meas\;\bigg|\;(f_n)_{n\in\setN}\subseteq{\rm LIP}_{bs}(X),\,f_n\to f\text{ in }L^2(X)\bigg\}\, .
\]
The \emph{Sobolev space} \(H^{1,2}(X)\) is defined as the finiteness domain \(\{f\in L^2(X)\,:\,\Ch(f)<+\infty\}\) of the Cheeger energy.
The restriction of the Cheeger energy to the Sobolev space admits the integral representation \(\Ch(f)=\frac{1}{2}\int|\nabla f|^2\di\meas\),
for a uniquely determined function \(|\nabla f|\in L^2(X)\) that is called the \emph{minimal weak upper gradient} of \(f\in H^{1,2}(X)\).
The linear space \(H^{1,2}(X)\) is a Banach space if endowed with the Sobolev norm
\[
\|f\|_{H^{1,2}(X)}\coloneqq\sqrt{\|f\|_{L^2(X)}^2+2\Ch(f)}=\sqrt{\|f\|_{L^2(X)}^2+\||\nabla f|\|_{L^2(X)}^2},\quad\text{ for every }f\in H^{1,2}(X)\, .
\]
Following \cite{Gigli12}, when \(H^{1,2}(X)\) is a Hilbert space (or equivalently \(\Ch\) is a quadratic form) we say that the metric measure
space \((X,\dist,\meas)\) is \emph{infinitesimally Hilbertian}.
\medskip

For the rest of this paper, the infinitesimal Hilbertianity of \((X,\dist,\meas)\) will be our standing assumption.
The results of \cite{AmbrosioGigliSavare11_3} ensure that \({\rm LIP}_{bs}(X)\) is dense in \(H^{1,2}(X)\) with respect to the
norm topology. We define the bilinear mapping \(H^{1,2}(X)\times H^{1,2}(X)\ni(f,g)\mapsto\nabla f\cdot\nabla g\in L^1(X)\) as
\[
\nabla f\cdot\nabla g\coloneqq\frac{|\nabla(f+g)|^2-|\nabla f|^2-|\nabla g|^2}{2},\quad\text{ for every }f,g\in H^{1,2}(X)\, .
\]
Given \(\Omega\subseteq X\) open, we define the \emph{local Sobolev space with Dirichlet boundary conditions} \(H^{1,2}_0(\Omega)\) as the closure
of \({\rm LIP}_{bs}(\Omega)\) in \(H^{1,2}(X)\). Notice that \(H^{1,2}_0(X)=H^{1,2}(X)\). Moreover, we declare that a given function \(f\in L^2(\Omega)\)
belongs to the \emph{local Sobolev space} \(H^{1,2}(\Omega)\) provided \(\eta f\in H^{1,2}(X)\) holds for every \(\eta\in{\rm LIP}_{bs}(\Omega)\) and
\[
|\nabla f|\coloneqq{\rm ess\,sup}\big\{\chi_{\{\eta=1\}}|\nabla(\eta f)|\;\big|\;\eta\in{\rm LIP}_{bs}(\Omega)\big\}\in L^2(X)\, ,
\]
where by \({\rm ess\,sup}_{\lambda\in\Lambda}f_\lambda\) we mean the essential supremum of a set
\(\{f_\lambda\}_{\lambda\in\Lambda}\) of measurable functions.

\begin{definition}[Local Laplacian]
Let \((X,\dist,\meas)\) be an infinitesimally Hilbertian space and \(\Omega\subseteq X\) an open set. Then we say that a function
\(f\in H^{1,2}(\Omega)\) has \emph{local Laplacian} in \(\Omega\), \(f\in D(\Delta,\Omega)\) for short, provided there exists a (uniquely determined)
function \(\Delta f\in L^2(\Omega)\) such that
\[
\int_\Omega g\Delta f\di\meas=-\int_\Omega\nabla g\cdot\nabla f\di\meas,\quad\text{ for every }g\in H^{1,2}_0(\Omega)\, .
\]
For brevity, we write \(D(\Delta)\) instead of \(D(\Delta,X)\).
\end{definition}
More generally, we will work with functions having a measure-valued Laplacian in some open set:
\begin{definition}[Measure-valued Laplacian]
Let \((X,\dist,\meas)\) be an infinitesimally Hilbertian space and \(\Omega\subseteq X\) an open set. Then we say that a function
\(f\in H^{1,2}(\Omega)\) has \emph{measure-valued Laplacian} in \(\Omega\), \(f\in D(\mathbf\Delta,\Omega)\) for short, provided there
exists a (uniquely determined) locally finite measure \(\mathbf\Delta f\) on \(\Omega\) such that
\[
\int_\Omega g\mathbf\Delta f\coloneqq\int_\Omega g\di\mathbf\Delta f=-\int_\Omega\nabla g\cdot\nabla f\di\meas\, ,\quad\text{ for every }g\in{\rm LIP}_{bs}(\Omega)\, .
\]
For brevity, we write \(D(\mathbf\Delta)\) instead of \(D(\mathbf\Delta,X)\). Moreover, given functions \(f\in{\rm LIP}(\Omega)\cap H^{1,2}(\Omega)\)
and \(\eta\in C_b(\Omega)\), we say that \emph{\(\mathbf\Delta f\leq\eta\) in the distributional sense} provided \(f\in D(\mathbf\Delta,\Omega)\) and
\(\mathbf\Delta f\leq\eta\meas\).
\end{definition}
The above two notions of Laplacian are consistent, in the following sense: given any \(f\in H^{1,2}(\Omega)\), it holds that \(f\in D(\Delta,\Omega)\)
if and only if \(f\in D(\mathbf\Delta,\Omega)\), \(\mathbf\Delta f\ll\meas\) and \(\frac{\di\mathbf\Delta f}{\di\meas}\in L^2(\Omega)\). If this is
the case, then we also have that the \(\meas\)-a.e.\ equality \(\Delta f=\frac{\di\mathbf\Delta f}{\di\meas}\) holds.

\subsection{Geometric Analysis on \texorpdfstring{$\RCD$}{RCD} spaces}\label{sec:RCD}

The focus of this paper will be on $\RCD(K,N)$ metric measure spaces $(X,\dist,\meas)$, i.e. infinitesimally Hilbertian metric measure spaces with Ricci curvature bounded from below and dimension bounded from above, in synthetic sense.\\
The Riemannian Curvature Dimension condition $\RCD(K,\infty)$ was introduced in \cite{AmbrosioGigliSavare14} coupling the Curvature Dimension condition $\CD(K,\infty)$, previously proposed in \cite{Sturm1,Sturm2} and independently in \cite{LottVillani}, with the infinitesimally Hilbertian assumption.\\
The class $\RCD(K,N)$ was proposed in \cite{Gigli12}. The (a priori more general) $\RCD^*(K,N)$ condition was thoroughly analysed in \cite{ErbarKuwadaSturm15} and (subsequently and independently) in \cite{AmbrosioMondinoSavare15} (see also \cite{CavallettiMilmanCD} for the equivalence between $\RCD^*$ and $\RCD$ in the case of finite reference measure).
\medskip
 
Below we recall some of the main properties of $\RCD$ spaces that will be relevant for our purposes.
\medskip

The $\RCD(K,N)$ condition is compatible with the smooth notion. In particular, smooth $N$--dimensional Riemannian manifolds with Ricci curvature bounded from below by $K$ endowed with the canonical volume measure are $\RCD(K,N)$ spaces. Smooth Riemannian manifolds with smooth and convex boundary (i.e. non negative second fundamental form with respect to the interior unit normal) are also included in the theory.\\ 
Moreover, $N$-dimensional Alexandrov spaces with sectional curvature bounded from below by $K/(N-1)$ endowed with the $N$--dimensional Hausdorff measure are $\RCD(K,N)$ spaces.  
\medskip

	A fundamental property of $\RCD(K,N)$ spaces is the stability with respect to pmGH-convergence, meaning that a pmGH-limit of a sequence of (pointed) $\RCD(K_n,N_n)$ spaces for some $K_n\to K$ and $N_n\to N$ is an $\RCD(K,N)$ metric measure space.
\medskip

Let us define
\[
\sn_K(r) := \begin{cases}
(-K)^{-\frac12} \sinh((-K)^{\frac12} r) & K<0\, ,\\
r & K=0\, ,\\
K^{-\frac12} \sin(K^{\frac12} r) & K>0\, .
\end{cases}
\]
We denote by $v(N,K,r)$ the volume of the ball of radius $r$ in the (unique) simply connected Riemannian manifold of sectional curvature $K$ and dimension $N$, and by $s(N, K, r)$ the surface measure of the boundary of such a ball. In particular $s(N,K,r)=N\omega_N\mathrm{sn}_K^{N-1}(r)$ and $v(N,K,r)=\int_0^rN\omega_N\mathrm{sn}_K^{N-1}(r)\de r$, where $\omega_N$ is the Euclidean volume of the Euclidean unit ball in $\mathbb R^N$.
\medskip

For an arbitrary $\CD((N-1)K,N)$ space $(\X,\dist,\meas)$ the classical Bishop--Gromov volume comparison holds. More precisely, for any $x\in\X$, the function $\meas(B_r(x))/v(N,K,r)$ is nonincreasing in $r$ and the function $\Per(B_r(x))/s(N,K,r)$ is essentially nonincreasing in $r$, i.e., the inequality
\[
\Per(B_R(x))/s(N,K,R) \le \Per(B_r(x))/s(N,K,r)\, ,
\]
holds for almost every radii $R\ge r$, see \cite[Theorem 18.8, Equation (18.8), Proof of Theorem 30.11]{VillaniBook}. Moreover, it holds that 
\[
\Per(B_r(x))/s(N, K, r)\leq \meas(B_r(x))/v(N,K,r)\, ,
\]
for any $r>0$. The last inequality follows from the monotonicity of the volume and perimeter ratios together with the coarea formula on balls.

\medskip
	The Bishop--Gromov inequality implies that $\RCD(K,N)$ spaces are locally uniformly doubling. Then Gromov's precompactness theorem guarantees that any sequence of $\RCD(K,N)$ spaces $(X_n,\dist_n,\meas_n,x_n)$ such that $0<\inf_n\meas_n(B_1(x_n))<\sup_n\meas_n(B_1(x_n))<\infty$ is precompact with respect to the pointed measured Gromov-Hausdorff convergence.

\medskip

For most of the results of this paper we will consider \(\RCD(K,N)\) spaces of the form \((X,\dist,\haus^N)\), for some \(K\in\setR\)
and \(N\in\N\). Notice that we are requiring that the dimension of the Hausdorff measure coincides with the upper dimensional bound
in the \(\RCD\) condition. These spaces are typically called \emph{non collapsed \(\RCD\) spaces} (\({\rm ncRCD}(K,N)\) spaces for short) or $N$--dimensional $\RCD(K,N)$ spaces
(see \cite{Kitabeppu17,DePhilippisGigli18,MondinoKapovitch}).
\medskip

When $(X_n,\dist_n,\haus^N,x_n)$ are $\RCD(K,N)$ spaces and $\inf_n\haus^N(B_1(x_n))>0$, up to subsequences they converge to some $N$-dimensional $\RCD(K,N)$ space $(Y,\dist_Y,\haus^N,y)$, which amounts to saying that the $N$--dimensional Hausdorff measures converge to the $N$--dimensional Hausdorff measure of the limit when there is no collapse. This is the so-called volume convergence theorem, originally proved in \cite{Colding97,ChCo1} for smooth manifolds and their limits and extended in \cite{DePhilippisGigli18} to the present setting.
\medskip

If $(\X,\dist,\haus^N)$ is an $\RCD((N-1)K,N)$ space, $\haus^N$-almost every point has a unique measured Gromov--Hausdorff tangent isometric to $\mathbb R^N$ (\cite[Theorem 1.12]{DePhilippisGigli18} after \cite{MondinoNaber}). Therefore by volume convergence
 \begin{equation}\label{eqn:VolumeConv}
 \lim_{r\to 0}\frac{\haus^N(B_r(x))}{v(N,K,r)}=\lim_{r\to 0}\frac{\haus^N(B_r(x))}{\omega_Nr^N}=1\, , \qquad \text{for $\haus^N$-almost every $x$}\, ,
 \end{equation}
 where $\omega_N$ is the volume of the unit ball in $\mathbb R^N$. Moreover, since the density function $x\mapsto \lim_{r\to 0}\haus^N(B_r(x))/\omega_Nr^N$ is lower semicontinuous (\cite[Lemma 2.2]{DePhilippisGigli18}), it is bounded above by the constant $1$. Hence $\haus^N(B_r(x))\leq v(N,K,r)$ for every $r>0$ and for every $x\in X$.
\medskip

\medskip
Let us now recall some results about sets of locally finite perimeter in $\RCD$ spaces. Given a Borel set \(E\subseteq X\) in an \(\RCD(K,N)\) space \((X,\dist,\haus^N)\) and any \(t\in[0,1]\), we denote by \(E^{(t)}\) the set
of \emph{points of density \(t\)} of \(E\), namely
\[
E^{(t)}\coloneqq\bigg\{x\in X\;\bigg|\;\lim_{r\to 0}\frac{\haus^N(E\cap B_r(x))}{\haus^N(B_r(x))}=t\bigg\}\, .
\]
The \emph{essential boundary} of \(E\) is defined as \(\partial^e E\coloneqq X\setminus(E^{(0)}\cup E^{(1)})\). We have that \(E^{(t)}\)
and \(\partial^e E\) are Borel sets. Furthermore, the \emph{reduced boundary} \(\mathcal F E\subseteq\partial^e E\) of a given set \(E\subseteq X\)
of finite perimeter is defined as the set of those points of \(X\) where the unique tangent to \(E\) (up to isomorphism) is the
half-space \(\{x=(x_1,\ldots,x_N)\in\setR^N\,:\,x_N>0\}\) in \(\setR^N\); see \cite[Definition 4.1]{AmbrosioBrueSemola19} for the precise
notion of convergence we are using. We point out that in the classical Euclidean framework this notion of reduced boundary is not fully consistent
with the usual one, since it allows for non-uniqueness of the blow-ups (in the sense that one can obtain different half-spaces when rescaling
along different sequences of radii converging to \(0\)).\\ 
It was proved in \cite{BruePasqualettoSemola} after \cite{Ambrosio02,AmbrosioBrueSemola19} that the perimeter measure \(\Per(E,\cdot)\)
can be represented as
\begin{equation}\label{eq:RepresentationPerimeter}
\Per(E,\cdot)=\haus^{N-1}|_{\mathcal F E}.
\end{equation}

As it is evident from \eqref{eq:RepresentationPerimeter}, the notion of perimeter that we are using does not charge the boundary of the space under
consideration, if any. Indeed, by the very definition, reduced boundary points for $E$ are regular points of the ambient space $(X,\dist,\haus^N)$.
We refer to \cite{DePhilippisGigli18,MondinoKapovitch,BrueNaberSemola20} for the relevant background about boundaries of $\RCD(K,N)$ spaces $(X,\dist,\haus^N)$ and just point out here that the notion is fully consistent with the case of smooth Riemannian manifolds and with the theory of Alexandrov spaces with sectional curvature bounded from below.

Moreover, we recall that, according to \cite[Proposition 4.2]{BPSGaussGreen}, 
\[
\mathcal F E=E^{(1/2)}=\bigg\{x\in X\;\bigg|\;\lim_{r\to 0}\frac{\haus^N(E\cap B_r(x))}{\haus^N(B_r(x))}=\frac{1}{2}\bigg\}\, ,
\quad\text{ up to }\mathcal H^{N-1}\text{-null sets}\, .
\]
\smallskip

The primary focus of this note will be isoperimetric sets, that as in the classical Riemannian setting are much more regular than general sets of finite perimeter.

\begin{definition}\label{def:MinimizerCompactVariations}
Let $(X,\dist,\meas)$ be a metric measure space. We say that a subset $E\subset X$ is a \textit{volume constrained minimizer for compact variations in $X$} if whenever $F\subset X$ is such that $E\Delta F\subset K\Subset X$, and $\meas(K\cap E)=\meas(K\cap F)$, then $\Per(E)\leq \Per(F)$.

We say that a subset $E\subset X$, with $\meas(E)<\infty$, is an \textit{isoperimetric set} whenever for any $F\subset X$ with $\meas(F)=\meas(E)$ we have that $\Per(E)\leq \Per (F)$. 

Notice that an isoperimetric set in $X$ is a fortiori a volume constrained minimizer for compact variations in $X$.
\end{definition}

Let us recall a topological regularity result for volume constrained minimizers borrowed from \cite{AntonelliPasqualettoPozzetta21}.

\begin{theorem}[{\cite[Theorem 1.3 and Theorem 1.4]{AntonelliPasqualettoPozzetta21}}]\label{thm:RegularityIsoperimetricSets}
    Let $(X,\dist,\haus^N)$ be an $\RCD(K,N)$ space with $2\leq N<+\infty$ natural number, $K\in\mathbb R$. Let $E$ be a volume constrained minimizer for compact variations in $X$. Then $E^{(1)}$ is open, $\partial^eE=\partial E^{(1)}$, and $\partial E^{(1)}$ is locally uniformly $(N-1)$-Ahlfors regular in $X$. 
    
    Assume further there exists $v_0>0$ such that $\haus^N(B_1(x))\geq v_0$ for every $x\in X$, and that $E\subset X$ is an isoperimetric region. Then $E^{(1)}$ is in addition bounded, and $\partial E^{(1)}$ is $(N-1)$-Ahlfors regular in $X$. 
\end{theorem}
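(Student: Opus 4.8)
The plan is to reduce the statement to known Euclidean-type regularity results for almost minimizers of the perimeter, transported to the $\RCD$ setting, and to combine them with the Ahlfors regularity and density structure already recorded in the preliminaries. First I would recall that a volume constrained minimizer for compact variations is, locally, a quasi-minimizer (or $\Lambda$-minimizer) of the perimeter: given any ball $B\Subset X$, one can adjust the volume of a competitor $F$ with $E\Delta F\Subset B$ by a small perturbation supported in a fixed reference ball disjoint from $B$, at a perimeter cost controlled linearly by the volume discrepancy $|\meas(E\cap B)-\meas(F\cap B)|$, which in turn is controlled by $\meas(B)$ via the isoperimetric-type comparison. This yields the $\Lambda$-minimality inequality $\Per(E,B)\le \Per(F,B)+\Lambda\,\meas(E\Delta F)$ on small balls, with $\Lambda$ depending only on $K$, $N$, $v_0$ (in the isoperimetric case) and on the local geometry (in the compact-variations case). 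From $\Lambda$-minimality, the density estimates of \cite{AntonelliPasqualettoPozzetta21} give upper and lower perimeter density bounds at every point of $\partial E^{(1)}$, i.e.\ local uniform $(N-1)$-Ahlfors regularity; this is exactly the content cited.

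Next I would establish the topological identifications $E^{(1)}$ open and $\partial^e E=\partial E^{(1)}$. Openness of $E^{(1)}$ follows from a clopen-type argument: by the lower density bound, points of $E^{(1)}$ have a neighbourhood in which the density of $E$ stays bounded away from $1/2$; combined with the De Giorgi-type structure (the perimeter is concentrated on $\mathcal F E=E^{(1/2)}$ up to $\haus^{N-1}$-null sets, from \eqref{eq:RepresentationPerimeter} and \cite{BPSGaussGreen}) and a density decay/monotonicity argument, one upgrades $\meas$-a.e.\ density $1$ to genuine interior points. Then $\partial E^{(1)}$ has zero $\meas$-measure (again by Ahlfors regularity of dimension $N-1$), and one checks $\partial^e E=\partial E^{(1)}$ by showing the topological boundary of the open set $E^{(1)}$ coincides with the measure-theoretic essential boundary: the inclusion $\partial^e E\subseteq \partial E^{(1)}$ is immediate once $E^{(1)}$ is open and of full density, and the reverse uses the lower density estimate to exclude density-$0$ and density-$1$ points from $\partial E^{(1)}$.

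For the second paragraph of the statement, under the standing assumption $\haus^N(B_1(x))\ge v_0>0$ and for $E$ isoperimetric, I would first note the uniform $\Lambda$-minimality now holds with a \emph{global} constant $\Lambda=\Lambda(K,N,v_0)$, which upgrades the local uniform Ahlfors regularity to uniform $(N-1)$-Ahlfors regularity of $\partial E^{(1)}$. Boundedness of $E^{(1)}$ is the more substantial point: one argues that an isoperimetric set of finite volume cannot "escape to infinity" or have an unbounded component, using the lower density bound for the perimeter together with a doubling/volume comparison to show that the volume contained in far-away balls decays, and then a cut-and-paste competitor argument (remove a far piece, restore volume near the main body) contradicts minimality unless the set is contained in a bounded region. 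This is essentially the deformation lemma for isoperimetric sets; the uniform lower volume bound $v_0$ is what makes the argument uniform.

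I expect the main obstacle to be the boundedness of $E^{(1)}$: unlike the density and Ahlfors-regularity parts, which are local and follow rather mechanically from $\Lambda$-minimality, boundedness is a genuinely global statement and requires quantitative control on how volume and perimeter distribute at large scales — precisely where the uniform lower bound on the volume of unit balls must be invoked, and where one needs the perimeter lower density estimate to convert "no volume far away" into "set is bounded." Since the statement is quoted verbatim from \cite[Theorem 1.3 and Theorem 1.4]{AntonelliPasqualettoPozzetta21}, I would in practice simply cite that paper; the sketch above indicates the structure of the argument one would reconstruct if needed.
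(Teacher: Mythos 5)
Your proposal is consistent with the paper, which does not prove this statement itself but imports it verbatim from \cite[Theorems 1.3 and 1.4]{AntonelliPasqualettoPozzetta21}, noting only that the proof there rests on a deformation lemma for sets of finite perimeter (\cite[Theorem 1.1 and Theorem 2.35]{AntonelliPasqualettoPozzetta21}). Your sketch — deformation lemma giving local $\Lambda$-minimality, hence density estimates, Ahlfors regularity and the topological identifications, with boundedness of isoperimetric sets obtained from a cut-and-paste argument using the uniform lower bound on volumes of unit balls — matches the strategy of the cited paper, so citing it as you do is exactly what the authors themselves do.
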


In the following, when $E$ is an isoperimetric region in a space $X$ as in \autoref{thm:RegularityIsoperimetricSets}, we will always assume that $E$ coincides with its open bounded representative given by $E^{(1)}$.

The proof of \autoref{thm:RegularityIsoperimetricSets} builds on the top of a deformation lemma for general sets of finite perimeter, see \cite[Theorem 1.1 and Theorem 2.35]{AntonelliPasqualettoPozzetta21}.

\subsection{Asymptotic mass decomposition}
The statements below are proved in \cite[Theorem 1.1 and Proposition 4.1]{AntonelliNardulliPozzetta} building on top of \cite{Nar14, AFP21, AntonelliPasqualettoPozzetta21}. They describe the precise behaviour of a minimizing (for the perimeter) sequence of sets with constant volume in the setting of $N$-dimensional non compact $\RCD(K,N)$ spaces with uniformly bounded volumes of unit balls. The following propositions are at the core of several proofs of this paper and its companion \cite{AntonelliPasqualettoPozzettaSemolaFIRSThalf}.

\begin{theorem}[Asymptotic mass decomposition]\label{thm:MassDecompositionINTRO}
Let $(X,\dist,\haus^N)$ be a non compact $\RCD(K,N)$ space. Assume there exists $v_0>0$ such that $\haus^N(B_1(x))\geq v_0$ for every $x\in X$. Let $V>0$. For every minimizing (for the perimeter) sequence $\Omega_i\subset X$ of volume $V$, with $\Omega_i$ bounded for any $i$, up to passing to a subsequence, there exist an increasing and bounded sequence $\{N_i\}_{i\in\mathbb N}\subseteq \mathbb N$, disjoint finite perimeter sets $\Omega_i^c, \Omega_{i,j}^d \subset \Omega_i$, and points $p_{i,j}$, with $1\leq j\leq N_i$ for any $i$, such that
\begin{itemize}
    \item $\lim_{i} \dist(p_{i,j},p_{i,\ell}) = \lim_{i} \dist(p_{i,j},o)=\infty$, for any $j\neq \ell\le\overline N$ and any $o\in X$, where $\overline N:=\lim_i N_i <\infty$;
    \item $\Omega_i^c$ converges to $\Omega\subset X$ in the sense of finite perimeter sets, and we have $\haus^N(\Omega_i^c)\to_i \haus^N(\Omega)$, and $ \Per( \Omega_i^c) \to_i \Per(\Omega)$. Moreover $\Omega$ is a bounded isoperimetric region for its own volume in $X$;
    \item for every $j\le\overline N$, $(X,\dist,\haus^N,p_{i,j})$ converges in the pmGH sense  to a pointed $\RCD(K,N)$ space $(X_j,\dist_j,\haus^N,p_j)$. Moreover there are isoperimetric regions $Z_j \subset X_j$ such that $\Omega^d_{i,j}\to_i Z_j$ in $L^1$-strong and $\Per(\Omega^d_{i,j}) \to_i \Per (Z_j)$;
    \item it holds that
    \begin{equation}\label{eq:UguaglianzeIntro}
    I_{(X,\dist,\haus^N)}(V) = \Per(\Omega) + \sum_{j=1}^{\overline{N}} \Per (Z_j),
    \qquad\qquad
    V=\haus^N(\Omega) +  \sum_{j=1}^{\overline{N}} \haus^N(Z_j).
    \end{equation}
\end{itemize}
\end{theorem}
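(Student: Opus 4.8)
The plan is to prove \autoref{thm:MassDecompositionINTRO} by a concentration--compactness analysis of the perimeter functional under the volume constraint $\haus^N(\cdot)=V$, splitting a minimizing sequence into one ``core'' piece surviving in $X$ and finitely many ``bubbles'' escaping to infinity. Two uniform ingredients drive the argument. First, on any $\RCD(K,N)$ space $Y$ with $\inf_y\haus^N(B_1(y))\ge v_0$ the Bishop--Gromov comparison and the volume lower bound give a uniform small-volume isoperimetric inequality: there are $c=c(K,N,v_0)>0$ and $v_1=v_1(K,N,v_0)>0$ with $I_Y(v)\ge c\,v^{\frac{N-1}{N}}$ for every $v\le v_1$, and the same constants work on every pmGH limit at infinity of $X$ (such limits again satisfy $\inf\haus^N(B_1(\cdot))\ge v_0$ by volume convergence). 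Second, the deformation lemma \cite[Theorem 1.1]{AntonelliPasqualettoPozzetta21} lets one change the volume of a set of finite perimeter by a prescribed small amount at a perimeter cost at most $C$ times that amount, with $C$ controlled by the perimeter of the set.

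First I would fix $o\in X$ and a perimeter-minimizing sequence $\Omega_i$ of volume $V$ with each $\Omega_i$ bounded, so $\Per(\Omega_i)\to I_X(V)$ and $\sup_i\Per(\Omega_i)<\infty$. By the ${\rm BV}$-compactness and lower semicontinuity on locally doubling spaces (\autoref{rem:SemicontPerimeter} and \cite{AmbrosioDiMarino14,Miranda03}), applied on the balls $B_R(o)$ and diagonalised in $R\to\infty$, a subsequence of $\Omega_i$ converges in $L^1_{\mathrm{loc}}(X)$ to a finite-perimeter set $\Omega$ with $\haus^N(\Omega)\le V$. If $\haus^N(\Omega)<V$, a positive amount of volume escapes, and I locate it iteratively. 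Given the current residual (initially $\Omega_i$, later $\Omega_i$ with the core and previously found bubbles removed), a pigeonhole argument on $\sum_k\Per(\Omega_i,B_{k+1}(o)\setminus B_k(o))$ via the coarea formula \autoref{thm:coarea} produces radii $R_i\to\infty$ along which truncation to $X\setminus B_{R_i}(o)$ is a \emph{clean cut}: it carries the full residual volume in the limit and its perimeter equals $\Per(\Omega_i,X\setminus\overline{B_{R_i}(o)})+o(1)$, so no perimeter is created at the interface. Inside this truncation, a further pigeonhole on unit balls picks points $p_{i,j}$ with $\dist(o,p_{i,j})\to\infty$ capturing a non-negligible (up to a factor, maximal) fraction of the residual volume in $B_1(p_{i,j})$; recentring at $p_{i,j}$, Gromov precompactness (applicable since $\haus^N(B_1(\cdot))$ stays in a fixed compact subinterval of $(0,\infty)$) gives, up to subsequence, $(X,\dist,\haus^N,p_{i,j})\to(X_j,\dist_j,\haus^N,p_j)$ in the pmGH sense, and ${\rm BV}$-compactness plus stability of perimeter under pmGH convergence \cite{AmbrosioBrueSemola19} make the recentred residual converge in $L^1_{\mathrm{loc}}$ to a set $Z_j\subset X_j$ with $\haus^N(Z_j)>0$. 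Carving $Z_j$ out and iterating yields disjoint sets $\Omega_i^c\to\Omega$ and $\Omega_{i,j}^d\to Z_j$ with the separations $\dist(p_{i,j},p_{i,\ell})\to\infty$, $\dist(p_{i,j},o)\to\infty$ built in by construction, and the counting $\{N_i\}$ increasing.

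From the clean-cut property and lower semicontinuity one gets $I_X(V)=\lim_i\Per(\Omega_i)\ge\Per(\Omega)+\sum_j\Per(Z_j)$ (the sum converges, being bounded by $I_X(V)$), and since the $\Omega_{i,j}^d$ carry asymptotically all the escaping volume, $\haus^N(\Omega)+\sum_j\haus^N(Z_j)\le V$. For the reverse inequalities I would run a \emph{gluing-back} argument: any finite configuration consisting of $\Omega'\subset X$ and sets $Z'_m\subset X'_m$ in pmGH limits at infinity of $X$, of total volume $V$, is approximately realised by a single competitor in $X$ --- using recovery sequences for perimeter under pmGH convergence to place finite-perimeter copies of the $Z'_m$ near divergent points, adjusting volumes by $o(1)$ with the deformation lemma, and summing perimeters additively since the pieces are mutually far apart --- whence $I_X(V)\le\Per(\Omega')+\sum_m\Per(Z'_m)$. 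Applied to our decomposition this forces the two identities $I_X(V)=\Per(\Omega)+\sum_j\Per(Z_j)$ and $V=\haus^N(\Omega)+\sum_j\haus^N(Z_j)$; and if $\Omega$ or some $Z_j$ failed to be isoperimetric in its own space, replacing it by a strictly better set of the same volume and gluing back would beat $I_X(V)$, a contradiction. In particular \autoref{thm:RegularityIsoperimetricSets} applies to $\Omega$ and to each $Z_j$, so they are bounded with Ahlfors-regular boundary.

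It remains to bound the number of bubbles. If some $Z_j$ had volume $v_j\le v_1$, I would delete it and enlarge the core $\Omega$ by volume $v_j$ via the deformation lemma: the new configuration keeps total volume $V$ and has total perimeter at most $I_X(V)+C(\Omega)v_j-\Per(Z_j)\le I_X(V)+C v_j-c\,v_j^{\frac{N-1}{N}}$, which is $<I_X(V)$ for $v_j$ small since $(N-1)/N<1$; gluing this back into $X$ contradicts the minimality of $\Omega_i$. Hence every $Z_j$ has volume at least $\delta_0=\delta_0(K,N,v_0,V)>0$, so there are at most $V/\delta_0$ of them and $\overline N:=\lim_i N_i<\infty$. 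Completeness of the decomposition (no volume lost diffusely) is handled separately: a residual carrying volume bounded below along the sequence would, by the relative isoperimetric inequality together with the bounded overlap of unit balls, have density uniformly bounded below in some unit ball, hence be captured by a recentring; once the finitely many bubbles have been extracted the leftover density tends to $0$ and so does the leftover volume. I expect the main obstacle to be making the cutting and the re-gluing quantitatively airtight and uniform along the sequence --- guaranteeing that neither volume nor perimeter leaks through the interfaces --- together with the uniform small-volume isoperimetric inequality that renders small bubbles strictly suboptimal; the remaining steps are compactness arguments of a type by now standard in the $\RCD$ setting.
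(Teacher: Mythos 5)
The paper offers no proof of this statement: it is imported from \cite{AntonelliNardulliPozzetta} (Theorem 1.1 and Proposition 4.1 there), and the proof in that reference is precisely the concentration--compactness scheme you describe --- clean cuts along well-chosen radii, recentring of the escaping mass at diverging points combined with pmGH precompactness and volume convergence, gluing-back competitors built from recovery sequences together with the deformation lemma of \cite{AntonelliPasqualettoPozzetta21}, and a uniform small-volume isoperimetric inequality (valid on $X$ and on all its limits at infinity thanks to the bound $\haus^N(B_1(\cdot))\ge v_0$) which makes small bubbles strictly suboptimal and hence bounds their number. Your sketch therefore follows essentially the same route as the cited proof; the points you yourself flag as delicate --- quantitative interface control when cutting (one really pigeonholes on the volume in annuli and uses the coarea/slicing bound, not on the perimeter of the annuli alone), the vanishing of the diffuse residual, and the degenerate case of the bubble-volume lower bound when the surviving core is empty, where the volume must be transferred to another piece --- are exactly where that reference invests its technical work.
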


\begin{proposition}\label{prop:ProfileDecomposition}
Let $(X,\dist,\haus^N)$ be a non compact $\RCD(K,N)$ space. Assume there exists $v_0>0$ such that $\haus^N(B_1(x))\geq v_0$ for every $x\in X$. Let $\{p_{i,j} \st i \in \mathbb N\}$ be a sequence of points on $X$, for $j=1,\ldots,\overline{N}$ where $\overline{N}\in\N \cup \{+\infty\}$. Suppose that each sequence $\{p_{i,j}\}_i$ is diverging along $X$ and that $(X,\dist,\haus^N,p_{i,j})$ converges in the pmGH sense  to a pointed $\RCD(K,N)$ space $(X_j,\dist_j,\haus^N,p_j)$. Defining
\begin{equation}\label{eqn:GeneralizedIsoperimetricProfile}
I_{X\sqcup_{j=1}^{\overline N}X_j}(v):=\inf\left\{\Per(E)+\sum_{j=1}^{\overline N}\Per (E_j):E\subseteq X,E_j\subseteq X_j, \haus^N(E)+\sum_{j=1}^{\overline N}\haus^N(E_j)=v\right\},
\end{equation}
 it holds $I_{X\sqcup_{j=1}^{\overline N} X_j}(v) = I_X(v)$ for any $v>0$.
\end{proposition}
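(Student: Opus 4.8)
The inequality $I_{X\sqcup_{j=1}^{\overline N}X_j}(v)\le I_X(v)$ is immediate: any competitor $E\subseteq X$ for $I_X(v)$ is also an admissible configuration on the right-hand side (take $E_j=\emptyset$ for every $j$), so the infimum defining $I_{X\sqcup_{j=1}^{\overline N}X_j}(v)$ is no larger. The content of the proposition is the reverse inequality $I_X(v)\le I_{X\sqcup_{j=1}^{\overline N}X_j}(v)$, which says that mass placed in the limit spaces $X_j$ at infinity can be ``brought back'' into $X$ without increasing perimeter or changing total volume, at least in the limit.

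The plan is to prove the reverse inequality by approximation. Fix $\eps>0$ and choose sets $E\subseteq X$, $E_j\subseteq X_j$ with $\haus^N(E)+\sum_{j}\haus^N(E_j)=v$ and $\Per(E)+\sum_j\Per(E_j)\le I_{X\sqcup_{j=1}^{\overline N}X_j}(v)+\eps$. Since the sum $\sum_j\haus^N(E_j)$ is finite, only finitely many $E_j$ carry most of the mass and perimeter, so after truncating the index set we may assume $\overline N<\infty$ at the cost of another $\eps$; moreover each $E_j$ may be taken bounded (intersect with a large ball using the coarea formula, \autoref{thm:coarea}, to control the perimeter contribution of the slicing). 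Now exploit the pmGH convergence $(X,\dist,\haus^N,p_{i,j})\to(X_j,\dist_j,\haus^N,p_j)$: by the approximation/recovery properties of sets of finite perimeter along pmGH-converging sequences (as in \cite{AmbrosioBrueSemola19}, building on \cite{AmbrosioHonda17}), for each $j$ there are bounded sets $E_{i,j}\subseteq X$, supported near $p_{i,j}$, with $\haus^N(E_{i,j})\to\haus^N(E_j)$ and $\limsup_i\Per(E_{i,j})\le\Per(E_j)$. Because the points $p_{i,j}$ diverge and mutually separate (each $\{p_{i,j}\}_i$ is diverging and, for $j\ne\ell$, $\dist(p_{i,j},p_{i,\ell})\to\infty$), for $i$ large the sets $E$, $E_{i,1},\dots,E_{i,\overline N}$ have pairwise disjoint (indeed far-apart) supports inside $X$. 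Hence $F_i:=E\cup\bigcup_{j=1}^{\overline N}E_{i,j}$ is a set of finite perimeter in $X$ with $\Per(F_i)=\Per(E)+\sum_j\Per(E_{i,j})$ and $\haus^N(F_i)=\haus^N(E)+\sum_j\haus^N(E_{i,j})\to v$.

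The volume of $F_i$ converges to $v$ but need not equal $v$ exactly, so the final step is a volume adjustment: modify $F_i$ by adding or removing a small amount of volume $\delta_i:=v-\haus^N(F_i)\to 0$, and bound the perimeter change by a quantity going to $0$. This is a standard localized deformation — one can use the deformation/adjustment lemmas underlying \autoref{thm:RegularityIsoperimetricSets} and \autoref{thm:MassDecompositionINTRO} (e.g. \cite[Theorem 1.1]{AntonelliPasqualettoPozzetta21}, or simply enlarge/shrink a ball inside $X$ away from all the supports, using that $\inf_x\haus^N(B_1(x))\ge v_0>0$ gives uniform lower volume bounds on balls so the perimeter cost of adjusting volume $\delta_i$ is $O(\delta_i^{(N-1)/N})\to 0$). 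After this correction we obtain $\tilde F_i\subseteq X$ with $\haus^N(\tilde F_i)=v$ and $\Per(\tilde F_i)\le\Per(E)+\sum_j\Per(E_j)+o(1)\le I_{X\sqcup_{j=1}^{\overline N}X_j}(v)+2\eps+o(1)$. Taking $i\to\infty$ and then $\eps\to 0$ yields $I_X(v)\le I_{X\sqcup_{j=1}^{\overline N}X_j}(v)$, completing the proof.

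The main obstacle is the recovery-sequence step: producing sets $E_{i,j}\subseteq X$ near $p_{i,j}$ that converge to $E_j\subseteq X_j$ in $L^1$-strong \emph{with} asymptotically no more perimeter. This requires the non-trivial lower-semicontinuity-plus-recovery machinery for perimeters under pmGH convergence of $\RCD$ spaces; the boundedness reductions and the separation of supports are then routine, and the final volume adjustment is the kind of elementary deformation already packaged in the cited results. Note this is precisely the technology that powers \autoref{thm:MassDecompositionINTRO}, so in practice one quotes those references rather than redoing the estimates.
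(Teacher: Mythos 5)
Your strategy is sound and is essentially the one behind the result as it is used here: the paper does not prove this proposition but quotes it from \cite{AntonelliNardulliPozzetta}, and the proof there goes exactly along your lines --- the inequality $I_{X\sqcup_{j=1}^{\overline N}X_j}(v)\le I_X(v)$ is immediate, while the converse is obtained by truncating to finitely many bounded blocks, transplanting them back into $X$ near the diverging points via the perimeter recovery/stability machinery along pmGH convergence (the same technology powering \autoref{thm:MassDecompositionINTRO}), and restoring the exact volume at a perimeter cost $O(\delta^{(N-1)/N})$, which is legitimate thanks to the noncollapsing assumption.

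One step is stated with a hypothesis the proposition does not contain: you use that $\dist(p_{i,j},p_{i,\ell})\to\infty$ for $j\neq\ell$, but only the divergence of each individual sequence $\{p_{i,j}\}_i$ is assumed, so for distinct $j$ the points may stay at bounded mutual distance and, at a common index $i$, the recovery sets $E_{i,j}$ need not have disjoint supports. The repair is routine and does not change the architecture of the proof: after the reduction to finitely many blocks and bounded sets, place the blocks one at a time, choosing inductively an index $i_j$ so large that the ball $B_{R_j}(p_{i_j,j})$ containing the $j$-th transplanted set is disjoint from $E$ and from the (bounded) sets already placed; this uses only that each sequence diverges. A second, smaller point: the volume-fixing step is cleanest if you arrange the transplanted configuration to have volume at most $v$ (e.g.\ by absorbing the truncation losses there), since then only the addition of a small ball far from all supports is needed, whereas removing excess volume requires an extra (standard, but not one-line) coarea-type selection. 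With these adjustments your argument is complete and coincides with the proof in \cite{AntonelliNardulliPozzetta}.
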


\subsection{Preliminary results from \texorpdfstring{\cite{AntonelliPasqualettoPozzettaSemolaFIRSThalf}}{[16]}}\label{subsec:FirstHalf}

The remaining auxiliary results we shall need are taken from \cite{AntonelliPasqualettoPozzettaSemolaFIRSThalf} and are collected below. We first need to introduce the following notation:
\begin{equation}
    s_{k,\lambda}(r):=\cos_k(r)-\lambda\sin_k(r)\, ,
\end{equation}
where 
\begin{equation}
   \cos_k''+k\cos_k=0\, ,\quad \cos_k(0)=1\, ,\quad \cos_k'(0)=0\, , 
\end{equation}
and 
\begin{equation}
    \sin_k''+k\sin_k=0\, ,\quad\sin_k(0)=0\, ,\quad \sin_k'(0)=1\, .
\end{equation}
Notice that $s_{k,-d}$ is a solution of
\begin{equation}
    v''+kv=0\, ,\quad v(0)=1\, ,\quad v'(0)=d\, .
\end{equation}
Moreover, $s_{0,\lambda}(r)=1-\lambda r$.

Finally, for $N>1$, $H\in\setR$ and $K\in\setR$, we introduce the Jacobian function 
\begin{equation}\label{eq:Jacobian}
\setR\ni r\mapsto J_{H,K,N}(r):=\left(\cos_{\frac{K}{N-1}}(r)+\frac{H}{N-1} \sin_{\frac{K}{N-1}}(r)\right)^{N-1}_+=\left(s_{\frac{K}{N-1},-\frac{H}{N-1}}(r)\right)^{N-1}_+\, .
\end{equation}
Notice that, when $K=0$ the expression for the Jacobian function simplifies into
\begin{equation}
\setR\ni r\mapsto J_{H,N}(r):=\left(1+\frac{H}{N-1} r\right)^{N-1}_+\, .
\end{equation}

\subsubsection{Mean curvature barriers for isoperimetric sets}
It is well-known that if an isoperimetric set $E$ in a Riemannian manifolds has smooth boundary $\partial E$, then $\partial E$ is a hypersurface with constant mean curvature. In \cite{AntonelliPasqualettoPozzettaSemolaFIRSThalf} (after \cite{MoS21}) we generalize the previous fact at the level of isoperimetric sets in $\RCD(K,N)$ spaces $(X,\dist,\haus^N)$ in the sense of the following result.

\begin{theorem}[{\cite{AntonelliPasqualettoPozzettaSemolaFIRSThalf}}]\label{thm:Isoperimetriciintro}
Let $(X,\dist,\haus^N)$ be an $\RCD(K,N)$ metric measure space for some $K\in\setR$ and $N\ge 2$ and let $E\subset X$ be an isoperimetric set.
Then, denoting by $f$ the signed distance function from $\overline{E}$, there exists $c\in\setR$ such that
\begin{equation}\label{eq:sharplaplacianintro}
\boldsymbol{\Delta} f\ge  -(N-1)\frac{s'_{\frac{K}{N-1},\frac{c}{N-1}}\circ \left(-f\right)}{s_{\frac{K}{N-1},\frac{c}{N-1}}\circ \left(-f\right)}      \quad\text{on $E$ and }\quad
\boldsymbol{\Delta} f\le (N-1)\frac{s'_{\frac{K}{N-1},-\frac{c}{N-1}}\circ f}{s_{\frac{K}{N-1},-\frac{c}{N-1}}\circ f}    \quad\text{on $X\setminus \overline{E}$}\, ,
\end{equation}
in the sense of distributions.
\end{theorem}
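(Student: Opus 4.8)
The plan is to exploit the first variation of perimeter under the volume constraint, encoded geometrically through the signed distance function $f$ from $\overline{E}$. First I would recall that an isoperimetric set $E$ is a volume constrained minimizer for compact variations, so by the regularity result \autoref{thm:RegularityIsoperimetricSets} we may take $E=E^{(1)}$ open and bounded with $\partial E=\partial E^{(1)}$ locally $(N-1)$-Ahlfors regular. The crucial structural fact, established in \cite{MoS21} and reproved in \cite{AntonelliPasqualettoPozzettaSemolaFIRSThalf}, is that there is a real constant $c$ (the \emph{generalized mean curvature} of $\partial E$) such that $E$ is simultaneously a minimizer of $\Per(\cdot)-c\,\haus^N(\cdot)$ among compactly supported perturbations from outside (giving one inequality) and from inside (giving the other). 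Concretely, for test functions $u\ge 0$ supported near $\partial E$, comparing $E$ with sublevel/superlevel perturbations $\{f<-t\}$ or $\{f>t\}$ and using the coarea formula \autoref{thm:coarea} to first order in $t$ produces the differential inequality on the perimeter measure of the tubular neighbourhoods.

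The core analytic step is the Laplacian comparison for the distance function on $\RCD(K,N)$ spaces, combined with the mean-curvature information on $\partial E$. On the set $E$, the function $-f$ is the distance to the boundary $\partial E$ measured inward; the standard Laplacian comparison on $\RCD(K,N)$ spaces controls $\boldsymbol{\Delta}(-f)$ from above in terms of the distance to the "soul" $\partial E$, but one must incorporate the fact that $\partial E$ is not a point but a hypersurface with mean curvature $\approx c$. This is exactly the role of the modified coefficient $s_{\frac{K}{N-1},\frac{c}{N-1}}$: it solves $v''+\frac{K}{N-1}v=0$ with initial slope encoding the mean curvature $c$, so that the Riccati-type comparison for the "radial" Laplacian of $f$ along geodesics leaving $\partial E$ normally gives the one-sided bound $\boldsymbol{\Delta} f\ge -(N-1)\,(s'/s)\circ(-f)$ on $E$. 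The symmetric argument on $X\setminus\overline{E}$, where now the outward normal flips the sign of the mean curvature, produces the bound with $s_{\frac{K}{N-1},-\frac{c}{N-1}}$. Technically one proceeds by: (i) showing $f\in D(\boldsymbol{\Delta})$ locally away from a null set via the theory of distance functions on $\RCD$ spaces; (ii) localizing the minimality to get that, in the viscosity/distributional sense, the mean curvature of the level sets $\{f=t\}$ satisfies the appropriate ODE comparison; (iii) integrating the Riccati inequality $\phi'+\frac{\phi^2}{N-1}\le -\frac{K}{N-1}$ (resp. with reversed inequality) with initial datum $\phi(0^{\pm})\le c$ (resp. $\ge c$) coming from the generalized mean curvature bound at $\partial E$.

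The main obstacle, and where the companion paper \cite{AntonelliPasqualettoPozzettaSemolaFIRSThalf} does the real work, is making rigorous the notion of "generalized mean curvature $c$ of $\partial E$" in the low-regularity $\RCD$ setting and transferring it into a quantitative distributional Laplacian bound for $f$ near $\partial E$ — that is, step (ii). In the smooth case this is the classical fact that a constant-mean-curvature hypersurface furnishes the initial condition for the Riccati comparison along normal geodesics; in the $\RCD$ setting one cannot differentiate $\partial E$, so instead one argues by comparison: perturbing $E$ by the flow of a vector field (equivalently, by sub-/super-level sets of $f$) and using the isoperimetric minimality together with the Ahlfors regularity of $\partial E$ forces the one-sided bounds on the first-order expansion of $\Per(\{f>t\})$, which via coarea is the desired Laplacian estimate. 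The remaining ingredients — the $\RCD$ Laplacian comparison for $d(\cdot,\partial E)$ and the ODE integration — are by now standard, so the proof reduces to carefully citing \autoref{thm:Isoperimetriciintro} from \cite{AntonelliPasqualettoPozzettaSemolaFIRSThalf} and checking the compatibility of sign conventions for the signed distance function.
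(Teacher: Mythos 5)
This theorem is not proved in the present paper at all: it is imported verbatim from the companion paper \cite{AntonelliPasqualettoPozzettaSemolaFIRSThalf} (building on \cite{MoS21}), so the paper's ``proof'' is precisely the citation you end with. Your sketch correctly identifies this and outlines the strategy actually used in the companion work (a constant mean-curvature barrier $c$ extracted from volume-constrained minimality via competitor/deformation arguments, then transferred into distributional Laplacian bounds for the signed distance with the one-dimensional model coefficients $s_{\frac{K}{N-1},\pm\frac{c}{N-1}}$), so it is consistent with the paper's treatment.
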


It is easily checked that the signed distance function from a smooth set $E$ in a Riemannian manifold satisfies \eqref{eq:sharplaplacianintro} if and only if $\partial E$ has constant mean curvature $c$. We remark that the inequalities \eqref{eq:sharplaplacianintro} are also sharp, as
equalities are attained by balls in the model spaces with constant sectional curvature. We stress that \autoref{thm:Isoperimetriciintro} is proved without any additional assumption on the regularity of the ambient space nor of the isoperimetric set, except the fundamental topological regularity from \autoref{thm:RegularityIsoperimetricSets}, which is also needed for \eqref{eq:sharplaplacianintro} to make sense.\\
In the realm of $\RCD(K,N)$ spaces, \autoref{thm:Isoperimetriciintro} just yields the existence of some constant as in the statement, hence we are in position to give the following definition.



\begin{definition}[Mean curvature barriers for isoperimetric sets]
Let $(X,\dist,\haus^N)$ be an $\RCD(K,N)$ space, let $E\subset X$ be an isoperimetric set. We call any constant $c$ such that \eqref{eq:sharplaplacianintro} holds as a \emph{mean curvature barrier} for $\partial E$. 
\end{definition}
For discussions concerning the uniqueness of $c$ as in the previous definition, and comparison with the Riemannian setting, we refer the reader to \cite{AntonelliPasqualettoPozzettaSemolaFIRSThalf}. { We mention that an analogous notion of mean curvature barrier has been recently studied in \cite{Ketterer21}, along with stability and splitting properties related to sets possessing such barriers.}

Exploiting the coarea formula, the Laplacian bounds in \eqref{eq:sharplaplacianintro} are sufficient to imply the following Heintze--Karcher type estimates on the perimeter and volume of equidistant sets from an isoperimetric region. { We mention that Heintze--Karcher type estimates were also obtained in \cite{KettererHeintzeKarcher}, where a different notion of mean curvature, based on the localization technique, is considered.}

\begin{proposition}[{\cite{AntonelliPasqualettoPozzettaSemolaFIRSThalf}}]\label{prop:variationofarea}
Let us consider an $\RCD(K,N)$ metric measure space $(X,\dist,\haus^N)$ for some $K\in\setR$ and $N\ge 2$.
Let $E\subset X$ be an isoperimetric set, and let $c\in\setR$ be given by \autoref{thm:Isoperimetriciintro}. Then for any $t\ge 0$ it holds
\begin{equation}\label{eq:extareabd}
\Per(\{x\in X\, :\, \dist(x,\overline{E})\le t\})\le J_{c,K,N}(t) \Per(E)\, ,
\end{equation}
and, for any $t\ge 0$,
\begin{equation}\label{eq:intareabound}
\Per(\{x\in X\, :\, \dist(x,X\setminus E)\le t\})\le J_{-c,K,N}(t)\Per(E)\,,
\end{equation}
where we recall that the Jacobian function has been introduced in \eqref{eq:Jacobian}.

In particular, for any $t\ge 0$ it holds
\begin{equation}\label{eq:extvolbound}
\haus^N(\{x\in X\setminus E\, :\, \dist(x,\overline{E})\le t\})\le \Per(E)\int _0^tJ_{c,K,N}(r)\di r\, ,
\end{equation}
and, for any $t\ge 0$,
\begin{equation}\label{eq:intvolbound}
\haus^N(\{x\in E\, :\, \dist(x,X\setminus E)\le t\})\le \Per(E)\int _0^tJ_{-c,K,N}(r)\di r\, .
\end{equation}
\end{proposition}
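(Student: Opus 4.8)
The plan is to deduce the perimeter and volume estimates from the Laplacian bounds in \autoref{thm:Isoperimetriciintro} via a first-variation/coarea argument applied to the (signed) distance function $f$ from $\overline{E}$, running the classical Heintze--Karcher comparison in this low-regularity synthetic setting. Let me focus on the exterior estimate \eqref{eq:extareabd}; the interior one \eqref{eq:intareabound} is entirely symmetric (replace $E$ by $X\setminus\overline E$, which flips the sign of $f$ and replaces $c$ by $-c$ in the barrier inequality), and the volume bounds \eqref{eq:extvolbound}--\eqref{eq:intvolbound} then follow immediately by integrating in $t$ and using the coarea formula, since $\{x\in X\setminus E:\dist(x,\overline E)\le t\}=\bigcup_{0<s\le t}\{f=-s\}$ up to null sets and $\Per(\{f>-s\})$ has the claimed bound for a.e.\ $s$.

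First I would set $g:=-f=\dist(\cdot,\overline E)$ on $X\setminus\overline E$, which is $1$-Lipschitz, and introduce the "exterior equidistant" sets $E_t:=\{x\in X:\dist(x,\overline E)\le t\}=\{g\le t\}$ (equivalently $E^{(1)}\cup\{0<g\le t\}$). The outward Laplacian bound reads, on $X\setminus\overline E$,
\[
\boldsymbol{\Delta} f\le (N-1)\frac{s'_{\frac{K}{N-1},-\frac{c}{N-1}}\circ f}{s_{\frac{K}{N-1},-\frac{c}{N-1}}\circ f},
\]
and since $J_{c,K,N}(t)=\big(s_{\frac{K}{N-1},-\frac{c}{N-1}}(t)\big)^{N-1}_+$, the right-hand side is precisely $J_{c,K,N}'(t)/J_{c,K,N}(t)$ evaluated at $t=-f=g$ on the interval where $J_{c,K,N}>0$. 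The idea is then to consider the function $\varphi(t):=\Per(E_t)$ (defined for a.e.\ $t\ge 0$ via coarea, or as $\haus^{N-1}(\{g=t\})$) and show that $t\mapsto \varphi(t)/J_{c,K,N}(t)$ is essentially nonincreasing on $\{J_{c,K,N}>0\}$, with $\varphi(0)=\Per(E)$; combined with $J_{c,K,N}(0)=1$ this gives \eqref{eq:extareabd}. (On the complement $\{J_{c,K,N}=0\}$, i.e.\ once $1+\tfrac{c}{N-1}t\le 0$ in the $K=0$ case and analogously otherwise, the right-hand side of \eqref{eq:extareabd} is $0$, so one must separately argue that $\Per(E_t)=0$ there, which follows because the Laplacian upper bound forces the distance function to have no regular level sets beyond that radius — equivalently $\{g>t_0\}$ is $\haus^N$-null where $t_0$ is the first zero of $J_{c,K,N}$, by the volume estimate obtained by integrating up to $t_0$.)

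To make the monotonicity rigorous without smoothness I would combine two ingredients. First, the metric coarea formula gives $\haus^N(\{s<g\le t\})=\int_s^t\Per(\{g>\tau\})\,d\tau$ (using $|\nabla g|=1$ a.e.\ on $\{g>0\}$, a standard fact for distance functions in $\RCD$ spaces since $g$ is an eikonal function there). Second, the distributional bound $\boldsymbol{\Delta} f\le \eta$ with $\eta=(N-1)J'/J\circ g$: testing against suitable Lipschitz approximations of $\chi_{\{g\le t\}}$ and using the representation of $\boldsymbol{\Delta} f$ as a measure, together with the coarea/Gauss--Green structure on $\RCD$ spaces (e.g.\ $\boldsymbol{\Delta} f\llcorner\{g=t\}$ controls the derivative of $\Per(\{g\le t\})$ from above), yields the differential inequality $\frac{d}{dt}\Per(E_t)\le (N-1)\frac{J'(t)}{J(t)}\Per(E_t)$ in the distributional/a.e.\ sense on $(0,t_0)$. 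Integrating the ODE comparison gives $\Per(E_t)\le J_{c,K,N}(t)\Per(E)$ for $t<t_0$, and by monotone/continuity considerations up to $t=t_0$, completing \eqref{eq:extareabd}.

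The main obstacle is the second ingredient: extracting the honest differential inequality for $t\mapsto\Per(E_t)$ from the distributional Laplacian bound in the absence of regularity of $\partial E_t$ and of $g$. In the smooth case this is the textbook computation $\frac{d}{dt}\haus^{N-1}(\{g=t\})=\int_{\{g=t\}}H_t$, with $H_t$ the mean curvature of the equidistant hypersurface, and the Laplacian bound controls $H_t$; here one must instead argue at the level of measures. I expect this to be handled exactly as in the companion paper: one uses that for an eikonal function $g$ on an $\RCD(K,N)$ space the measure $\boldsymbol{\Delta} g$ disintegrates along the level sets of $g$ (via the coarea/needle structure), so that for a.e.\ pair $s<t$ one has $\Per(E_t)-\Per(E_s)\le \boldsymbol{\Delta} f\big(\{s<g\le t\}\big)\le \int_s^t (N-1)\tfrac{J'(\tau)}{J(\tau)}\Per(E_\tau)\,d\tau$, and then Grönwall finishes the job. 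The sharpness and the equality case (balls in space forms) are not needed for the statement, only that the comparison function is $J_{c,K,N}$, which is built precisely so that the model ODE $v'=(N-1)\tfrac{J'}{J}v$ has $J_{c,K,N}$ itself as a solution.
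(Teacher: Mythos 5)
Your overall strategy coincides with the one the paper indicates for this result: the bounds \eqref{eq:extareabd}--\eqref{eq:intvolbound} are meant to follow from the Laplacian barriers \eqref{eq:sharplaplacianintro} of \autoref{thm:Isoperimetriciintro} via coarea and an ODE/Gr\"onwall comparison for $t\mapsto\Per(E_t)$, i.e.\ a synthetic Heintze--Karcher argument. Your identification of $(N-1)s'_{\frac{K}{N-1},-\frac{c}{N-1}}/s_{\frac{K}{N-1},-\frac{c}{N-1}}$ with $J_{c,K,N}'/J_{c,K,N}$, the reduction of the interior estimate to the symmetric barrier with $c$ replaced by $-c$, and the passage from the perimeter bounds to the volume bounds by coarea (using $|\nabla f|=1$ a.e.) are all correct; the rigorous mechanism for the differential inequality is indeed the one you mention in passing, namely testing the measure-valued inequality with Lipschitz functions of $f$ and using $\nabla(\phi\circ f)\cdot\nabla f=\phi'(f)$ together with coarea, which yields the weak form of $\frac{d}{dt}\Per(E_t)\le \frac{J'}{J}(t)\Per(E_t)$. (Minor slip: with the paper's convention $f=\dist(\cdot,\overline E)>0$ on $X\setminus\overline E$, so your $g$ equals $f$ there, not $-f$; this is harmless.)

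Two steps, however, are asserted rather than proved and are genuine gaps. First, the anchoring at $t=0$: the barrier holds only on the \emph{open} set $X\setminus\overline E$, so your test functions must vanish near $\partial E$, and what you obtain is the essential monotonicity of $\Per(E_t)/J_{c,K,N}(t)$ on $(0,t_0)$ only; writing $\varphi(0)=\Per(E)$ does not give the needed inequality $\limsup_{t\to0^+}\Per(E_t)\le\Per(E)$ (lower semicontinuity of the perimeter gives the opposite direction). Producing the boundary term $\Per(E)$ requires an extra ingredient, e.g.\ the Gauss--Green/normal-trace formula on $\RCD$ spaces applied up to $\mathcal F E$ (where the trace of $\nabla f$ from outside has modulus at most $1$ and the trace measure is $\haus^{N-1}$ restricted to $\mathcal F E$), or an identification of the outer Minkowski content of the isoperimetric set with its perimeter; relatedly, your inequality $\Per(E_t)-\Per(E_s)\le\boldsymbol{\Delta}f(\{s<\dist(\cdot,\overline E)\le t\})$ is precisely this crux and cannot simply be delegated ``as in the companion paper'', since the statement under review is the companion paper's result. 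Second, on the set where $J_{c,K,N}$ vanishes your justification does not work: the volume estimate obtained by integrating up to the first zero $t_0$ gives finiteness of $\haus^N(\{0<f\le t_0\})$ but carries no information about $\{f>t_0\}$. The correct argument is that the barrier degenerates beyond $t_0$, forcing that region to be empty (up to negligible sets), exactly in the spirit of the arguments in \autoref{prop:C>0} and \autoref{cor:inscribed radius bound}. With these two points supplied, your plan becomes a complete proof along the paper's intended lines.
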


\subsubsection{Fine properties of the isoperimetric profile}

One of the main results of \cite{AntonelliPasqualettoPozzettaSemolaFIRSThalf} consists in the derivation of sharp second-order differential inequalities satisfied by the isoperimetric profile of an $\RCD$ space, without any assumption on the existence of isoperimetric sets. {Previous results about second order differential inequalities for the isoperimetric profile under lower Ricci curvature bounds in the smooth setting can be found in \cite{BavardPansu86,Bayle03,MorganJohnson00,NiWangiso,MondinoNardulli16}. We refer to the introduction of \cite{AntonelliPasqualettoPozzettaSemolaFIRSThalf} for a more detailed comparison with the previous literature. }\\
For the next result to make sense, we recall in the next remark the basic continuity and positivity property of the profile function.

\begin{remark}\label{rem:LocalHolderProfile}
Let $(X,\dist,\haus^N)$ be an $\RCD(K,N)$ space. Assume that there exists $v_0>0$ such that $\haus^N(B_1(x))\geq v_0$ for every $x\in X$. Let $I:(0,\haus^N(X))\to \mathbb R$ be the isoperimetric profile of $X$. Then $I(v)>0$ for every $v\in (0,\haus^N(X))$ and $I$ is continuous.\\
The previous claim is proved in \cite{AntonelliPasqualettoPozzettaSemolaFIRSThalf}, however it easily follows by \autoref{thm:MassDecompositionINTRO} and by adapting the proof of \cite[Theorem 2]{FloresNardulli20}.
\end{remark}

In the next result, we shall say that a function $I:(0,\haus^N(X))\to (0,\infty)$ satisfies a second-order differential inequality in the viscosity sense if whenever $\phi:(x_0-\eps,x_0+\eps)\cap (0,\haus^N(X))\to \setR$ is a $C^2$ function with $\phi\le I$ on $(x_0-\eps,x_0+\eps)\cap (0,\haus^N(X))$ and $\phi(x_0)=I(x_0)$, then the corresponding inequality holds at $x_0$ with $\phi$ in place of $I$.

\begin{theorem}[{\cite{AntonelliPasqualettoPozzettaSemolaFIRSThalf}}]\label{thm:BavardPansuIntro}
Let $(X,\dist,\haus^N)$ be an $\RCD(K,N)$ space. Assume that there exists $v_0>0$ such that $\haus^N(B_1(x))\geq v_0$ for every $x\in X$. 

Let $I:(0,\haus^N(X))\to (0,\infty)$ be the isoperimetric profile of $X$. Then:
\begin{enumerate}
    \item the inequality
 \[
    -I''I\geq K+\frac{(I')^2}{N-1}\,\quad\text{holds in the viscosity sense on $(0,\haus^N(X))$}\,,
 \]
    \item  if $\psi:=I^{\frac{N}{N-1}}$ then 
\begin{equation}\label{eqn:Bayle}
    -\psi''\geq \frac{KN}{N-1}\psi^{\frac{2-N}{N}}\,\quad\text{holds in the viscosity sense on $(0,\haus^N(X))$}\, .
\end{equation}
\end{enumerate}
\end{theorem}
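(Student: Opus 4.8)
The plan is to localise at a fixed volume $\bar v\in(0,\haus^N(X))$, to attach to it a (possibly generalised) isoperimetric configuration realising $I(\bar v)$, and then to differentiate twice an elementary competitor inequality obtained by sweeping that configuration along the signed distance, feeding in the Heintze--Karcher estimates of \autoref{prop:variationofarea} to control the resulting perimeters and volumes. Statement (2) will follow formally from (1): since $s\mapsto s^{N/(N-1)}$ is increasing and $C^2$ on $(0,\infty)$, $C^2$ lower test functions for $\psi=I^{N/(N-1)}$ correspond to $C^2$ lower test functions for $I$, and a direct differentiation of the relation $\psi=I^{N/(N-1)}$ turns the inequality of (1) into $-\psi''\geq\frac{KN}{N-1}\psi^{\frac{2-N}{N}}$; so I focus on (1). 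Fix $\bar v$. If $X$ is compact an isoperimetric region of volume $\bar v$ exists by the direct method; if $X$ is noncompact, \autoref{thm:MassDecompositionINTRO} provides a generalised isoperimetric configuration $R=\Omega\sqcup\bigsqcup_{j=1}^{\overline N}Z_j$ whose pieces $P\in\{\Omega,Z_1,\dots,Z_{\overline N}\}$ are isoperimetric regions of finite volume and perimeter in their own $\RCD(K,N)$ ambient spaces --- $X$ for $\Omega$, a pmGH limit $X_j$ of $X$ at infinity for $Z_j$ --- with $\sum_P\haus^N(P)=\bar v$ and $\sum_P\Per(P)=I(\bar v)$. By \autoref{prop:ProfileDecomposition} the generalised profile $I_{X\sqcup_jX_j}$ agrees with $I$ at every volume, so any configuration produced by modifying finitely many pieces is a legitimate competitor at its own volume. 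For each piece $P$ I choose a mean curvature barrier $c_P$ via \autoref{thm:Isoperimetriciintro} (the topological regularity of \autoref{thm:RegularityIsoperimetricSets} makes the signed distance meaningful), and I let $P^{+t}$, resp.\ $P^{-t}$, be the outer, resp.\ inner, $t$-equidistant set of $P$ inside its ambient space. By \autoref{prop:variationofarea} these have finite perimeter, $\Per(P^{\pm t})\le J_{\pm c_P,K,N}(t)\,\Per(P)$ and $\haus^N(P^{+t}\setminus P),\ \haus^N(P\setminus P^{-t})\le\Per(P)\int_0^tJ_{\pm c_P,K,N}(s)\,\de s$, while the coarea formula (\autoref{thm:coarea}) applied to the $1$-Lipschitz distance functions at hand gives the matching lower bounds $\haus^N(P^{+t}\setminus P)\ge\int_0^t\Per(P^{+s})\,\de s$ and similarly for $P^{-t}$.

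Now fix a $C^2$ lower test function $\phi$ at $\bar v$, so $\phi\le I$ near $\bar v$ and $\phi(\bar v)=I(\bar v)$, the latter positive by \autoref{rem:LocalHolderProfile}. The first step is to pin $\phi'(\bar v)$. Replacing in $R$ one single piece $P$ by $P^{+t}$, resp.\ $P^{-t}$, yields a competitor at volume $\bar v+\delta_P^+(t)$, resp.\ $\bar v-\delta_P^-(t)$, of perimeter at most $I(\bar v)-\Per(P)+J_{\pm c_P,K,N}(t)\,\Per(P)$, where $\delta_P^\pm(t)\to0$ and $\delta_P^\pm(t)/t\to\Per(P)>0$ as $t\to0^+$ (from the estimates above and lower semicontinuity of the perimeter). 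Along the sequences $w_t:=\bar v\pm\delta_P^\pm(t)$ this reads $I(w_t)\le I(\bar v)+c_P(w_t-\bar v)+o(|w_t-\bar v|)$; if $\phi'(\bar v)>c_P$ this would contradict $\phi(w_t)\le I(w_t)$ for the outer variation, so $\phi'(\bar v)\le c_P$, and the symmetric argument with the inner variation gives $\phi'(\bar v)\ge c_P$. Hence all the barriers coincide: $c_P=c:=\phi'(\bar v)$ for every piece.

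The second step enlarges all the pieces simultaneously. Set $V(t):=\sum_P\haus^N(P^{+t})$ and $\beta(t):=\sum_P\Per(P^{+t})=\Per\bigl(\bigsqcup_PP^{+t}\bigr)$. Lower semicontinuity together with $\beta(t)\le J_{c,K,N}(t)\,I(\bar v)$ gives $\beta(0)=I(\bar v)$ and continuity of $\beta$ at $0$, and combining the volume estimates with $\beta(s)\ge I(V(s))\ge\phi(V(s))$ one gets, as $t\to0^+$, the expansions $V(t)-\bar v=I(\bar v)\,t\bigl(1+\tfrac{c}{2}t\bigr)+o(t^2)$ and $\beta(t)=I(\bar v)(1+ct)+o(t)$. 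Since $\bigsqcup_PP^{+t}$ is a competitor at volume $V(t)$ and $I_{X\sqcup_jX_j}=I$, it holds $\phi(V(t))\le I(V(t))\le\beta(t)\le I(\bar v)\,J_{c,K,N}(t)$; expanding the extreme sides to second order in $t$ and using $\phi\in C^2$, $\phi'(\bar v)=c$ and $\phi(\bar v)=I(\bar v)>0$, the constant and linear terms match and the comparison of the $t^2$ coefficients gives $\phi''(\bar v)\,I(\bar v)\le\frac{(N-2)c^2}{N-1}-c^2-K$, that is $-\phi''(\bar v)\,I(\bar v)\ge K+\frac{c^2}{N-1}=K+\frac{\phi'(\bar v)^2}{N-1}$, which is (1). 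The step I expect to be the main obstacle is the bookkeeping around the generalised configuration --- checking that the perimeter and volume comparisons, the lower semicontinuity, and the coarea lower bounds are robust enough to be summed over the finitely many pieces living in distinct limit spaces, and that the one-sided variations genuinely force $\phi'(\bar v)$ to equal the common barrier; with this in hand the two Taylor expansions are routine.
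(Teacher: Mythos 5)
Your argument is correct and reconstructs essentially the proof from the companion paper \cite{AntonelliPasqualettoPozzettaSemolaFIRSThalf}, from which the present paper only quotes \autoref{thm:BavardPansuIntro}: generalized isoperimetric regions via \autoref{thm:MassDecompositionINTRO} and \autoref{prop:ProfileDecomposition}, identification of every mean curvature barrier with $\phi'(\bar v)$ through one-sided equidistant variations, and the second-order comparison of $\phi(V(t))$ with $I(\bar v)J_{c,K,N}(t)$ using \autoref{prop:variationofarea}, coarea, and lower semicontinuity. The reduction of item (2) to item (1) via the monotone $C^2$ change of test function $\phi=\theta^{\frac{N-1}{N}}$ is the standard one and works exactly as you indicate.
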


The previous \autoref{thm:BavardPansuIntro} implies several fine properties on the isoperimetric profile, as stated below.

\begin{corollary}[{\cite{AntonelliPasqualettoPozzettaSemolaFIRSThalf}}]\label{cor:FinePropertiesProfile}
Let $(X,\dist,\haus^N)$ be an $\RCD(K,N)$ space with $N\ge 2$. Assume that there exists $v_0>0$ such that $\haus^N(B_1(x))\geq v_0$ for every $x\in X$. Let $I:(0,\haus^N(X))\to (0,\infty)$ be the isoperimetric profile of $X$.

Then the following assertions hold true.
\begin{enumerate}
    \item There exists $C:=C(K,N,v_0)>0$ and $v_1:=v_1(K,N,v_0)>0$ such that the function $\eta(v):=I^{\frac{N}{N-1}}(v)-Cv^{\frac{2+N}{N}}$ is concave on the interval $[0,v_1]$.
    As a consequence the function
    \[
    [0,\haus^N(X))\ni v\mapsto \frac{I(v)}{v^{\frac{N-1}{N}}},
    \]
    has a finite strictly positive limit as $v\to 0$.
    
    \item There exists \(\tilde C=\tilde C(K,N,v_0)>0\) such that the function \(\tilde\eta(v)\coloneqq I(v)-\tilde C v^{\frac{1+N}{N}}\) is
    concave on \([0,v_1]\).
    
    \item There exists $\eps \eqdef \eps(K,N,v_0)>0$ such that $I$ is strictly subadditive on $(0,\eps)$. Moreover, if $K=0$, then one can take $\eps=\haus^N(X)$.
    
    \item For any $0<V_1<\haus^N(X)$ there exist $\mathscr{C},\mathscr{L}>0$ depending on $K,N,v_0,V_1$ such that
    \begin{equation}
    \begin{split}
        v\mapsto I^{\frac{N}{N-1}}(v) - \mathscr{C}v^{\frac{2+N}{N}} &\qquad\text{is concave on $[0,V_1]$}\, ,\\
      v\mapsto  I^{\frac{N}{N-1}}(v) &\qquad\text{is $\mathscr{L}$-Lipschitz on $[0,V_1]$}\, .
    \end{split}
    \end{equation}
\end{enumerate}
\end{corollary}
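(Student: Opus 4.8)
The strategy is to extract all four items from the two viscosity differential inequalities of \autoref{thm:BavardPansuIntro}, from the continuity and strict positivity of $I$ in \autoref{rem:LocalHolderProfile}, and from the two-sided bound
\[
c_1\,v^{\frac{N-1}{N}}\ \le\ I(v)\ \le\ c_2\,v^{\frac{N-1}{N}}\qquad\text{for }v\in(0,v_1],
\]
with $c_1,c_2,v_1>0$ depending only on $K,N,v_0$: the upper bound follows by comparing $I(v)$ with small metric balls, controlling their volume and perimeter through Bishop--Gromov and $\haus^N(B_1(\cdot))\ge v_0$, and the lower bound is the local Euclidean-type isoperimetric inequality valid on $\RCD$ spaces, which are locally uniformly doubling PI spaces that are uniformly Ahlfors $N$-regular at small scales. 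In particular $\psi:=I^{\frac{N}{N-1}}$ extends continuously to $[0,v_1]$ with $\psi(0)=0$ and $c_1'v\le\psi(v)\le c_2'v$ on $(0,v_1]$, where $c_i':=c_i^{N/(N-1)}$.

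\emph{Items (1) and (2).} If $K\ge 0$, then $-\psi''\ge\frac{KN}{N-1}\psi^{\frac{2-N}{N}}\ge 0$ in the viscosity sense forces $\psi$ to be concave on $(0,\haus^N(X))$, so $\eta:=\psi-Cv^{\frac{N+2}{N}}$ is concave for every $C>0$ (a concave function minus a convex one), and one may take $v_1=\haus^N(X)$. If $K<0$, then since $\frac{2-N}{N}\le 0$ and $\psi(v)\ge c_1'v$ we get $\psi(v)^{\frac{2-N}{N}}\le (c_1')^{\frac{2-N}{N}}v^{\frac{2-N}{N}}$, so the viscosity inequality upgrades to $\psi''\le C_K v^{\frac{2-N}{N}}$ in the viscosity sense on $(0,v_1]$ with $C_K=C_K(K,N,v_0)$; choosing $C=C(K,N,v_0)$ so that $\big(Cv^{\frac{N+2}{N}}\big)''=\tfrac{2(N+2)}{N^2}\,C\,v^{\frac{2-N}{N}}\ge C_Kv^{\frac{2-N}{N}}$ makes $\eta$ have a nonpositive viscosity second derivative on $(0,v_1]$, hence concave there, hence concave on $[0,v_1]$ by continuity at $0$. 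The consequence then follows: $I(v)/v^{\frac{N-1}{N}}=(\psi(v)/v)^{\frac{N-1}{N}}$ and $\psi(v)/v=\eta(v)/v+Cv^{\frac{2}{N}}$, and since $\eta$ is concave with $\eta(0)=0$ the chord slope $\eta(v)/v$ is monotone in $v$ and bounded by the two-sided estimate, hence converges as $v\to0^+$ to a limit in $(0,+\infty)$. Item~(2) is proved the same way starting from $-I''I\ge K+\tfrac{(I')^2}{N-1}$, which gives $I''\le -K/I$ in the viscosity sense; for $K<0$ the lower bound on $I$ yields $I''\le\tfrac{|K|}{c_1}v^{\frac{1-N}{N}}$, and since $v^{\frac{N+1}{N}}$ has second derivative proportional to $v^{\frac{1-N}{N}}$, a suitable $\tilde C=\tilde C(K,N,v_0)$ makes $\tilde\eta:=I-\tilde Cv^{\frac{N+1}{N}}$ concave on $[0,v_1]$.

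\emph{Item (3).} When $K=0$, $\psi$ is concave on $(0,\haus^N(X))$ with $\psi(0^+)=0$, hence subadditive there, and since $x\mapsto x^{\frac{N-1}{N}}$ is increasing and strictly subadditive on $(0,\infty)$, $I=\psi^{\frac{N-1}{N}}$ is strictly subadditive on all of $(0,\haus^N(X))$ (strictness from $I>0$); thus $\eps=\haus^N(X)$. For general $K$ fix $0<a,b$ with $a+b<\eps\le v_1$ and write $q=\frac{N+2}{N}>1$, $p=\frac{N}{N-1}$. The concave function $\eta=\psi-Cv^q$ vanishes at $0$, hence is subadditive, so $\psi(a+b)\le\psi(a)+\psi(b)+\delta$ with $0\le\delta=C\big[(a+b)^q-a^q-b^q\big]\le Cq\,(a+b)^{q-1}\min(a,b)$. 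By concavity of $t\mapsto t^{1/p}$ one gets $I(a+b)=\psi(a+b)^{1/p}\le\big(\psi(a)+\psi(b)\big)^{1/p}+\tfrac1p\delta\big(\psi(a)+\psi(b)\big)^{1/p-1}$, while the strict inequality $(x^p+y^p)^{1/p}<x+y$ for $x,y>0$ gives $\big(I(a)^p+I(b)^p\big)^{1/p}\le I(a)+I(b)-G$ with $G\ge c_p\min(I(a),I(b))\ge c_pc_1(\min(a,b))^{1/p}$. Using the linear bounds on $\psi$ to estimate the correction term by $C_6\min(a,b)(a+b)^{1/N}$, the desired $I(a+b)<I(a)+I(b)$ reduces to $\big(\min(a,b)(a+b)\big)^{1/N}<c_pc_1/C_6$, which holds once $\eps=\eps(K,N,v_0)$ is small enough.

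\emph{Item (4) and the main difficulty.} Fixing $V_1<\haus^N(X)$, one has $\psi^{\frac{2-N}{N}}\le C_5v^{\frac{2-N}{N}}$ on all of $(0,V_1]$ — near $0$ by the linear lower bound on $\psi$ as above, and on $[v_1,V_1]$ because there $\psi\ge m(V_1)>0$ by continuity and positivity — so the argument of item~(1) produces $\mathscr{C}=\mathscr{C}(K,N,v_0,V_1)$ with $\psi-\mathscr{C}v^{\frac{N+2}{N}}$ concave on $[0,V_1]$. Applying this on a slightly larger interval $[0,V_1']$ with $V_1<V_1'<\haus^N(X)$ (or $V_1'=2V_1$ if $\haus^N(X)=+\infty$), the concave function $\eta$ so obtained has $\eta'(0^+)=\lim_{v\to0}\psi(v)/v<+\infty$ by item~(1) and, $V_1$ being interior, a finite left derivative at $V_1$; hence $\eta'$ is bounded on $(0,V_1]$ and $\psi=\eta+\mathscr{C}v^{\frac{N+2}{N}}$ is $\mathscr{L}$-Lipschitz on $[0,V_1]$. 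The genuinely delicate point is the strict subadditivity of item~(3) when $K<0$: unlike the case $K=0$, here $\psi$ is only concave up to a convex correction, and one must play the slack of the strict inequality $(x^p+y^p)^{1/p}<x+y$ — which degenerates when the pair $(a,b)$ is very unbalanced — against the convex defect $\delta$, which is fortunately small, of order $\min(a,b)(a+b)^{q-1}$, precisely in that regime; making the two effects cancel uniformly at all scales is the crux, while everything else reduces to combining the viscosity inequalities with the two-sided bound on $I$.
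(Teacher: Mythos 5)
This corollary is imported verbatim from the companion paper, so the present text contains no proof to compare against; judged on its own terms, your derivation follows exactly the route the statement is built on: everything is extracted from the viscosity inequalities of \autoref{thm:BavardPansuIntro}, the continuity and positivity of $I$ from \autoref{rem:LocalHolderProfile}, and a two-sided bound $c_1v^{\frac{N-1}{N}}\le I(v)\le c_2v^{\frac{N-1}{N}}$ at small volumes with constants depending only on $K,N,v_0$. Items (1)--(3) check out: the viscosity comparison of $\psi$ with $Cv^{\frac{N+2}{N}}$ (resp.\ of $I$ with $\tilde Cv^{\frac{N+1}{N}}$), the monotone chord slope of the concave $\eta$ with $\eta(0)=0$, and the balancing in item (3) of the convexity defect $\delta\le Cq(a+b)^{q-1}\min(a,b)$ (with $q=\frac{N+2}{N}$) against the strict-subadditivity gain $G\ge\frac1N\min(I(a),I(b))\ge\frac{c_1}{N}\min(a,b)^{\frac{N-1}{N}}$ are all correct, granted the asserted small-volume lower bound on $I$ — which is the one external ingredient you use without proof and which itself requires a covering/relative-isoperimetric argument on the uniformly Ahlfors-regular PI balls, not just the phrase you give for it.

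The genuine gap is the uniformity of the constants in item (4). The statement requires $\mathscr C$ and $\mathscr L$ to depend only on $K,N,v_0,V_1$, and this is precisely what gets used later (e.g.\ in \autoref{prop:unifrapp}, where the Lipschitz bound must hold uniformly along a sequence of different spaces). Your lower bound $\psi\ge m(V_1)>0$ on $[v_1,V_1]$ is obtained ``by continuity and positivity'' of $I$ on the given space, which only yields a space-dependent $m$, hence a space-dependent $\mathscr C$; likewise, bounding the left derivative of $\eta$ at $V_1$ requires a lower bound on $\eta(V_1')$ and an upper bound on $\psi$ near $V_1$ that are uniform in the class, and you never establish two-sided control of $I$ away from $v\to0$. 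To close this you need, with constants depending only on $K,N,v_0,V_1$: a global lower bound of the form $I(v)\ge c(K,N,v_0)\min\{v,v^{\frac{N-1}{N}}\}$ on $(0,V_1']$ (the same covering argument you invoke near zero, run at scale one), and an upper bound $I(v)\le C(K,N,v_0,V_1)$ for $v\le V_1'$ obtained by comparison with metric balls, whose radius is controlled by the uniform lower bound on volumes of unit balls and whose perimeter is controlled by Bishop--Gromov. With these uniform bounds inserted, your argument for item (4) goes through as written.
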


\begin{remark}
We anticipate that the limit $\lim_{v\to 0} I(v)/v^{\frac{N-1}{N}}$ will be explicitly computed in \autoref{cor:AsymptoticProfileZero}, yielding an answer to Question 4 in \cite{NardulliOsorio20}.
\end{remark}

\subsubsection{Regularity and stability of isoperimetric sets}

The fine properties in \autoref{cor:FinePropertiesProfile} imply useful regularity properties on isoperimetric sets, some of them stated in the next proposition. {We refer to \cite{MondinoNardulli16} for analogous results in the case of smooth Riemannian manifolds with nonnegative Ricci curvature and uniform controls on the geometry at infinity.}

\begin{proposition}[{\cite{AntonelliPasqualettoPozzettaSemolaFIRSThalf}}]\label{prop:NewRegularityIsoperimetricSets}
Let $(X,\dist,\haus^N)$ be an $\RCD(K,N)$ space with $N\geq 2$ and $K\le 0$. Let us assume that $\inf_{x\in X}\haus^N(B_1(x))\geq  v_0>0$.

Then the following assertions hold true.
\begin{enumerate}
    \item Letting $\varepsilon>0$ be such that the isoperimetric profile $I$ is strictly subadditive on $(0,\varepsilon)$\footnote{Such an $\varepsilon>0$ exists thanks to \autoref{cor:FinePropertiesProfile}.}, if $E$ is an isoperimetric region in $X$ with $\haus^N(E)< \varepsilon$, then $E$ is connected. If in addition $\haus^N$ is finite, then $E$ is simple (i.e.\ $E$ and $X\setminus E$ are indecomposable) and $E^{(0)}$ is connected.
    
    \item There exist constants \(\bar v=\bar v(K,N,v_0)>0\) and $C=C(K,N,v_0)>0$ such that, if $E\subseteq X$ is an isoperimetric region, then
    \begin{equation}
    \diam E\leq C\haus^N(E)^{\frac{1}{N}}\quad\text{ whenever }\haus^N(E)\leq\bar v\, .
    \end{equation}
    Moreover, if \(K=0\) and $A\coloneqq{\rm AVR}(X,\dist,\haus^N)>0$, then it holds that
    \begin{equation}\label{eq:BoundDiamBig}
    \diam E\leq\tilde C\haus^N(E)^{\frac{1}{N}}\quad\text{ for every isoperimetric region }E\subseteq X\, ,
    \end{equation}
    for some constant \(\tilde C=\tilde C(K,N,A)>0\).
\end{enumerate}
\end{proposition}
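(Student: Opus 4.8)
\emph{Part (1).} I would deduce both statements from the strict subadditivity of the profile on $(0,\eps)$ given by \autoref{cor:FinePropertiesProfile}(3), together with the decomposition of a set of finite perimeter into its at most countably many indecomposable components, over which $\haus^N$ and $\Per$ are additive. If $E$ is isoperimetric with $\haus^N(E)<\eps$ and $E$ were decomposable, choose one component $E_1$ and write $E=E_1\sqcup(E\setminus E_1)$ with $0<\haus^N(E_1),\haus^N(E\setminus E_1)<\eps$ and $\Per(E)=\Per(E_1)+\Per(E\setminus E_1)\ge I(\haus^N(E_1))+I(\haus^N(E\setminus E_1))>I(\haus^N(E))$, contradicting minimality; hence $E$ is indecomposable, and since its canonical representative $E^{(1)}$ is open by \autoref{thm:RegularityIsoperimetricSets} it is connected. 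If moreover $\haus^N(X)<+\infty$, then $\haus^N(B_1(\cdot))\ge v_0$ forces $X$ to be bounded, hence compact (it is complete and doubling), so $\partial E^{(1)}$ is compact and Ahlfors regular and $E^{(0)}=X\setminus\overline{E^{(1)}}$ is open. Since $X\setminus E$ is a volume‑constrained minimizer for its own volume and $I$ is symmetric about $\haus^N(X)/2$, a variant of the argument above rules out decomposability of $X\setminus E$: a ``small'' component of the complement is a hole which, filled into $E$, strictly lowers the perimeter while raising the volume within a range on which $I$ is nondecreasing, again contradicting minimality, so at most one component survives. Thus $E$ is simple and $E^{(0)}$ is connected. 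The delicate point is precisely the bookkeeping for the complement, which I would organize through the structure theory of indecomposable sets.

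\emph{Part (2).} The core is a scale‑invariant density estimate, and the first step is a sharp control on the regularity of $I$. By \autoref{cor:FinePropertiesProfile}(1), $I^{\frac{N}{N-1}}$ is semiconcave near $0$ and asymptotic there to $L_0\,v$ with $L_0=\big(\lim_{v\to0}I(v)v^{-\frac{N-1}{N}}\big)^{\frac{N}{N-1}}\in(0,\infty)$; when $K=0$ and $A\coloneqq{\rm AVR}(X,\dist,\haus^N)>0$, \autoref{thm:BavardPansuIntro}(2) makes $I^{\frac{N}{N-1}}$ genuinely concave on $(0,+\infty)$, while \autoref{thm:IsoperimetricSharpRigidintro} supplies the matching lower bound $I(v)\ge N(\omega_N A)^{\frac1N}v^{\frac{N-1}{N}}$. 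In either regime one concludes that $I$ is locally Lipschitz near every value $V$ with Lipschitz constant $L\le C\,V^{-\frac1N}$ — for $V\le\bar v$ with $C=C(K,N,v_0)$, and for all $V$ with $C=C(N,A)$ in the case $K=0$, $A>0$ — by writing $I=(I^{\frac{N}{N-1}})^{\frac{N-1}{N}}$ and composing the Lipschitz bound for $I^{\frac{N}{N-1}}$ with $s\mapsto s^{\frac{N-1}{N}}$, which is $\sim V^{-\frac1N}$‑Lipschitz on the relevant range of values. Local Lipschitzianity then upgrades any isoperimetric region $E$ of volume $V$ to a $\Lambda$‑minimizer of the perimeter for variations supported in small balls: whenever $\haus^N(E\Delta F)$ is small, $\Per(F)\ge I(\haus^N(F))\ge I(V)-L\,|\haus^N(F)-V|\ge\Per(E)-L\,\haus^N(E\Delta F)$, i.e.\ $\Per(E)\le\Per(F)+\Lambda\,\haus^N(E\Delta F)$ with $\Lambda=L\lesssim V^{-\frac1N}$.

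Fix $x\in\overline{E^{(1)}}$ and set $g(r)\coloneqq\haus^N(E\cap B_r(x))$, nondecreasing with $g(0)=0$ and $g(r)>0$ for $r>0$. Testing the $\Lambda$‑minimality against $F=E\setminus B_r(x)$ and using the standard slicing bounds $\Per(E\setminus B_r(x))\le\Per(E)-\Per(E,B_r(x))+g'(r)$ and $\Per(E\cap B_r(x))\le\Per(E,B_r(x))+g'(r)$, valid for a.e.\ $r$ with $g'(r)=\haus^{N-1}(E^{(1)}\cap\partial B_r(x))$, one obtains $\Per(E,B_r(x))\le g'(r)+\Lambda g(r)$; inserting this into the isoperimetric inequality $I(g(r))\le\Per(E\cap B_r(x))$ and using $I(v)\ge c_*v^{\frac{N-1}{N}}$ for small $v$ (again \autoref{cor:FinePropertiesProfile}(1)) yields $c_*\,g(r)^{\frac{N-1}{N}}\le 2g'(r)+\Lambda g(r)$. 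Since $g(r)\le\haus^N(B_r(x))\le C_6 r^N$ for $r\le1$, the term $\Lambda g(r)=\Lambda g(r)^{\frac1N}g(r)^{\frac{N-1}{N}}$ is reabsorbed as soon as $r\le r_0$, where $r_0\sim\Lambda^{-1}\sim V^{\frac1N}$, leaving $g'(r)\ge\tfrac{c_*}{4}\,g(r)^{\frac{N-1}{N}}$; integrating this differential inequality from $0$ gives the density lower bound $\haus^N(E\cap B_r(x))\ge c_0\,r^N$ for all $r\le r_0$, with $c_0=c_0(K,N,v_0)$ (resp.\ $c_0(N,A)$).

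To conclude, recall from Part (1) that $E^{(1)}$ is connected (taking $\bar v\le\eps$, resp.\ for all $V$ when $K=0$), so for a fixed $p\in E^{(1)}$ the continuous function $\dist(p,\cdot)$ maps $E^{(1)}$ onto an interval containing $[0,R^*)$, where $R^*\coloneqq\sup_{E^{(1)}}\dist(p,\cdot)\ge\tfrac12\diam E^{(1)}$. Choose $\rho\sim r_0\sim V^{\frac1N}$ and points $y_j\in E^{(1)}$ with $\dist(p,y_j)=(2j+1)\rho$ for $0\le j\le\lfloor R^*/(2\rho)\rfloor-1$: the balls $B_\rho(y_j)$ are pairwise disjoint, each carries $\haus^N$‑mass at least $c_0\rho^N$ of $E$, so $\lfloor R^*/(2\rho)\rfloor\,c_0\rho^N\le\haus^N(E)=V$, which rearranges to $R^*\le 2V/(c_0\rho^{N-1})+2\rho\lesssim V^{\frac1N}$ and hence $\diam E\le 2R^*\le C\,\haus^N(E)^{\frac1N}$. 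When $K=0$ and $A>0$ all scales above are uniformly comparable to $V^{\frac1N}$ with constants depending only on $N$ and $A$, which gives the bound for every isoperimetric region. The main obstacle is exactly this scale invariance: one must verify that the Lipschitz constant of $I$ near $V$ decays like $V^{-1/N}$, so that the $\Lambda$‑minimality scale $r_0$, and therefore $\rho$, has order $V^{1/N}$; with only a fixed small scale the chain argument would give merely $\diam E\lesssim V$, which is vacuous as $V\to0$.
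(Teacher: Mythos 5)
This proposition is quoted from the companion paper \cite{AntonelliPasqualettoPozzettaSemolaFIRSThalf}; the present paper contains no proof of it, so I can only assess your argument on its own merits. For the first claim of part (1) and for part (2) your route is the natural one and is essentially correct: strict subadditivity plus additivity of volume and perimeter over the decomposition into indecomposable components rules out decomposability, and for the diameter bound the chain ``local Lipschitz bound for $I$ near $V$ with constant $\lesssim V^{-1/N}$ (from \autoref{cor:FinePropertiesProfile}, resp.\ concavity of $I^{\frac{N}{N-1}}$ together with \autoref{thm:IsoperimetricSharpRigidintro} when $K=0$, ${\rm AVR}>0$) $\Rightarrow$ $\Lambda$-minimality with $\Lambda\sim V^{-1/N}$ $\Rightarrow$ density estimates at the scale $r_0\sim V^{1/N}$ $\Rightarrow$ chaining along the connected open representative'' is sound, and you correctly identified that the whole point is the scale invariance $r_0\sim V^{1/N}$. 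Two steps deserve explicit justification rather than citation: the quantitative lower bound $I(w)\ge c_*(K,N,v_0)\,w^{\frac{N-1}{N}}$ for $w\le\bar v$ does not follow merely from the existence of the limit in \autoref{cor:FinePropertiesProfile}(1), and ``indecomposable $\Rightarrow$ the open representative $E^{(1)}$ is connected'' needs the short argument that a splitting $E^{(1)}=A\sqcup B$ into disjoint open sets forces $\mathcal F A\cap\mathcal F B=\emptyset$ with both contained in $\partial^e E$, whence $\Per(E)=\Per(A)+\Per(B)$.

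The genuine gap is in the second half of part (1), namely that $E$ is simple and $E^{(0)}$ is connected when $\haus^N(X)<\infty$. Your mechanism is: fill a component $F$ of $X\setminus E$ into $E$, observe $\Per(E\cup F)=\Per(E)-\Per(F)<I(\haus^N(E))$, and contradict minimality ``within a range on which $I$ is nondecreasing''. This requires $I$ to be nondecreasing on $[\haus^N(E),\haus^N(E)+\haus^N(F)]$, and the smaller component of the complement can carry up to half of $\haus^N(X)-\haus^N(E)$, so the range is not small. For $K=0$ this can be salvaged by concavity (\autoref{cor:IsoperimetricProfileRCD0N}) together with the symmetry $I(v)=I(\haus^N(X)-v)$, but the proposition allows $K<0$, where the profile of a compact space need not be monotone on any fixed fraction of $(0,\haus^N(X))$ and strict subadditivity is only guaranteed on $(0,\varepsilon)$, so neither your argument for $E$ nor its ``complement'' variant applies to the large complementary volumes. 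As sketched, therefore, the proof of simplicity does not close for $K<0$; one needs an additional ingredient, for instance localizing the holes inside a ball of radius $\sim\haus^N(E)^{1/N}$ containing $E$ (so that the filled volume stays in a range where the quantitative bounds $I(w)\ge c_*w^{\frac{N-1}{N}}$ and the local Lipschitz estimate beat each other in your favour), or a volume-restoring deformation argument, rather than global monotonicity of $I$.
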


Another useful result implied by \autoref{cor:FinePropertiesProfile} is a stability theorem for sequences of isoperimetric sets. We can prove that for sequences of isoperimetric sets $E_i$ converging in $L^1$ to a limit set $F$, the convergence can be improved to Hausdorff convergence of both the sets and their topological boundary. {Notice that the statement is a well known consequence of the classical regularity theory in the case when the ambient space is a fixed Riemannian manifold, or when a sequence of Riemannian metrics on a given manifold converges in a sufficiently strong norm (see for example the argument in \cite[Theorem 2.2]{MorganJohnson00} or \cite[Part III]{MaggiBook}). Here we deal with lower regularity assumptions for the ambient spaces and a weaker notion of convergence.}

Observe that no uniform hypotheses on the mean curvature barriers for the $E_i$'s are assumed, instead any sequence of mean curvature barriers for the $E_i$'s converges to a mean curvature barrier for $F$.

\begin{theorem}[{\cite{AntonelliPasqualettoPozzettaSemolaFIRSThalf}}]\label{thm:ConvergenceBarriersStability}
Let $(X_i,\dist_i,\haus^N,x_i)$ be a sequence of $\RCD(K,N)$ spaces converging to $(Y,\dist_Y,\haus^N,y)$ in pmGH sense, and let $(Z,\dist_Z)$ be a space realizing the convergence. Assume that $\haus^N(B_1(p))\ge v_0>0$ for any $p \in X_i$ and any $i$. Let $E_i \subset X_i$, $F\subset Y$.

If $E_i$ is isoperimetric, $E_i\subset B_R(x_i)$ for some $R>0$ for any $i$, $c_i$ is a mean curvature barrier for $E_i$ for any $i$, $E_i\to F $ in $L^1$-strong, and $0<\lim_i \haus^N(E_i)< \lim_i \haus^N(X_i)$, then
    \begin{equation}
    \begin{split}
        \text{$F$ is isoperimetric}\, ,& \\
         |c_i| \le L& \qquad \text{for any $i$ large enough}\, ,\\
        |D\chi_{E_i}|\to |D\chi_F|  & \qquad\text{in duality with $C_{\rm bs}(Z)$}\, ,\\
        \partial E_i \to \partial F\, , \,\, 
        \overline{E}_i \to \overline{F}  & \qquad\text{in Hausdorff distance in $Z$}\, ,
    \end{split}
    \end{equation}
where $L=L(K,N,v_0,\lim_i \haus^N(E_i), \lim_i \haus^N(X_i))>0$. In particular, the mean curvature barriers $c_i$ converge up to subsequence to a mean curvature barrier for $F$.
\end{theorem}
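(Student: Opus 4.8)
The plan is to split the statement into four linked assertions and prove them in the order: (1) $F$ is isoperimetric; (2) uniform bound on the barriers $|c_i|\le L$; (3) $|D\chi_{E_i}|\to |D\chi_F|$ in duality with $C_{\rm bs}(Z)$; (4) Hausdorff convergence of $\overline E_i$ and $\partial E_i$. For (1), one first notes that $L^1$-strong convergence gives $\haus^N(E_i)\to\haus^N(F)=:v$ and, by lower semicontinuity of the perimeter along pmGH convergence (the analogue of \autoref{rem:SemicontPerimeter} in the pmGH setting, as in \cite{AmbrosioBrueSemola19}), $\Per(F)\le\liminf_i\Per(E_i)=\liminf_i I_{X_i}(v)$. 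To get the reverse inequality one must show $\limsup_i I_{X_i}(v)\le\Per(F)$, i.e. that the isoperimetric profiles do not drop in the limit; but $F\subset Y$ is an admissible competitor of volume $v$ in $Y$, so $I_Y(v)\le\Per(F)$, and combined with a recovery-sequence argument (build competitors in $X_i$ out of $F$, again via the $L^1$-strong theory of \cite{AmbrosioBrueSemola19, AmbrosioHonda17}) one obtains $\limsup_i I_{X_i}(v)\le I_Y(v)\le \Per(F)$. Here the uniform lower bound $\haus^N(B_1(p))\ge v_0$ is essential: it guarantees (via \autoref{cor:FinePropertiesProfile}, item (4), and \autoref{prop:NewRegularityIsoperimetricSets}) the equi-boundedness, equi-Lipschitz and concavity estimates on the profiles $I_{X_i}$ needed to pass to the limit and to rule out loss of mass to infinity; the hypothesis $E_i\subset B_R(x_i)$ together with $0<\lim_i\haus^N(E_i)<\lim_i\haus^N(X_i)$ rules out the degenerate cases $v=0$ and $v=\haus^N(Y)$.

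For (2), the equidistant-set estimates of \autoref{prop:variationofarea} are the key tool. The constant $c_i$ is the mean curvature barrier of the isoperimetric set $E_i$ given by \autoref{thm:Isoperimetriciintro}, and \eqref{eq:extvolbound}--\eqref{eq:intvolbound} bound the volume of $t$-neighbourhoods of $E_i$ (inside and outside) in terms of $\int_0^t J_{\pm c_i,K,N}(r)\di r$ and $\Per(E_i)$. If $|c_i|\to\infty$ along a subsequence, the Jacobian $J_{-c_i,K,N}$ (or $J_{c_i,K,N}$, depending on the sign) forces the inner (or outer) $t$-neighbourhood to exhaust a definite fraction of $E_i$ for arbitrarily small $t$, while $\Per(E_i)$ stays bounded (by (1) and the profile estimates) and $\haus^N(E_i)\to v>0$ stays bounded below. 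Quantitatively: since $J_{-c,K,N}(r)=\big(\cos_{K/(N-1)}(r)-\tfrac{c}{N-1}\sin_{K/(N-1)}(r)\big)^{N-1}_+$ vanishes for $r\ge r_0(c)$ with $r_0(c)\to 0$ as $c\to+\infty$, \eqref{eq:intvolbound} with $t=r_0(c_i)$ gives $\haus^N(E_i)\le\Per(E_i)\int_0^{r_0(c_i)}J_{-c_i,K,N}\to 0$, a contradiction (and symmetrically for $c_i\to-\infty$ using \eqref{eq:extvolbound} together with the fact that $\haus^N(X_i\setminus E_i)$ is also bounded below, which follows from $v<\lim_i\haus^N(X_i)$). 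Hence $|c_i|\le L$ for $i$ large, with $L$ depending only on $K,N,v_0$ and the two limit volumes. The main obstacle is exactly this step: making the blow-up of $|c_i|$ genuinely incompatible with the two-sided volume and perimeter control, using the precise form of the Jacobian comparison functions and keeping track that the relevant constant is uniform.

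For (3), having $|c_i|\le L$, the same \autoref{prop:variationofarea} estimates now give \emph{uniform} (in $i$) bounds on $\haus^N$ of $t$-neighbourhoods of $\overline E_i$ and of $X_i\setminus E_i$, of the form $\haus^N(\{0<\dist(\cdot,\overline E_i)\le t\})\le C(L,K,N)\Per(E_i)\,t$ for $t$ small, i.e. a uniform one-sided density/perimeter lower regularity. Together with $L^1$-strong convergence $\chi_{E_i}\haus^N\weakto\chi_F\haus^N$ and the lower semicontinuity already used in (1), this upgrades weak convergence of the perimeter measures to $|D\chi_{E_i}|\to|D\chi_F|$ in duality with $C_{\rm bs}(Z)$: the uniform density estimates prevent perimeter from concentrating or escaping, and on any ball one compares $\liminf\Per(E_i,B)$ and $\limsup\Per(E_i,B)$ with $\Per(F,B)$ using the coarea/slicing argument of \cite{AmbrosioBrueSemola19}. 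Finally, for (4): the one-sided Ahlfors-type lower bounds on $\Per(E_i,\cdot)=\haus^{N-1}|_{\mathcal F E_i}$ coming from \autoref{thm:RegularityIsoperimetricSets} (applied to each $E_i$, uniformly, since $\haus^N(B_1(p))\ge v_0$), combined with the volume-neighbourhood estimates above, give that points of $\overline E_i$ carry a definite amount of $\haus^N$-mass of $E_i$ at every small scale and, symmetrically, points of $X_i\setminus\overline E_i$ carry a definite amount of complement mass; passing this through the weak convergence of $\chi_{E_i}\haus^N$ and $\chi_{X_i\setminus E_i}\haus^N$ in $Z$ yields that $\overline E_i\to\overline F$ and $X_i\setminus E_i\to Y\setminus F$ in Hausdorff distance, whence $\partial E_i\to\partial F$. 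The last sentence of the statement is then immediate: by (1) $F$ is isoperimetric, by (2) the $c_i$ are bounded, so up to a subsequence $c_i\to c$, and passing to the limit in the distributional inequalities \eqref{eq:sharplaplacianintro} for $f_i=$ signed distance from $\overline E_i$ — using the Hausdorff convergence $\overline E_i\to\overline F$ to get locally uniform convergence of the signed distance functions, and the stability of distributional Laplacian inequalities under pmGH convergence — shows that $c$ is a mean curvature barrier for $F$.
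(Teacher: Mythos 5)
Two remarks before the substance: this theorem is quoted from the companion paper \cite{AntonelliPasqualettoPozzettaSemolaFIRSThalf}, so the present paper contains no proof to compare against; judged on its own terms, your outline follows exactly the route suggested by the tools recalled here (lower semicontinuity plus transplantation for the optimality of $F$, the Heintze--Karcher bounds of \autoref{prop:variationofarea} for the barrier bound, uniform density estimates for the Hausdorff convergence, and passage to the limit in \eqref{eq:sharplaplacianintro} for the final claim), and as a sketch it is essentially sound.

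Two steps, however, do not deliver what you claim as written. In step (1), transplanting $F$ into $X_i$ only yields $\limsup_i I_{X_i}(v_i)\le \Per(F)$, which combined with lower semicontinuity gives $\Per(E_i)\to\Per(F)$ but does \emph{not} show that $F$ is isoperimetric: the inequality you actually need is against arbitrary competitors in $Y$. You must take any (near-optimal, and after a truncation, bounded) $G\subset Y$ with $\haus^N(G)=\haus^N(F)$, produce $G_i\subset X_i$ with $G_i\to G$ in $L^1$-strong, $\Per(G_i)\to\Per(G)$ and volumes fixed to $\haus^N(E_i)$ (a volume correction available under the non-collapsing assumption, or alternatively use the uniform continuity of the profiles from \autoref{cor:FinePropertiesProfile}), so that $\Per(E_i)\le\Per(G_i)$ and hence $\Per(F)\le\liminf_i\Per(E_i)\le\Per(G)$; the parenthetical ``build competitors out of $F$'' conflates these two transplants. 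In step (2), applying \eqref{eq:intvolbound} with $t=r_0(c_i)$ only bounds the inner tubular neighbourhood $\{x\in E_i:\dist(x,X_i\setminus E_i)\le r_0(c_i)\}$, not all of $E_i$; you should either first establish an inradius bound (the analogue of \autoref{cor:inscribed radius bound} for general $K$) or, more simply, let $t\to\infty$ in \eqref{eq:intvolbound}: since $J_{-c_i,K,N}$ vanishes beyond $r_0(c_i)$ the right-hand side is still $\Per(E_i)\int_0^{r_0(c_i)}J_{-c_i,K,N}\to0$ when $c_i\to+\infty$, while the left-hand side exhausts $\haus^N(E_i)\to v>0$ because $E_i$ is bounded; the symmetric argument for $c_i\to-\infty$ uses that $\haus^N(X_i\setminus E_i)$ stays bounded away from $0$ thanks to $\lim_i\haus^N(E_i)<\lim_i\haus^N(X_i)$. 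With these corrections, and granting the standard ingredients you invoke (recovery sequences for sets of finite perimeter along non-collapsed pmGH convergence, uniformity in $i$ of the density estimates, and stability of distributional Laplacian bounds for the uniformly Lipschitz signed distance functions), the argument goes through.
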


We conclude with a technical tool which combines \autoref{thm:MassDecompositionINTRO} with the previously obtained fine properties on the isoperimetric profile and on the topology of isoperimetric sets. {We refer again to \cite{MondinoNardulli16} for analogous results in the setting of smooth Riemannian manifolds with nonnegative Ricci curvature and uniform bounds on the geometry at infinity.}

\begin{lemma}[{\cite{AntonelliPasqualettoPozzettaSemolaFIRSThalf}}]\label{lem:IsoperimetricAtFiniteOrInfinite}
Let $(X,\dist,\haus^N)$ be an $\RCD(K,N)$ space with $N\geq 2$. Let us assume that $\inf_{x\in X}\haus^N(B_1(x))\geq  v_0>0$. Let $\varepsilon>0$ be such that the isoperimetric profile $I$ is strictly subadditive on $(0,\varepsilon)$\footnote{Such an $\varepsilon>0$ exists thanks to \autoref{cor:FinePropertiesProfile}.}.
\begin{enumerate}
    \item Let $\{\Omega_i\}_{i\in\mathbb N}$ be a minimizing (for the perimeter) sequence of bounded finite perimeter sets of volume $v<\varepsilon$ in $X$. Then, if one applies \autoref{thm:MassDecompositionINTRO}, either $\overline N=0$, or $\overline N=1$ and $\haus^N(\Omega)=0$.
    
    \item Let $X_1,\ldots, X_{\overline{N}}$ be pmGH limits of $X$ along sequences of points $\{p_{i,j}\}_{i \in \setN}$, for $j=1,\ldots,\overline{N} \in \setN \cup\{+\infty\}$. Let $\Omega=E \cup \bigcup_{j=1}^{\overline{N}} E_j$, with $E\subset X, E_j \subset X_j$ be a set achieving the infimum in \eqref{eqn:GeneralizedIsoperimetricProfile} for some $v<\varepsilon$. Then exactly one component among $E,E_1,\ldots,E_{\overline{N}}$ is nonempty.
\end{enumerate}
In particular, for any $v<\varepsilon$ there is an $\RCD(K,N)$ space $(Y,\dist,\haus^N)$ which is either $X$ or a pmGH limit of $X$ along a sequence $\{p_i\}_i \subset X$, and a set $E \subset Y$ such that $\haus^N(E)=v$ and $I_X(v)=\Per(E)$.
\end{lemma}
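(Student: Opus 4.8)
The plan is to isolate one elementary \emph{packing principle} and apply it three times. The principle reads: if $X_0:=X$ and $X_1,\dots,X_m$ are pmGH limits of $X$ along sequences of points diverging in $X$, and if $v_0,\dots,v_m\ge 0$ satisfy $\sum_j v_j<\varepsilon$ and $\sum_{j=0}^m I_{X_j}(v_j)=I_X(\sum_j v_j)$, then at most one of the $v_j$ is strictly positive. To prove it I would first observe that $I_{X_j}\ge I_X$ on $(0,\haus^N(X))$: testing the generalized profile \eqref{eqn:GeneralizedIsoperimetricProfile} with a single competitor supported in $X_j$ gives $I_{X\sqcup_{j}X_j}(v)\le I_{X_j}(v)$, and the left-hand side equals $I_X(v)$ by \autoref{prop:ProfileDecomposition}. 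Secondly, $I_X$ is strictly subadditive on $(0,\varepsilon)$ by \autoref{cor:FinePropertiesProfile}(3). Hence $\sum_j I_{X_j}(v_j)\ge \sum_j I_X(v_j)\ge I_X(\sum_j v_j)$ by iterated subadditivity, and the hypothesis forces equality throughout; since every partial sum of the $v_j$ lies in $(0,\varepsilon)$, two positive $v_j$'s would make the last inequality strict, a contradiction.

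Given the principle, \textbf{item (1)} follows by applying \autoref{thm:MassDecompositionINTRO} to the minimizing sequence $\{\Omega_i\}$ of volume $v<\varepsilon$: since $\Omega$ and the $Z_j$ are isoperimetric for their own volumes and, as one checks, each bubble $Z_j$ has positive volume, \eqref{eq:UguaglianzeIntro} becomes $\sum_{j=0}^{\overline N} I_{X_j}(v_j)=I_X(v)$ and $\sum_{j=0}^{\overline N}v_j=v$, with $X_0=X$, $v_0=\haus^N(\Omega)$, $v_j=\haus^N(Z_j)$. By the packing principle exactly one $v_j$ is positive (none is impossible since $v>0$): if it is $v_0$, then $\overline N=0$; if it is some $v_j$ with $j\ge 1$, then $\overline N=1$ and $\haus^N(\Omega)=0$. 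For \textbf{item (2)}, set $X_0:=X$, $E_0:=E$, $v_j:=\haus^N(E_j)$: since $\Omega=\bigcup_j E_j$ attains the infimum in \eqref{eqn:GeneralizedIsoperimetricProfile}, which equals $I_X(v)$ by \autoref{prop:ProfileDecomposition}, one has $I_X(v)=\sum_j\Per(E_j)\ge\sum_j I_{X_j}(v_j)\ge\sum_j I_X(v_j)\ge I_X(v)$, hence equality throughout; the packing principle then shows that all but one of the $v_j$ vanish, and after modification on $\haus^N$-null sets exactly one component is nonempty.

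Finally, for the concluding assertion, fix $v<\varepsilon$, pick a minimizing sequence of bounded finite-perimeter sets of volume $v$ (the kind \autoref{thm:MassDecompositionINTRO} operates on), and apply item (1). If $\overline N=0$, the set $\Omega$ of \autoref{thm:MassDecompositionINTRO} is a bounded isoperimetric region of volume $v$ in $X$: take $Y=X$, $E=\Omega$. If $\overline N=1$ with $\haus^N(\Omega)=0$, then $\Per(\Omega)=0$, so \eqref{eq:UguaglianzeIntro} yields $\haus^N(Z_1)=v$ and $\Per(Z_1)=I_X(v)$ with $Z_1$ isoperimetric in the pmGH limit $X_1$ of $X$ at infinity: take $Y=X_1$, $E=Z_1$. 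I expect the only mildly delicate point to be the bookkeeping of null bubbles/components (and reading ``nonempty'' as ``not $\haus^N$-negligible''), together with checking that every argument fed to $I_X$ stays $<\varepsilon$, which holds since all of them are $\le v<\varepsilon$; everything else is a direct combination of \autoref{prop:ProfileDecomposition}, \autoref{cor:FinePropertiesProfile}, and \autoref{thm:MassDecompositionINTRO}.
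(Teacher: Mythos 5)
Your proposal is correct and is essentially the intended argument: the lemma is imported from the companion paper, and its proof there rests on exactly the ingredients you combine, namely \autoref{thm:MassDecompositionINTRO}, the inequality $I_{X_j}\ge I_X$ obtained from \autoref{prop:ProfileDecomposition} (equivalently $I_{X\sqcup X_j}=I_X$), and the strict subadditivity of $I$ on $(0,\varepsilon)$, which forbids two components of positive volume and then yields items (1), (2) and the final claim exactly as you describe. The only points you leave implicit are the ones you yourself flag --- that each bubble $Z_j$ produced by \autoref{thm:MassDecompositionINTRO} has $\haus^N(Z_j)>0$ (needed to pass from ``at most one positive volume'' to the stated dichotomy on $\overline N$, and guaranteed by the construction in the source of that theorem rather than by its quoted statement) and the case $\overline N=+\infty$ in item (2), where countable subadditivity follows from finite subadditivity plus the continuity of $I$ recalled in \autoref{rem:LocalHolderProfile} --- and these are handled in the same spirit in the original proof.
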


\section{Non negatively curved spaces}


We define the \emph{opening} of an $\RCD(0,N)$ Euclidean metric measure cone of dimension $N$ to be the value of the density at any tip, see the discussion before \autoref{thm:AlexandrovRigidIntro}. Hence, the opening of the cone is equal to its $\mathrm{AVR}$.

\subsection{Rigid bounds on the inscribed radius and Kasue-type rigidity}

We explicitly consider the cleaner expressions for the bounds in \eqref{eq:sharplaplacianintro} under the assumption that $K=0$. Observe that on an $\RCD(0,N)$ space, by the direct analog on $\RCD(0,N)$ spaces of \cite[Proposition 2.18]{AntBruFogPoz}, isoperimetric sets of positive measure may exist only if unit balls in $(X,\dist,\haus^N)$ have volume uniformly bounded from below, otherwise the isoperimetric profile of $X$ identically vanishes. 
\medskip

{We remark that any non compact $\RCD(0,N)$ metric measure space $(X,\dist,\meas)$ has infinite total mass. The statement for smooth Riemannian manifolds is due to Calabi and Yau independently. We refer to \cite{Huangvolgr} for a generalization to metric measure spaces verifying the $\mathrm{MCP}(0,N)$ condition, a setting more general than the one considered here.}

\begin{proposition}\label{prop:C>0}
Let $(X,\dist,\haus^N)$ be a non compact $\RCD(0,N)$ metric measure space for some $N\ge 2$ and assume that $\haus^N(B_1(x))\ge v_0>0$ for any $x \in X$. Let $E\subset X$ be an isoperimetric set. Then there exists $c\in [0,\infty)$ such that, denoting by $f$ the signed distance function from $\overline{E}$, it holds
\begin{equation}\label{eq:sharpLap0}
\Delta f\le \frac{c}{1+\frac{c}{N-1}f}\, ,\quad\text{on $X\setminus \overline{E}$}\, ,\quad \Delta f\ge  \frac{c}{1+\frac{c}{N-1}f}\, ,\quad\text{on $E$}\, .
\end{equation}
Furthermore, if ${\rm AVR}(X,\dist,\haus^N)>0$, then $c>0$. Moreover, if $c=0$, the following holds.
Let $(\tilde X,\tilde\dist)$ be the completion of $X\setminus \overline{E}$ endowed with the intrinsic distance induced by $\dist$, and let $\dist'$ be the intrinsic distance induced by $\dist$ on $\partial E$\footnote{In such a way, two different connected components of $(\tilde X,\tilde\dist)$ or $(\partial E,\dist')$ have distance equal to $+\infty$.}. Then every connected component of $(\partial E,\dist',\haus^{N-1})$ is an $\RCD(0,N-1)$-space, and $(\partial E\times[0,+\infty),\dist'\times\dist_{\mathrm{eu}},\haus^{N-1}\otimes\leb^1)$ is isomorphic to $(\tilde X,\tilde\dist,\haus^N)$ as metric measure spaces, where $\dist_{\mathrm{eu}}$ is the Euclidean distance.
\end{proposition}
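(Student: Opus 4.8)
The plan is to deduce the statement from the general Laplacian comparison \autoref{thm:Isoperimetriciintro} applied with $K=0$, then to identify the rigidity case $c=0$ via a splitting-type argument. First I would specialize \eqref{eq:sharplaplacianintro} to $K=0$: since $\cos_0\equiv 1$, $\sin_0(r)=r$, we have $s_{0,\lambda}(r)=1-\lambda r$, so the barriers become $\boldsymbol\Delta f\ge -(N-1)\frac{-c/(N-1)}{1-(c/(N-1))(-f)}=\frac{c}{1+\frac{c}{N-1}f}$ on $E$ and $\boldsymbol\Delta f\le (N-1)\frac{-c/(N-1)}{1+(c/(N-1))f}\cdot(-1)$... wait, more carefully: on $X\setminus\overline E$, $\boldsymbol\Delta f\le (N-1)\frac{s'_{0,-c/(N-1)}\circ f}{s_{0,-c/(N-1)}\circ f}$; since $s_{0,-c/(N-1)}(r)=1+\frac{c}{N-1}r$ has derivative $\frac{c}{N-1}$, this is exactly $\frac{c}{1+\frac{c}{N-1}f}$. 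To see $c\ge 0$ (and not merely $c\in\setR$), I would use \autoref{prop:NewRegularityIsoperimetricSets}: since $K=0$ the diameter bound \eqref{eq:BoundDiamBig} forces $\overline E$ bounded, hence $f\to+\infty$ on $X\setminus\overline E$ which is unbounded (the space is non compact with infinite mass); if $c<0$ the quantity $1+\frac{c}{N-1}f$ vanishes at finite distance $t_0=-(N-1)/c$ from $\overline E$, forcing via \eqref{eq:extvolbound}–\eqref{eq:extareabd} (or rather their $K=0$ analogues) that $X\subseteq\{x:\dist(x,\overline E)\le t_0\}$ has finite measure, a contradiction with infinite total mass; hence $c\ge 0$.

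Next, for the Euclidean-volume-growth part: if $\mathrm{AVR}(X,\dist,\haus^N)>0$ and $c=0$, then \eqref{eq:sharpLap0} gives $\boldsymbol\Delta f\le 0$ on $X\setminus\overline E$, i.e.\ $-f=\dist(\cdot,\overline E)$ is superharmonic outside $\overline E$; combined with the Heintze–Karcher bounds \eqref{eq:extareabd}–\eqref{eq:extvolbound} with $J_{0,N}\equiv 1$ this yields $\Per(\{\dist(\cdot,\overline E)\le t\})\le\Per(E)$ and $\haus^N(\{\dist(\cdot,\overline E)\le t\}\setminus E)\le t\,\Per(E)$ for all $t$, so the volume of balls grows at most linearly, contradicting $\mathrm{AVR}>0$ (which forces growth like $r^N$, $N\ge 2$). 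Hence $c>0$ in that case. This is the easy direction.

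The main work — and the main obstacle — is the rigidity statement when $c=0$: that $(\tilde X,\tilde\dist,\haus^N)$ splits isometrically as $(\partial E,\dist',\haus^{N-1})\times([0,\infty),\dist_{\mathrm{eu}},\leb^1)$ with each component of $\partial E$ an $\RCD(0,N-1)$ space. The strategy would be a \emph{distance-function splitting} argument in the spirit of the Cheeger–Gromoll / Gigli splitting theorem applied to the Busemann-type function $b:=f$ on the (complete) metric measure space $(\tilde X,\tilde\dist,\haus^N)$, which is still $\RCD(0,N)$ (it is a convex-like subset; here one must check, or cite, that equidistant-set completions of isoperimetric complements remain $\RCD(0,N)$ — this is the delicate synthetic point). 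On $\tilde X$ one has $f\ge 0$, $f=0$ on $\partial E$, $|\nabla f|=1$ a.e., and from $c=0$ the two-sided bound forces $\boldsymbol\Delta f=0$, so $f$ is harmonic with unit gradient; the equality case of the Laplacian comparison / Heintze–Karcher estimates (equidistant perimeters and volumes are \emph{exactly} $J_{0,N}\equiv 1$ times $\Per(E)$) pins down the metric along the normal direction. One then runs the rigidity in the $\RCD$ splitting theorem — harmonic function with $|\nabla f|\equiv 1$ produces a line splitting of the universal-cover-type object, but here $f\ge 0$ so one gets a half-line $[0,\infty)$ splitting rather than a full line, and the level set $\{f=0\}=\partial E$ inherits the structure of an $\RCD(0,N-1)$ space with the induced length metric $\dist'$ and measure $\haus^{N-1}$ (the measure identification via the coarea formula and the equality in \eqref{eq:extareabd}). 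The multi-component subtlety is handled by the convention in the footnote: $\partial E$ or $\tilde X$ may be disconnected, and the splitting is performed componentwise. I expect verifying that $(\tilde X,\tilde\dist,\haus^N)$ satisfies $\RCD(0,N)$ (so that the splitting theorem is applicable) and carefully matching the reference measure $\haus^N$ with $\haus^{N-1}\otimes\leb^1$ to be the two technical hot spots.
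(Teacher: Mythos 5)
Your specialization of \autoref{thm:Isoperimetriciintro} to $K=0$, your argument that $c\ge 0$ (finite exterior volume from \eqref{eq:extvolbound} when $c<0$, against the infinite total mass of a non compact $\RCD(0,N)$ space), and your argument that $\mathrm{AVR}>0$ forces $c>0$ (at most linear tube growth versus growth like $r^N$) are all correct and essentially the paper's arguments; only the incidental claim that $X\subseteq\{\dist(\cdot,\overline E)\le t_0\}$ is unjustified, but you do not need it since the volume bound alone gives the contradiction.

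The genuine gap is in the rigidity step. You assert that when $c=0$ ``the two-sided bound forces $\boldsymbol\Delta f=0$'' on the exterior, but \eqref{eq:sharpLap0} is \emph{not} two-sided anywhere: with $c=0$ it gives $\Delta f\le 0$ on $X\setminus\overline E$ and $\Delta f\ge 0$ on $E$, i.e.\ one inequality on each of two disjoint regions. Nothing in the barrier theorem alone tells you the exterior distance function is harmonic, and without that you cannot claim equality in the Heintze--Karcher estimates, so your splitting argument never gets started. The paper supplies the missing reverse inequality by a separate isoperimetric argument: since $\haus^N(X)=\infty$, the profile $I$ is positive and concave (\autoref{rem:LocalHolderProfile}, \autoref{thm:BavardPansuIntro}), hence nondecreasing; combined with $\Per(E_t)\le\Per(E)$ from \eqref{eq:extareabd} (with $J_{0,N}\equiv1$) this shows each tube $E_t=\{\dist(\cdot,\overline E)<t\}$ is itself an isoperimetric region with $\Per(E_t)=\Per(E)$, and applying \autoref{thm:Isoperimetriciintro} to $E_t$ (whose barrier is again nonnegative) together with the identity $\dist_E=t-\dist_{X\setminus E_t}$ on $E_t\setminus E$ yields $\Delta\dist_E\ge0$ there, whence $\Delta\dist_E=0$ outside $\overline E$. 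You would need this (or an equivalent) step. A secondary weak point: your plan to invoke the $\RCD$ splitting theorem presupposes that $(\tilde X,\tilde\dist,\haus^N)$ is $\RCD(0,N)$, which you flag but do not prove and which is not obvious; the paper avoids this by deducing the half-space splitting from $\Delta\dist_E=0$ via the Kasue-type rigidity results of Ketterer and Ketterer--Kitabeppu--Lakzian rather than via the line-splitting theorem.
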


\begin{proof}

Let us prove that $c\geq 0$. If $c<0$ the first bound in \eqref{eq:sharpLap0} degenerates on the set $\{x\in X\setminus \overline E:\dist(x,E)>(1-N)/c\}$, which is nonempty since $E$ is bounded and $X$ is non compact. This gives a contradiction and thus $c\geq 0$.
\medskip

Now let us assume that $c=0$. By \autoref{rem:LocalHolderProfile} and \autoref{thm:BavardPansuIntro} we know that the isoperimetric profile $I$ is strictly positive and concave {(notice that $\haus^N(X)=\infty$, as we remarked above)}. Hence $I'(v)\ge0$ for any volume $v>0$ such that $I$ is differentiable at $v$, and then $I$ is nondecreasing. Letting $E_t\coloneqq \{x\in X\, :\, \dist(x,\overline{E})<t\}$ for any $t>0$, we deduce that
\begin{equation}
\Per(E_t) \ge I(\haus^N(E_t)) \ge I(\haus^N(E)) = \Per(E)\, .
\end{equation}
On the other hand, since $c=0$, we have that $\Per(E_t)\le \Per(E)$ by \eqref{eq:extareabd}. Therefore
\begin{equation}
    \Per(E)=\Per(E_t) \qquad\forall t\ge0\, ,
\end{equation}
and, as $\Per(E_t) = I(\haus^N(E_t)) $, the set $E_t$ is an isoperimetric region for any $t>0$. Moreover $\haus^N(E_t\setminus E) = t\Per(E)$ by coarea. Hence the isoperimetric profile is constant on $[\haus^N(E),\infty)$.\\ 
Denote by $\dist_\Omega$ the distance from some set $\Omega$. By \eqref{eq:sharpLap0} we know that $\Delta \dist_E \le 0$ on $X\setminus \overline{E}$. On the other hand $\dist_E = t- \dist_{X\setminus E_t}$ on $E_t\setminus E$, then
\begin{equation}
    \Delta \dist_E = \Delta\left(- \dist_{X\setminus E_t}\right) \ge 0 
    \qquad
    \text{on $E_t\setminus \overline{E}$}\, ,
\end{equation}
where the inequality follows from \eqref{eq:sharpLap0} applied on the isoperimetric set $E_t$, since we already know that any mean curvature barrier for $E_t$ is non negative. Since $t>0$ is arbitrary, we proved that
\begin{equation}
    \Delta \dist_E = 0 \qquad 
    \text{on $X\setminus \overline{E}$}.
\end{equation}
The isomorphic splitting in the last part of the statement then follows analogously as in the recent \cite[Theorem 1.4]{Ketterer21}, following the lines of \cite{KettererKitabeppuLakzian21}, extending the classical Riemannian result \cite[Theorem C]{Kasue83}. See Corollary 3.18, the beginning of Section 5.2, Corollary 5.6, and Theorem 5.10 in \cite{KettererKitabeppuLakzian21}.
\medskip

Let us show that, if $(X,\dist,\haus^N)$ is an $\RCD(0,N)$ space and $\mathrm{AVR}(X,\dist,\haus^N)>0$, then $c>0$. Indeed, in this case $X$ is non compact and by the previous discussion we have $c\geq 0$. Let us suppose for the sake of contradiction that $c=0$. From \eqref{eq:extvolbound}, we have that, for every $t>0$,
\begin{equation}
\haus^N(\{x\in X\setminus E\, :\, \dist(x,\overline{E})\le t\})\le t\Per(E)\, .
\end{equation}
Let us now fix $p\in E$, and let $D:=\diam E<\infty$. The diameter is finite because of \autoref{thm:RegularityIsoperimetricSets}. For every $t>D$, we have that $B_{t}(p)\setminus B_D(p)\subseteq \{x\in X\setminus E\, :\, \dist(x,\overline{E})\le t\}$. Hence
\begin{align}
\haus^N(\{x\in X\setminus E\, :\, \dist(x,\overline{E})\le t\})\geq & \haus^N(B_t(p)\setminus B_D(p))\\
\geq &\omega_N\mathrm{AVR}(X,\dist,\haus^N) t^N-\haus^N(B_D(p))\,,
\end{align}
which is a contradiction with \eqref{eq:extvolbound} for $t$ large enough, since $N\geq 2$.
\end{proof}

\begin{remark}
We refer to the recent \cite[Theorem 1.1]{Ketterer21} for the local counterpart of the rigidity statement in \autoref{prop:C>0}.
\end{remark}

\autoref{prop:C>0} implies an inscribed radius bound in terms of a mean curvature barrier $c$ for an isoperimetric set. The bound is classical in the Riemannian setting, see \cite{Kasue83}, and it has been recently extended to essentially non branching $\CD(0,N)$ metric measure spaces in \cite[Theorem 1.1]{Burtscheretal20} (with a formulation slightly different from ours). Moreover, the equality case can also be characterized, see \cite[Theorem 1.4]{Burtscheretal20}, in the $\RCD$ case.\\ 
We report here the details of the proof in our setting for the reader's convenience. Let us also point out that the result admits natural generalizations to arbitrary lower Ricci curvature bounds.
\medskip

We first need a well-known topological result (cf.\ \cite{ChCo2, MondinoKapovitch}) which can now be more directly deduced employing the recent \cite{DengNonBranching}.

\begin{lemma}\label{lem:PalleConnesse}
Let $(X,\dist,\haus^N)$ be an $\RCD(K,N)$ metric measure space for some $N\ge 2$. Let $B_r(x)\subset X$ be a fixed ball and let $x_0 \in B_r(x) \setminus\{x\}$. Then for $\haus^N$-almost every $y \in B_r(x)$ there exists a geodesic from $x_0$ to $y$ contained in $B_r(x)\setminus\{x\}$. In particular $B_r(x)\setminus\{x\}$ is connected. Moreover, if $E\subset X$ is open and connected and $x \in E$, then $E \setminus\{x\}$ is connected.
\end{lemma}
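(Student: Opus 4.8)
The statement has three parts of increasing specificity: (i) for $\haus^N$-a.e.\ $y\in B_r(x)$ there is a geodesic from $x_0$ to $y$ staying inside $B_r(x)\setminus\{x\}$; (ii) $B_r(x)\setminus\{x\}$ is connected; and (iii) if $E$ is open, connected, and $x\in E$, then $E\setminus\{x\}$ is connected. Parts (ii) and (iii) will be deduced from (i) by topological bookkeeping, so the heart of the matter is (i). The key input is the recent non-branching theorem of Deng \cite{DengNonBranching}: in an $\RCD(K,N)$ space, geodesics do not branch, and moreover for $\haus^N$-a.e.\ $y$ the geodesic from a fixed point $x_0$ to $y$ is unique. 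I would combine this with the fact that for $\haus^N$-a.e.\ $y$ the point $x$ does \emph{not} lie in the interior of the (a.e.\ unique) geodesic $\gamma_{x_0\to y}$: if $x$ were an interior point of $\gamma_{x_0\to y}$ for a positive-measure set of endpoints $y$, then the concatenation of a fixed geodesic $x_0\to x$ with the various sub-geodesics $x\to y$ would exhibit branching at $x$ (two distinct minimizing geodesics $x_0\to x$ would not be needed — rather, one sees that the geodesics $x_0\to y$ all pass through $x$ and then diverge, forcing a branch point unless they coincide near $x$), contradicting Deng's theorem. One must also control the endpoint: shrinking $y$ slightly toward $x_0$ along $\gamma_{x_0\to y}$ if necessary, one arranges that the whole geodesic segment has endpoint distinct from $x$; since $x_0\neq x$ this is automatic at the $x_0$ end. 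Finally, both endpoints $x_0,y\in B_r(x)$ and the geodesic from $x_0$ to $y$ has length $<2r$, but more carefully: since $\dist(x,x_0)<r$ and $\dist(x,y)<r$, any point $z$ on the geodesic satisfies — hmm, this does not immediately give $\dist(x,z)<r$. So one should instead work with a slightly smaller ball: prove the statement for $B_{r'}(x)$ with $r'<r$ and then take $r'\uparrow r$, or observe that for $\haus^N$-a.e.\ $y\in B_r(x)$ one can choose $y$ so close to the "generic" locus that the geodesic $x_0\to y$, being the a.e.-unique one and not passing through $x$, stays in $B_r(x)$ — this last containment needs an argument.

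For the containment $\gamma_{x_0\to y}\subseteq B_r(x)$ I would argue as follows. The set $B_r(x)$ is open, $x_0\in B_r(x)$, and geodesics in $\RCD(K,N)$ vary continuously with endpoints (by Deng's uniqueness for a.e.\ endpoint plus Arzelà–Ascoli, limits of geodesics are geodesics). The subset of $y\in B_r(x)$ for which the a.e.-unique geodesic $\gamma_{x_0\to y}$ is contained in $B_r(x)$ is non-empty (it contains $y$ close to $x_0$) and I claim it is both open and closed in the full-measure set of "good" $y$; alternatively, and more robustly, I would use that $B_r(x)$ is \emph{almost everywhere} geodesically convex relative to $x_0$ in the following weak sense that suffices here. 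Actually the cleanest route avoiding convexity: fix $\rho<r$ with $x_0\in B_\rho(x)$; for $\haus^N$-a.e.\ $y\in B_\rho(x)$ the a.e.-unique geodesic from $x_0$ to $y$ does not contain $x$ in its interior (by the branching argument above) and has length $\dist(x_0,y)<2\rho$, hence lies in $B_{3\rho}(x)$... no, that still overshoots. The honest fix is: we do \emph{not} need the geodesic to stay in $B_r(x)$ for the connectivity conclusions if we phrase (i) correctly — but the statement does demand it. So I would genuinely need a weak convexity statement; here I would invoke that in an $\RCD(K,N)$ space balls are "weakly geodesically convex" in the measure-theoretic sense relevant to the localization/needle literature, or simply prove (i) for the \emph{intrinsic} ball and note that for the purpose of connectivity of $B_r(x)\setminus\{x\}$ it is enough to connect a full-measure set of points to $x_0$ by \emph{paths} in $B_r(x)\setminus\{x\}$, and such paths can be taken to be the geodesics truncated and rerouted — I expect the paper uses the honest a.e.-geodesic-in-the-ball statement via Deng, so I would follow that. \textbf{This containment is the step I expect to be the main obstacle}; the branching contradiction for excluding $x$ from the interior is routine given \cite{DengNonBranching}.

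**Deducing (ii) and (iii).** Once (i) holds, connectivity of $B_r(x)\setminus\{x\}$ is immediate: the full-measure set of $y$ that can be joined to $x_0$ by a geodesic inside $B_r(x)\setminus\{x\}$ is in particular path-connected to $x_0$, hence lies in one path-component; its closure is all of $B_r(x)$ (full measure, and $B_r(x)\setminus\{x\}$ is locally path-connected since $\RCD(K,N)$ spaces are locally geodesic away from — well, everywhere — and removing one point does not destroy local path-connectedness in dimension $\geq 2$, which holds as $N\geq 2$ forces local topological dimension $\geq$ — more simply: small punctured balls around any $q\neq x$ are path-connected by (i) applied locally, and a small punctured ball around $x$ itself is connected by applying (i) with center $x$ and a nearby base point). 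A standard argument then shows the whole space $B_r(x)\setminus\{x\}$ is connected. For (iii): let $E$ be open, connected, $x\in E$; write $E\setminus\{x\}=U_1\sqcup U_2$ with $U_1,U_2$ open and suppose both non-empty. Pick a small ball $B_\delta(x)\subseteq E$; then $B_\delta(x)\setminus\{x\}$ is connected by (ii), so it lies entirely in, say, $U_1$. But $E$ is connected and $E=U_1\cup(U_2\cup\{x\})$ with $U_2\cup\{x\}$ containing an interior point of $E$ near $x$? No — $U_2$ is open and non-empty, $U_1$ is open and non-empty and $U_1\supseteq B_\delta(x)\setminus\{x\}$, so $U_1\cup\{x\}$ is open (it contains $B_\delta(x)$) and disjoint from $U_2$, giving $E=(U_1\cup\{x\})\sqcup U_2$ a separation of the connected set $E$, a contradiction. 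Hence $E\setminus\{x\}$ is connected. I would write this up citing \cite{DengNonBranching} for the non-branching/a.e.-uniqueness of geodesics and \cite{ChCo2,MondinoKapovitch} for the statement being classical, with the streamlined proof via Deng replacing the earlier arguments.
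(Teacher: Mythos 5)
Your central mechanism --- using the non-branching theorem of \cite{DengNonBranching} to show that the set of endpoints $y$ all of whose geodesics from the basepoint pass through $x$ is $\haus^N$-negligible --- is exactly the one used in the paper, and your final step (the separation argument with $U_1\cup\{x\}$ open, for the connectedness of $E\setminus\{x\}$) coincides with the paper's. Two differences matter, though. First, your branching contradiction is not quite complete as sketched: from a positive-measure bad set $Y$ you must extract two bad endpoints whose concatenated geodesics genuinely branch, and this can fail if one endpoint lies on the geodesic from $x$ to the other, in which case the two concatenations are nested and no branching occurs. The paper fixes this with the coarea formula $\haus^N(Y)=\int_0^r\haus^{N-1}(Y\cap\partial^e B_t(x))\de t$: since $N\ge 2$, positive measure of $Y$ yields two distinct points $y_1\neq y_2$ of $Y$ with $\dist(x,y_1)=\dist(x,y_2)\in(0,r)$, and concatenating a fixed geodesic from $x_0$ to $x$ with geodesics from $x$ to $y_1$ and to $y_2$ then produces a genuine branching couple, contradicting \cite[Theorem 1.3]{DengNonBranching}.

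Second, the step you flag as the main obstacle is a genuine gap in your proposal, and your suspicion that no convexity-type statement will rescue it is correct: for a fixed basepoint $x_0\in B_r(x)$, the (unique) geodesic from $x_0$ to $y$ can leave $B_r(x)$ for a positive-measure set of $y\in B_r(x)$ (this already happens on a flat cylinder), so the containment in $B_r(x)$ cannot be forced for $\haus^N$-a.e.\ $y$ in the whole ball with respect to the fixed basepoint. The paper's proof never goes through such a statement: it proves only the avoidance of $x$ for a.e.\ endpoint, and it obtains the connectedness of $B_r(x)\setminus\{x\}$ not by your density argument based at $x_0$, but by joining two arbitrary points $z,w\in B_r(x)\setminus\{x\}$ through a common auxiliary target $y$. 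If $y$ is chosen in a sufficiently small punctured ball centred at $x$, outside the two negligible sets of bad endpoints relative to the basepoints $z$ and $w$, then any point $p$ on a minimizing geodesic from $z$ (or $w$) to $y$ satisfies $\dist(x,p)\le\dist(x,y)+\dist(y,p)\le 2\dist(x,y)+\dist(x,z)<r$, so both geodesics stay in $B_r(x)$, avoid $x$, and their concatenation connects $z$ to $w$ inside $B_r(x)\setminus\{x\}$; this is all that is needed for the two connectedness assertions, which are the parts of the lemma used later in the paper. As written, your deduction of connectedness leans on the unproven containment for a.e.\ $y$ with basepoint $x_0$, so it does not close; restructuring it along the lines above (basepoint-free avoidance statement plus the triangle-inequality containment for auxiliary targets near $x$) repairs it.
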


\begin{proof}
Let $Y\subset B_r(x)$ be the set of points $y \in B_r(x)$ such that any geodesic from $x_0$ to $y$ passes through $x$. Since $(X,\dist)$ is a length space, it is immediately checked that $\dist(x_0,y)=\dist(x_0,x)+\dist(x,y)$ for any $y \in Y$. Suppose by contradiction that $\haus^N(Y)>0$, then
\[
0<\haus^N(Y) = \int_0^r \haus^{N-1}(Y \cap \partial^e B_t(x)) \de t.
\]
Since $N\ge2$, this implies that there are $y_1,y_2 \in Y$ such that $\dist(x,y_1)=\dist(x,y_2) \in(0,r)$ and $y_1\neq y_2$. If $\gamma_0$ is a geodesic from $x_0$ to $x$ and $\gamma_1$ (resp. $\gamma_2$) is a geodesic from $x$ to $y_1$ (resp. $y_2$), joining $\gamma_0$ with $\gamma_1$ (resp. $\gamma_2$) yields a geodesic $\sigma_1$ (resp. $\sigma_2$) from $x_0$ to $y_1$ (resp. $y_2$). Then the couple $\sigma_1,\sigma_2$ is branching according to \cite[Definition 2.23]{DengNonBranching}, and this contradicts \cite[Theorem 1.3]{DengNonBranching}.

Hence $\haus^N(Y)=0$, and since balls are path-connected, this implies that $B_r(x)\setminus\{x\}$ is path-connected as well. {Indeed, for any $z,w\in B_r(x)\setminus\{x\}$ we can find a point $y\in B_r(x)\setminus\{x\}$ (actually a set of full measure of such points) such that any minimizing geodesic from $z$ to $y$ and any minimizing geodesic from $w$ to $y$ has image contained in $B_r(x)\setminus\{x\}$. The concatenation of any two of these geodesics is a continuous path from $z$ to $w$ with image contained in $B_r(x)\setminus\{x\}$.}

Finally, letting $E$ open and connected with $x \in E$, there exists a ball $B_r(x) \subset E$. If by contradiction $E\setminus\{x\}= A \sqcup B$ for two disjoint nonempty open sets $A,B$, then $B_r(x)\setminus\{x\} = (A \cap B_r(x)) \sqcup ( B \cap B_r(x))$. Hence we have that, for example, $B \cap B_r(x)= \emptyset$ and $B_r(x)\setminus\{x\} \subset A$. Then $A \cup \{x\}$ is open and contained in $E$ and it is disjoint from $B$. Moreover $(A\cup\{x\}) \sqcup B = E$, implying that $E$ is not connected.
\end{proof}

\begin{corollary}[Inscribed radius bound]\label{cor:inscribed radius bound}
Let $(X,\dist,\haus^N)$ be an $\RCD(0,N)$ metric measure space for some $N\ge 2$. Let $E\subset X$ be a bounded open set such that the signed distance function from $E$ satisfies \eqref{eq:sharpLap0} for some $c>0$. Then
\begin{equation}
\sup_{x\in E}\dist(x,X\setminus E)\le \frac{N-1}{c}\, .
\end{equation}
Moreover, if $E$ is connected, then equality holds if and only if $E$ is isometric to a ball of radius $\frac{N-1}{c}$ centered at one tip of some Euclidean metric measure cone of dimension $N$.
\end{corollary}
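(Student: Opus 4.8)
The plan is to treat the inequality and the rigidity separately; throughout write $d\coloneqq\dist(\cdot,X\setminus E)$, so that $f=-d$ on $E$. For the bound, the key observation is that the Heintze--Karcher-type estimate \eqref{eq:intareabound} depends only on the Laplacian bound \eqref{eq:sharpLap0} and the coarea formula --- this is how \autoref{prop:variationofarea} is proved --- and hence applies here even though $E$ is not assumed to be isoperimetric. Since $\{x:\dist(x,X\setminus E)\le t\}=X\setminus\{d>t\}$ for $t\ge0$ and the perimeter is unchanged under complementation, \eqref{eq:intareabound} gives $\Per(\{d>t\})\le\big(1-\tfrac{c}{N-1}t\big)^{N-1}_+\Per(E)$ for all $t\ge0$. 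For $t>(N-1)/c$ the right-hand side vanishes, so $\{d>t\}$ is a bounded open set of zero perimeter in the connected space $X$; as $\haus^N$ is positive on balls, it must be empty, and therefore $\sup_{x\in E}d\le(N-1)/c$. (One may also reprove this directly from the ODE comparison for $t\mapsto\haus^N(\{d>t\})$, exactly as in the proof of \autoref{prop:variationofarea}.)

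We now turn to the rigidity. The implication ``$\Leftarrow$'' is immediate, a ball of radius $(N-1)/c$ around a tip of a Euclidean metric measure cone having inscribed radius $(N-1)/c$. For the converse, suppose $E$ is connected and $d(x_0)=R\coloneqq(N-1)/c$ for some $x_0\in E$; such a point exists because $d$ is continuous, vanishes on $\partial E$, and $\overline E$ is compact ($X$ being proper). Then $B_R(x_0)\subseteq E$, and on $B_R(x_0)$ we set $w\coloneqq R+f=R-d\ge0$. The triangle inequality $d(x)\ge d(x_0)-\dist(x,x_0)$ yields $w\le\dist(\cdot,x_0)$, while \eqref{eq:sharpLap0}, rewritten using $c=(N-1)/R$, reads $\Delta w=\Delta f\ge\frac{N-1}{w}$ in the sense of distributions on $B_R(x_0)\setminus\{x_0\}$. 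Combining this with the $\RCD(0,N)$ Laplacian comparison $\Delta\dist(\cdot,x_0)\le\frac{N-1}{\dist(\cdot,x_0)}$ on $X\setminus\{x_0\}$ and with $w\le\dist(\cdot,x_0)$ gives $\Delta\big(w-\dist(\cdot,x_0)\big)\ge0$ on $B_R(x_0)\setminus\{x_0\}$.

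To conclude, choose $z\in X\setminus E$ realizing $\dist(x_0,X\setminus E)=R$ (it exists since $X$ is proper) and a minimizing unit-speed geodesic $\gamma\colon[0,R]\to\overline E$ from $x_0$ to $z$; one checks $d(\gamma(t))=R-t$, so $w(\gamma(t))=t=\dist(\gamma(t),x_0)$ and $w-\dist(\cdot,x_0)$ attains its maximum value $0$ at the interior points $\gamma(t)$, $t\in(0,R)$. Since $B_R(x_0)\setminus\{x_0\}$ is connected by \autoref{lem:PalleConnesse}, the strong maximum principle forces $w\equiv\dist(\cdot,x_0)$, so $d(x)=R-\dist(x,x_0)$ on $B_R(x_0)$ and all the intermediate inequalities become equalities; in particular $\Delta\dist(\cdot,x_0)=\frac{N-1}{\dist(\cdot,x_0)}$ on $B_R(x_0)\setminus\{x_0\}$, equivalently $r\mapsto\haus^N(B_r(x_0))/r^N$ is constant on $(0,R)$. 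By the volume-cone-implies-metric-cone theorem (\cite{DePhilippisGigli16}, after \cite{ChCo0}), $B_R(x_0)$ is then isometric as a metric measure space to the ball of radius $R$ around the vertex of a Euclidean $N$-dimensional metric measure cone over an $\RCD(N-2,N-1)$ space, with $x_0$ mapped to the vertex, which is a tip. Finally $d=0$ on $\partial B_R(x_0)$, so $\partial B_R(x_0)\subseteq X\setminus E$ and $B_R(x_0)=\overline{B_R(x_0)}\cap E$ is open and closed in the connected set $E$; hence $E=B_R(x_0)$. The main work is in the rigidity part: one must run the strong maximum principle for a difference of two merely Lipschitz distance-type functions with measure-valued Laplacians on a \emph{punctured} ball in an $\RCD$ space --- including excluding that $w$ vanishes away from $x_0$, which would degenerate $\tfrac{N-1}{w}$ and which is handled, again, via the connectedness of $B_R(x_0)\setminus\{x_0\}$ --- and then apply the nonsmooth volume-cone rigidity in the correct form, checking that the incenter $x_0$ becomes a tip. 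The inequality, by comparison, is a soft consequence of the already-available Heintze--Karcher estimates.
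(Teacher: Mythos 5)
Your argument is correct in substance, and the rigidity half is essentially the paper's proof in disguise: writing $d=\dist(\cdot,X\setminus E)$ and $R=(N-1)/c$, your comparison function $w-\dist(\cdot,x_0)$ coincides with $R-F$, where $F=d+\dist(\cdot,x_0)$ is exactly the function used in the paper, and you invoke the same ingredients (\autoref{lem:PalleConnesse}, the strong maximum principle, and the volume-cone-to-metric-cone theorem of \cite{DePhilippisGigli16}). The only structural difference there is that the paper runs the maximum principle on all of $E\setminus\{x_0\}$, so that $E=B_R(x_0)$ drops out at once, while you work on the punctured ball and recover $E=B_R(x_0)$ afterwards by an open-closed argument in the connected set $E$; that variant is fine.

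Two steps deserve attention. First, for the inequality the paper argues in one line that otherwise the lower barrier in \eqref{eq:sharpLap0} degenerates, whereas your route through \eqref{eq:intareabound} implicitly requires $\Per(E)<\infty$, which is not among the hypotheses (Proposition \ref{prop:variationofarea} is stated for isoperimetric sets, where this is automatic). This is easily repaired: either argue as the paper does, or start the Heintze--Karcher comparison from an a.e.\ level $t_0>0$, where $\Per(\{d>t_0\})<\infty$ by the coarea formula since $d$ is $1$-Lipschitz and $E$ is bounded; also, to exclude that the zero-perimeter set $\{d>t\}$ is co-negligible (relevant if $X$ is compact) note that its complement contains the nonempty open set $\{d<t\}$. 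Second, where the paper passes from the equality $\Delta d=-c/\bigl(1-\tfrac{c}{N-1}d\bigr)$ to $\haus^N(B_r(x_0))=Cr^N$ via the needle decomposition of \cite{CM20}, you instead extract equality in the Laplacian comparison for $\dist(\cdot,x_0)$ and assert its equivalence with constancy of $r\mapsto\haus^N(B_r(x_0))/r^N$. This implication is true but is not a mere reformulation: one should test the identity against functions of $\dist(\cdot,x_0)$, use $|\nabla\dist(\cdot,x_0)|=1$ $\haus^N$-a.e.\ and the coarea formula to get $\Per(B_t(x_0))=Ct^{N-1}$ for a.e.\ $t\in(0,R)$, and then integrate, using $\haus^N(B_t(x_0))\to 0$ as $t\to 0$. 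With these two points filled in, your proof is complete and marginally more elementary than the paper's, since it avoids the localization technique.
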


\begin{proof}
The inequality immediately follows for otherwise the second inequality in \eqref{eq:sharpLap0} degenerates.
Let us now assume that $E$ is connected and that equality holds, i.e., there exists a ball of maximal radius $B_{\frac{N-1}{c}}(x_0)\subset E$. In particular $\partial B_{\frac{N-1}{c}}(x_0)\cap \partial E\neq \emptyset$. For simplicity, denote $R\coloneqq \frac{N-1}{c}$ in this proof. By \eqref{eq:sharpLap0} we know that
\begin{equation}
    \Delta \dist_{X\setminus E} \le \frac{-c}{1-\frac{c}{N-1}\dist_{X\setminus E}}
    \qquad
    \text{on $E$.}
\end{equation}
On the other hand, letting $\dist_{x_0}$ be the distance from $x_0$, the Laplacian comparison theorem (see \cite[Corollary 5.15]{Gigli12}) gives that
\begin{equation}
    \Delta \dist_{x_0} \le \frac{N-1}{\dist_{x_0}} \qquad
    \text{on $E\setminus \{x_0\}$,}
\end{equation}
in the sense of distributions. Letting $F\coloneqq \dist_{X\setminus E}+ \dist_{x_0}$ we find
\begin{equation}\label{eq:LaplFInRad}
    \begin{split}
        \Delta F & \le \frac{N-1}{\dist_{x_0}} - \frac{(N-1)c}{N-1-c\,\dist_{X\setminus E}}= \frac{(N-1)c}{\dist_{x_0}(N-1-c\,\dist_{X\setminus E})} \left( \frac{N-1}{c} - F \right),
    \end{split}
\end{equation}
on $E\setminus\{x_0\}$. We observe that $F\ge R$ on $E\setminus\{x_0\}$. Indeed $F\ge \dist_{x_0}\ge R$ on $E\setminus B_R(x_0)$, and $F= \dist_{X\setminus E}+ \dist_{x_0}\ge R$ on $B_R(x_0)$ as well. Hence $\Delta F \le 0$ on $E\setminus\{x_0\}$. On the other hand $F\equiv R$ along a geodesic $\gamma$ from $x_0$ to a point in the boundary of $E$. Since $E\setminus\{x_0\}$ is connected by \autoref{lem:PalleConnesse}, by the strong maximum principle this implies that $F\equiv R$ on $E\setminus\{x_0\}$. In particular $E\setminus\{x_0\}\subset B_R(x_0)$, for otherwise $F>R$ at some point in $(E\setminus\{x_0\})\setminus \overline{B}_R(x_0)$, and then $E= B_R(x_0)$.

From now on, let us denote $E^*\coloneqq E\setminus\{x_0\}$.
Since $\Delta F=0$ on $E^*$, then equality holds in \eqref{eq:LaplFInRad}, and then $\Delta \dist_{X\setminus E} = \frac{-c}{1-\frac{c}{N-1}\dist_{X\setminus E}}$ on $E^*$. Hence by \cite[Corollary 4.16]{CM20} we get that
\begin{equation}\label{eq:Eqhrigidity}
    (\log h_\alpha)' = \Delta (-\dist_{X\setminus E}) |_{X_\alpha}= \frac{c}{1-\frac{c}{N-1}\dist_{X\setminus E}}\bigg|_{X_\alpha},
\end{equation}
along the corresponding geodesic $X_\alpha$, where $\{h_\alpha,X_\alpha\}_{\alpha\in Q}$ are given by the disintegration of $\haus^N$ with respect to the signed distance function from $B_R(x_0)$. As $X_\alpha$ is defined from $x_0$ up to some point in $\partial B_R(x_0)$ at least, then $h_\alpha$ is defined on some interval $[-R,b(X_\alpha)]$, for $b(X_\alpha)>0$, and then \eqref{eq:Eqhrigidity} reads
\begin{equation}
    \frac{h_\alpha'(t)}{h_\alpha(t)} = \frac{c}{1+\frac{c}{N-1}t}
    \qquad
    \text{for a.e. $t \in (-R,0]$,}
\end{equation}
and then for any $t \in (-R,0]$, for $\mathfrak{q}$-a.e. $\alpha$.
Solving for $h_\alpha$ yields
\begin{equation}
    h_\alpha(t)=h_\alpha(0)\left(1+\frac{c}{N-1}t \right)^{N-1}\qquad
    \text{for $t \in [-R,0]$.}
\end{equation}
By disintegration, for any $r \in(0,R)$ we deduce that
\begin{equation}\label{eq:VolumeGrowthRigidityInRad}
    \begin{split}
        \haus^N(B_r(x_0) ) 
        &= \int_{Q} \int_0^r h_\alpha( t- R) \de t \de \mathfrak{q}(\alpha) = \left( \int_{Q} h_\alpha(0)\de\mathfrak{q}(\alpha)\right) \left( \frac{c}{N-1}\right)^{N-1}\frac1N r^N.
    \end{split}
\end{equation}
The conclusion follows by the volume-cone to metric-cone theorem in \cite[Theorem 1.1]{DePhilippisGigli16}.
\end{proof}

\begin{corollary}[Barrier bounds]\label{cor:MeanCurvatureIso}
Let $(X,\dist,\haus^N)$ be an $\RCD(0,N)$ space for $N\ge 2$  and assume that $\haus^N(B_1(x))\ge v_0>0$ for any $x \in X$. Let $E$ be an isoperimetric region of volume $\haus^N(E)\in (0,\infty)$. Let $c\in [0,+\infty)$ be a barrier given from \autoref{thm:Isoperimetriciintro}, cf. \autoref{prop:C>0}. Then
\begin{equation}\label{eq:ww}
c\leq \frac{N-1}{N}\frac{\Per(E)}{\haus^N(E)}\, .
\end{equation}
Moreover, if $c= \tfrac{N-1}{N}\tfrac{\Per(E)}{\haus^N(E)}$, then $E$ is isometric to a ball of radius $\frac{N-1}{c}$ centered at one tip of some Euclidean metric measure cone of dimension $N$.

If also ${\rm AVR}(X,\dist,\haus^N)>0$, then
\begin{equation}\label{eq:zz}
    c \ge (N-1) \left(\frac{N\omega_N{\rm AVR}(X,\dist,\haus^N)}{\Per(E)} \right)^{\frac{1}{N-1}}\, .
\end{equation}
\end{corollary}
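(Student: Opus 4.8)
The plan is to deduce \eqref{eq:ww}, its rigidity statement, and \eqref{eq:zz} by combining the Heintze--Karcher-type estimates of \autoref{prop:variationofarea} with the inscribed radius bound \autoref{cor:inscribed radius bound} and with \autoref{prop:C>0}; no genuinely new computation is needed, the point is to feed the right sets into the right inequalities. Throughout, recall that $E$ denotes its canonical bounded open representative $E^{(1)}$, that the signed distance from $\overline E$ satisfies \eqref{eq:sharpLap0} with the given $c\in[0,\infty)$ (by \autoref{prop:C>0}), and that $\Per(E)>0$ (since $0<\haus^N(E)<\haus^N(X)=+\infty$). For \eqref{eq:ww} itself: if $c=0$ the inequality is trivial because its right-hand side is strictly positive, so assume $c>0$. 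I would then apply \autoref{cor:inscribed radius bound} to get $\sup_{x\in E}\dist(x,X\setminus E)\le\frac{N-1}{c}$, which says precisely that $\{x\in E:\dist(x,X\setminus E)\le\frac{N-1}{c}\}=E$. Plugging $t=\frac{N-1}{c}$ into \eqref{eq:intvolbound} (where, since $K=0$, $J_{-c,N}(r)=\bigl(1-\tfrac{c}{N-1}r\bigr)_+^{N-1}$ and $\int_0^{(N-1)/c}\bigl(1-\tfrac{c}{N-1}r\bigr)^{N-1}\di r=\tfrac{N-1}{cN}$) gives $\haus^N(E)\le\tfrac{N-1}{cN}\Per(E)$, i.e.\ \eqref{eq:ww}.

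For the rigidity, suppose equality holds in \eqref{eq:ww}; then $c>0$ and $\haus^N(E)=\tfrac{N-1}{cN}\Per(E)$. I would set $r_0:=\sup_{x\in E}\dist(x,X\setminus E)$, attained at some $x_0\in E$ (the maximum of the $1$-Lipschitz function $\dist(\cdot,X\setminus E)$ over the compact set $\overline E$ is attained, and a maximizer lies in $E=E^{(1)}$ once $r_0>0$). If one had $r_0<\frac{N-1}{c}$, then $\{x\in E:\dist(x,X\setminus E)\le r_0\}=E$ and \eqref{eq:intvolbound} with $t=r_0$ would give $\haus^N(E)\le\Per(E)\int_0^{r_0}\bigl(1-\tfrac{c}{N-1}r\bigr)^{N-1}\di r<\tfrac{N-1}{cN}\Per(E)=\haus^N(E)$, a contradiction, since the integrand is strictly positive on $\bigl(r_0,\tfrac{N-1}{c}\bigr)$. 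Hence $r_0=\tfrac{N-1}{c}$: equality is attained in the inscribed radius bound. Since $K=0$, \autoref{prop:NewRegularityIsoperimetricSets}(1) (where one may take $\varepsilon=\haus^N(X)$) shows that the isoperimetric region $E$ is connected, so the connectedness hypothesis of the rigidity part of \autoref{cor:inscribed radius bound} is met, and that result yields that $E$ is isometric to a ball of radius $\tfrac{N-1}{c}$ centered at one tip of some Euclidean metric measure cone of dimension $N$.

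For \eqref{eq:zz}, assume ${\rm AVR}(X,\dist,\haus^N)>0$, so that $c>0$ by \autoref{prop:C>0}. Fixing $p\in E$ and observing that $B_t(p)\subseteq\{x\in X:\dist(x,\overline E)\le t\}$ for every $t\ge0$, I would combine \eqref{eq:extvolbound} with Bishop--Gromov:
\[
\omega_N{\rm AVR}(X,\dist,\haus^N)\,t^N-\haus^N(E)\le\haus^N(B_t(p))-\haus^N(E)\le\haus^N(B_t(p)\setminus E)\le\Per(E)\,\frac{N-1}{cN}\Bigl[\Bigl(1+\tfrac{c}{N-1}t\Bigr)^N-1\Bigr].
\]
Dividing by $t^N$ and letting $t\to+\infty$ gives $\omega_N{\rm AVR}(X,\dist,\haus^N)\le\Per(E)\,\dfrac{c^{N-1}}{N(N-1)^{N-1}}$, which rearranges exactly to \eqref{eq:zz}.

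The main obstacle I anticipate is the rigidity step: one has to notice that a \emph{scalar} equality in \eqref{eq:ww} forces the inscribed radius to be \emph{exactly} $\tfrac{N-1}{c}$ and to be realized by an interior ball touching $\partial E$ (this is where the strict positivity of the Heintze--Karcher integrand is used), and that the $K=0$ hypothesis is what guarantees, via strict subadditivity of the isoperimetric profile, that isoperimetric regions are connected, so that \autoref{cor:inscribed radius bound} can be invoked in its rigid form; once these two observations are in place, everything reduces to results already available in the excerpt.
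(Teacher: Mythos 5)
Your proposal is correct and follows essentially the same route as the paper: \autoref{cor:inscribed radius bound} plus \eqref{eq:intvolbound} for \eqref{eq:ww}, the same contradiction argument forcing the inscribed radius to equal $\tfrac{N-1}{c}$ together with connectedness from \autoref{prop:NewRegularityIsoperimetricSets} and the rigidity of \autoref{cor:inscribed radius bound} for the equality case, and \eqref{eq:extvolbound} compared with Bishop--Gromov as $t\to\infty$ for \eqref{eq:zz}. The only (immaterial) difference is that in the rigidity step you apply the volume bound \eqref{eq:intvolbound} directly at $t=r_0$, whereas the paper phrases the same strict comparison via the coarea formula and the area bound \eqref{eq:intareabound}.
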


\begin{proof}
By \autoref{cor:inscribed radius bound} we have the bound on the distance from the complement 
\begin{equation}\label{eq:IsopProof2}
\sup_{x\in E}\dist(x,X\setminus E)\le \frac{N-1}{c}\, ,
\end{equation}
we can apply \eqref{eq:intvolbound}, substituting $K=0$, to obtain
\begin{equation}
\haus^N(\{x\in E\, :\, \dist(x,X\setminus E)\le r\})\le \Per(E)\int _0^r\left(1-\frac{c}{N-1}s\right)^{N-1}\di s\, ,
\end{equation}
for any $0<r<\frac{N-1}{c}$.
In particular
\begin{equation}
\haus^N(E)\le \Per(E)\int_0^{\frac{N-1}{c}}\left(1-\frac{c}{N-1}s\right)^{N-1}\di s=\Per(E)\frac{(N-1)}{cN},
\end{equation}
and \eqref{eq:ww} follows.

Now assume that $c= \tfrac{N-1}{N}\tfrac{\Per(E)}{\haus^N(E)}$. Suppose by contradiction that $\sup_{x\in E}\dist(x,X\setminus E)< \tfrac{N-1}{c}$. Then by \eqref{eq:intareabound} and \autoref{cor:inscribed radius bound} we have that
\begin{equation}
\begin{split}
\haus^N(E) &= \haus^N(\{x\in E\, :\, \dist(x,X\setminus E)\le (N-1)/c\})\\
&= \int_0^{\frac{N-1}{c}} \Per(\{x\in E\, :\, \dist(x,X\setminus E)\le r\}) \de r
\\
&<\Per(E)\int_0^{\frac{N-1}{c}} \left( 1 - \frac{c}{N-1}r\right)^{N-1} \de r 
= \Per(E) \frac{N-1}{cN} = \haus^N(E),
\end{split}
\end{equation}
which is impossible. Therefore $\sup_{x\in E}\dist(x,X\setminus E)= \tfrac{N-1}{c}$. Since $E$ is connected by \autoref{prop:NewRegularityIsoperimetricSets}, the rigidity part in \autoref{cor:inscribed radius bound} implies the claim.

Assume now that ${\rm AVR}(X,\dist,\haus^N)>0$ and let $c\in(0,\infty)$ be a mean curvature barrier, see \autoref{thm:Isoperimetriciintro} and \autoref{prop:C>0}. As a consequence of \eqref{eq:extvolbound} we have the following
\begin{align}
\haus^N(\{x\in X\setminus E\, :\, \dist(x,E)\le r\})\le &\Per(E)\int _0^r\left(1+\frac{c}{N-1}s\right)^{N-1}\di s\\
=&\Per(E)\frac{N-1}{Nc}\left[\left(1+\frac{cr}{N-1}\right)^N-1\right]\, ,
\end{align}
for any $0<r<\infty$.
Then we can study the asymptotics of the right hand side above as $r\to \infty$ to obtain
\begin{equation}
\Per(E)\frac{N-1}{Nc}\left[\left(1+\frac{cr}{N-1}\right)^N-1\right]
= \Per(E)\frac{c^{N-1}}{N(N-1)^{N-1}}r^N \left(1+ O\left(r^{-1}\right) \right).
\end{equation}
Since $E$ is bounded by \autoref{thm:RegularityIsoperimetricSets}, the Euclidean volume growth condition implies that{, letting $x_0\in E$ be any point, we have}
{
\begin{equation}
\begin{split}
\haus^N(\{x\in X\setminus E\, :\, \dist(x,E)\le r\}) 
&\ge \haus^N(\{x \in X \st \dist(x,E)\le r\}) - \haus^N(E)\\& \ge \haus^N(B_r(x_0)) - \haus^N(E)
\\&\ge \mathrm{AVR}(X,\dist,\haus^N)\omega_Nr^N - \haus^N(E)\\
&= \left(1 +O\left(r^{-N}\right) \right) \mathrm{AVR}(X,\dist,\haus^N)\omega_Nr^N 
\end{split}
\end{equation}
as $r\to+\infty$,} and then \eqref{eq:zz} follows.
\end{proof}


\subsection{Sharp and rigid isoperimetric inequalities}

In the following we give a new proof, tailored for $\RCD(0,N)$ spaces with reference measure $\haus^N$, of the sharp isoperimetric inequality under the Euclidean volume growth assumption.\\ 
The approach presented is suited for dealing with the rigidity case, thus extending the rigidity result for the sharp isoperimetric inequality treated in \cite{BrendleFigo, BaloghKristaly, AgostinianiFogagnoloMazzieri, FogagnoloMazzieri}. Notice also that the rigidity results in \cite{BrendleFigo, BaloghKristaly} need an a-priori hypothesis on the regularity of the boundary of the set $E$. Thus our approach to rigidity not only deals with the larger setting of $\RCD(0,N)$ spaces with reference measure $\haus^N$, but also recovers a slightly empowered version of the rigidity results in the smooth setting, without assuming any  regularity of the boundary. Compare with the discussion in \cite[Section 5.2]{BaloghKristaly}.

\begin{lemma}\label{lem:RigidityIsop}
Let $N\ge 2$. Let $(X,\dist,\haus^N)$ be an $\RCD(0,N)$ metric measure space with $\mathrm{AVR}(X,\dist,\haus^N)>0$, and let $E\subset X$ be a set of finite perimeter. If $E$ is an isoperimetric region, then 
\begin{equation}
\Per(E)\ge N\omega_N^{\frac{1}{N}}\left(\mathrm{AVR}(X,\dist,\haus^N)\right)^{\frac{1}{N}}\left(\haus^N(E)\right)^{\frac{N-1}{N}}\, .
\end{equation}
Moreover equality holds for some $E$ with $\haus^N(E)\in(0,\infty)$ if and only if $X$ is isometric to a Euclidean metric measure cone (of dimension $N$) over an $\RCD(N-2,N-1)$ space, and $E$ is isometric to a ball centered at one of the tips of $X$.
\end{lemma}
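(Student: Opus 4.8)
The plan is to squeeze a mean curvature barrier $c$ of $E$ between the two estimates of \autoref{cor:MeanCurvatureIso}: this immediately yields the inequality, and the rigidity then follows by asking when the two estimates are simultaneously sharp.

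Since $A\coloneqq\mathrm{AVR}(X,\dist,\haus^N)>0$, the space $X$ is non compact (else $A=0$), and Bishop--Gromov monotonicity gives $\haus^N(B_1(x))\ge A\,\omega_N>0$ for every $x\in X$; hence \autoref{prop:C>0} applies and provides a mean curvature barrier $c\in(0,\infty)$ for $E$. Note moreover that $\Per(E)=I_X(\haus^N(E))\in(0,\infty)$ by \autoref{rem:LocalHolderProfile}, and that $E$ is connected because $K=0$ (combine \autoref{cor:FinePropertiesProfile}(3) with \autoref{prop:NewRegularityIsoperimetricSets}(1)). Abbreviating $P\coloneqq\Per(E)$ and $V\coloneqq\haus^N(E)$, \autoref{cor:MeanCurvatureIso} yields
\[
(N-1)\left(\frac{N\omega_N A}{P}\right)^{\frac{1}{N-1}}\;\le\;c\;\le\;\frac{N-1}{N}\,\frac{P}{V}\,.
\]
Cancelling $N-1$, raising to the power $N-1$, and clearing denominators gives $N^N\omega_N A\,V^{N-1}\le P^N$, i.e.\ the asserted inequality after taking $N$-th roots.

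For the rigidity, suppose $V\in(0,\infty)$ and that equality holds. Then both inequalities in the display above must be equalities, for otherwise the chain --- hence the isoperimetric inequality --- would be strict. In particular $c=\frac{N-1}{N}\frac{P}{V}$, so the equality case of \autoref{cor:MeanCurvatureIso} forces $E$ to be isometric, as a metric measure space, to the ball $B_R(x_0)$ of radius $R\coloneqq\frac{N-1}{c}$ centered at a tip of some Euclidean metric measure cone of dimension $N$, whose opening we denote by $A_0$. Then $V=A_0\,\omega_N R^N$ and $P=N A_0\,\omega_N R^{N-1}$, and substituting these into the equality $P=N\omega_N^{1/N}A^{1/N}V^{\frac{N-1}{N}}$ forces $A_0=A$. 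Since $x_0$ is an interior point of $E$, the density of $(X,\dist,\haus^N)$ at $x_0$ equals $A_0=A$ and $\haus^N(B_r(x_0))=A\,\omega_N r^N$ for all $r\in(0,R)$. By Bishop--Gromov the function $r\mapsto\haus^N(B_r(x_0))/(\omega_N r^N)$ is non increasing with limit $A$ as $r\to\infty$; being equal to $A$ on $(0,R)$ and bounded below by $A$, it is identically $A$ on $(0,\infty)$. Hence $\haus^N(B_r(x_0))=A\,\omega_N r^N$ for every $r>0$, so the volume-cone to metric-cone theorem \cite[Theorem 1.1]{DePhilippisGigli16} implies that $(X,\dist,\haus^N)$ is a Euclidean metric measure cone of dimension $N$ with vertex $x_0$; its cross section is an $\RCD(N-2,N-1)$ space (see \cite{Ketterer15}), and $E=B_R(x_0)$ is a ball centered at the tip $x_0$, which has density $A$. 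Conversely, if $(X,\dist,\haus^N)$ is such a cone and $E=B_\rho(o)$ is a ball centered at a tip $o$, then $\haus^N(E)=A\,\omega_N\rho^N$ and $\Per(E)=N A\,\omega_N\rho^{N-1}$, so $E$ saturates the (already known) sharp isoperimetric inequality; in particular $E$ is an isoperimetric region realizing equality.

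The step I expect to be the crux is the globalization: the interior rigidity only certifies that $E$ itself is a ball inside \emph{some} cone --- a statement localized near $x_0$ --- whereas the conclusion concerns $X$ as a whole. The bridge is the lower estimate for $c$ in \autoref{cor:MeanCurvatureIso}, which records the Euclidean volume growth of the exterior of $E$: combined with the interior rigidity it pins the density of $X$ at $x_0$ to the value $\mathrm{AVR}$, after which Bishop--Gromov monotonicity propagates the constancy of the volume ratio from the single ball $B_R(x_0)$ to all scales, making the volume-cone to metric-cone theorem applicable globally. A secondary technical point is the connectedness of $E$, which is what makes the equality case of \autoref{cor:MeanCurvatureIso} available; for $K=0$ it is automatic.
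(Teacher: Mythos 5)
Your proposal is correct and takes essentially the same route as the paper: the inequality is derived exactly as in the paper from the two barrier bounds of \autoref{cor:MeanCurvatureIso}, and the rigidity rests on the same ingredients, namely equality in those bounds forcing $c=\tfrac{N-1}{N}\tfrac{\Per(E)}{\haus^N(E)}$, Bishop--Gromov monotonicity pinched against $\mathrm{AVR}(X,\dist,\haus^N)$, and the volume-cone-to-metric-cone theorem of De Philippis--Gigli. The only difference is cosmetic packaging: the paper shows directly that equality gives $\haus^N(E)=\omega_N\,\mathrm{AVR}(X,\dist,\haus^N)\left(\tfrac{N-1}{c}\right)^N$, so by Bishop--Gromov the inscribed ball of radius $\tfrac{N-1}{c}$ exhausts $E$ and the volume ratio at the center is constant at all scales, whereas you detour through the isometric cone-ball description of \autoref{cor:MeanCurvatureIso} and match its opening to the AVR; this works, and your glossed step $\Per(E)=NA_0\omega_N R^{N-1}$ is anyway immediate from $R=\tfrac{N-1}{c}=\tfrac{N\haus^N(E)}{\Per(E)}$ together with $\haus^N(E)=A_0\omega_N R^N$.
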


\begin{proof}
By \autoref{cor:MeanCurvatureIso} we get the lower bound
\begin{equation}\label{eq:IsopProof1}
\Per(E)\ge \max\left\{\frac{N\haus^N(E)c}{(N-1)}\, , \mathrm{AVR}(X,\dist,\haus^N)\omega_N\cdot \frac{N(N-1)^{N-1}}{c^{N-1}}\right\}.
\end{equation}
Hence
\begin{align}\label{eq:IsopProof}
\Per(E)\ge& \left(\frac{N\haus^N(E)c}{(N-1)}\right)^{\frac{N-1}{N}}\left( \mathrm{AVR}(X,\dist,\haus^N)\omega_N\cdot \frac{N(N-1)^{N-1}}{c^{N-1}}\right)^{\frac{1}{N}}\\
=& N\omega_N^{\frac{1}{N}}\left(\mathrm{AVR}(X,\dist,\haus^N)\right)^{\frac{1}{N}}\left(\haus^N(E)\right)^{\frac{N-1}{N}},
\end{align}
which is the sharp isoperimetric inequality.

\medskip

Now let us assume that $\haus^N(E)\in(0,\infty)$ and equality holds in \eqref{eq:IsopProof}. This implies that the two competitors in the right hand side of \eqref{eq:IsopProof1} are equal, that is
\begin{equation}
    \haus^N(E)= \omega_N \mathrm{AVR}(X,\dist,\haus^N)\left(\frac{N-1}{c} \right)^N.
\end{equation}
By \autoref{cor:inscribed radius bound} we know that $E$ contains a ball $B$ of radius $\tfrac{N-1}{c}$. By Bishop--Gromov monotonicity, the measure of $B$ satisfies $\haus^N(B)\ge \omega_N\mathrm{AVR}(X,\dist,\haus^N)\left(\tfrac{N-1}{c} \right)^N=\haus^N(E)$. As $B\subset E$, we conclude that $E=B$ is a metric ball in $X$. Let us write $E=B_{\frac{N-1}{c}}(x)$ for some $x$. By Bishop--Gromov monotonicity we obtain that
\[
\haus^N(B_R(x)) = \left(\frac{R}{\frac{N-1}{c}} \right)^N \haus^N\left(B_{\frac{N-1}{c}}(x)\right),
\]
for any $R\ge \tfrac{N-1}{c}$. As $\RCD(0,N)$ spaces are in particular $\RCD^*(0,N)$, we are in position to apply the rigidity result in \cite[Theorem 1.1]{DePhilippisGigli16}. Since $X$ is endowed with the Hausdorff measure $\haus^N$, we conclude that item (3) in \cite[Theorem 1.1]{DePhilippisGigli16} holds. Since $R\ge \tfrac{N-1}{c}$ is arbitrary, we get that $N\ge2$ and $X$ is isometric to a metric measure cone on a bounded $\RCD^*(N-2,N-1)$ space with finite measure, which is, in particular, also an $\RCD(N-2,N-1)$ space by \cite{CavallettiMilmanCD}.
The same rigidity result yields that $E$ is the ball centered at one of the tips of $X$.
\end{proof}

Though giving the rigidity of the sharp isoperimetric inequality in the setting of \autoref{lem:RigidityIsop}, the last argument seems to give an alternative proof of the sharp isoperimetric inequality only if we know a priori that isoperimetric regions exist for every volume on $X$. Nevertheless combining
\autoref{thm:MassDecompositionINTRO} and \autoref{lem:RigidityIsop} we can give an alternative proof of the sharp isoperimetric inequality in the setting of $\RCD(0,N)$ spaces $(X,\dist,\haus^N)$ with $\mathrm{AVR}(X,\dist,\haus^N)>0$, together with a characterization of the equality case.
In particular, we obtain an alternative proof of the sharp isoperimetric inequality in the setting of Riemannian manfiolds with non negative Ricci curvature and Euclidean volume growth.

\begin{proof}[Proof of \autoref{thm:IsoperimetricSharpRigidintro}]
Let $V:=\haus^N(E)$, and take $\Omega_i$ a minimizing sequence of bounded sets of volume $V$. In the setting of \autoref{thm:MassDecompositionINTRO}, we have that for every $1\leq j\leq \overline N$ the inequality $\mathrm{AVR}(X_j,\dist_j,\haus^N)\geq \mathrm{AVR}(X,\dist,\haus^N)$ holds as a consequence of the volume convergence from \cite{DePhilippisGigli18} and the monotonicity of Bishop--Gromov ratios. Hence, by using the latter inequality, together with \eqref{eq:UguaglianzeIntro}, the fact that $\Omega,Z_j$ are isoperimetric, and \autoref{lem:RigidityIsop}, we have
\begin{equation}\label{eq:EstimateProofRigid}
    \begin{split}
        \Per(E)&\geq I(V) = \Per(\Omega) + \sum_{j=1}^{\overline{N}} \Per (Z_j) \\ &\ge N\omega_N^{\frac{1}{N}}\bigg(\left(\mathrm{AVR}(X,\dist,\haus^N)\right)^{\frac{1}{N}}\left(\haus^N(\Omega)\right)^{\frac{N-1}{N}}+\\
        &\qquad+\sum_{j=1}^{\overline N}\left(\mathrm{AVR}(X_j,\dist_j,\haus^N)\right)^{\frac{1}{N}}\left(\haus^N(Z_j)\right)^{\frac{N-1}{N}}\bigg) \\
        &\geq N\omega_N^{\frac 1N}\mathrm{AVR}(X,\dist,\haus^N)^{\frac 1N}\left(\left(\haus^N(\Omega)\right)^{\frac{N-1}{N}}+\sum_{j=1}^{\overline N}\left(\haus^N(Z_j)\right)^{\frac{N-1}{N}}\right) \\
        &\geq N\omega_N^{\frac 1N}\mathrm{AVR}(X,\dist,\haus^N)^{\frac 1N}\left(\haus^N(\Omega)+\sum_{j=1}^{\overline N}\haus^N(Z_j)\right)^{\frac{N-1}{N}} \\
        &=N\omega_N^{\frac 1N}\mathrm{AVR}(X,\dist,\haus^N)^{\frac 1N}\left(\haus^N(E)\right)^{\frac{N-1}{N}}.
    \end{split}
\end{equation}
The rigidity part of the statement follows from \autoref{lem:RigidityIsop}.
\end{proof}

The rigidity part of \autoref{thm:IsoperimetricSharpRigidintro} allows to characterize isoperimetric sets in $\RCD(0,N)$ cones.

\begin{corollary}\label{cor:IsoperimetricOnCones}
Let $(X,\dist,\haus^N)$ be a Euclidean metric measure cone (of dimension $N$) over an $\RCD(N-2,N-1)$ space, for some $N\geq 2$. Let $\vartheta$ be the opening of the cone, i.e., the density at any tip. Then 
\begin{equation}\label{eqn:IsopOfaCone}
I_X(v)=N(\omega_N\vartheta)^{1/N}v^{\frac{N-1}{N}}, \qquad \text{for all $v>0$}\, .
\end{equation}
Moreover, all the isoperimetric regions in $X$ are balls centered at one of the tips.
\end{corollary}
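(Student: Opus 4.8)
The plan is to combine the sharp isoperimetric inequality of \autoref{thm:IsoperimetricSharpRigidintro} with an explicit computation of the volume and the perimeter of metric balls centered at a tip, and then to read off the classification of isoperimetric regions from the rigidity part of the same theorem. As a preliminary, by \cite{Ketterer15} the space $(X,\dist,\haus^N)$ is an $\RCD(0,N)$ space, and by the discussion preceding the statement its asymptotic volume ratio equals its opening $\vartheta>0$. Since $X$ is $\RCD(0,N)$, the Bishop--Gromov inequality gives that $r\mapsto\haus^N(B_r(x))/(\omega_N r^N)$ is nonincreasing with limit $\mathrm{AVR}(X,\dist,\haus^N)=\vartheta$ as $r\to\infty$, so $\haus^N(B_r(x))\ge\omega_N\vartheta r^N$ for every $x\in X$ and $r>0$; in particular $\inf_{x\in X}\haus^N(B_1(x))\ge\omega_N\vartheta>0$, and hence \autoref{rem:LocalHolderProfile} ensures that $I_X$ is continuous and strictly positive on $(0,\infty)$.

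The lower bound $I_X(v)\ge N(\omega_N\vartheta)^{1/N}v^{(N-1)/N}$ for every $v>0$ is then exactly \autoref{thm:IsoperimetricSharpRigidintro} with $\mathrm{AVR}=\vartheta$. For the matching upper bound I would fix a tip $x_0$: its density is $\vartheta=\mathrm{AVR}(X,\dist,\haus^N)$, so the nonincreasing Bishop--Gromov ratio $r\mapsto\haus^N(B_r(x_0))/(\omega_N r^N)$ attains the common value $\vartheta$ in the limits $r\to0$ and $r\to\infty$ and is therefore identically $\vartheta$, i.e.\ $\haus^N(B_r(x_0))=\omega_N\vartheta r^N$ for all $r>0$. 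Applying the coarea formula \autoref{thm:coarea} to the $1$-Lipschitz function $\dist(x_0,\cdot)$, whose slope is $1$ $\haus^N$-almost everywhere, yields $\int_0^R\Per(B_r(x_0))\de r=\haus^N(B_R(x_0))=\omega_N\vartheta R^N$ for every $R>0$, hence $\Per(B_r(x_0))=N\omega_N\vartheta\, r^{N-1}$ for a.e.\ $r>0$. Taking $r=(v/(\omega_N\vartheta))^{1/N}$ gives $I_X(v)\le N(\omega_N\vartheta)^{1/N}v^{(N-1)/N}$ for a.e.\ $v>0$, and then for every $v>0$ by continuity of $I_X$ (alternatively by the monotonicity of $v\mapsto I_X(v)/v^{(N-1)/N}$, see \autoref{cor:IsoperimetricProfileRCD0N}). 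Together with the lower bound this proves \eqref{eqn:IsopOfaCone}.

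Finally, if $E\subset X$ is an isoperimetric region with $\haus^N(E)=v\in(0,\infty)$, then by \eqref{eqn:IsopOfaCone} we have $\Per(E)=I_X(v)=N\omega_N^{1/N}\vartheta^{1/N}v^{(N-1)/N}$, so equality holds in \eqref{eqn:SharpIsopintro}; the rigidity statement of \autoref{thm:IsoperimetricSharpRigidintro} then forces $E$ to be isometric to a ball centered at a tip of $X$, and since any isometry of $X$ carries tips to tips and metric balls to metric balls, $E$ is itself such a ball. The point I expect to be the most delicate is the perimeter identity $\Per(B_r(x_0))=N\omega_N\vartheta r^{N-1}$: although the volume identity is an immediate rigidity consequence of Bishop--Gromov, extracting the perimeter of balls from it requires the coarea formula together with an almost-everywhere-to-everywhere upgrade, which is precisely where the uniform volume lower bound (again from Bishop--Gromov) and the ensuing continuity of $I_X$ enter.
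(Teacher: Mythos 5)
Your proposal is correct and follows essentially the same route the paper intends for this corollary: the lower bound is exactly \autoref{thm:IsoperimetricSharpRigidintro}, the matching upper bound comes from balls centered at a tip (where the Bishop--Gromov ratio is constantly $\vartheta$), and the classification of isoperimetric regions is read off from the rigidity part of the same theorem. Your coarea computation of $\Per(B_r(x_0))$ is fine, though it can be shortcut via the inequality $\Per(B_r(x))/s(N,0,r)\le \haus^N(B_r(x))/v(N,0,r)$ already recalled in the paper's preliminaries, which gives $\Per(B_r(x_0))\le N\omega_N\vartheta r^{N-1}$ for every $r>0$ without any almost-everywhere upgrade.
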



\subsection{Isoperimetric monotonicity on spaces with non negative Ricci curvature}

In this section we derive some further consequences of the sharp concavity properties of the isoperimetric profile for $\RCD(0,N)$ spaces $(X,\dist,\haus^N)$.
\medskip

Recall that in \cite{Huiskenvideo} a notion of isoperimetric cone angle for an $N$-dimensional Riemannian manifold $(M,g)$ with non negative Ricci curvature was proposed as 
\begin{equation*}
c_{\mathrm{iso}}(M,g):=\inf\left\{\frac{\left(\haus^{N-1}(\partial \Omega)\right)^{\frac{N}{N-1}}}{\left(N\omega_{N}^{\frac{1}{N}}\right)^{\frac{N}{N-1}}\haus^N(\Omega)}\, :\, \emptyset \neq \Omega\subset M\, , \quad\text{$\Omega$ open with smooth boundary}\right\}\, .
\end{equation*}
With this definition, the sharp isoperimetric inequality for smooth Riemannian manifolds with non negative Ricci curvature (and Euclidean volume growth) \cite{BrendleFigo,AgostinianiFogagnoloMazzieri,FogagnoloMazzieri} could be restated as
\begin{equation}
   c_{\mathrm{iso}}(M,g)\ge \left(\mathrm{AVR}(M,g)\right)^{\frac{1}{N-1}}\, ,
\end{equation}
{ where $\mathrm{AVR}(M,g)\eqdef {\rm AVR}(M,\dist,\haus^N)$ for the Riemannian distance $\dist$ on $(M,g)$.}
Then, {by employing balls with radii going to infinity, see for instance \cite[Corollary 3.6]{AntBruFogPoz},} it is not difficult to check that also the converse inequality holds, so that
\begin{equation}
    c_{\mathrm{iso}}(M,g)= \left(\mathrm{AVR}(M,g)\right)^{\frac{1}{N-1}}\, .
\end{equation}
This shows that the large scale geometry of a manifold with non negative Ricci curvature influences its isoperimetric behaviour down to the bottom \emph{volume scale}.\\
Thanks to \autoref{thm:BavardPansuIntro}, we understand that this is true for any two intermediate volume scales, namely the {scale invariant isoperimetric profile} is monotone decreasing with respect to the volume.  

Notice that, in terms of the isoperimetric profile function $I$, the isoperimetric cone angle can be equivalently characterized as
\begin{equation}
c_{\mathrm{iso}}(M,g)=\inf\left\{\frac{I(v)^{\frac{N}{N-1}}}{\left(N\omega_{N}^{\frac{1}{N}}\right)^{\frac{N}{N-1}}v}\, :\, v\in (0,\infty)\right\}\, ,
\end{equation}
or, introducing the {scale invariant isoperimetric profile} at volume $v\in(0,\haus^N)$ as
\begin{equation*}
    c_{\mathrm{iso}}(M,g)(v):=\frac{I(v)^{\frac{N}{N-1}}}{\left(N\omega_{N}^{\frac{1}{N}}\right)^{\frac{N}{N-1}}v}=\inf\left\{\frac{\left(\haus^{N-1}(\partial \Omega)\right)^{\frac{N}{N-1}}}{\left(N\omega_{N}\right)^{\frac{1}{N-1}}\haus^N(\Omega)}\, :\, \emptyset \neq \Omega\subset M\, , \haus^N(\Omega)=v\right\}\, , 
\end{equation*}
by 
\begin{equation*}
c_{\mathrm{iso}}(M,g)=\inf_{v\in (0,\infty)}c_{\mathrm{iso}}(M,g)(v)\, .
\end{equation*}
Below we prove that, in the greater generality of $\RCD(0,N)$ spaces $(X,\dist,\haus^N)$, the {scale invariant isoperimetric profile} is monotone decreasing with respect to the volume, without any further assumption on the volume growth. {Analogous statements for smooth Riemannian manifolds with nonnegative Ricci curvature that are either compact or have uniformly controlled geometry at infinity were obtained in \cite{BavardPansu86,Bayle03,MondinoNardulli16}.}

\begin{theorem}\label{cor:IsoperimetricProfileRCD0N}
Let $(X,\dist,\haus^N)$ be an $\RCD(0,N)$ space with isoperimetric profile function $I$. The following hold:
\begin{enumerate}
    \item the function $I^{\frac{N}{N-1}}$ is concave on $(0,\haus^N(X))$. A fortiori $I$ is concave on $(0,\haus^N(X))$ and, if $\haus^N(X)=+\infty$, $I$ is nondecreasing on $(0,+\infty)$;
    \item we have that
    \begin{equation}\label{eq:monotonicityformula}
    v\mapsto \frac{I(v)}{v^{\frac{N-1}{N}}}\,\,\text{is non-increasing on $(0,\haus^N(X))$}
    \end{equation}
    and, when $\haus^N(X)=+\infty$,
    \begin{equation}\label{eq:aslargev}
    \lim_{v\to \infty}\frac{I(v)}{v^{\frac{N-1}{N}}} = N\left(\omega_N\mathrm{AVR}(X,\dist,\haus^N)\right)^{\frac 1N}\, ;
    \end{equation}
    \item when $\haus^N(X)=+\infty$,
    \begin{equation}
    \lim_{v\to \infty} v^{\frac{1}{N}}I_+'(v)=(N-1)\left(\omega_N\mathrm{AVR}(X,\dist,\haus^N)\right)^{\frac 1N}\, ,
    \end{equation}
    where $I_+'(v)$ is the right derivative of $I$;
    \item if $\mathrm{AVR}(X,\dist,\haus^N)>0$, then $I$ is strictly increasing and strictly concave. 
\end{enumerate}
\end{theorem}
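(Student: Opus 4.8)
The plan is to derive all four items from the sharp second-order inequality \autoref{thm:BavardPansuIntro} specialized to $K=0$, from the sharp isoperimetric inequality \autoref{thm:IsoperimetricSharpRigidintro}, and from elementary convexity bookkeeping. First I would dispose of the degenerate situation: if $\inf_{x\in X}\haus^N(B_1(x))=0$, then by the $\RCD(0,N)$ analogue of \cite[Proposition 2.18]{AntBruFogPoz} recalled above one has $I\equiv 0$ and, by Bishop--Gromov, $\mathrm{AVR}(X,\dist,\haus^N)=0$, so all four assertions are vacuous. From then on I would assume $\inf_{x\in X}\haus^N(B_1(x))\ge v_0>0$, which makes \autoref{rem:LocalHolderProfile}, \autoref{thm:BavardPansuIntro}, \autoref{cor:FinePropertiesProfile} and \autoref{thm:IsoperimetricSharpRigidintro} available; in particular $I>0$ and continuous on $(0,\haus^N(X))$ and $I(v)\to 0$ as $v\to 0^+$ by \autoref{cor:FinePropertiesProfile}(1). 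Set $\psi\coloneqq I^{N/(N-1)}$. For item (1): with $K=0$, \autoref{thm:BavardPansuIntro}(2) reads $-\psi''\ge 0$ in the viscosity sense on $(0,\haus^N(X))$; since a continuous function whose viscosity (equivalently, distributional) second derivative is non-positive on an interval is concave, this gives concavity of $\psi$, hence of $I=\psi^{(N-1)/N}$ by composition with the concave nondecreasing map $t\mapsto t^{(N-1)/N}$; and when $\haus^N(X)=+\infty$ a concave function on $(0,+\infty)$ that is bounded below (by $0$) is automatically nondecreasing, since a strictly negative right-derivative at some point would force $I(v)\to-\infty$.

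For item (2) I would extend $\psi$ continuously by $\psi(0)\coloneqq 0$ and invoke the standard fact that for a concave function the secant slope from the origin $v\mapsto \psi(v)/v$ is nonincreasing; since $I(v)/v^{(N-1)/N}=(\psi(v)/v)^{(N-1)/N}$, this yields \eqref{eq:monotonicityformula}. For the limit at infinity when $\haus^N(X)=+\infty$: the sharp inequality \autoref{thm:IsoperimetricSharpRigidintro} already gives $I(v)/v^{(N-1)/N}\ge N(\omega_N\mathrm{AVR}(X,\dist,\haus^N))^{1/N}$ for every $v$, so I only need the matching upper bound along a diverging sequence of volumes. If $A\coloneqq\mathrm{AVR}(X,\dist,\haus^N)>0$, I would test with balls $B_r(p)$, using $\haus^N(B_r(p))\sim\omega_N A r^N$ and $\Per(B_r(p))\sim N\omega_N A r^{N-1}$ as $r\to\infty$ (Bishop--Gromov together with the argument in \cite[Corollary 3.6]{AntBruFogPoz}), which forces $I(v_r)/v_r^{(N-1)/N}\le \Per(B_r(p))/v_r^{(N-1)/N}\to N(\omega_N A)^{1/N}$ along $v_r\coloneqq\haus^N(B_r(p))$; by the monotonicity just proved this becomes the full limit. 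If $A=0$, I would instead extract by coarea a radius $\rho\in(r,2r)$ with $\Per(B_\rho(p))\le \haus^N(B_{2r}(p))/r\le 2^N\haus^N(B_r(p))/r$, so that $\Per(B_\rho(p))/\haus^N(B_\rho(p))^{(N-1)/N}\le 2^N(\haus^N(B_r(p))/r^N)^{1/N}\to 0$, again pinning the limit at $0=N(\omega_N A)^{1/N}$.

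For items (3) and (4) I would continue from item (1). For (3), with $\haus^N(X)=+\infty$, $\psi$ is concave and nondecreasing on $(0,+\infty)$, so $I_+'$ and $\psi_+'$ exist everywhere, $\psi_+'$ is nonincreasing and nonnegative with limit $\ell$, and integrating $\psi_+'$ shows $\psi(v)/v\to\ell$; comparing with item (2) identifies $\ell=L\coloneqq (N(\omega_N\mathrm{AVR}(X,\dist,\haus^N))^{1/N})^{N/(N-1)}$. Differentiating $\psi=I^{N/(N-1)}$ gives $v^{1/N}I_+'(v)=\frac{N-1}{N}(\psi(v)/v)^{-1/N}\psi_+'(v)$, which tends to $\frac{N-1}{N}L^{(N-1)/N}=(N-1)(\omega_N\mathrm{AVR}(X,\dist,\haus^N))^{1/N}$ when $L>0$; when $L=0$ the concavity bound $\psi_+'(v)\le \psi(v)/v$ squeezes $0\le v^{1/N}I_+'(v)\le \frac{N-1}{N}(\psi(v)/v)^{(N-1)/N}\to 0$. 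For (4), $\mathrm{AVR}(X,\dist,\haus^N)>0$ forces $\inf_x\haus^N(B_1(x))\ge\omega_N\mathrm{AVR}(X,\dist,\haus^N)>0$ and $\haus^N(X)=+\infty$ by Bishop--Gromov, so (1)--(3) apply and in particular $I$ is positive, concave, nondecreasing with $\lim_{v\to\infty}I(v)/v^{(N-1)/N}=N(\omega_N\mathrm{AVR}(X,\dist,\haus^N))^{1/N}>0$. If $I$ were not strictly concave it would be affine on some $[a,b]$; inserting the test function $\phi\coloneqq I|_{(a,b)}$ (which is $C^2$ there with $\phi''\equiv 0$) into the viscosity inequality $-I''I\ge (I')^2/(N-1)$ of \autoref{thm:BavardPansuIntro}(1) forces $I'\equiv 0$ on $(a,b)$, hence $I$ and $\psi$ constant on $[a,b]$; since $\psi$ is concave and nondecreasing, $\psi_+'$ is nonincreasing and vanishes on $[a,b)$, hence is $\le 0$ — thus $\equiv 0$ — on $[b,+\infty)$, so $\psi$, and therefore $I$, is constant on $[a,+\infty)$, contradicting $\lim_{v\to\infty}I(v)/v^{(N-1)/N}>0$. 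Strict monotonicity of $I$ then follows, since a nondecreasing strictly concave function cannot be constant on any nondegenerate interval.

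I expect the genuinely delicate points to be threefold. First, the passage from the viscosity differential inequalities to the \emph{global} concavity of $I^{N/(N-1)}$ on the whole interval $(0,\haus^N(X))$, as opposed to the local and perturbed concavity already recorded in \autoref{cor:FinePropertiesProfile}; this requires the clean equivalence between viscosity supersolutions of $-u''\ge 0$ and concave functions. Second, the two borderline cases $\mathrm{AVR}=0$ in items (2) and (3), where one has no a priori control on the decay rate of $I(v)/v^{(N-1)/N}$ and must instead produce ad hoc competitors and squeeze inequalities. Third, in item (4), the correct use of the \emph{viscosity} form of $-I''I\ge(I')^2/(N-1)$ on a putative affine piece — testing $I$ against itself — combined with the structural observation that an isoperimetric profile of a space with positive asymptotic volume ratio cannot become eventually constant.
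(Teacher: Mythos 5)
Your argument is correct, and for items (1)--(3) it is essentially the paper's proof with the outsourced steps written out: the paper also disposes of the degenerate case $\inf_x\haus^N(B_1(x))=0$ via the $\RCD$ analogue of \cite[Proposition 2.18]{AntBruFogPoz}, gets concavity of $I^{N/(N-1)}$ from continuity plus \eqref{eqn:Bayle}, gets \eqref{eq:monotonicityformula} from the secant-slope monotonicity with $\psi(0^+)=0$, and then simply cites \cite[Corollary 3.6]{AntBruFogPoz} for \eqref{eq:aslargev} and for item (3); your explicit competitors (balls at infinity, with the Bishop--Gromov bound $\Per(B_r)\le N\haus^N(B_r)/r$, plus the lower bound from \autoref{thm:IsoperimetricSharpRigidintro}) and your $\psi_+'$ bookkeeping are precisely the content of that citation, and your separate coarea treatment of the case $\mathrm{AVR}=0$ is harmless, indeed slightly more than needed since the perimeter-to-volume ratio bound for balls already covers it. The genuine divergence is in item (4), where you reverse the paper's logic: the paper first proves that $I$ is strictly increasing (citing \cite[Corollary 3.8]{AntBruFogPoz}, which exploits the previous items) and then deduces strict concavity by composing the concave, strictly increasing $\psi$ with the strictly concave power $t\mapsto t^{(N-1)/N}$, whereas you prove strict concavity first, by testing the viscosity inequality $-I''I\ge (I')^2/(N-1)$ with the affine function itself on a putative affine piece (forcing zero slope, hence local constancy, which propagates by concavity and monotonicity of $\psi_+'$ to constancy on a half-line and contradicts the positive limit in \eqref{eq:aslargev}), and then obtain strict monotonicity as a corollary. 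Both routes are sound; yours has the merit of being self-contained and of showing directly how the viscosity inequality rules out flat pieces, while the paper's is shorter and leans on the smooth-case references.
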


\begin{proof}
Let us notice that if $\inf_x\haus^N(B_1(x))=0$, we have $I\equiv 0$, see \cite[Proposition 2.18]{AntBruFogPoz}, whose proof adapts in the non-smooth setting since it only relies on Bishop--Gromov monotonicity. Thus this case is trivial. Let us suppose, from now on, that $\inf_x\haus^N(B_1(x))>0$.

Item (1) readily follows from the continuity of $I$, see \autoref{rem:LocalHolderProfile}, and \eqref{eqn:Bayle}.

Let us prove item (2). Since $I^{\frac{N}{N-1}}$ is concave and $\lim_{v\to 0^+}I^{\frac{N}{N-1}}(v)=0$, we get from concavity that
$$
v\mapsto \frac{I^{\frac{N}{N-1}}(v)}{v},
$$
is non-increasing. Hence we get the first part of the assertion. The asymptotic \eqref{eq:aslargev} follows as in \cite[Corollary 3.6]{AntBruFogPoz}, since it only relies on the Bishop--Gromov monotonicity and the isoperimetric inequality in \autoref{thm:IsoperimetricSharpRigidintro}.

Item (3) follows verbatim as in \cite[Corollary 3.6]{AntBruFogPoz}, since it only relies on the Bishop--Gromov monotonicity and the concavity of $I$.

Let us prove item (4). The fact that $I$ is strictly increasing follows verbatim as in \cite[Corollary 3.8]{AntBruFogPoz} by exploiting the previous items. The fact that $I$ is strictly concave follows from the fact that $I^{\frac{N}{N-1}}$ is concave and strictly increasing.
\end{proof}

The monotonicity of the isoperimetric profile in \autoref{cor:IsoperimetricProfileRCD0N} also directly implies the following consequence.

\begin{corollary}
Let $(X,\dist,\haus^N)$ be an $\RCD(0,N)$ space with infinite volume.

If $E\subset X$ is an isoperimetric region, then $\Per(E)\le \Per(F)$ whenever $\haus^N(E)\le\haus^N(F)$. In particular $E$ is outward minimizing, i.e., $\Per(E)\le \Per(F)$ whenever $E\subset F$.

If also ${\rm AVR}(X,\dist,\haus^N)>0$, then the previous inequalities are strict and isoperimetric sets are strictly outward minimizing.
\end{corollary}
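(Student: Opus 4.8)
The plan is to derive everything from the monotonicity of the isoperimetric profile $I$ recorded in \autoref{cor:IsoperimetricProfileRCD0N}, combined with the mere definition of $I$ as an infimum of perimeters at fixed volume. Since $\haus^N(X)=+\infty$, item (1) of \autoref{cor:IsoperimetricProfileRCD0N} gives that $I$ is nondecreasing on $(0,+\infty)$, and item (4) gives that $I$ is strictly increasing as soon as $\mathrm{AVR}(X,\dist,\haus^N)>0$. (If $\inf_x\haus^N(B_1(x))=0$ then $I\equiv0$, so every isoperimetric region has zero perimeter and the non-strict inequalities are trivial; this degenerate case is ruled out once $\mathrm{AVR}>0$.)

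For the first assertion, let $E$ be an isoperimetric region and let $F\subset X$ satisfy $\haus^N(E)\le\haus^N(F)$; we may assume $\haus^N(F)<+\infty$ (so that $\haus^N(F)$ lies in the domain $[0,\haus^N(X))$ of $I$) and $\Per(F)<+\infty$. First I would write $\Per(E)=I(\haus^N(E))$ by the definition of isoperimetric region. Then, since $F$ has volume strictly less than $\haus^N(X)$, it is an admissible competitor for $I(\haus^N(F))$, so $\Per(F)\ge I(\haus^N(F))$. Finally, monotonicity of $I$ together with $\haus^N(E)\le\haus^N(F)$ gives $I(\haus^N(F))\ge I(\haus^N(E))$. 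Chaining the three relations yields $\Per(F)\ge\Per(E)$. The outward-minimizing property is the special case $E\subseteq F$, for which the volume inequality is automatic.

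For the strict statements I would additionally assume $\mathrm{AVR}(X,\dist,\haus^N)>0$ and $\haus^N(E)<\haus^N(F)$: then the middle step becomes the strict inequality $I(\haus^N(F))>I(\haus^N(E))$ by strict monotonicity of $I$, and the same chain gives $\Per(F)>\Per(E)$. Specializing to $E\subseteq F$ with $\haus^N(F\setminus E)>0$ (i.e.\ $E\neq F$ as sets of finite perimeter) forces $\haus^N(E)<\haus^N(F)$, hence $\Per(E)<\Per(F)$, which is strict outward minimality. I do not expect any real obstacle: all the analytic content is already contained in \autoref{cor:IsoperimetricProfileRCD0N} (hence in \autoref{thm:BavardPansuIntro} and \autoref{thm:IsoperimetricSharpRigidintro}), and the only point deserving attention is to keep the competitor volumes inside the domain of $I$, which costs nothing for the intended reading of the statement.
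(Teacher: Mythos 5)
Your proof is correct and is exactly the paper's intended argument: the corollary is stated as a direct consequence of \autoref{cor:IsoperimetricProfileRCD0N}, via the chain $\Per(E)=I(\haus^N(E))\le I(\haus^N(F))\le\Per(F)$ and its strict version when $\mathrm{AVR}>0$. Your side remarks (restricting to finite-volume competitors and reading strict containment in the measure-theoretic sense) are the right way to interpret the statement.
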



\subsection{Consequences for other geometric and functional inequalities}\label{subsec:functineq}

Here we follow a classical strategy to obtain (sharp) functional inequalities from (sharp) isoperimetric inequalities arguing by rearrangement, see for instance \cite{PolyaSzego,BerardMeyer82,Maziabooksobolev,Grigorianisomazia}.\\ 
Since we are able to characterize the rigidity in the sharp isoperimetric inequalities, the characterization of rigidity in the sharp functional inequalities will follow as well.\\
We will improve the existing results in several directions:
\begin{itemize}
    \item taking into account the new monotonicity of the quotient $v\mapsto I(v)/v^{\frac{n-1}{n}}$, our statements will depend on the isoperimetric behaviour of the space on a fixed range of volumes $v\in [0,\bar{v}]$, rather than on the full range (or, equivalently, on the asymptotic volume ratio);
    \item we will characterize the rigidity without technical regularity assumptions for smooth Riemannian manifolds, improving upon the recent \cite{BaloghKristaly};
    \item we will characterize the rigidity in the more general setting of $\RCD(0,N)$ spaces $(X,\dist,\haus^N)$. This setting includes as remarkable examples Alexandrov spaces with non negative sectional curvature and cones with non negative Ricci curvature.
\end{itemize}

We will focus on $\RCD(0,N)$ metric measure spaces $(X,\dist,\haus^N)$.

Let us borrow the terminology from \cite{MondinoSemolaPolyaSzego} (see also \cite[Section 3]{NobiliViolo}). Given a metric measure space $(X,\dist,\meas)$, an open set $\Omega\subset X$ with $\meas(\Omega)<\infty$ and a Borel function $u:\Omega\to[0,\infty)$ we will denote by
\begin{equation}
\mu(t):=\meas\left(\{u>t\}\right)\, ,
\end{equation}
the distribution function of $u$ and by $u^{\sharp}$ the generalized inverse function of $\mu$.\\
Moreover, given $N\ge 1$ we choose $0<r<\infty$ such that $\meas_N([0,r])=\meas(\Omega)$, where $\meas_N=N\omega_N r^{N-1}\di r$ and define the monotone rearrangement $u^*$ of $u$ by
\begin{equation}
u^*(x):=u^{\sharp}\left(\meas_N([0,x])\right)\, ,\quad\text{for any $x\in [0,r]$}\, .    
\end{equation}
Notice that, by the very definition, $u$ and $u^*$ have the same distribution function, which implies that 
\begin{equation}
\int_{\Omega} f(u)\di\meas=\int _{[0,r]}f(u^*)\di\meas_N\, ,\quad\text{for any Borel function $f:[0,\infty)\to[0,\infty)$}\, .
\end{equation}
Let $1<p<+\infty$. For any non negative function $u\in W^{1,p}_0(\Omega)$. Let us also introduce a function $f_u:[0,\sup u^*]\to [0,\infty)$ by
\begin{equation}
f_u(t):=\int \abs{\nabla u^*}^{p-1}\di\Per(\{u^*>t\})\, ,
\end{equation}
and notice that, by the coarea formula,
\begin{equation}
\int_0^{\sup u^*}f_u(t)\di t=\int _{[0,r]}\abs{\nabla u^*}^p\di\meas_N\, .
\end{equation}

We shall denote by $I_N$ the isoperimetric profile of $\setR^N$ with the canonical Euclidean structure. Notice that it coincides with the isoperimetric profile of the model one dimensional metric measure space $\left([0,\infty),|\cdot |,N\omega_Nr^{N-1}\di r\right)$ and it holds $I_N(v)=N\omega_N^{1/N}v^{\frac{N-1}{N}}$ for any $v\ge 0$.\\ 
The classical rearrangement argument (see \cite{BerardMeyer82,PolyaSzego} for the classical formulations and \cite{MondinoSemolaPolyaSzego,NobiliViolo} for a more recent one in the setting of $\CD(K,N)$ spaces) gives the following.

\begin{proposition}\label{prop:strongPolyaSzego}
Let $(X,\dist,\haus^N)$ be an $\RCD(0,N)$ metric measure space. Let $1<p<+\infty$. Let $\Omega\subset X$ be an open domain with $\haus^N(\Omega)<\infty$. Let $u\in W^{1,p}_0(\Omega)$ be non negative and let $u^*:[0,r]\to [0,\infty)$ be its monotone rearrangement, where $r>0$ is such that $\meas_N([0,r])=\haus^N(\Omega)$. Then
\begin{equation}
\int _{\Omega}\abs{\nabla u}^p\di\haus^N\ge \int_0^{\sup u^*}\left(\frac{I_{(X,\dist,\haus^N)}(\mu(t))}{I_N(\mu(t))}\right)^pf_u(t)\di t\, .
\end{equation}
\end{proposition}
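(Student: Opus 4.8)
The plan is to run the classical rearrangement (P\'olya--Szeg\H{o}) argument, but in a form that only uses $\haus^N$-integrals, the coarea formula, H\"older's inequality, and the isoperimetric inequality $\Per(E)\ge I_X(\haus^N(E))$ coming from the very definition of $I_X$, so that no regularity of the level sets of $u$ is needed. First I would dispose of the degenerate case: if $\inf_{x\in X}\haus^N(B_1(x))=0$ then $I_X\equiv 0$ by \cite[Proposition 2.18]{AntBruFogPoz} (whose proof only uses Bishop--Gromov), the right-hand side vanishes and there is nothing to prove; so from now on assume $\inf_{x\in X}\haus^N(B_1(x))>0$, whence $I_X>0$ on $(0,\haus^N(X))$ by \autoref{rem:LocalHolderProfile}.

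Set $\mu(t)\coloneqq\haus^N(\{u>t\})$ and $\psi(t)\coloneqq\int_{\{u>t\}}|\nabla u|^p\di\haus^N$; both are non-increasing, $\psi(0)=\int_\Omega|\nabla u|^p\di\haus^N$ (since $|\nabla u|=0$ $\haus^N$-a.e.\ on $\{u=0\}$), and $\mu(t),\psi(t)\to0$ as $t\uparrow\sup u^*$ (because $\haus^N(\Omega)<\infty$ and $|\nabla u|^p\in L^1$). For $h>0$ consider the truncation $u_{t,h}\coloneqq\min\big((u-t)_+,h\big)\in W^{1,p}_0(\Omega)$, for which $|\nabla u_{t,h}|=|\nabla u|\,\chi_{\{t<u<t+h\}}$ $\haus^N$-a.e.\ and $|Du_{t,h}|=|\nabla u_{t,h}|\haus^N$ by the locality/chain rule for the minimal weak upper gradient and the compatibility of the $\mathrm{BV}$ and Sobolev calculi on $\RCD$ spaces. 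Applying the coarea formula (\autoref{thm:coarea}) to $u_{t,h}$ and then H\"older's inequality with exponents $p$ and $p'=p/(p-1)$ gives
\[
\int_t^{t+h}\Per(\{u>\tau\})\di\tau=\int_{\{t<u<t+h\}}|\nabla u|\di\haus^N\le\big(\psi(t)-\psi(t+h)\big)^{1/p}\big(\mu(t)-\mu(t+h)\big)^{1/p'}.
\]
Dividing by $h$ and letting $h\downarrow0$ at the (full-measure set of) points $t\in(0,\sup u^*)$ that are simultaneously Lebesgue points of $\tau\mapsto\Per(\{u>\tau\})\in L^1(0,\infty)$ and differentiability points of the monotone functions $\psi,\mu$ yields $\Per(\{u>t\})\le\big(-\psi'(t)\big)^{1/p}\big(-\mu'(t)\big)^{1/p'}$; since $\{u>t\}$ has finite perimeter for a.e.\ $t$ and $\mu(t)>0$ on $(0,\sup u^*)$, the bound $\Per(\{u>t\})\ge I_X(\mu(t))>0$ forces $-\mu'(t)>0$ a.e.\ there, and raising to the $p$-th power and rearranging gives $-\psi'(t)\ge I_X(\mu(t))^p\big(-\mu'(t)\big)^{-(p-1)}$ for a.e.\ $t$. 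Integrating in $t$ and using $\int_\Omega|\nabla u|^p\di\haus^N=\psi(0)\ge\int_0^{\sup u^*}\big(-\psi'(t)\big)\di t$ (monotonicity of $\psi$) produces
\[
\int_\Omega|\nabla u|^p\di\haus^N\ge\int_0^{\sup u^*}\frac{I_X(\mu(t))^p}{\big(-\mu'(t)\big)^{p-1}}\di t.
\]

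It then remains to recognize the integrand as $\big(I_X(\mu(t))/I_N(\mu(t))\big)^p f_u(t)$, i.e.\ to check that $f_u(t)=I_N(\mu(t))^p\big(-\mu'(t)\big)^{-(p-1)}$ for a.e.\ $t\in(0,\sup u^*)$. This is an elementary one-dimensional computation on the model $\big([0,r],|\cdot|,\meas_N\big)$: there $\{u^*>t\}=[0,\rho(t))$ with $\omega_N\rho(t)^N=\mu(t)$, so $\Per(\{u^*>t\})=I_N(\mu(t))\,\delta_{\rho(t)}$ and $f_u(t)=|\nabla u^*(\rho(t))|^{p-1}I_N(\mu(t))$; differentiating $\mu(t)=\omega_N\rho(t)^N$ and using $(u^*)'(\rho(t))\,\rho'(t)=1$ with $u^*$ non-increasing gives $-\mu'(t)=I_N(\mu(t))/|\nabla u^*(\rho(t))|$, whence the identity (equivalently: running the same coarea--H\"older chain for $u^*$ on the model, where the isoperimetric and H\"older inequalities are both equalities, gives directly $f_u(t)=-\tfrac{d}{dt}\int_{\{u^*>t\}}|\nabla u^*|^p\di\meas_N=I_N(\mu(t))^p\big(-\mu'(t)\big)^{-(p-1)}$). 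Since $\sup u=\sup u^*$, this completes the argument.

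The main point requiring care is the passage $h\downarrow0$: one must select the common full-measure set of $t$ where the Lebesgue differentiation of $\Per(\{u>\cdot\})$, the classical differentiability of the monotone functions $\psi,\mu$, and the positivity $-\mu'(t)>0$ all hold, and verify that the Sobolev truncation rules and the identity $|Du_{t,h}|=|\nabla u_{t,h}|\haus^N$ are available in the infinitesimally Hilbertian/$\RCD$ setting. The truncation device is precisely what lets us avoid a coarea formula with the singular weight $|\nabla u|^{p-1}\di\Per(\{u>t\},\cdot)$, which would otherwise force a trace/precise-representative theory for $|\nabla u|$; apart from this, no approximation of $u$ by Lipschitz functions is needed.
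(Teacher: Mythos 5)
Your proof is correct and is essentially the argument the paper intends: the paper omits the proof, referring to the classical rearrangement scheme of \cite{MondinoSemolaPolyaSzego,NobiliViolo}, which is exactly your truncation--coarea--H\"older--isoperimetric chain (with $-\mu'>0$ a.e.\ extracted from the positivity of $I_X$ in the non-collapsed case, and the collapsed case dismissed as trivial), followed by the one-dimensional identification on the model space. The only point deserving slightly more care is that identification, since evaluating $\abs{\nabla u^*}$ at the single point $\rho(t)$ and using $(u^*)'(\rho(t))\,\rho'(t)=1$ requires the absence of jumps of $u^*$ and the fact that $\rho$ cannot map a set of positive measure into the null set of non-differentiability points once $-\rho'>0$ a.e.; but since only the inequality $f_u(t)\le I_N(\mu(t))^p\bigl(-\mu'(t)\bigr)^{1-p}$ is needed, your argument goes through.
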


We omit the proof that can be obtained arguing as in \cite{MondinoSemolaPolyaSzego,NobiliViolo}. We just point out, since this will be relevant in order to address the rigidity issue, that the stronger statement
\begin{equation}\label{eq:imppssuper}
 \int _{\Omega}\abs{\nabla u}^p\di\haus^N\ge \int_0^{\sup u^*}\left(\frac{\Per(\{u>t\})}{I_N(\mu(t))}\right)^pf_u(t)\di t\,    
\end{equation}
can be obtained for any non negative function $u\in W^{1,p}_0(\Omega)$, for $1<p<+\infty$, such that $u^*$ has $\leb^1$-almost everywhere non vanishing derivative on $(0,r)$, see \cite[Proposition 3.13]{MondinoSemolaPolyaSzego}.

\begin{corollary}\label{cor:improvedPS}
Let $(X,\dist,\haus^N)$ be an $\RCD(0,N)$ metric measure space. Let $1<p<+\infty$. Let $\Omega\subset X$ be an open domain with $\haus^N(\Omega)<\infty$. Let $u\in W^{1,p}_0(\Omega)$ be non negative and let $u^*:[0,r]\to [0,\infty)$ be its monotone rearrangement, where $r>0$ is such that $\meas_N([0,r])=\haus^N(\Omega)$. Then
\begin{equation}\label{eq:improvedPS}
\int _{\Omega}\abs{\nabla u}^p\di\haus^N\ge \left(\frac{I_{(X,\dist,\haus^N)}(\haus^N(\Omega))}{I_N(\haus^N(\Omega))}\right)^p\int_{[0,r]}\abs{\nabla u^*}^p\di\meas_N\, .
\end{equation}
\end{corollary}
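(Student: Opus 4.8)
The plan is to deduce \eqref{eq:improvedPS} directly from Proposition~\ref{prop:strongPolyaSzego}, using only the monotonicity of the scale invariant isoperimetric profile recorded in Theorem~\ref{cor:IsoperimetricProfileRCD0N}. First I would observe the elementary identity $I_N(v)=N\omega_N^{1/N}v^{\frac{N-1}{N}}$, so that for every $v\in(0,\haus^N(X))$
\[
\frac{I_{(X,\dist,\haus^N)}(v)}{I_N(v)}=\frac{1}{N\omega_N^{1/N}}\cdot\frac{I_{(X,\dist,\haus^N)}(v)}{v^{\frac{N-1}{N}}}\,.
\]
By item (2) of Theorem~\ref{cor:IsoperimetricProfileRCD0N} (that is, \eqref{eq:monotonicityformula}), the right-hand side is non-increasing in $v$ on $(0,\haus^N(X))$; hence the ratio $v\mapsto I_{(X,\dist,\haus^N)}(v)/I_N(v)$ is non-increasing as well.

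Next, for every $t$ with $0<t<\sup u^*$ one has $0<\mu(t)=\haus^N(\{u>t\})\le\haus^N(\Omega)<\haus^N(X)$, since $\{u>t\}$ is contained in $\Omega$ up to $\haus^N$-negligible sets. The monotonicity just established then gives
\[
\frac{I_{(X,\dist,\haus^N)}(\mu(t))}{I_N(\mu(t))}\ge\frac{I_{(X,\dist,\haus^N)}(\haus^N(\Omega))}{I_N(\haus^N(\Omega))}
\qquad\text{for a.e.\ }t\in(0,\sup u^*)\,.
\]
Inserting this bound into the conclusion of Proposition~\ref{prop:strongPolyaSzego}, raising to the power $p>0$ (both sides being non-negative), and factoring out the now $t$-independent constant, I obtain
\[
\int_\Omega\abs{\nabla u}^p\di\haus^N
\ge\left(\frac{I_{(X,\dist,\haus^N)}(\haus^N(\Omega))}{I_N(\haus^N(\Omega))}\right)^p\int_0^{\sup u^*}f_u(t)\di t\,,
\]
and the last integral equals $\int_{[0,r]}\abs{\nabla u^*}^p\di\meas_N$ by the coarea identity recorded just before the statement of Proposition~\ref{prop:strongPolyaSzego}. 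This is exactly \eqref{eq:improvedPS}.

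I do not expect a genuine obstacle here: the argument is immediate once Proposition~\ref{prop:strongPolyaSzego} and the monotonicity of the profile are available. The only point that requires a brief remark is the degenerate case $\haus^N(\Omega)=\haus^N(X)$ (possible only when $\haus^N(X)<\infty$ and $\Omega$ exhausts $X$ up to null sets), where $I_{(X,\dist,\haus^N)}(\haus^N(\Omega))$ is to be interpreted as $\lim_{v\uparrow\haus^N(X)}I_{(X,\dist,\haus^N)}(v)$ and one concludes by a trivial limiting procedure over slightly smaller open domains; in the main case of interest, namely $\haus^N(\Omega)<\haus^N(X)$, no such care is needed.
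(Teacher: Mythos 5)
Your argument is correct and is essentially the paper's own proof: the paper deduces \autoref{cor:improvedPS} from \autoref{prop:strongPolyaSzego} by invoking the monotonicity formula \eqref{eq:monotonicityformula}, which is exactly the step you carry out (bounding the ratio at $\mu(t)\le\haus^N(\Omega)$ by the ratio at $\haus^N(\Omega)$ and factoring it out of the integral). Your extra remark on the degenerate case $\haus^N(\Omega)=\haus^N(X)$ is a harmless addition; no gap to report.
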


\begin{proof}
The result follows from \autoref{prop:strongPolyaSzego}, thanks to the monotonicity formula \eqref{eq:monotonicityformula}.
\end{proof}

\begin{remark}
Let $1<p<+\infty$. In \cite{AgostinianiFogagnoloMazzieri,FogagnoloMazzieri,BaloghKristaly} under different assumptions the comparison 
\begin{equation}\label{eq:classPS}
 \int _{\Omega}\abs{\nabla u}^p\di\haus^N\ge \left(\mathrm{AVR}(X,\dist,\haus^N)\right)^\frac{p}{N}\int_{[0,r]}\abs{\nabla u^*}^p\di\meas_N\,   
\end{equation}
was deduced from the sharp isoperimetric inequality, via symmetric rearrangement.\\
The estimate \eqref{eq:improvedPS} is easily seen to imply \eqref{eq:classPS} since
\begin{equation}
v\mapsto \frac{I_{(X,\dist,\haus^N)}(v)}{I_N(v)}\downarrow \left(\mathrm{AVR}(X,\dist,\haus^N)\right)^{\frac{1}{N}}\, ,\quad\text{as $v\to +\infty$}\, ,
\end{equation}
by \autoref{cor:IsoperimetricProfileRCD0N}. In fact, it is strictly stronger, since it requires control over the isoperimetric profile up to volume $\haus^N(\Omega)$ instead of requiring control over the isoperimetric profile for all volumes (or, equivalently, on the asymptotic volume ratio of $(X,\dist,\haus^N)$).
\end{remark}

Given any $1<p<\infty$ and any bounded and open domain $\Omega\subset X$ such that $\haus^N(X\setminus \Omega)>0$ we shall denote by $\lambda^{1,p}(\Omega)$ the first Dirichlet eigenvalue of the $p$-Laplacian with homogeneous Dirichlet boundary conditions on $\Omega$. It is well known that it admits the classical variational interpretation
\begin{equation}
\lambda^{1,p}(\Omega):=\inf\left\{\frac{\int_{\Omega} \abs{\nabla f}^p\di\haus^N}{\int_{\Omega}f^p\di\haus^N}\, :\,  f\in \Lip_c(\Omega)\right\}\, .
\end{equation}
Moreover, following \cite{Coulhon95dimension,Coulhon03iso} we introduce a whole scale of $p$-isoperimetric profile functions $I_p:[0,\infty)\to[0,\infty)$ by
\begin{equation}
I_p(v):=\inf\left\{\lambda^{1,p}(\Omega)\, : \, \Omega\subset X\, ,\haus^N(\Omega)=v\right\}\, .
\end{equation}
We shall also denote by $I_{p,N}:(0,\infty)\to(0,\infty)$ the function associating to any volume $v\in (0,\infty)$ the lowest first eigenvalue of the $p$-Laplacian with Dirichlet boundary conditions on open domains $\Omega\subset \setR^N$ with $\leb^N(\Omega)=v$. It is well known that $I_{p,N}$ is the first eigenvalue of the $p$-Laplacian with Dirichlet boundary conditions on a ball $B_r(0^N)$ such that $\leb^N(B_r(0^N))=v$. In particular, $I_{p,N}(v)=C_{p,N}v^{-\frac{p}{N}}$, for some constant $C_{p,N}>0$.

\begin{theorem}
Let $(X,\dist,\haus^N)$ be an $\RCD(0,N)$ metric measure space. Then, for any $1<p<\infty$ it holds
\begin{equation}\label{eq:piso}
\frac{I_p(v)}{I_{p,N}(v)}\ge \left(\frac{I_{(X,\dist,\haus^N)}(v)}{I_N(v)}\right)^p\ge \left(\mathrm{AVR}(X,\dist,\haus^N)\right)^{\frac{p}{N}} \, ,\quad\text{for any $0<v<\infty$}\, .
\end{equation}

\end{theorem}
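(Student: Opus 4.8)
The plan is to derive both inequalities in \eqref{eq:piso} from results already in hand: the rightmost one is an immediate consequence of the monotonicity of the scale invariant isoperimetric profile, while the leftmost one is obtained by the classical monotone rearrangement argument, built on top of the quantitative P\'olya--Szeg\H{o} inequality of \autoref{cor:improvedPS}.

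For the second inequality I would first dispose of degenerate cases: if $\inf_{x\in X}\haus^N(B_1(x))=0$ then $I_{(X,\dist,\haus^N)}\equiv 0$ and, by Bishop--Gromov monotonicity of $r\mapsto\haus^N(B_r(x))/(\omega_N r^N)$, also $\mathrm{AVR}(X,\dist,\haus^N)=0$, so both sides vanish; and if $\haus^N(X)<+\infty$ then $X$ is compact (a non compact $\RCD(0,N)$ space has infinite mass) and again $\mathrm{AVR}(X,\dist,\haus^N)=0$. In the remaining case I would invoke item (2) of \autoref{cor:IsoperimetricProfileRCD0N}: the function $v\mapsto I_{(X,\dist,\haus^N)}(v)/I_N(v)=(N\omega_N^{1/N})^{-1}\,I_{(X,\dist,\haus^N)}(v)/v^{(N-1)/N}$ is non-increasing on $(0,\haus^N(X))$ and, when $\haus^N(X)=+\infty$, tends to $\mathrm{AVR}(X,\dist,\haus^N)^{1/N}$ as $v\to+\infty$; hence it stays $\ge \mathrm{AVR}(X,\dist,\haus^N)^{1/N}$ for every $v$, and raising to the $p$-th power gives the claim.

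For the first inequality, fix $v\in(0,\haus^N(X))$ (the case $v\ge\haus^N(X)$ is trivial, since then $I_p(v)=+\infty$). Let $\Omega\subset X$ be any bounded open domain with $\haus^N(\Omega)=v$ and $\haus^N(X\setminus\Omega)>0$, and pick $r>0$ with $\meas_N([0,r])=\omega_N r^N=v$, so that $\leb^N(B_r(0^N))=v$. It suffices to bound $\lambda^{1,p}(\Omega)$ from below, and since this eigenvalue is the infimum of the $p$-Rayleigh quotient over non-negative $f\in\Lip_c(\Omega)$, I would fix such an $f\not\equiv 0$ and estimate its Rayleigh quotient. Writing $f^*\colon[0,r]\to[0,\infty)$ for its monotone rearrangement, \autoref{cor:improvedPS} applied to $u=f$ gives $\int_\Omega\abs{\nabla f}^p\di\haus^N\ge (I_{(X,\dist,\haus^N)}(v)/I_N(v))^p\int_{[0,r]}\abs{\nabla f^*}^p\di\meas_N$, while equimeasurability yields $\int_\Omega f^p\di\haus^N=\int_{[0,r]}(f^*)^p\di\meas_N$. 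The remaining ingredient is the comparison $\int_{[0,r]}\abs{\nabla f^*}^p\di\meas_N\ge I_{p,N}(v)\int_{[0,r]}(f^*)^p\di\meas_N$: since $\haus^N(\{f>0\})\le v=\meas_N([0,r])$, the rearrangement $f^*$ vanishes at the endpoint $r$, so $x\mapsto f^*(\abs{x})$ is a radial function in $W^{1,p}_0(B_r(0^N))$ whose Euclidean $p$-Rayleigh quotient equals, after passing to polar coordinates, the one-dimensional quotient of $f^*$ with respect to $\meas_N$; and that Euclidean quotient is at least $\lambda^{1,p}(B_r(0^N))=I_{p,N}(v)$ by the characterization of $I_{p,N}$ recalled right before the statement. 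Chaining the three estimates, dividing by $\int_\Omega f^p\di\haus^N>0$, and then taking the infimum first over $f$ and then over all admissible $\Omega$ of volume $v$ yields $I_p(v)\ge (I_{(X,\dist,\haus^N)}(v)/I_N(v))^p I_{p,N}(v)$, i.e.\ the first inequality.

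I do not expect a serious obstacle: the analytic core --- the sharp rearrangement inequality with the isoperimetric ratio as a factor --- is exactly \autoref{cor:improvedPS}, so the rest is bookkeeping. The only two points requiring care are (i) checking that $f^*$ genuinely vanishes at the right endpoint $r$, which is precisely where the compact support of $f$ (equivalently $\haus^N(\{f>0\})\le\haus^N(\Omega)$) enters, so that $f^*$ is a legitimate Dirichlet test function on the Euclidean ball, and (ii) using the identification of the model weighted eigenvalue on $([0,r],\abs{\cdot},\meas_N)$ with the Euclidean ball eigenvalue $I_{p,N}(v)$ only in the direction we need, which follows from the fact that radial functions form an admissible subclass of $W^{1,p}_0(B_r(0^N))$.
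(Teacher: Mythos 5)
Your argument is correct and is essentially the paper's proof: the paper deduces the first inequality from \autoref{prop:strongPolyaSzego}/\autoref{cor:improvedPS} via exactly the classical rearrangement argument and the variational characterization of $\lambda^{1,p}$ that you spell out, and the second inequality from the monotonicity and large-volume limit of $v\mapsto I(v)/I_N(v)$ in \autoref{cor:IsoperimetricProfileRCD0N}. Your explicit handling of the degenerate cases and of the vanishing of $f^*$ at the endpoint just fills in details the paper leaves implicit.
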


\begin{proof}
The statement follows directly from \autoref{prop:strongPolyaSzego} and \autoref{cor:improvedPS} by the classical symmetric rearrangement argument and the variational characterization of the first eigenvalue of the $p$-Laplacian with Dirichlet boundary conditions.
\end{proof}

\begin{remark}
The fact that the isoperimetric profile controls the whole scale of $p$-isoperimetric profiles is classical \cite{Coulhon95dimension,Grigoryanbook}. However, in all the references we are aware of, control was intended up to constants. The observation that the isoperimetric profile controls the $p$-spectral gap without the need of additional constants seems to be new even for smooth Riemannian manifolds with non negative Ricci curvature and it fully exploits the monotonicity \autoref{cor:IsoperimetricProfileRCD0N}.\\
\end{remark}

\begin{corollary}\label{cor:rigidityspectral}
Let $(X,\dist,\haus^N)$ be an $\RCD(0,N)$ metric measure space with $\mathrm{AVR}(X,\dist,\haus^N)>0$. Let $1<p<+\infty$. Let $\Omega\subset X$ be an open and bounded domain. If 
\begin{equation}
\lambda^{1,p}(\Omega)=\left(\mathrm{AVR}(X,\dist,\haus^N)\right)^{\frac{p}{N}}I_{p,N}(\haus^N(\Omega))\, ,
\end{equation}
then $(X,\dist,\haus^N)$ is isomorphic to a metric measure cone over an $\RCD(N-1,N)$ metric measure space $(Y,\dist_Y,\haus^{N-1})$. In particular, if $(X,\dist)$ is isometric to a smooth Riemannian manifold, then $(X,\dist)$ is isometric to $\setR^N$.
\end{corollary}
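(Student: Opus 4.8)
The plan is to feed the equality case back through the inequalities behind \eqref{eq:piso} in order to produce, \emph{inside the bounded domain $\Omega$}, an honest isoperimetric region of $X$ that saturates the sharp isoperimetric inequality of \autoref{thm:IsoperimetricSharpRigidintro}, and then to quote the rigidity statement of \autoref{lem:RigidityIsop}. Write $v\coloneqq\haus^N(\Omega)$ and $A\coloneqq\mathrm{AVR}(X,\dist,\haus^N)>0$. First I would fix a nonnegative first $p$-eigenfunction $u\in W^{1,p}_0(\Omega)$; it exists by the direct method, using the compactness of the embedding $W^{1,p}_0(\Omega)\hookrightarrow L^p(\Omega)$ on the bounded set $\Omega$ in the locally doubling, Poincaré space $X$. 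Restricting $\Omega$ to the connected component supporting $u$ does not change $\lambda^{1,p}(\Omega)$, so one may assume $u>0$ on $\Omega$, whence the distribution function $\mu$ of $u$ satisfies $\mu(0^+)=v$.

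Next I would combine \autoref{prop:strongPolyaSzego}, the monotonicity \eqref{eq:monotonicityformula} (which gives $I_X(\mu(t))/I_N(\mu(t))\ge I_X(v)/I_N(v)$ since $\mu(t)\le v$), the identity $\int_0^{\sup u^*}f_u\,\di t=\int_{[0,r]}|\nabla u^*|^p\,\di\meas_N$, the variational characterization of the first $p$-eigenvalue of the Euclidean ball of volume $v$ (equivalently of the one-dimensional model $([0,r],\meas_N)$), and the bound $I_X(v)/I_N(v)\ge A^{1/N}$ from \autoref{thm:IsoperimetricSharpRigidintro}, to obtain $\lambda^{1,p}(\Omega)\ge (I_X(v)/I_N(v))^p\,I_{p,N}(v)\ge A^{p/N}I_{p,N}(v)$. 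The hypothesis forces all of these to be equalities. Equality in the variational inequality on the ball, together with uniqueness up to scaling of the first $p$-eigenfunction of a Euclidean ball, forces $u^*$ to be the radial model eigenfunction; in particular $u^*$ has nowhere-vanishing derivative on $(0,r)$ and its level sets have positive perimeter, so the weight $f_u(t)$ is strictly positive for every $t\in(0,\sup u)$. Since $I_X/I_N$ is non-increasing, equal to $A^{1/N}$ at $v$, the equality $\int_0^{\sup u^*}(I_X(\mu(t))/I_N(\mu(t)))^p f_u(t)\,\di t=(I_X(v)/I_N(v))^p\int_0^{\sup u^*}f_u(t)\,\di t$ then forces $I_X(\mu(t))=N\omega_N^{1/N}A^{1/N}\mu(t)^{\frac{N-1}{N}}$ for a.e.\ $t\in(0,\sup u)$.

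Now that $u^*$ has nowhere-vanishing derivative, the sharpened Pólya–Szegő bound \eqref{eq:imppssuper} becomes applicable to $u$; running the same chain with \eqref{eq:imppssuper} in place of \autoref{prop:strongPolyaSzego} and again using that everything is an equality forces $\Per(\{u>t\})=I_X(\mu(t))$ for a.e.\ $t\in(0,\sup u)$. Hence, for a.e.\ such $t$, the bounded set $E_t\coloneqq\{u>t\}\subset\Omega\subset X$ is an isoperimetric region with $0<\haus^N(E_t)<\infty$ and $\Per(E_t)=N\omega_N^{1/N}A^{1/N}\haus^N(E_t)^{\frac{N-1}{N}}$, i.e.\ it attains equality in the sharp isoperimetric inequality. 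The rigidity part of \autoref{lem:RigidityIsop} then yields that $(X,\dist,\haus^N)$ is isometric to a Euclidean metric measure cone of dimension $N$ over an $\RCD(N-2,N-1)$ space, with $E_t$ a ball centered at a tip. Finally, if $(X,\dist)$ is a smooth Riemannian manifold, a complete smooth Riemannian cone must be flat — smoothness of the metric at the vertex forces the link to be the round sphere $S^{N-1}$ — so $(X,\dist)$ is isometric to $\R^N$.

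The main obstacle is the bookkeeping in the equality case: identifying precisely which intermediate inequalities degenerate and, crucially, using the plain Pólya–Szegő inequality \emph{first} to pin $u^*$ down as the regular model eigenfunction (so that $f_u>0$ and the sharpened estimate \eqref{eq:imppssuper} becomes available) before concluding that the super-level sets of $u$ are genuine isoperimetric regions of $X$. Remaining inside the bounded domain $\Omega$ in this way is exactly what lets one apply \autoref{lem:RigidityIsop} directly; the alternative of only extracting $I_X(v)=N\omega_N^{1/N}A^{1/N}v^{\frac{N-1}{N}}$ and invoking the generalized existence \autoref{thm:MassDecompositionINTRO} would additionally require ruling out the possibility that the optimal configuration for volume $v$ escapes into a pmGH limit at infinity of $X$.
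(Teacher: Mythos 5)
Your proposal is correct and follows essentially the same route as the paper: one shows the equality forces the monotone rearrangement $u^*$ to be the model first eigenfunction (hence with a.e.\ non-vanishing derivative, so \eqref{eq:imppssuper} applies), deduces that the superlevel sets $\{u>t\}$ saturate the sharp isoperimetric inequality, and concludes via the rigidity of \autoref{lem:RigidityIsop}, with the smooth case handled by the flatness of smooth cones. Your extra bookkeeping (first running the plain Pólya--Szegő chain, then the sharpened one) is just a more explicit version of the argument the paper outlines.
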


\begin{proof}
The proof is similar to the analogous rigidity statement proved for $\RCD(N-1,N)$ metric measure spaces in \cite{MondinoSemolaPolyaSzego} building on the top of the rigidity statement for the L\'evy--Gromov isoperimetric inequality obtained in \cite{CM17}, so we just outline it.

Notice that the spectral gap of $\Omega$ is attained by a function $u\in W^{1,p}_0(\Omega)$, by a classical argument. Under our assumptions, the rearrangement $u^*$ is a first eigenfunction of the $p$-Laplacian with Dirichlet boundary conditions on $([0,r],|\cdot|,\meas_N)$. In particular, it is a classical fact that it has non vanishing derivative $\leb^1$-a.e. on $[0,r]$ and \eqref{eq:imppssuper} holds.\\
Since equality holds in the spectral gap inequality, equality holds in \eqref{eq:imppssuper}. In particular,
\begin{equation}
\Per(\{u>t\})=\mathrm{AVR}(X,\dist,\haus^N)^{\frac{1}{N}}I_N(\haus^N(\{u>t\}))\, ,
\end{equation}
for $\leb^1$-a.e. $t\in [0,\sup u^*]$.
This is sufficient to get the first conclusion in the statement, thanks to \autoref{lem:RigidityIsop}.\\
The second conclusion directly follows since the unique $\RCD(0,N)$ metric measure cone $(X,\dist,\haus^N)$ which is smooth is $\setR^N$. 
\end{proof}

\begin{remark}
The rigidity \autoref{cor:rigidityspectral} is new also for smooth Riemannian manifolds with non negative Ricci curvature and Euclidean volume growth, since it removes the additional regularity assumptions required in \cite{BaloghKristaly}.
\end{remark}

\subsection{Asymptotic isoperimetric behaviour for nonnegatively Ricci curved spaces with stable asymptotic cones and Euclidean volume growth}

In the context of $\RCD(0,N)$ spaces verifying suitable conditions on their asymptotic cones and with Euclidean volume growth we can show, heavily leveraging on the ideas developed in \cite{AntBruFogPoz}, that isoperimetric regions exist for any sufficiently large volume, and that, up to translations along Euclidean factors and scalings, they converge to balls in the asymptotic cone at infinity in the Hausdorff sense.\\
Moreover, in the same class, the rigidity in \autoref{thm:IsoperimetricSharpRigidintro} holds just under the assumption that the isoperimetric profile of the space equals the one of the cone with same ${\rm AVR}$ and dimension, \emph{for some} volume $V>0$. We stress again that the latter class of non negatively Ricci curved spaces encompasses the class of finite dimensional Alexandrov spaces with nonnegative curvature and Euclidean volume growth, cf. \autoref{rem:AncheAlexandrov}. 

\begin{proof}[Proof of \autoref{thm:AlexandrovRigidIntro}]
The first item comes from a straightforward adaptation of the proof of \cite[Theorem 1.2]{AntBruFogPoz} to the setting of $\RCD(0,N)$ spaces, taking into account the generalized asymptotic mass decomposition \autoref{thm:MassDecompositionINTRO} and the properties proved in \autoref{cor:IsoperimetricProfileRCD0N}.\\ 
Let us sketch the proof, by referring the reader to \cite{AntBruFogPoz} for the complete argument. The proof of \cite[Theorem 1.2]{AntBruFogPoz} leverages on \cite[Lemma 4.2]{AntBruFogPoz}, which is already formulated in the non smooth setting, and on \cite[Theorem 1.1]{AntBruFogPoz}. The analogue of \cite[Theorem 1.1]{AntBruFogPoz} can be readily proven in the setting of $\RCD(0,N)$ spaces. Indeed, as a consequence of item (3) of \autoref{cor:FinePropertiesProfile}, and item (1) of \autoref{lem:IsoperimetricAtFiniteOrInfinite}, the minimization process on the space, when ran as in \autoref{thm:MassDecompositionINTRO}, either produces an isoperimetric region or exactly one piece escaping in a pmGH limit at infinity. Having this at disposal the contradiction argument of \cite[Theorem 1.1]{AntBruFogPoz} applies verbatim also in the non smooth case.
\medskip

In order to prove the second item, assume first that $X$ does not split any line. Hence, from the assumption, no asymptotic cone of it splits any line as well.

Let us fix $o\in X$ and denote $(X_i,\dist_i,\haus^N_{\dist_i},o):=(X,V_i^{-1/N}\dist,V_i^{-1}\haus^N,o)$. Let us also take $q_i\in E_i$. Taking into account \eqref{eq:BoundDiamBig}, for every $i\geq 1$, the following hold
\begin{equation}\label{eqn:BoundVolDiam}
\haus^N_{\dist_i}(E_i)=1, \qquad \diam_{\dist_i}(E_i) \leq D\, ,
\end{equation}
where $D$ is a constant depending on $N,\mathrm{AVR}(X,\dist,\haus^N)$.
Moreover, $(X_i,\dist_i,\haus^N_{\dist_i},o)\to (C,\dist_\infty,\haus^N,\tilde o)$, as $i\to +\infty$, where $(C,\dist_\infty,\haus^N,\tilde o)$ is an asymptotic cone of $(X,\dist,\haus^N)$. 

We now distinguish two cases.
\begin{itemize}
    \item Suppose
    $\dist_i(o,q_i)=V_i^{-1/N}\dist(o,q_i)\to +\infty$. Hence $\dist(o,q_i)\to +\infty$ as $i\to +\infty$. Let us call $r_i:=\dist_i(o,q_i)$. Let $(C',\dist_\infty',\haus^N,\tilde o')$ be an asymptotic cone obtained as a limit of a subsequence of $(X,r_i^{-1}\dist, r_i^{-N}\haus^N,o)$. From \cite[Lemma 4.2]{AntBruFogPoz} and the fact that $C'$ does not split a line by assumption, we have that $\vartheta[C',\dist_\infty',\haus^N,o']\geq \mathrm{AVR}(X,\dist,\haus^N)+\varepsilon$, for some $\varepsilon>0$, and for every $o'\in C'$ such that $\dist_\infty(\tilde o',o')=1$. Arguing as in the proof of \cite[Lemma 4.2]{AntBruFogPoz}, by volume convergence there exists $\rho>0$ such that 
    \[
    \frac{\haus^N(B_{\rho r_i}(q_i))}{\omega_N(\rho r_i)^N}\geq \mathrm{AVR}(X,\dist,\haus^N)+\varepsilon/2\, ,
    \]
    for every $i$ sufficiently large. 
    
    Let $(X_i,\dist_i,\haus^N_{\dist_i},q_i)$ converge, up to subsequence, to a pmGH limit $(C'',\dist'',\haus^N,q_\infty)$.
    By Bishop--Gromov monotonicity, for every $R>0$ and for any $i$ sufficiently large the following holds
    \[
    \frac{\haus^N(B_{RV_i^{-1/N}}(q_i))}{\omega_N\left(RV_i^{-1/N}\right)^N}\geq \frac{\haus^N(B_{\rho r_i}(q_i))}{\omega_N\left(\rho r_i\right)^N}\, ,
    \]
    since $\dist(o,q_i)\to +\infty$. Hence, by volume convergence, 
    \[
    \frac{\haus^N(B_R(q_\infty))}{\omega_NR^N}\geq \mathrm{AVR}(X,\dist,\haus^N)+\varepsilon/2\, ,
    \]
    for every $R>0$, from which $\mathrm{AVR}(C'',\dist'',\haus^N)\geq \mathrm{AVR}(X,\dist,\haus^N)+\varepsilon/2$.
    
    By \eqref{eqn:BoundVolDiam}, we can apply \cite[Proposition 3.3]{AmbrosioBrueSemola19}. Therefore, up to subsequences, $E_i\subset (X_i,\dist_i,\haus^N_{\dist_i},q_i)$ converges in $L^1$-strong to a set of finite perimeter $E_\infty\subset (C'',\dist'',\haus^N,q_\infty)$ with $\haus^N(E_\infty)=1$. Moreover
    \begin{equation}\label{eqn:CONTRA}
    \begin{split}
        \Per(E_\infty)&\leq \liminf_{i\to +\infty} \Per (E_i) = \liminf_{i\to +\infty} V_i^{-\frac{N-1}{N}} \Per(E_i) \\ &=\liminf_{i\to +\infty} V_i^{-\frac{N-1}{N}} I(V_i) = N(\omega_N\mathrm{AVR}(X,\dist,\haus^N))^{\frac{1}{N}}\, ,
    \end{split}
    \end{equation}
    where in the last equality we are using the asymptotic in \autoref{cor:IsoperimetricProfileRCD0N}. However, from the sharp isoperimetric inequality on $C''$ and taking into account that $\haus^N(E_\infty)=1$, we have
    \begin{equation*}
        \Per(E_\infty)\geq N(\omega_N\mathrm{AVR}(C'',\dist'',\haus^N))^{\frac{1}{N}}\geq N(\omega_N(\mathrm{AVR}(X,\dist,\haus^N)+\varepsilon/2))^{\frac{1}{N}}\, ,
    \end{equation*}
    which is a contradiction with \eqref{eqn:CONTRA}. Thus this case cannot occur.
 
    \item Hence $\sup_{i\in\mathbb N}\dist_i(o,q_i)<+\infty$. In this case, up to subsequence, there exists a point $o'$ at finite distance from $o$ in $C$ such that $(X_i,\dist_i,\haus^N_{\dist_i},q_i)\to (C,\dist_\infty,\haus^N,o')$. Arguing as in the previous case we get that, up to subsequences, by the lower semicontinuity of the perimeter and the sharp isoperimetric inequality, the sequence $E_i$ converges in $L^1$-strong to $E\subset (C,\dist_\infty,\haus^N)$ such that $\haus^N(E)=1$ and
    \[
    \Per(E)=N(\omega_N\mathrm{AVR}(X,\dist,\haus^N))^{\frac{1}{N}}\, .
    \]
    Hence, by the rigidity of the isoperimetric inequality, $E$ is isometric to the ball of volume $1$ centered at some tip in $C$.
\end{itemize}
By \autoref{thm:ConvergenceBarriersStability}, convergence in $L^1$-strong of the $E_i$'s implies convergence in Hausdorff distance in some realization. Thus the proof is completed when $X$ does not split any line. \medskip

In the remaining general case, we can write $X=\mathbb R^k\times \tilde X$, with $k\geq 1$, and such that $\tilde X$ is such that no asymptotic cone of $\tilde X$ splits a line. Let $o$ and $q_i$ be as above. Up to a translation along the Euclidean factor, we may always assume that $q_i=(0,\tilde q_i)$, where $\tilde q_i\in\tilde X$. This prevents the fact that the component along $\mathbb R^k$ of $q_i$ might go to infinity.
Hence one can argue as in the previous case, by slightly modifying the argument, distinguishing the case in which $\dist_i(o,q_i)\to +\infty$, which eventually does not occur, and the case $\sup_i \dist_i(o,q_i)<+\infty$.
\medskip

Let us now prove the third item. Let $\Omega_i\subset X$ be a perimeter minimizing sequence of measure $\haus^N(\Omega_i)=V$, i.e., $\Per(\Omega_i)\to I(V)$, and let $p_{i,j}, p_j, \Omega, Z_j, X_j, \overline{N}$ be given by \autoref{thm:MassDecompositionINTRO}. If no mass is lost in the limit, then there exists an isoperimetric region of volume $V$ and the result follows from \autoref{thm:IsoperimetricSharpRigidintro}.\\
So let us assume that $\overline{N}\ge1$, and then, by \autoref{lem:IsoperimetricAtFiniteOrInfinite}, we actually have $\overline{N}=1$ and $\haus^N(\Omega)=0$. We have that $X=\setR^{k}\times \tilde X$ for $k \in\{0,\ldots,N\}$ and $\dist=\dist_{\rm eu}\times\dist_{\tilde X}$, where $\tilde X$ is such that no asymptotic cone of it splits a line.
We can rename the only diverging sequence of points $p_{i,1}$ into $p_{i,1}=(q_{i},y_{i})$. It is a standard fact to check that ${\rm AVR}(\tilde X,\dist_{\tilde X},\haus^{N-k})={\rm AVR}(X,\dist,\haus^N)$. Observe that in this case, $X_1$ is the only limit space obtained by applying \autoref{thm:MassDecompositionINTRO}.

If $\sup_i \dist_{\tilde X}(y_{i},y_0)<+\infty$ for some $y_0\in \tilde X$, then $X_1$ is isometric to $X$. Hence the limit set $Z_1$ is an isoperimetric region of volume $V$ in a copy of $X$, and then again the claim follows as if no mass were lost.

Assume then that $\lim_i \dist_{\tilde X}(y_{i},y_0)=+\infty$.
We can write $X_1=\setR^{k}\times \widetilde{Y}$ and $p_1=(q,y)$, where $(\tilde X,\dist_{\tilde X},\haus^{N-k},y_{i})\to (\widetilde{Y}, \dist_{\widetilde{Y}}, \haus^{N-k}, y)$ in the pmGH sense. Recall that no asymptotic cone of $\tilde X$ splits a line.
It then follows from \cite[Lemma 4.2]{AntBruFogPoz} that there exists $\eps>0$ such that ${\rm AVR}(X_1,\dist_{X_1},\haus^N)={\rm AVR}(\widetilde{Y}, \dist_{\widetilde{Y}}, \haus^{N-k})\ge {\rm AVR}(\tilde X,\dist_{\tilde X},\haus^{N-k}) + \eps $. Hence, by \autoref{thm:MassDecompositionINTRO} and \autoref{thm:IsoperimetricSharpRigidintro}, we get that
\begin{equation}
    \begin{split}
        N\omega_N^{\frac1N}&({\rm AVR}(X,\dist,\haus^N))^{\frac1N}V^{\frac{N-1}{N}}
        = I(V) =  \Per (Z_1) \\&\ge N\omega_N^{\frac{1}{N}}
        \left(\mathrm{AVR}(X,\dist,\haus^N)+\eps\right)^{\frac{1}{N}} V^{\frac{N-1}{N}},
    \end{split}
\end{equation}
which yields a contradiction.
\medskip
\end{proof}

\begin{remark}\label{rem:AncheAlexandrov}
Even though the existence above is shown only for big volumes, the rigidity \autoref{thm:AlexandrovRigidIntro} holds whenever equality \eqref{eq:RigidProfileAlexandrov} is achieved at some volume $V>0$.\\
Moreover, notice that every Alexandrov space of dimension $N$ with nonnegative curvature and Euclidean volume growth falls in the hypotheses of the first part of \autoref{thm:AlexandrovRigidIntro}, since \cite[Theorem 4.6]{AntBruFogPoz} holds with the same proof in the setting of Alexandrov spaces.
Hence, \autoref{thm:AlexandrovRigidIntro}, when specialized to the Alexandrov setting, gives raise to a complete generalization of \cite[Theorem 6.3]{LeonardiRitore} and \cite[Theorem 6.14]{LeonardiRitore} to the setting of non negatively curved Alexandrov spaces with Euclidean volume growth, which is a class that strictly contains the one of convex bodies of $\mathbb R^N$ with non-degenerate asymptotic cone considered in \cite[Section 6]{LeonardiRitore}.
\end{remark}

\section{Asymptotic isoperimetric behaviour for small volumes and almost regularity theorems}

In this last section we combine the second order differential inequalities and monotonicity for the isoperimetric profile with the sharp and rigid isoperimetric inequality for $\RCD(0,N)$ spaces $(X,\dist,\haus^N)$ and the strong stability of isoperimetric regions to determine the asymptotic isoperimetric behaviour for small volumes of $\RCD(K,N)$ spaces with a uniform lower bound on the volume of unit balls. Then we prove new global $\eps$-regularity results formulated in terms of the isoperimetric profile.

\subsection{Asymptotic isoperimetric behaviour for small volumes}

Given an $\RCD(K,N)$ space $(X,\dist,\meas)$, we call $\vartheta[X,\dist,\meas,x]$ the \emph{density at the point $x\in X$} defined as
\[
\vartheta[X,\dist,\meas,x]:=\lim_{r\to 0}\frac{\meas(B_r(x))}{\omega_Nr^N}=\lim_{r\to 0}\frac{\meas(B_r(x))}{v(N,K/(N-1),r)}.
\]

\begin{lemma}\label{lem:InequalitiesIsopProfile}
Let $(X,\dist,\haus^N)$ be an $\RCD(K,N)$ space such that $\haus^N(B_1(x))\geq v_0>0$ for every $x\in X$ and some $v_0>0$. Then the following hold.
\begin{enumerate}
    \item For every $p\in X$ we have that 
    \[
    \lim_{v\to 0}\frac{I_X(v)}{v^{\frac{N-1}{N}}}\leq N(\omega_N\vartheta[X,\dist,\haus^N,p])^{1/N}.
    \]
    As a consequence, if $K=0$, for every tangent cone $C_p$ at $p$ we have 
    \[
    I_X(v)\leq I_{C_p}(v), \qquad \text{for every $v>0$}.
    \]

    \item Let $\{p_i\}$ be a sequence of points on $X$. Up to subsequences $(X,\dist,\haus^N,p_i)$ converge to an $\RCD(K,N)$ space $(X_\infty,\dist_\infty,\haus^N,p_\infty)$. Let $\lambda_i\to +\infty$ be a diverging sequence. Up to subsequences, $(X,\lambda_i\dist,\haus^N_{\lambda_i\dist},p_i)$ converge to an $\RCD(0,N)$ space $(X',\dist',\haus^N,p')$. Then 
    \[
    \mathrm{AVR}(X',\dist',\haus^N)\geq \vartheta[X_\infty,\dist_\infty,\haus^N, p_\infty].
    \] 
    As a consequence, if $C_\infty$ is a tangent cone of $X_\infty$ at $p_\infty$, the following holds
    \[
    I_{C_\infty}(v)\leq I_{X'}(v), \qquad \text{for every $v>0$}.
    \]
\end{enumerate}
\end{lemma}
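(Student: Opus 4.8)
The plan is to establish the two numerical estimates first — the bound on $\lim_{v\to 0}I_X(v)/v^{\frac{N-1}{N}}$ in item (1) and the bound $\mathrm{AVR}(X',\dist',\haus^N)\ge\vartheta[X_\infty,\dist_\infty,\haus^N,p_\infty]$ in item (2) — and then read off the stated profile comparisons from the explicit isoperimetric profile of $\RCD(0,N)$ cones (\autoref{cor:IsoperimetricOnCones}) together with the monotonicity of the scale invariant profile (\autoref{cor:IsoperimetricProfileRCD0N}).

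For the first estimate of item (1) I would test the isoperimetric profile with the geodesic balls $B_r(p)$, $r\to 0$. Writing $\vartheta:=\vartheta[X,\dist,\haus^N,p]$, Bishop--Gromov with comparison curvature $K/(N-1)$ gives that $r\mapsto\haus^N(B_r(p))/v(N,K/(N-1),r)$ is non-increasing with limit $\vartheta$, hence $\haus^N(B_r(p))\le\vartheta\,v(N,K/(N-1),r)$ for all $r$; combined with $\Per(B_r(p))/s(N,K/(N-1),r)\le\haus^N(B_r(p))/v(N,K/(N-1),r)$ (recalled in \autoref{sec:RCD}) this yields $\Per(B_r(p))\le\vartheta\,s(N,K/(N-1),r)$ for all $r>0$. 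Since $v(N,K/(N-1),r)\sim\omega_Nr^N$ and $s(N,K/(N-1),r)\sim N\omega_Nr^{N-1}$ as $r\to 0$, setting $v_r:=\haus^N(B_r(p))\sim\vartheta\omega_Nr^N\to 0$ gives $I_X(v_r)/v_r^{\frac{N-1}{N}}\le\Per(B_r(p))/v_r^{\frac{N-1}{N}}\to N(\omega_N\vartheta)^{\frac1N}$; as $\lim_{v\to0}I_X(v)/v^{\frac{N-1}{N}}$ exists by \autoref{cor:FinePropertiesProfile}, it coincides with this value along $v_r$ and so is $\le N(\omega_N\vartheta)^{\frac1N}$. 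For the consequence when $K=0$: the tangent cone $C_p$ has volume ratio constantly equal to $\vartheta$ (at each scale it equals $\lim_j\haus^N(B_{\rho/\lambda_j}(p))/(\omega_N(\rho/\lambda_j)^N)=\vartheta$), hence by the volume-cone-to-metric-cone theorem it is a Euclidean metric measure cone of dimension $N$ over an $\RCD(N-2,N-1)$ space with opening $\vartheta$, so $I_{C_p}(v)=N(\omega_N\vartheta)^{\frac1N}v^{\frac{N-1}{N}}$ by \autoref{cor:IsoperimetricOnCones}. By \autoref{cor:IsoperimetricProfileRCD0N} the map $v\mapsto I_X(v)/v^{\frac{N-1}{N}}$ is non-increasing, hence bounded above by its limit at $0$, which is $\le N(\omega_N\vartheta)^{\frac1N}=I_{C_p}(v)/v^{\frac{N-1}{N}}$; thus $I_X(v)\le I_{C_p}(v)$.

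For item (2) I would first note that $\vartheta_\infty:=\vartheta[X_\infty,\dist_\infty,\haus^N,p_\infty]>0$, since $\haus^N(B_1(p_\infty))\ge\liminf_i\haus^N(B_1(p_i))\ge v_0$ and Bishop--Gromov forces $\vartheta_\infty\ge v_0/v(N,K/(N-1),1)$. For $r$ outside a countable set, volume convergence along the rescaled spaces gives $\haus^N(B_r^{X'}(p'))/(\omega_Nr^N)=\lim_i\haus^N(B_{r/\lambda_i}(p_i))/(\omega_N(r/\lambda_i)^N)$. For fixed $\rho>0$ (again outside a countable set) and $i$ large enough that $r/\lambda_i\le\rho$, Bishop--Gromov monotonicity \emph{inside the single fixed space $X$} bounds the right-hand quantity below by $(1+o(1))\haus^N(B_\rho(p_i))/v(N,K/(N-1),\rho)$, which tends by volume convergence to $\haus^N(B_\rho^{X_\infty}(p_\infty))/v(N,K/(N-1),\rho)$; letting $\rho\to 0$ this converges to $\vartheta_\infty$, so $\haus^N(B_r^{X'}(p'))\ge\vartheta_\infty\omega_Nr^N$, and therefore $\mathrm{AVR}(X',\dist',\haus^N)=\lim_{r\to\infty}\haus^N(B_r^{X'}(p'))/(\omega_Nr^N)\ge\vartheta_\infty$, using that the volume ratio on the $\RCD(0,N)$ space $X'$ is non-increasing. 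The consequence is then identical in spirit to that of item (1): a tangent cone $C_\infty$ of $X_\infty$ at $p_\infty$ is a Euclidean metric measure cone of dimension $N$ over an $\RCD(N-2,N-1)$ space with opening $\vartheta_\infty$, so $I_{C_\infty}(v)=N(\omega_N\vartheta_\infty)^{\frac1N}v^{\frac{N-1}{N}}$ by \autoref{cor:IsoperimetricOnCones}; since $\mathrm{AVR}(X',\dist',\haus^N)\ge\vartheta_\infty>0$ the space $X'$ has Euclidean volume growth (hence infinite volume), and by \autoref{cor:IsoperimetricProfileRCD0N} the map $v\mapsto I_{X'}(v)/v^{\frac{N-1}{N}}$ is non-increasing with limit $N(\omega_N\mathrm{AVR}(X',\dist',\haus^N))^{\frac1N}$ as $v\to\infty$, so $I_{X'}(v)/v^{\frac{N-1}{N}}\ge N(\omega_N\mathrm{AVR}(X',\dist',\haus^N))^{\frac1N}\ge N(\omega_N\vartheta_\infty)^{\frac1N}=I_{C_\infty}(v)/v^{\frac{N-1}{N}}$ for every $v$, i.e. $I_{C_\infty}(v)\le I_{X'}(v)$.

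The only genuinely delicate point I foresee is the double limit in the first estimate of item (2): one must not pass to a tangent object of $X$ directly, but rather keep the scale $r$ fixed while letting $i\to\infty$ — which replaces $B_r$ in the rescaled space by $B_{r/\lambda_i}(p_i)$ at a vanishing scale inside the \emph{fixed} space $X$ — and only afterwards compare with balls of a fixed small radius $\rho$ through Bishop--Gromov monotonicity before sending $\rho\to 0$ and invoking volume convergence at that fixed scale. Everything else is a bookkeeping combination of Bishop--Gromov, volume convergence for non-collapsed pmGH limits, the metric-measure-cone structure of tangent cones of non-collapsed $\RCD(K,N)$ spaces, and the previously established fine properties of the isoperimetric profile; a minor point is the edge case $\haus^N(X)<\infty$, where the profile comparisons are to be read as statements for $v$ in the domain of $I_X$.
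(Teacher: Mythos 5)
Your proposal is correct and follows essentially the same route as the paper: item (1) by testing the profile with small balls $B_r(p)$ and Bishop--Gromov (your use of the inequality $\Per(B_r)/s\le\haus^N(B_r)/v$ valid for all $r$ replaces the paper's essential limit of the perimeter ratio, a cosmetic difference), and item (2) by the same two-scale argument transferring the density at $p_\infty$ to $p_i$ at a fixed small scale via volume convergence, pushing down to scale $r/\lambda_i$ by Bishop--Gromov, and passing to the limit in the rescaled spaces. The consequences are likewise read off from \autoref{cor:IsoperimetricOnCones} and \autoref{cor:IsoperimetricProfileRCD0N} as in the paper, the only (immaterial) deviation being that for item (2) you use the monotonicity and large-volume asymptotics of the scale-invariant profile of $X'$ where the paper cites the sharp isoperimetric inequality of \autoref{thm:IsoperimetricSharpRigidintro} directly.
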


\begin{proof}
Let us prove the first item. Let us denote for simplicity $\vartheta:=\vartheta[X,\dist,\haus^N,p]>0$. By definition of density and thanks to the Bishop--Gromov monotonicity and the coarea formula, we get that 
\[
\vartheta=\lim_{r\to 0} \frac{\haus^N(B_r(p))}{\omega_Nr^N}=\mathrm{esslim}_{r\to 0} \frac{\Per(B_r(p))}{N\omega_Nr^{N-1}}\, .
\]

Hence, for every $\varepsilon>0$ there exists $v_\varepsilon$ such that for every $v\leq v_\varepsilon$ there exists $r_v$ with $\haus^N(B_{r_v}(p))=v$ and 
\[
\Per(B_{r_v}(p))\leq N(\omega_N(\vartheta+\varepsilon))^{1/N}v^{\frac{N-1}{N}}\, .
\]
Then, for every $\varepsilon>0$ there exists $v_\varepsilon>0$ such that 
\[
\frac{I(v)}{v^{\frac{N-1}{N}}}\leq N(\omega_N(\vartheta+\varepsilon))^{1/N}, \qquad \text{for all $v\leq v_\varepsilon$}\, .
\]
Hence for every $\varepsilon>0$ we have that, taking into account the existence of the limit in item (1) in \autoref{cor:FinePropertiesProfile}, 
\begin{equation}\label{eq:u2}
\lim_{v\to 0}\frac{I(v)}{v^{\frac{N-1}{N}}}\leq N(\omega_N(\vartheta+\varepsilon))^{1/N}\, .
\end{equation}
Thus taking $\varepsilon\to 0$ in \eqref{eq:u2}
we get the first sought conclusion of the first item. 

The second conclusion of the first item follows from the first, the fact that $v\mapsto I(v)/v^{\frac{N-1}{N}}$ is nonincreasing thanks to item (2) of \autoref{cor:IsoperimetricProfileRCD0N}, and \autoref{cor:IsoperimetricOnCones}, since the opening at any tip of a tangent cone at $p$ is $\vartheta[X,\dist,\haus^N,p]$.
\medskip

Let us prove the second item. Let us call for simplicity $\vartheta:=\vartheta[X_\infty,\dist_\infty,\haus^N,p_\infty]$. For every $\varepsilon>0$ there exists $r_\varepsilon>0$ such that
\begin{equation}\label{eq:carg}
\frac{\haus^N(B_r(p_\infty))}{v(N,K/(N-1),r)}>\vartheta-\varepsilon, \qquad \text{for all $0<r\leq r_\varepsilon$}\, .
\end{equation}
By volume convergence and \eqref{eq:carg}, for $i$ large enough we have 
\begin{equation}
\haus^N(B_{r_\varepsilon/2}(p_i))/(v(N,K/(N-1),r_\varepsilon/2))>\vartheta-2\varepsilon\, . 
\end{equation}
Hence by Bishop--Gromov monotonicity we deduce that, for $i$ large enough,
\[
\frac{\haus^N(B_r(p_i))}{v(N,K/(N-1),r)}>\vartheta-2\varepsilon, \qquad \text{for all $0<r\leq r_\varepsilon/2$}\, .
\]

In particular, for every $R>0$, we have that, since $\lambda_i\to+\infty$, and since $v(N,K/(N-1),r)/(\omega_Nr^N)\to 1$ as $r\to 0$, for $i$ large enough it holds
\begin{equation}\label{eq:carg2}
\frac{\haus^N_{\lambda_i\dist}(B_R^{\lambda_i\dist}(p_i))}{\omega_NR^N}=\frac{\haus^N(B_{R/\lambda_i}(p_i))}{\omega_N(R/\lambda_i)^N}>\vartheta-3\varepsilon.
\end{equation}
Hence, from \eqref{eq:carg2} and volume convergence, we get that for every $\varepsilon>0$ and every $R>0$ it holds
\[
\frac{\haus^N(B_R(x'))}{\omega_NR^N}>\vartheta-3\varepsilon\, .
\]
Taking $\varepsilon\to 0$ and $R\to +\infty$ we get the first part of the sought claim in item (2).
\medskip

For the second part it suffices to notice that $I_{C_\infty}(v)=N(\omega_N\vartheta)^{1/N}v^{\frac{N-1}{N}}$, as a consequence of \autoref{cor:IsoperimetricOnCones}, and then $I_{C_\infty}(v) \le N(\omega_N(\mathrm{AVR}(X',\dist',\haus^N)))^{1/N}v^{\frac{N-1}{N}} \le I_{X'}(v)$ by the first part of the item and the isoperimetric inequality.
\end{proof}

Below we determine the asymptotic isoperimetric behaviour for small volumes on $\RCD(K,N)$ spaces $(X,\dist,\haus^N)$ with volumes of unit balls uniformly bounded from below. Namely, we give the proof of \autoref{cor:AsymptoticProfileZero}.

\begin{proof}[Proof of \autoref{cor:AsymptoticProfileZero}]
Let us first prove that 
\begin{equation}\label{eqn:MinimumReached}
\vartheta_{\infty,\mathrm{min}}:=\inf\{\vartheta[Y,\dist_Y,\haus^N,y]:\text{$y\in Y$, $Y$ is $X$ or a pmGH limit at infinity of $X$}\},
\end{equation}
is realized. For simplicity let us call $\vartheta:=\vartheta_{\infty,\mathrm{min}}$.\\ 
It suffices to take a minimizing sequence $(Y_i,\dist_{i},\haus^N,y_i)$ such that $\vartheta[Y_i,\dist_i,\haus^N,y_i]\to \vartheta$. Since $\inf_{i\in\mathbb N}\haus^N(B_1(y_i))\ge v_0$, by volume convergence \cite[Theorem 1.2, Theorem 1.3]{DePhilippisGigli18} we can extract a subsequence such that $(Y_i,\dist_i,\haus^N,y_i)\to (Y_\infty,\dist_\infty,\haus^N,y_\infty)$ as $i\to +\infty$. A simple diagonal argument tells that $Y_\infty$ is either isometric to $X$ or to a pmGH limit at infinity of it. Moreover, from \cite[Lemma 2.2]{DePhilippisGigli18}, we get that 
\[
\vartheta[Y_\infty,\dist_\infty,\haus^N,y_\infty]\leq \liminf_{i\to +\infty}\vartheta[Y_i,\dist_i,\haus^N,y_i]=\vartheta,
\]
and thus $\vartheta[Y_\infty,\dist_\infty,\haus^N,y_\infty]=\vartheta$ showing that the infimum is attained and then it must be strictly positive.
\medskip

In order to verify \eqref{eqn:EQUA} let us first prove that 
\begin{equation}\label{eqn:Claim1}
\vartheta_{\infty,\mathrm{min}}\leq\liminf_{r\to 0}\inf_{x\in X}\frac{\haus^N(B_r(x))}{v(N,K/(N-1),r)}\, .
\end{equation}
Let us consider a decreasing sequence $r_i\downarrow 0$, such that
\[
\liminf_{r\to 0}\inf_{x\in X}\frac{\haus^N(B_r(x))}{v(N,K/(N-1),r)}=\lim_{i\to +\infty} \inf_{x\in X}\frac{\haus^N(B_{r_i}(x))}{v(N,K/(N-1),r_i)}\, .
\]
For every $i\in\mathbb N$ let us choose $x_i\in X$ such that 
\[
\frac{\haus^N(B_{r_i}(x_i))}{v(N,K/(N-1),r_i)}\leq \inf_{x\in X}\frac{\haus^N(B_{r_i}(x))}{v(N,K/(N-1),r_i)}+i^{-1}\, .
\]
Up to subsequences, $(X,\dist,\haus^N,x_i)\to (X',\dist',\haus^N,x')$ in the pmGH sense. We have, by the very definition of $\vartheta_{\infty,\mathrm{min}}$, that $\vartheta[X',\dist',\haus^N,x']\geq \vartheta_{\infty,\mathrm{min}}$.\\
Let us fix $\varepsilon>0$. Hence there exists $i_0$ sufficiently large such that for every $i\geq i_0$, 
\[
\frac{\haus^N(B_{r_{i}}(x'))}{v(N,K/(N-1),r_{i})}\geq \vartheta[X',\dist',\haus^N,x']-\varepsilon\, .
\]
By volume convergence and Bishop--Gromov comparison, for every $R<r_{i_0}$, we have that, for $i$ sufficiently large, the following holds 
\[
\frac{\haus^N(B_{r_i}(x_i))}{v(N,K/(N-1),r_i)}\geq \frac{\haus^N(B_{R}(x_i))}{v(N,K/(N-1),R)}\geq \vartheta[X',\dist',\haus^N,x']-2\varepsilon\, .
\]
Hence, for i sufficiently large, we obtain 
\[
\inf_{x\in X}\frac{\haus^N(B_{r_i}(x))}{v(N,K/(N-1),r_i)}+i^{-1}\geq \vartheta[X',\dist',\haus^N,x']-2\varepsilon
\]
and by taking $i\to +\infty$ and $\varepsilon\to 0$ in the previous, we finally obtain \eqref{eqn:Claim1}.\\
Now notice that
\begin{equation}\label{eqn:Claim2}
\vartheta_{\infty,\mathrm{min}}\geq\limsup_{r\to 0}\inf_{x\in X}\frac{\haus^N(B_r(x))}{v(N,K/(N-1),r)}\, .
\end{equation}
is just a consequence of Bishop--Gromov monotonicity, jointly with the volume convergence when $\vartheta_{\infty,\mathrm{min}}$ is reached at infinity. Joining together \eqref{eqn:Claim1} and \eqref{eqn:Claim2} we proved \eqref{eqn:EQUA}.

\medskip

Let us prove \eqref{eqn:AsymptoticIsoperimetricProfileAtZero}. We first check that \begin{equation}\label{eqn:ClaimTo}
    \liminf_{v\to 0}\frac{I(v)}{v^{\frac{N-1}{N}}}\geq N(\omega_N\vartheta)^{1/N}\, .
\end{equation}

Let us take an arbitrary sequence $V_i\to 0$ as $i\to +\infty$. We apply \autoref{thm:MassDecompositionINTRO}, together with \autoref{lem:IsoperimetricAtFiniteOrInfinite}. Up to subsequences, for every $i\in\mathbb N$, there exists $(X_i,\dist_i,\haus^N)$, which is either isometric to $(X,\dist,\haus^N)$ or to a pmGH limit at infinity of it, such that there exists an isoperimetric set $E_i\subset X_i$ with  $\haus^N(E_i)=V_i$, and $I(V_i)=I_{X_i}(V_i)=\Per (E_i)$. Let us choose arbitrary points $q_i\in E_i$. The pointed metric measure spaces $(X_i,\dist_i,\haus^N,q_i)$ converge, up to subsequences, to $(X_\infty,\dist_\infty,\haus^N,q_\infty)$ thanks to the uniform lower bound on the volume of balls and to volume convergence. By a simple diagonal process, we infer that $X_\infty$ is either isometric to $X$ or to a pmGH limit at infinity of it. 

Let us rename, for simplicity, $(X'_i,\dist_i',\haus^N_{\dist_i'},q_i'):=(X_i,V_i^{-1/N}\dist_i,V_i^{-1}\haus^N,q_i)$. Up to subsequences, arguing as above, we have
\[
(X'_i,\dist_i',\haus^N_{\dist_i'},q_i')\to (X',\dist_{X'},\haus^N,x')\, ,
\]
as $i\to +\infty$. From \autoref{lem:InequalitiesIsopProfile} we get that 
\begin{equation}\label{eqn:EstimateUseful}
\mathrm{AVR}(X',\dist_{X'},\haus^N)\geq\vartheta[X_\infty,\dist_\infty,\haus^N,q_\infty]\, .
\end{equation}

Moreover, $\haus^N_{\dist_i'}(E_i)=1$ and, by \autoref{prop:NewRegularityIsoperimetricSets}, $\diam_{\dist_i'}E_i\leq k$, where $k$ only depends on $N,K,v_0$. Hence we can apply \cite[Theorem 3.3]{AmbrosioBrueSemola19} to get that, up to subsequences, $E_i\subset (X'_i,\dist_i',\haus^N_{\dist_i'},q_i')$ converge in $L^1$-strong to a finite perimeter set $E_\infty\subset X'$ with $\haus^N(E_\infty)=1$. Moreover, from \cite[Proposition 3.6]{AmbrosioBrueSemola19} we get that 
\begin{equation}\label{eqn:EvvaiEvvai}
\begin{split}
\Per(E_\infty)&\leq \liminf_{i\to +\infty}\Per(E_i)=\liminf_{i\to +\infty}V_i^{-\frac{N-1}{N}}\Per(E_i)\\
&= \liminf_{i\to +\infty}V_i^{-\frac{N-1}{N}}I(V_i)\, .
\end{split}
\end{equation}
From the sharp isoperimetric inequality \eqref{eqn:SharpIsopintro} applied on $(X',\dist_{X'},\haus^N)$ and \eqref{eqn:EstimateUseful}, we have 
\begin{equation}\label{eqn:EvvaiEvvai2}
\begin{split}
    \Per(E_\infty)&\geq N(\omega_N\mathrm{AVR}(X',\dist_{X'},\haus^N))^{1/N}\haus^N(E_\infty)^{\frac{N-1}{N}}\geq N(\omega_N\vartheta[X_\infty,\dist_\infty,\haus^N,q_\infty])^{1/N}\\
    &\geq N(\omega_n\vartheta)^{1/N}\, .
\end{split}
\end{equation}
Joining together \eqref{eqn:EvvaiEvvai} and \eqref{eqn:EvvaiEvvai2}, we conclude that
\begin{equation}\label{eqn:DAUNIRE}
    \liminf_{i\to +\infty}V_i^{-\frac{N-1}{N}}I(V_i)\geq \Per(E_\infty)
    \geq N(\omega_N\vartheta[X_\infty,\dist_\infty,\haus^N,q_\infty])^{1/N}\geq  N(\omega_n\vartheta)^{1/N},
\end{equation}
and thus we obtain
\eqref{eqn:ClaimTo}.

Let us now prove that 
\begin{equation}\label{eqn:ClaimTo2}
    \limsup_{v\to 0}\frac{I(v)}{v^{\frac{N-1}{N}}}\leq N(\omega_N\vartheta)^{1/N}\, .
\end{equation}
Let us take $(Y,\dist_Y,\haus^N,y)$, with $Y$ isometric to $X$ or to some pmGH limit at infinity of it, such that $\vartheta[Y,\dist_Y,\haus^N,y]=\vartheta$.
Hence, by \cite[Proposition 2.18]{AntonelliNardulliPozzetta} we have that $I(v)\leq I_Y(v)$ for every $v\geq 0$. Moreover, from the first item of \autoref{lem:InequalitiesIsopProfile}, we get that 
\[
\lim_{v\to 0}\frac{I_Y(v)}{v^{(N-1)/N}}\leq N(\omega_N\vartheta)^{1/N}.
\]
Putting the last two inequalities together, we get 
\eqref{eqn:ClaimTo2}. Then \eqref{eqn:AsymptoticIsoperimetricProfileAtZero} follows taking also \eqref{eqn:ClaimTo} into account.
\medskip

The proof of the second item readily comes from the proof of the first item above. Indeed, from \eqref{eqn:DAUNIRE} and \eqref{eqn:ClaimTo2} we conclude that all the inequalities in \eqref{eqn:DAUNIRE} are equalities and thus $\vartheta[X_\infty,\dist_\infty,\haus^N,q_\infty]=\vartheta$. Hence $q_\infty$ is a point at which the minimum in \eqref{eqn:MinimumReached} is reached. Moreover, since $\diam_{\dist_i}E_i\to 0$, due to the fact that $\haus^N(E_i)\to 0$ and (2) in \autoref{prop:NewRegularityIsoperimetricSets}, we get that the sets $E_i$ converge to $q_\infty$ in Hausdorff distance in a realization.

\medskip

For the proof of the third item we exploit the proof of the first item above. Indeed, from \eqref{eqn:DAUNIRE} and \eqref{eqn:ClaimTo2} we conclude that all the inequalities in \eqref{eqn:EvvaiEvvai} and \eqref{eqn:EvvaiEvvai2} are equalities. Hence we have that 
\[
\Per(E_\infty)=N(\omega_N\mathrm{AVR}(X',\dist_{X'},\haus^N))^{1/N}\haus^N(E_\infty)^{\frac{N-1}{N}}\, ,
\]
thus $E_\infty$ saturates the sharp isoperimetric inequality \autoref{thm:IsoperimetricSharpRigidintro} on $X'$. Hence, by the rigidity part of \autoref{thm:IsoperimetricSharpRigidintro}, $X'$ is a Euclidean metric measure cone over an $\RCD(N-2,N-1)$ space, its opening is equal to $\mathrm{AVR}(X',\dist_{X'},\haus^N)=\vartheta[X_\infty,\dist_\infty,\haus^N,q_\infty]=\vartheta$ and $E_{\infty}$ is a ball centred at one of the tips of $X'$. Finally, applying \autoref{thm:ConvergenceBarriersStability}, we deduce that the convergence of $\overline{E}_i \subset X_i'$ the the balls $E_\infty$ holds in Hausdorff sense.
\end{proof}

\begin{remark}
If $(X,\dist,\haus^N)$ is a compact $\RCD(K,N)$ metric measure space, then the statement of \autoref{cor:AsymptoticProfileZero} simplifies, since it is not necessary to consider pointed limits at infinity and $\vartheta_{\infty, \mathrm{min}}$ is the minimal density at a point $p\in X$, which is attained by lower semicontinuity of the density and compactness. Moreover, isoperimetric regions of small volume converge in the Hausdorff sense and up to subsequences to a point where the minimal density is realized.
\end{remark}

\begin{remark}\label{rm:exmin1}
In \eqref{eqn:AsymptoticIsoperimetricProfileAtZero}, it may occur that $\vartheta_{\infty,\min}<1$ also on smooth Riemannian manifolds with Ricci curvature bounded below and volume of unit balls bounded below. For example, let $M=\R \times [0,+\infty) \times \mathbb{S}^1$ be endowed with a Riemannian metric $g$ of the form $g= {\rm d}t^2 + {\rm d}r^2 + \sigma(t,r)^2 {\rm d}\theta^2$, where $t \in \R,$ $r \in [0,+\infty)$, $\theta \in \mathbb{S}^1$ and $(\mathbb S^1,{\rm d} \theta^2)$ is the circle of radius $1$. For any $t \in \R$, we can arrange
\[
\sigma(t,r) = \begin{cases}
r & \text{if } r \in [0,R_{1,t}] ,\\
h(t)+\frac{r}{2} & \text{if }  r \in [R_{2,t},+\infty) ,
\end{cases}
\]
where $0<R_{1,t}<R_{2,t} \to 0$ and $0<h(t)\to 0$ as $|t|\to+\infty$, and $r\mapsto \sigma(t,r)$ is concave for any $t$. Then $(M,g)$ is smooth and without boundary. Moreover, a direct computation of the Ricci curvature on the frame $\partial_t, \partial_r, \partial_\theta / \sigma$ yields
\[
\begin{aligned}[t]
    &\Ric(\partial_t, \partial_t) = - \frac{\partial_t^2 \sigma}{\sigma},\\
    &\Ric(\partial_r, \partial_r) = - \frac{\partial_r^2 \sigma}{\sigma}, 
\end{aligned}
\qquad
\begin{aligned}[t]
&   \Ric\left(\frac{\partial_\theta}{\sigma}, \frac{\partial_\theta}{\sigma} \right) = - \frac{\partial_t^2 \sigma}{\sigma} - \frac{\partial_r^2 \sigma}{\sigma},\\
    &\Ric(\partial_t, \partial_r) = - \frac{\partial_t\partial_r\sigma}{\sigma},
\end{aligned}
\]
and $\Ric(\partial_t,  \partial_\theta / \sigma) = \Ric(\partial_r,  \partial_\theta / \sigma)=0$. By concavity with respect to $r$, we ensure that $ - \tfrac{\partial_r^2 \sigma}{\sigma}\ge 0$, and by taking $t\mapsto R_{1,t},R_{2,t}$ varying sufficiently slowly, we can set $\Ric \ge K$ on $M$ for some $K\le0$.

On the other hand, $(M,g)$ looks like a smoothing of the cone $C\eqdef (M, {\rm d}t^2 + {\rm d}r^2 + (r/2)^2 {\rm d}\theta)$ where the smoothing effect worsens as $|t|\to+\infty$. Indeed, the pmGH limit of $(M,g)$ along a sequence of the form $(t_i,0,q)$ for $t_i\to+\infty$ is the cone $C$. Hence $\vartheta_{\infty,\min}$ in this case is achieved by the density at any point $(t,0,q) \in C$, and it is strictly less than $1$.
\end{remark}

\begin{remark}
The first two items of \autoref{cor:AsymptoticProfileZero} generalize \cite[Theorem 6.9]{LeonardiRitore} to the case of $\RCD(K,N)$ spaces with uniform lower bounds on the volume of unit balls. Notice that the generalization is strict, since the results \cite{LeonardiRitore} are stated for convex bodies in $\mathbb R^N$ with a uniform bound below on the volume of unit balls. 

Moreover, the third item of \autoref{cor:AsymptoticProfileZero} partially generalizes
\cite[Corollary 6.15]{LeonardiRitore} to the case of $\RCD(K,N)$ spaces with uniform lower bounds on the volume of unit balls. Again the generalization is non-trivial, since \cite[Corollary 6.15]{LeonardiRitore} holds for convex bodies in $\mathbb R^N$ with a uniform bound below on the volume of unit balls. 

For the sake of comparison, let us point out that \cite[Corollary 6.15]{LeonardiRitore} is more precise than item (3) of \autoref{cor:AsymptoticProfileZero}: it states that the rescaled sets converge in the Hausdorff sense precisely in the tangent cone at a point where the minimal opening is reached. Instead, in \autoref{cor:AsymptoticProfileZero} we do not prove that the convergence of the rescaled sets holds in a tangent cone at a point where the minimal opening is reached, and the validity of such a statement in the present setting goes beyond the scope of this note and is left to future investigation.
\end{remark}

We consider some consequences of the asymptotic of the isoperimetric profile for small volumes (\autoref{cor:AsymptoticProfileZero}).
\medskip


\medskip

Let us introduce the normalized isoperimetric profile $\bar{I}:[0,1]\to [0,\infty]$ of a metric measure space $(X,\dist,\meas)$ with finite measure by 
\begin{equation}
\bar{I}(v):=I(\meas(X)v)\, ,
\end{equation}
where $I:[0,\meas(X)]\to[0,\infty]$ is the isoperimetric profile of $(X,\dist,\meas)$.

\begin{proposition}\label{prop:unifrapp}
Let $(X_n,\dist_n,\haus^N)$ be $\RCD(K,N)$ spaces with diameters uniformly bounded from above by $0<D<\infty$, assume that $\inf_{n\in\mathbb N}\haus^N(X_n)>0$, and assume that they converge to $(X,\dist,\haus^N)$ in the Gromov--Hausdorff sense. Then 
\begin{equation}\label{eq:unifquot}
    \lim_{n\to\infty}\sup_{v\in [0,1]}\left |\frac{\bar{I}_n(v)}{\bar{I}(v)}-1\right|=0\, ,
\end{equation}
if and only if 
\begin{equation}\label{eq:convmindens}
\lim_{n\to\infty}\min_{x\in X_n}\vartheta[X_n,\dist_n,\haus^N,x]=\min_{x\in X}\vartheta[X,\dist,\haus^N,x]\, .
\end{equation}
\end{proposition}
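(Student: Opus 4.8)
The plan is to derive everything from the small–volume asymptotics of \autoref{cor:AsymptoticProfileZero}, a stability argument for isoperimetric sets, and one genuinely new ingredient: a \emph{uniform-in-$n$} version of those asymptotics. Throughout write $m_n\coloneqq\haus^N(X_n)$, $m\coloneqq\haus^N(X)$, $\vartheta_n\coloneqq\min_{x\in X_n}\vartheta[X_n,\dist_n,\haus^N,x]$ and $\bar{\vartheta}\coloneqq\min_{x\in X}\vartheta[X,\dist,\haus^N,x]$, both minima being attained by lower semicontinuity of the density and compactness. We may assume $N\ge 2$ (the case $N=1$ is elementary) and, replacing $K$ by $\min(K,0)$, that $K\le 0$; this affects none of the quantities involved. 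Since $\diam X_n\le D$ and $\inf_n m_n>0$, Bishop--Gromov yields a uniform $v_0>0$ with $\haus^N(B_1(x))\ge v_0$ for every $x\in X_n$ (and on $X$), a uniform upper bound on $m_n$, and a uniform lower bound $\vartheta_n\ge v_0/v(N,K/(N-1),1)>0$; hence by volume convergence $m_n\to m\in(0,\infty)$ and $(X,\dist,\haus^N)$ is a compact $\RCD(K,N)$ space. As the $X_n$ are compact there are no pmGH limits at infinity, so \autoref{cor:AsymptoticProfileZero} applies to each $X_n$ and to $X$; combined with the symmetry $I_Y(v)=I_Y(\haus^N(Y)-v)$ valid on any finite-mass space it gives
\[
\lim_{v\to 0^+}\frac{\bar{I}_n(v)}{v^{\frac{N-1}{N}}}=\lim_{v\to 1^-}\frac{\bar{I}_n(v)}{(1-v)^{\frac{N-1}{N}}}=A_n\coloneqq m_n^{\frac{N-1}{N}}N(\omega_N\vartheta_n)^{\frac1N},
\]
and likewise for $X$ with constant $A\coloneqq m^{\frac{N-1}{N}}N(\omega_N\bar{\vartheta})^{\frac1N}$; since $m_n\to m>0$, the convergence $A_n\to A$ is equivalent to \eqref{eq:convmindens}. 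The implication \eqref{eq:unifquot}$\Rightarrow$\eqref{eq:convmindens} is then immediate: for fixed $n$, letting $v\to0^+$ in $\bar{I}_n(v)/v^{\frac{N-1}{N}}=(\bar{I}_n(v)/\bar{I}(v))(\bar{I}(v)/v^{\frac{N-1}{N}})$ gives $\lim_{v\to0}\bar{I}_n(v)/\bar{I}(v)=A_n/A$, hence $|A_n/A-1|\le\sup_{v\in(0,1)}|\bar{I}_n(v)/\bar{I}(v)-1|\to0$ and $A_n\to A$.

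Now assume $A_n\to A$ and aim at \eqref{eq:unifquot}. First I would prove that $\bar{I}_n\to\bar{I}$ pointwise on $(0,1)$. Fix $v\in(0,1)$ and let $E_n\subset X_n$ be an isoperimetric region of volume $m_nv$ (it exists on a compact $\RCD(K,N)$ space by the direct method). The perimeters $\Per(E_n)=I_{X_n}(m_nv)$ are uniformly bounded (by the perimeter of a metric ball of that volume, controlled via Bishop--Gromov), and $E_n\subset B_D(x_n)$, so by \cite[Proposition 3.3]{AmbrosioBrueSemola19} a subsequence converges in $L^1$-strong to some $F\subset X$ with $\haus^N(F)=mv\in(0,m)$; then \autoref{thm:ConvergenceBarriersStability} applies and yields that $F$ is isoperimetric and $\Per(E_n)\to\Per(F)=I_X(mv)$, i.e.\ $\bar{I}_n(v)\to\bar{I}(v)$, and since the limit is subsequence-independent the full sequence converges. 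Second, by \autoref{cor:FinePropertiesProfile}(4) together with the profile symmetry, for every compact $J\Subset(0,1)$ the functions $\bar{I}_n^{\frac{N}{N-1}}$ are equi-Lipschitz on $J$ for all large $n$, with constant independent of $n$.

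The crucial step is to make the endpoint asymptotics uniform: I claim there is $\kappa$ with $\kappa(0^+)=0$ so that $A_n-\kappa(v)\le\bar{I}_n(v)/v^{\frac{N-1}{N}}\le A_n+\kappa(v)$ for all small $v>0$ and all $n$ (symmetrically near $v=1$). The upper bound is a direct consequence of \autoref{cor:FinePropertiesProfile}(1): the function $\eta_n(t)\coloneqq I_{X_n}^{\frac{N}{N-1}}(t)-Ct^{\frac{2+N}{N}}$ is concave on $[0,v_1]$ with $C,v_1$ depending only on $K,N,v_0$ and vanishes at $0$, so $\eta_n(t)/t$ is non-increasing and thus $\eta_n(t)\le t\lim_{s\to0}\eta_n(s)/s=t\,(N(\omega_N\vartheta_n)^{\frac1N})^{\frac{N}{N-1}}$, which (recalling $A_n^{\frac{N}{N-1}}=m_n(N(\omega_N\vartheta_n)^{\frac1N})^{\frac{N}{N-1}}$) gives $\bar{I}_n^{\frac{N}{N-1}}(v)=I_{X_n}^{\frac{N}{N-1}}(m_nv)\le A_n^{\frac{N}{N-1}}v+C'v^{\frac{2+N}{N}}$ with $C'$ uniform. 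The lower bound is the heart of the matter, and I would establish it by contradiction: otherwise there are $\varepsilon_1>0$, $n_k\to\infty$ and $w_k\coloneqq m_{n_k}v_k\to0$ with $I_{X_{n_k}}(w_k)\le(N(\omega_N\vartheta_{n_k})^{\frac1N}-\varepsilon_1)w_k^{\frac{N-1}{N}}$. Taking $E_k\subset X_{n_k}$ isoperimetric of volume $w_k$, \autoref{prop:NewRegularityIsoperimetricSets}(2) gives $\diam E_k\le Cw_k^{1/N}$, so rescaling $X_{n_k}$ by $w_k^{-1/N}$ around $q_k\in E_k$ produces, via the uniform bound on $\vartheta_{n_k}$ and Gromov precompactness, a blow-down $\RCD(0,N)$ space $(X',\dist',\haus^N,x')$; the rescaled diameters of $E_k$ being uniformly bounded, \cite[Proposition 3.3]{AmbrosioBrueSemola19} yields a limit $E_\infty\subset X'$ with $\haus^N(E_\infty)=1$ and, using \eqref{eq:convmindens} to pass to the limit, $\Per(E_\infty)\le\liminf_k(N(\omega_N\vartheta_{n_k})^{\frac1N}-\varepsilon_1)=N(\omega_N\bar{\vartheta})^{\frac1N}-\varepsilon_1$. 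On the other hand, following the blow-up/volume-convergence argument of \autoref{lem:InequalitiesIsopProfile}(2) — with the extra remark that, as $X_{n_k}\to X$ in the Gromov--Hausdorff sense and the $q_k$'s subconverge to some $q_\infty\in X$, Bishop--Gromov and volume convergence force $\liminf_k\haus^N(B_{rw_k^{1/N}}(q_k))/(\omega_N(rw_k^{1/N})^N)\ge\vartheta[X,\dist,\haus^N,q_\infty]\ge\bar{\vartheta}$ for every $r>0$ — one obtains $\mathrm{AVR}(X')\ge\bar{\vartheta}$, so that the sharp isoperimetric inequality \autoref{thm:IsoperimetricSharpRigidintro} on $X'$ gives $\Per(E_\infty)\ge N(\omega_N\bar{\vartheta})^{\frac1N}$, a contradiction.

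Granting these ingredients, \eqref{eq:unifquot} follows by a routine compactness argument: if it failed there would be $\varepsilon_0>0$, $n_k\to\infty$ and $v_k\to v_*\in[0,1]$ with $|\bar{I}_{n_k}(v_k)/\bar{I}(v_k)-1|\ge\varepsilon_0$. If $v_*\in(0,1)$, the equi-Lipschitz bound on $\bar{I}_{n_k}^{\frac{N}{N-1}}$ near $v_*$ and the pointwise convergence give $\bar{I}_{n_k}(v_k)\to\bar{I}(v_*)$, while $\bar{I}(v_k)\to\bar{I}(v_*)>0$ by \autoref{rem:LocalHolderProfile}, so the ratio tends to $1$. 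If $v_*=0$ (and symmetrically if $v_*=1$), then $\bar{I}(v_k)/v_k^{\frac{N-1}{N}}\to A$ and, by the uniform asymptotics, $\bar{I}_{n_k}(v_k)/v_k^{\frac{N-1}{N}}\in[A_{n_k}-\kappa(v_k),A_{n_k}+\kappa(v_k)]\to A$ since $A_{n_k}\to A$, so again the ratio tends to $1$; in all cases we reach a contradiction. The main obstacle is exactly the uniform-in-$n$ lower asymptotic bound of the previous paragraph — a uniform version of the small-volume analysis behind \autoref{cor:AsymptoticProfileZero} — whose proof must couple rescalings of the spaces $X_n$ with the convergence $X_n\to X$ and with hypothesis \eqref{eq:convmindens}.
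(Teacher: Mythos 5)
Your argument is correct, and its skeleton (locally uniform convergence of the normalized profiles in the interior of $(0,1)$ via pointwise convergence plus the uniform Lipschitz bounds of \autoref{cor:FinePropertiesProfile}, combined with matching asymptotics at the endpoints) is the same as the paper's; the difference lies in how the two endpoint ingredients are obtained. The paper cites \cite{AmbrosioHonda17} for the pointwise convergence of the profiles and then disposes of the endpoint regime by invoking the uniform concavity normalization of \autoref{cor:FinePropertiesProfile}(2) together with \cite[Lemma B.3.4]{Bayle03}, a general comparison lemma for profiles with controlled concavity and convergent endpoint asymptotics. You instead prove the pointwise convergence through \autoref{thm:ConvergenceBarriersStability} applied to isoperimetric regions of the $X_n$ (which works, since compactness gives existence, mean curvature barriers exist by \autoref{thm:Isoperimetriciintro}, and the volume constraint $0<mv<m$ is met), and you replace Bayle's lemma by a self-contained uniform-in-$n$ two-sided small-volume asymptotic: the upper bound from \autoref{cor:FinePropertiesProfile}(1), and the lower bound from a rescaling/contradiction argument using the diameter bound of \autoref{prop:NewRegularityIsoperimetricSets}(2), the $\mathrm{AVR}$ estimate in the spirit of \autoref{lem:InequalitiesIsopProfile}(2) (your observation that the rescaled density ratios at $q_k$ are asymptotically bounded below by $\vartheta[X,\dist,\haus^N,q_\infty]\ge\bar\vartheta$ via Bishop--Gromov and volume convergence is the right one), and the sharp isoperimetric inequality \autoref{thm:IsoperimetricSharpRigidintro}. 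This buys independence from Bayle's lemma at the cost of re-running a uniform version of the blow-up analysis behind \autoref{cor:AsymptoticProfileZero}; note that the ``hard'' lower bound could also be extracted more cheaply from the very concavity you already use for the upper bound, since $G_n(v)/v\ge G_n(w)/w$ for $v\le w\le v_1$ together with the pointwise convergence $\bar I_n(w)\to\bar I(w)$ and the asymptotics of $\bar I$ at $0$ gives the uniform lower bound without any compactness argument --- this is essentially what the appeal to \cite[Lemma B.3.4]{Bayle03} accomplishes. Two harmless loose ends you should tidy up: in the negation of the uniform lower bound you should also dispose of the case of a bounded subsequence of indices (handled by the fixed-space asymptotics of \autoref{cor:AsymptoticProfileZero}), and the value $v=0$ (and $v=1$) in the supremum requires the obvious convention since $\bar I_n(0)=\bar I(0)=0$.
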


\begin{proof}
The pointwise convergence of the normalized isoperimetric profiles under these assumptions is well known and it follows from the convergence and stability of functions of bounded variation and sets of finite perimeter, see \cite{AmbrosioHonda17}.
\medskip

The pointwise convergence can be strengthened to locally uniform convergence on compact subsets $[\alpha,\beta]\Subset [0,1]$ thanks to the uniform Lipschitz property of the isoperimetric profiles, see \autoref{cor:FinePropertiesProfile}.
Then the uniform convergence to $1$ of the ratios $\bar{I}_n/\bar{I}$ on compact subsets of $(0,1)$ follows since $\bar{I}$ is locally uniformly bounded away from $0$ on $(0,1)$ under our assumptions.
\medskip

The implication from \eqref{eq:unifquot} to \eqref{eq:convmindens} follows from the explicit asymptotic behaviour of the isoperimetric profile for small volumes in terms of the minimal density \autoref{cor:AsymptoticProfileZero}.

The implication from \eqref{eq:convmindens} to \eqref{eq:unifquot} follows from item (2) of \autoref{cor:FinePropertiesProfile}, \autoref{cor:AsymptoticProfileZero} and \cite[Lemma B.3.4]{Bayle03}.
\end{proof}

\begin{remark}
We point out that the inequality $\geq$ holds true unconditionally in \eqref{eq:convmindens}, by lower semicontinuity of the density (see \cite[Lemma 2.2]{DePhilippisGigli18}).
\end{remark}

\begin{remark}
Let us point out that \autoref{prop:unifrapp} gives a fairly complete answer to the questions raised at the end of \cite[Remark 4.3.4]{Bayle03}. Indeed it completely characterizes the uniform convergence to $1$ of the quotients of the isoperimetric profiles $I_n/I$ in terms of the convergence of the minimal densities. In particular, the uniform convergence of the quotients holds whenever $(X_n,\dist_n,\haus^N)$ are smooth Riemannian manifolds and $(X,\dist,\haus^N)$ is a smooth Riemannian manifold, since under these assumptions
\begin{equation}
\vartheta[X_n,\dist_n,\haus^N,x_n]=\min_{x\in X}\vartheta[X,\dist,\haus^N,x]\,, \quad\text{for any $x_n\in X_n$ and any $x\in X$}\, .
\end{equation}
However, even under the assumption that $(X_n,\dist_n,\haus^N)$ are smooth Riemannian manifolds, it is not necessary that the limit is a smooth Riemannian manifold for \eqref{eq:unifquot} to hold. Indeed, there are elementary examples of $\RCD(K,N)$ spaces $(X,\dist,\haus^N)$ with empty singular set that are not smooth Riemannian manifolds.
\end{remark}

\subsection{Isoperimetric almost regularity theorems}

It is a classical result, pointed out for instance in \cite{Ledouxoptimal}, that a complete Riemannian manifold $(M^n,g)$ with non negative Ricci curvature such that the sharp Euclidean isoperimetric inequality holds is isometric to $\setR^n$. The result was strengthened in \cite{Xiaalmost}, where it is proved that if an almost Euclidean isoperimetric inequality holds, then $(M^n,g)$ is diffeomorphic to $\setR^n$. Both proofs are based on the observation that the isoperimetric inequality controls from below the volume of balls. In particular, a Euclidean isoperimetric inequality forces a Euclidean behaviour of the volume of balls. Therefore rigidity in the Bishop--Gromov inequality holds and the manifold is isometric to $\setR^n$. Analogously, the almost Euclidean isoperimetric inequality forces almost Euclidean lower bounds on the volume of balls, and the statement in \cite{Xiaalmost} follows from Cheeger--Colding's Reifenberg's theorem \cite{ChCo1}.

Conversely, it is known after \cite{CavallettiMondinoalmost} that almost Euclidean lower volume bounds imply almost Euclidean isoperimetric inequalities on smaller balls see also the recent \cite[Theorem 3.9]{NobiliViolo}.

As a consequence of our main results, an almost Euclidean isoperimetric inequality for a given volume is sufficiently strong to guarantee almost-regularity. Moreover, an almost Euclidean lower bound on the volume of balls forces an almost Euclidean isoperimetric inequality for small volumes. On the one hand this statement is stronger than the almost Euclidean isoperimetric inequality inside small balls from \cite{CavallettiMondinoalmost}, since small diameter implies small volume by the Bishop--Gromov inequality. On the other hand, the assumptions in \cite{CavallettiMondinoalmost} are local and more general (no assumptions on infinitesimal Hilbertianity nor on non collapsing), while we need global conditions in our argument.

\begin{theorem}\label{thm:isoperimetricepsregularity}
Let $(X,\dist,\haus^N)$ be an $\RCD(K,N)$ metric measure space. Then there exists a constant $C(K,N)>0$ such that for any $\eps>0$ there exist $\delta(\eps,K,N)>0$ and $v(\eps,K,N)>0$ for which the following holds: if there exists $0<v<v(\eps,K,N)$ such that 
\begin{equation}\label{eq:almostiso}
\frac{I(v)}{v^{\frac{N-1}{N}}}\ge N\omega_N^{\frac{1}{N}}-\delta\, ,
\end{equation}
then for any $0<r<C(K,N)v^{\frac{1}{N}}$ and any $x\in X$ it holds 
    \begin{equation}\label{eq:isoalmvol}
        \frac{\haus^N(B_r(x))}{\omega_Nr^N}\ge 1-\eps\, .
    \end{equation}
Conversely, for any $\eps>0$ there exist constants $r(\eps,K,N)>0$, $v(\eps,r,K,N)$ and $\delta(\eps,K,N)>0$
for which the following holds: if $(X,\dist,\haus^N)$ is an $\RCD(K,N)$ metric measure space and there exists $0<r<r(\eps,K,N)$ such that 
\begin{equation}\label{eq:almeuclvol}
     \frac{\haus^N(B_r(x))}{\omega_Nr^N}\ge 1-\delta\, , \quad\text{for any $x\in X$}\, ,
\end{equation}
then 
\begin{equation}
\frac{I(v)}{v^{\frac{N-1}{N}}}\ge N\omega_N^{\frac{1}{N}}-\eps\, ,\quad \text{for any $0<v<v(\eps,r,K,N)$}\, .
\end{equation}
    
\end{theorem}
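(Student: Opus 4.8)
The plan is to prove the two implications separately. Both rest on the Bishop--Gromov comparison (in its volume and perimeter form) recalled in the Preliminaries and on the coarea formula \autoref{thm:coarea}; the converse will additionally use the asymptotic profile \autoref{cor:AsymptoticProfileZero}. Throughout we treat $N\ge 2$, the case $N=1$ being elementary.

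\textbf{First implication.} Fix $\eps>0$ and suppose \eqref{eq:almostiso} holds for a small $v<\haus^N(X)$. For a fixed $x\in X$ choose $s=s(x)>0$ with $\haus^N(B_s(x))=v$; this is possible up to an at most countable set of exceptional values of $v$, where one argues by approximation, since $t\mapsto\haus^N(B_t(x))$ is monotone with at most countably many jumps. As $B_s(x)$ is an admissible competitor at volume $v$, the definition of $I$ and the hypothesis give $\Per(B_s(x))\ge I(v)\ge (N\omega_N^{1/N}-\delta)v^{\frac{N-1}{N}}$, while Bishop--Gromov yields $\Per(B_s(x))/s(N,\tfrac{K}{N-1},s)\le \haus^N(B_s(x))/v(N,\tfrac{K}{N-1},s)$. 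Combining these and using $\haus^N(B_s(x))=v$ gives
\[
v^{\frac1N}\cdot\frac{s(N,\tfrac{K}{N-1},s)}{v(N,\tfrac{K}{N-1},s)}\ \ge\ N\omega_N^{\frac1N}-\delta .
\]
Since $s\mapsto s(N,\tfrac{K}{N-1},s)/v(N,\tfrac{K}{N-1},s)$ is bounded above on $[s_0,\infty)$ for each $s_0>0$, this forces $s\le s_0(K,N)$ once $v<v_*(K,N)$; and for $s\le s_0(K,N)$ one has the expansions $s(N,\tfrac{K}{N-1},s)/v(N,\tfrac{K}{N-1},s)=\tfrac Ns(1+O(s^2))$ and $v(N,\tfrac{K}{N-1},s)=\omega_N s^N(1+O(s^2))$. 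Plugging these in shows $s\ge C(K,N)v^{1/N}$ and $\haus^N(B_s(x))/v(N,\tfrac{K}{N-1},s)\ge 1-\eps'$ with $\eps'=\eps'(\delta,v,K,N)\to 0$ as $\delta,v\to 0$. Finally, for $0<r<C(K,N)v^{1/N}\le s(x)$, monotonicity of the volume ratio in Bishop--Gromov gives $\haus^N(B_r(x))/v(N,\tfrac{K}{N-1},r)\ge 1-\eps'$, and dividing by $v(N,\tfrac{K}{N-1},r)/(\omega_N r^N)=1+O(r^2)=1+O(v^{2/N})$ yields \eqref{eq:isoalmvol} once $\delta$ and $v$ are small enough depending only on $\eps,K,N$. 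Note this argument needs the isoperimetric hypothesis only \emph{at the single volume} $v$, which is exactly why no a priori lower bound on $\haus^N(B_1(x))$ is required.

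\textbf{Converse implication.} Assume \eqref{eq:almeuclvol} for some small $r$; since $r<1$ we get $\haus^N(B_1(x))\ge(1-\delta)\omega_N r^N=:v_0>0$ for all $x$, so \autoref{cor:AsymptoticProfileZero} is applicable. By Bishop--Gromov the bound propagates to smaller scales: $\haus^N(B_{r'}(x))/v(N,\tfrac{K}{N-1},r')\ge (1-\delta)\,\omega_N r^N/v(N,\tfrac{K}{N-1},r)\ge 1-\delta_1$ for all $0<r'\le r$ and all $x$, with $\delta_1=\delta_1(\delta,r,K,N)\to 0$ as $\delta,r\to 0$. Letting $r'\to 0$ gives $\vartheta[X,\dist,\haus^N,x]\ge 1-\delta_1$ for every $x$, and by volume convergence the same lower bound $\haus^N(B_{r'}(\,\cdot\,))\ge (1-\delta_1)v(N,\tfrac{K}{N-1},r')$ for $r'\le r$ is inherited by every pmGH limit at infinity of $X$, hence its density is $\ge 1-\delta_1$ at every point. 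Thus $\vartheta_{\infty,\mathrm{min}}\ge 1-\delta_1$ in the notation of \autoref{cor:AsymptoticProfileZero}, and that result gives $\lim_{v\to 0}I(v)/v^{\frac{N-1}{N}}=N(\omega_N\vartheta_{\infty,\mathrm{min}})^{1/N}\ge N\omega_N^{1/N}(1-\delta_1)^{1/N}\ge N\omega_N^{1/N}-\tfrac{\eps}{2}$ once $\delta$ (hence $\delta_1$) is small. By definition of the limit there is $v(\eps,r,K,N)>0$ with $I(v)/v^{\frac{N-1}{N}}\ge N\omega_N^{1/N}-\eps$ for all $0<v<v(\eps,r,K,N)$.

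\textbf{Main obstacle.} Once the two inputs (Bishop--Gromov and \autoref{cor:AsymptoticProfileZero}) are in place the content is light, and the delicate points are of a bookkeeping nature: tracking the dependence of $\delta,v,r$ on $\eps,K,N$ so the claimed quantifier structure holds; handling the at most countable set of radii where $t\mapsto\haus^N(B_t(x))$ is discontinuous in the first implication; and ruling out, uniformly in $x$, the degenerate regime where $s(x)$ is large in the first implication (which is precisely what pins down the universal constant $C(K,N)$, one may take $C(K,N)=(2\omega_N)^{-1/N}$).
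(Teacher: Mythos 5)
Your first implication is correct, and it takes a genuinely different route from the paper. You fix, for each $x$, the radius $s(x)$ at which $B_{s(x)}(x)$ has volume exactly $v$, use it as a competitor at the single volume where \eqref{eq:almostiso} is assumed, and play the perimeter--volume Bishop--Gromov comparison against the isoperimetric lower bound; the lower bound $s(x)\ge C(K,N)v^{1/N}$ then comes from the non collapsed bound $\haus^N(B_s(x))\le v(N,K/(N-1),s)$ (recalled in the Preliminaries), and monotonicity of the volume ratio propagates the almost Euclidean density down to all $r<C(K,N)v^{1/N}$. The paper instead propagates \eqref{eq:almostiso} to \emph{all} smaller volumes via the monotonicity \eqref{eq:monotonicityformula} (proved for $K=0$, with ``minor modifications'' claimed for $K<0$) and then integrates the resulting differential inequality $f(r)\ge(N\omega_N^{1/N}-\delta)(\int_0^r f)^{\frac{N-1}{N}}$ obtained by coarea. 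Your version avoids both the monotonicity of the scale invariant profile and the ODE step, and treats general $K$ uniformly; the only cosmetic points are that the attainability of the volume $v$ by a ball needs no exceptional set (in $\RCD(K,N)$ spaces spheres are negligible, so $t\mapsto\haus^N(B_t(x))$ is continuous), and that you should say explicitly that the lower bound on $s(x)$ uses $\haus^N(B_s(x))\le v(N,K/(N-1),s)$ rather than the displayed inequality alone.

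The converse implication has a genuine gap: the quantifier structure of the statement requires $v(\eps,r,K,N)$ to depend only on $\eps,r,K,N$, i.e.\ to be uniform over \emph{all} $\RCD(K,N)$ spaces satisfying \eqref{eq:almeuclvol}, whereas your argument produces a threshold depending on the particular space. Indeed, from $\vartheta_{\infty,\mathrm{min}}\ge 1-\delta_1$ and \autoref{cor:AsymptoticProfileZero} you only conclude that $\lim_{v\to0}I(v)/v^{\frac{N-1}{N}}\ge N\omega_N^{1/N}-\eps/2$, and ``by definition of the limit'' the range of volumes where $I(v)/v^{\frac{N-1}{N}}\ge N\omega_N^{1/N}-\eps$ holds is a priori space-dependent; no monotonicity or concavity property of the profile converts this into a uniform rate (for $K=0$ the scale invariant profile is non-increasing, so its limit at $0$ is a supremum, which bounds $I(v)/v^{\frac{N-1}{N}}$ from \emph{above}, not below). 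This uniformity is precisely the nontrivial half of the paper's proof of the converse: one argues by contradiction along a sequence of spaces $(X_n,\dist_n,\haus^N)$ with density lower bounds $1-1/n$ and sets of vanishing volume violating the almost Euclidean inequality, replaces them by isoperimetric sets via \autoref{lem:IsoperimetricAtFiniteOrInfinite} (possibly in a pointed limit at infinity, where \eqref{eq:almeuclvol} persists by volume convergence), uses the diameter bound of \autoref{prop:NewRegularityIsoperimetricSets} to rescale around a point and obtain limits $F\subset B_2(0^N)\subset\setR^N$ of positive volume, and contradicts the Euclidean isoperimetric inequality via lower semicontinuity of the perimeter. Without an argument of this kind (or some other quantitative almost Euclidean isoperimetric inequality valid at a uniform volume scale), your proof establishes only a weaker, space-dependent version of the converse.
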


\begin{proof}
We prove the first part of the statement under the assumption that $K=0$, in which case it is possible to choose 
\begin{equation}
v(\eps,0,N)=+\infty\, , \quad C(0,N)=\frac{1}{2}\omega_{N}^{\frac{1}{N}}\,.
\end{equation}
The case $K<0$ can be handled with minor modifications with respect to the argument that we are going to present, as in the case of previous arguments in the note.
\medskip

If $K=0$ and \eqref{eq:almostiso} holds for some $v>0$, then by \eqref{eq:monotonicityformula}
\begin{equation}\label{eq:almeuclall}
 \frac{I(\bar{v})}{\bar{v}^{\frac{N-1}{N}}}\ge N\omega_N^{\frac{1}{N}}-\delta\, ,  \quad\text{for any $0<\bar{v}<v$}\, .
\end{equation}
We wish to apply the almost Euclidean isoperimetric inequality \eqref{eq:almeuclall} to balls with small radii, to estimate their volume via the coarea formula.\\
Let us consider any $x\in X$ and any $0<s<C(0,N)v^{\frac{1}{N}}$. Notice that 
\begin{equation}
    \haus^N(B_s(x))\le v\, ,
\end{equation}
by the Bishop--Gromov inequality and the very definition of $C(0,N)$. Hence, $B_s(x)$ is a competitor for the isoperimetric inequality in the range where \eqref{eq:almeuclall} holds. Therefore it holds
\begin{equation}\label{eq:ale}
\Per(B_s(x))\ge \left( N\omega_N^{\frac{1}{N}}-\delta\right)\left(\haus^N(B_s(x))\right)^{\frac{N-1}{N}}\, ,\quad\text{for any $0<s<C(0,N)v^{\frac{1}{N}}$} \, .
\end{equation}
Let us set $f(s):=\Per(B_s(x))$. The coarea formula implies that
\begin{equation}
\haus^N(B_r(x))=\int_0^rf(s)\di s\, , \quad\text{for any $r>0$}\, .
\end{equation}
Therefore, \eqref{eq:ale} can be turned into
\begin{equation}
f(r)\ge \left( N\omega_N^{\frac{1}{N}}-\delta\right)\left(\int_0^rf(s)\di s\right)^{\frac{N-1}{N}}\, ,\quad\text{for any $r>0$}\, .
\end{equation}
By integrating the integral inequality above we easily infer that \eqref{eq:isoalmvol} holds whenever $x$ is a regular point. The statement follows since the function $x\mapsto \haus^N(B_r(x))$ is continuous, by Bishop--Gromov inequality, and the regular set is dense.
\medskip

Let us prove the converse implication, focusing again on the case $K=0$. The general case can be handled with minor modifications.\\ 
If $K=0$ then we can set $r(\eps,K,N)=+\infty$. Notice that, by volume convergence, the lower volume bound in \eqref{eq:almeuclvol} holds also for any pointed limit at infinity $(Y_{\infty},\dist_{\infty},\haus^N,y_{\infty})$ of $(X,\dist,\haus^N)$. In particular, by Bishop--Gromov volume montonicity, all the densities at any point in any pointed limit at infinity of $(X,\dist,\haus^N)$ are bounded from below by $1-\delta$. The almost Euclidean isoperimetric inequality for small volumes $0<v<v((X,\dist,\haus^N))$ follows from \eqref{eqn:AsymptoticIsoperimetricProfileAtZero}.\\
Let us show that actually the conclusion holds for $0<v<v(\eps,r,0,N)$, with $v(\eps,r,0,N)$ independent of the metric measure space $(X,\dist,\haus^N)$ verifying the assumptions of the statement.\\
If this is not the case, then we can find a sequence of $\RCD(0,N)$ metric measure spaces $(X_n,\dist_n,\haus^N)$ such that 
\begin{equation}\label{eq:unilow}
    \frac{\haus^N(B^{X_n}_r(x_n))}{\omega_Nr^N}\ge 1-\frac{1}{n}\, ,
\end{equation}
for any $x_n\in X_n$ for any $n\in\setN$ and Borel sets $E_n\subset X_n$ such that 
\begin{equation}
\haus^N(E_n)\downarrow 0\, ,\quad\Per(E_n)\le \left(N\omega_N^{\frac{1}{N}}-\eps\right)\left(\haus^N(E_n)\right)^{\frac{N-1}{N}}\, .
\end{equation}
By volume convergence, any pointed limit at infinity $(Y,\dist_Y,\haus^N,y)$ of any of the metric measure spaces $(X_n,\dist_n,\haus^N)$ verifies \eqref{eq:unilow}. Therefore, up to possibly substituting $(X_n,\dist_n,\haus^N)$ with a pointed limit at infinity, we can assume thanks to \autoref{lem:IsoperimetricAtFiniteOrInfinite} that $E_n\subset X_n$ is an isoperimetric set.\\
Thanks to (2) in \autoref{prop:NewRegularityIsoperimetricSets}, $\diam E_n\le C\haus^N(E_n)^{\frac{1}{N}}$, for a uniform constant $C(N)$, for any sufficiently large $n\in\setN$. After a point picking and scaling argument, we obtain a sequence of metric measure spaces $(Y_n,\dist_n,\haus^N,y_n)$ converging in the pmGH topology to $\setR^N$ with canonical metric measure structure and sets $F_n\subset B_2(y_n)$ with measures uniformly bounded and uniformly bounded away from zero such that
\begin{equation}
\Per(F_n)\le \left(N\omega_N^{\frac{1}{N}}-\eps\right)\left(\haus^N(F_n)\right)^{\frac{N-1}{N}}\, .
\end{equation}
Up to the extraction of a subsequence, the sets $F_n$ converge in $L^1$ strong to $F\subset B_2(0^N)\subset \setR^N$. By lower semicontinuity of the perimeter under $L^1$ strong convergence, this contradicts the Euclidean isoperimetric inequality.
\end{proof}

\begin{remark}\label{rm:Reif}
Under the same assumptions of \autoref{thm:isoperimetricepsregularity} above, if $\eta>0$ and \eqref{eq:isoalmvol} holds for some $\eps<\eps(\eta,K,N)$ and some $r<r(\eta,K,N)$ and $x\in X$, then
\begin{equation}
\dist_{GH}\left(B_s(x),B_s(0^N)\right)<\eta s\, ,\quad\text{for any $0<s<r/2$}\, .
\end{equation}
This is a consequence of the classical almost volume rigidity theorem \cite{Anderson90,Colding97,ChCo1,DePhilippisGigli18}. 

From this observation and Reifenberg's theorem \cite{ChCo1,MondinoKapovitch} it follows that there exists $\eps=\eps(N)>0$
such that if $(X,\dist,\haus^N)$ is an $\RCD(K,N)$ metric measure space and 
\begin{equation}
    \lim_{v\to 0}\frac{I(v)}{v^{\frac{N-1}{N}}}\ge N\omega_N^{\frac{1}{N}}-\eps(N)\, ,
\end{equation}
then $(X,\dist,\haus^N)$ and all its pointed limits at infinity are homeomorphic to smooth Riemannian manifolds. Furthermore, if 
\begin{equation}
    \lim_{v\to 0}\frac{I(v)}{v^{\frac{N-1}{N}}}= N\omega_N^{\frac{1}{N}}\,,
\end{equation}
then $(X,\dist,\haus^N)$ and all its pointed limits at infinity have empty singular set.
\end{remark}

\begin{remark}
The very same argument presented for the proof of the second implication in \autoref{thm:isoperimetricepsregularity}, together with the explicit asymptotics of the isoperimetric profile for small volumes \eqref{eqn:AsymptoticIsoperimetricProfileAtZero}, shows that the following holds: if $(X,\dist,\haus^N)$ is an $\RCD(K,N)$ metric measure space such that 
\begin{equation}
\frac{\haus^N(B_r(x))}{v_{K,N}(r)}\ge \alpha\, ,\quad\text{for any $x\in X$}\, ,
\end{equation}
for some $1\ge \alpha>0$ and some $r>0$, then for any $\eps>0$ there exists $v_{\eps}:=v_\eps(\eps,K,N,\alpha)>0$ such that an \emph{almost conical} isoperimetric inequality 
\begin{equation}
I(v)\ge \left(N\omega_{N}^{\frac{1}{N}}\alpha-\eps\right)v^{\frac{N-1}{N}}
\end{equation}
holds for any $0<v<v_{\eps}$.
\end{remark}

We wish to specialize the isoperimetric almost regularity theorem to the case of non collapsing manifolds with two-sided Ricci curvature bounds and Einstein manifolds. 

Let us recall that the regular set of a noncollapsed limit of Riemannian manifolds with uniformly bounded Ricci curvature is an open set, isometric to a $C^{1,\alpha}$-Riemannian manifold for any $0<\alpha<1$, see \cite{Anderson90,ChCo1}.

\begin{definition}
Let $(X,\dist)$ be a noncollapsed limit of smooth $n$-dimensional Riemannian manifolds with Ricci curvature uniformly bounded from below. Given any $x\in X$ we define the harmonic radius $r_h(x)$ so that $r_h(x)=0$ if there is no neighbourhood of $x$ where $(X,\dist)$ is isometric to a Riemannian manifold $(M,g)$. Otherwise we define $r_h(x)$ to be the largest $r>0$ such that there exists a mapping $\Phi:B_r(0^n)\subset \mathbb R^n\to X$ such that
\begin{itemize}
    \item $\Phi(0)=x$ and $\Phi$ is a diffeomorphism with its image;
    \item $\Delta_gx^\ell=0$, where $x^\ell$ are the coordinate functions and $\Delta_g$ is the Laplace--Beltrami operator;
    \item if $g_{ij}=\Phi^*g$ is the pullback metric, then 
    \begin{equation}
        ||g_{ij}-\delta_{ij}||_{C^0(B_r(0^n))}+r||\partial_kg_{ij}||_{C^0(B_r(0^n))}<10^{-3}\, .
    \end{equation}
\end{itemize}
\end{definition}

In the case of noncollapsed limit of Einstein manifolds with uniformly bounded Einstein constants, the regular set is isometric to a $C^{\infty}$-Riemannian manifold, see \cite{Anderson90,ChCo1}.

\begin{definition}
Let $(X,\dist)$ be a noncollapsed limit of smooth $n$-dimensional Einstein manifolds with uniformly bounded Einstein constants. Given any $x\in X$ we define the regularity scale $r_x$ so that $r_x=0$ if $x$ is a singular point and 
\begin{equation}
    r_x:=\max_{0<r\le 1}\left \{ \sup_{B_r(x)}|\mathrm{Riem}|\le r^{-2}\right\}\, .
\end{equation}
\end{definition}

\begin{corollary}\label{cor:RicciflatEinstein}
Let $n\ge 2$ be fixed. Then there exists $\eta=\eta(n)>0$ such that the following holds. Let $(M^n,g)$ be a smooth and complete Riemannian manifold with bounded Ricci curvature. Then the harmonic radius is uniformly bounded away from zero on $M$ if and only if 
\begin{equation}\label{eq:harmonicrad}
    \lim_{v\to 0} \frac{I(v)}{v^{\frac{n-1}{n}}}\ge n\omega_n^{\frac{1}{n}}-\eta\, .
\end{equation}
Moreover, if \eqref{eq:harmonicrad} holds, then 
\begin{equation}\label{eq:imp}
    \lim_{v\to 0} \frac{I(v)}{v^{\frac{n-1}{n}}}= n\omega_n^{\frac{1}{n}}\,  
\end{equation}
and all the pointed limits at infinity of $(M,g)$ are Riemannian manifolds with $C^{1,\alpha}\cap W^{2,q}$ Riemannian metric for any $\alpha<1$ and any $q<\infty$.

If the manifold $(M^n,g)$ is Einstein, the same conclusion holds with the regularity scale in place of the harmonic radius and $C^{1,\alpha}\cap W^{2,q}$ replaced by $C^{\infty}$.
\end{corollary}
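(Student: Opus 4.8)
The plan is to combine the small--volume asymptotics of the isoperimetric profile, \eqref{eqn:AsymptoticIsoperimetricProfileAtZero} in \autoref{cor:AsymptoticProfileZero}, with the classical $\eps$--regularity and gap theorems of Anderson and Cheeger--Colding for manifolds with two--sided Ricci bounds. Write $|\Ric_g|\le\Lambda$. I would first dispose of the degenerate case $\inf_{x\in M}\haus^n(B_1(x))=0$: along points $x_i$ with $\haus^n(B_1(x_i))\to 0$, the ODE comparison already used in the proof of \autoref{thm:isoperimetricepsregularity} applied to $V_i(r):=\haus^n(B_r(x_i))$ forces $\liminf_{v\to 0}I(v)/v^{\frac{n-1}{n}}=0$, while the $C^0$--closeness of $g$ to the Euclidean metric in a harmonic chart shows $r_h(x)\ge c$ implies $\haus^n(B_1(x))\ge c'(n,c)>0$; so in this case \emph{both} sides of the asserted equivalence fail and there is nothing to prove. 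From now on $\inf_{x\in M}\haus^n(B_1(x))\ge v_0>0$, so \autoref{cor:AsymptoticProfileZero} applies and gives $\lim_{v\to0}I(v)/v^{\frac{n-1}{n}}=n(\omega_n\vartheta_{\infty,\min})^{1/n}$ with $\vartheta_{\infty,\min}=\lim_{r\to0}\inf_{x\in M}\haus^n(B_r(x))/v(n,K/(n-1),r)\in(0,1]$.

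The heart of the proof is the chain of equivalences
\[
\inf_{x\in M}r_h(x)>0\iff \vartheta_{\infty,\min}=1\iff \lim_{v\to0}\frac{I(v)}{v^{\frac{n-1}{n}}}=n\omega_n^{\frac1n},
\]
the second being immediate from the formula above. For ``$\Leftarrow$'' in the first equivalence: if $r_h\ge r_0$ everywhere, then $|\Ric|\le\Lambda$ and the interior $W^{2,q}$--estimates for the harmonic--coordinate representation of $g$ give $C^{1,\alpha}$ bounds on $g$ with a modulus of continuity uniform in $x$; a standard computation then yields $\haus^n(B_r(x))=\omega_n r^n(1+o(1))$ with $o(1)\to 0$ uniformly in $x$ as $r\to0$, i.e.\ $\vartheta_{\infty,\min}=1$. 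For ``$\Rightarrow$'': by definition of $\vartheta_{\infty,\min}$ and Bishop--Gromov monotonicity, $\vartheta_{\infty,\min}=1$ means that for every $\eps>0$ there is $r_\eps>0$, which may be taken $\le\sqrt{(n-1)/\Lambda}$, with $\haus^n(B_{r_\eps}(x))\ge(1-\eps)\omega_n r_\eps^n$ for all $x\in M$; rescaling $g\mapsto r_\eps^{-2}g$ normalizes $|\Ric|\le n-1$ and keeps the unit ball almost Euclidean, so Anderson's $\eps$--regularity theorem (\cite{Anderson90}, see also \cite{ChCo1}) gives $r_h\ge c(n)$ in the rescaled metric, hence $r_h^g\ge c(n)r_\eps>0$ uniformly on $M$.

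The constant $\eta(n)$ comes from the gap theorem. By the characterization in \autoref{cor:AsymptoticProfileZero}, $\vartheta_{\infty,\min}$ is \emph{attained} as a pointwise density $\vartheta[Y,y]$, where $Y$ is $M$ or a pmGH limit at infinity of $M$; any such $Y$ is a non--collapsed Gromov--Hausdorff limit of copies of $M$, hence of smooth $n$--manifolds with $|\Ric|\le\Lambda$ and a uniform lower volume bound, so the Anderson--Cheeger--Colding gap theorem supplies $\eps(n)>0$ with no density value of such a space lying in $(1-\eps(n),1)$. Choosing $\eta(n)$ so that $(1-\eta(n)/(n\omega_n^{1/n}))^n>1-\eps(n)$, the hypothesis $\lim_{v\to0}I(v)/v^{\frac{n-1}{n}}\ge n\omega_n^{\frac1n}-\eta(n)$ forces $\vartheta_{\infty,\min}>1-\eps(n)$, hence $\vartheta_{\infty,\min}=1$, which with the chain above proves both directions of the equivalence and \eqref{eq:imp}. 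For the ``moreover'' part: $\vartheta_{\infty,\min}=1$ means every point of $M$ and of every pmGH limit at infinity has density $1$, hence is regular (by $\eps$--regularity, or \autoref{rm:Reif}); under the two--sided bound the regular set is a $C^{1,\alpha}\cap W^{2,q}$ Riemannian manifold for every $\alpha<1$, $q<\infty$, and if $g$ is Einstein, bootstrapping $\Ric=\lambda g$ in harmonic coordinates upgrades this to $C^\infty$. The Einstein case of the whole statement is identical once one notes $|\Ric|\le|\lambda|$ automatically and that, by interior Schauder estimates for the Einstein equation, the regularity scale $r_x$ is bounded below on $M$ (resp.\ on the limits at infinity) exactly when the harmonic radius is.

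I expect the main obstacle to be the passage from the small--volume asymptotics---which only records the infinitesimal ``density at infinity'' $\vartheta_{\infty,\min}$---to a \emph{uniform} lower bound on the harmonic radius: this requires producing an almost--Euclidean volume bound at a \emph{fixed positive} scale (not merely in the limit $r\to0$) to feed Anderson's $\eps$--regularity, and using the gap theorem to exclude the intermediate regime. The careful bookkeeping of the two--sided Ricci bound under rescaling, and the verification that $\vartheta_{\infty,\min}$ is genuinely realized as a pointwise density on a non--collapsed Ricci limit with two--sided bounds (so the gap theorem applies to it), are the delicate points; the degenerate--case reduction and the Einstein bootstrapping are routine.
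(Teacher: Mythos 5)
Your argument is correct and follows essentially the same route as the paper: the paper likewise combines its isoperimetric almost-regularity results (\autoref{thm:isoperimetricepsregularity}, \autoref{rm:Reif} and \autoref{cor:AsymptoticProfileZero}) with the Anderson--Cheeger--Colding $\eps$-regularity and gap theorems, passing through almost-Euclidean volume bounds and densities of limits at infinity to get the equivalence with a uniform harmonic-radius (resp.\ regularity-scale) bound and \eqref{eq:imp}. Your additional bookkeeping (disposing of the collapsed case via the volume comparison in \autoref{thm:isoperimetricepsregularity}, using the attainment of $\vartheta_{\infty,\mathrm{min}}$, and noting that rescaling makes the gap constant dimensional) is consistent with, and slightly more explicit than, the paper's proof.
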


\begin{proof}
The statement follows from \autoref{thm:isoperimetricepsregularity} thanks to the $\eps$-regularity theorems for Einstein manifolds and manifolds with bounded Ricci curvature from \cite{Anderson90,ChCo1}: there exists $\eps(n,v)>0$ such that if $(M^n,g)$ satisfies $|\mathrm{Ric}|\le \eps$ and $\mathrm{vol}(B_1(p))>v$ and
\begin{equation}
    \dist_{GH}(B_2(p),B_2(0^n))<\eps\, ,
\end{equation}
then $r_h(p)\ge 1$.
Moreover, if $(M^n,g)$ is Einstein, then $r_p\ge 1$.
\medskip

We prove the statement in the case of bounded Ricci curvature, the case of Einstein manifolds being completely analogous.\\
If \eqref{eq:harmonicrad} holds for $\eta=\eta(\eps,n)$ small enough, then by \autoref{thm:isoperimetricepsregularity} and \autoref{rm:Reif} all the tangent cones of all the pointed limits at infinity of $(M^n,g)$ have density bigger than $1-\eps$ and unit balls $\eps$-GH close to $B_1(0^n)\subset \setR^n$. By the $\eps$-regularity theorem that we recalled above, all the pointed limits at infinity have empty singular set. Then \eqref{eq:imp} follows from \autoref{cor:AsymptoticProfileZero} (1).\\
For the very same reasons, there exists $r>0$ such that 
\begin{equation}
    \dist_{GH}(B_r(p),B_r(0^n))<\eps r\, ,
\end{equation}
for any $p\in M$. The regularity of the pointed limits at infinity and the uniform lower bound on the harmonic radius follow again from the $\eps$-regularity theorem.
\end{proof}

\subsection*{Acknowledgements}
The last author is grateful to Elia Bru\`e and Andrea Mondino for several conversations and useful comments on a preliminary version of this note and to Christian Ketterer for useful comments.
The authors are grateful to Otis Chodosh, Mattia Fogagnolo and Emanuel Milman for useful feedbacks on a preliminary version of the paper {and to the reviewer for careful reading and valuable comments on a previous version of the note.}

\subsection*{Declarations}
Conflict of interest. The authors state that there is no conflict of interest.

\printbibliography[title={References}]

\typeout{get arXiv to do 4 passes: Label(s) may have changed. Rerun} 

\end{document}